\numberwithin{equation}{section}
\newtheorem{theorem}{Theorem}[section]
\newtheorem{lemma}[theorem]{Lemma}
\newtheorem{corollary}[theorem]{Corollary}
\newtheorem{proposition}[theorem]{Proposition}
\newtheorem{maintheorem}{Theorem}
\theoremstyle{definition}
\newtheorem{definition}[theorem]{Definition}
\newtheorem{remark}[theorem]{Remark}
\newtheorem{problem}[theorem]{Question}
\newtheorem{question}[theorem]{Question}
\newenvironment{claim}[1]{\par\noindent\textbf{Claim:}\space#1}{}
\newenvironment{claimproof}[1]{\par\noindent\textbf{Proof:}\space#1}{}
\numberwithin{equation}{section}
\newcommand{\bR}{{\mathbb{R}}}
\newcommand{\bH}{{\mathbb{H}}}
\newcommand{\bZ}{{\mathbb{Z}}}
\newcommand{\sF}{\mathcal{F}}
\newcommand{\sN}{\mathcal{N}}
\newcommand{\sH}{\mathcal{H}}
\newcommand{\sD}{\mathcal{D}}
\newcommand{\sP}{\mathcal{P}}
\newcommand{\sC}{\mathcal{C}}
\newcommand{\sB}{\mathcal{B}}
\newcommand{\scD}{\mathscr{D}}
\newcommand{\scT}{\mathscr{T}}
\newcommand{\scH}{\mathscr{H}}
\newcommand{\scF}{\mathscr{F}}
\newcommand{\scQ}{\mathscr{Q}}
\newcommand{\scW}{\mathscr{W}}
\newcommand{\scZ}{\mathscr{Z}}
\newcommand{\scC}{\mathscr{C}}
\newcommand{\scB}{\mathscr{B}}
\newcommand{\scG}{\mathscr{G}}
\newcommand{\scR}{\mathscr{R}}
\newcommand{\scL}{\mathscr{L}}
\newcommand{\scV}{\mathscr{V}}
\newcommand{\scU}{\mathscr{U}}
\newcommand{\diam}{\mathop\mathrm{diam}}
\newcommand{\Net}{\mathop\mathrm{Net}}
\newcommand{\dist}{\mathop\mathrm{dist}}
\newcommand{\ang}{\mathop\mathrm{Angle}}
\newcommand{\face}{\mathop\mathrm{Faces}}
\newcommand{\Layer}{\mathop\mathrm{Layer}}
\newcommand{\Up}{\mathop\mathrm{Up}}
\newcommand{\BWGL}{\mathop\mathrm{BWGL}}
\newcommand{\Graph}{\mathop\mathrm{Graph}}
\newcommand{\Divider}{\mathop\mathrm{Divider}}
\newcommand{\Cover}{\mathop\mathrm{Cover}}
\newcommand{\Stop}{\mathop\mathrm{Stop}}
\newcommand{\Bot}{\mathop\mathrm{Bot}}
\newcommand{\Hp}{\bH^{d+1}}
\newcommand{\R}{\mathbb{R}}
\newcommand{\Z}{\mathbb{Z}}
\newcommand{\N}{\mathbb{N}}
\newcommand{\D}{\mathbb{D}}
\newcommand{\C}{\mathbb{C}}
\title{Lipschitz decompositions of domains with bilaterally flat boundaries}
\date{}
\author{Jared Krandel,\vspace{1ex} \\ Department of Mathematics, University of California Davis, \\ One Shields Ave., Davis CA 94720, USA, \vspace{1ex} \\ \text{jkrandel@ucdavis.edu}}
\begin{document}

\maketitle

\begin{abstract}
We study classes of domains in $\R^{d+1},\ d \geq 2$ with sufficiently flat boundaries that admit a decomposition or covering of bounded overlap by Lipschitz graph domains with controlled total surface area. This study is motivated by the following result proved by Peter Jones as a piece of his proof of the Analyst's Traveling Salesman Theorem in the complex plane: Any simply connected domain $\Omega\subseteq\C$ with finite boundary length $\sH^1(\partial\Omega) < \infty$ can be decomposed into Lipschitz graph domains with total boundary length at most $M\sH^1(\partial\Omega)$ for some $M > 0$ independent of $\Omega$. In this paper, we prove an analogous Lipschitz decomposition result in higher dimensions for domains with Reifenberg flat boundaries satisfying a uniform beta-squared sum bound. We use similar techniques to show that domains with general Reifenberg flat or uniformly rectifiable boundaries admit similar Lipschitz decompositions while allowing the constituent domains to have bounded overlaps rather than be disjoint.
\end{abstract}

\textbf{Mathematics Subject Classification:} 28A75, 28A78, 28A12

\textbf{Funding:} Jared Krandel was partially supported by the National Science Foundation under Grant No. DMS-1763973

\tableofcontents
\section{Introduction}

\subsection{Overview}
In many areas of analysis, general domains that are somehow ``close'' or well-approximated by a \textit{Lipschitz domain} tend to have many desirable properties.
\begin{definition}[Lipschitz domains]
    We say that $\Omega\subseteq\R^{d+1}$ is a \textit{Lipschitz domain} if for each $p\in\partial\Omega$, there exists $r > 0$ such that $B(p,r)\cap\partial\Omega$ is a Lipschitz graph.
\end{definition}
For instance, the idea of finding good Lipschitz domains inside of more general domains has an important place in the study of harmonic measure in the plane and beyond \cite{DJ90}, \cite{Dah77}, \cite{Bad10}, \cite{Azz18}. Lipschitz domains have similarly been used to give characterizations of rectifiability and uniform rectifiability and generally to study the relationship between harmonic measure on a domain and the geometry of the domain's boundary \cite{ABHM19}, \cite{Mo21}, \cite{Mo19}, \cite{ABHM17}, \cite{BH17}, \cite{GMT18}, \cite{AAM19}, \cite{Az16}, \cite{Az21}. The slightly stronger notion of a \textit{Lipschitz graph domain} has also played an important role in quantitative geometric measure theory.
\begin{definition}[Lipschitz graph domains]\label{def:lip-graph-domain}
We say that an open, connected set $\Omega\subseteq \R^{d+1}$ is an \textit{$M$-Lipschitz graph domain} if the following holds: There exists a composition of a translation, dilation, and rotation $A$ with image domain $\widetilde{\Omega} = A(\Omega)$ such that there exists a function $r_{\widetilde{\Omega}}:\mathbb{S}^d\rightarrow\R^+$ with
\begin{equation*}
    \partial\widetilde{\Omega} = \left\{ r_{\widetilde{\Omega}}(\theta)\theta : \theta\in \mathbb{S}^d \right\}
\end{equation*}
and, for any $\theta,\psi\in\mathbb{S}^d$
\begin{align*}
    |r_{\widetilde{\Omega}}(\theta) - r_{\widetilde{\Omega}}(\psi)| \leq M|\theta-\psi|,
\end{align*}
\begin{equation*}
    \frac{1}{1+M} \leq r_{\widetilde{\Omega}}(\theta) \leq 1.
\end{equation*}
\end{definition}
Intuitively, a Lipschitz graph domain is a ``Lipschitz graph over a sphere''. These domains appear in the following striking result due to Peter Jones which is the primary inspiration for this paper:
\begin{theorem}[\cite{Jon90} Theorem 2]\label{pj-decomp}
There exists a constant $M>0$ such that the following holds: For any simply connected domain $\Omega\subseteq \C$ with $\sH^1(\partial\Omega) < \infty$, there is a rectifiable curve $\Gamma$ such that 
$$\Omega\setminus\Gamma = \bigcup_j\Omega_j$$
where $\Omega_j$ is an $M$-Lipschitz graph domain for each $j$, $\Omega_j\cap\Omega_{j'}=\varnothing$ for $j\not= j'$, and
\begin{equation}\label{e:pj-bdy-msr}
    \sum_j\sH^1(\partial\Omega_j) \leq M \sH^1(\partial\Omega).
\end{equation}
\end{theorem}
Also see \cite{GJM92} for a similar result for minimal surfaces in $\R^n$. Note that the Lipschitz constant for the domains in Theorem \ref{pj-decomp} cannot be made small as pointed out by Jones. Indeed, a Lipschitz graph domain with constant $\delta \ll 1$ is a small perturbation of a ball, and such domains cannot tile the unit square while maintaining \eqref{e:pj-bdy-msr}. We informally say that Theorem \ref{pj-decomp} gives a \textit{Lipschitz decomposition} of a domain $\Omega$ in the sense that $\Omega$ is written as a union of closures of disjoint Lipschitz graph domains with boundary lengths controlled by the boundary length of $\Omega$.  Despite being geometrically interesting in and of itself, Theorem \ref{pj-decomp} has an important place in the history of quantitative geometric measure theory because it is central to Jones's original proof of the Analyst's Traveling Salesman Theorem in $\R^2$. This central result gives a characterization of subsets of rectifiable curves and an estimate on their lengths in terms of a quantity called the \textit{Jones beta number} which measures how close a subset $E\subseteq \R^2$ is to being linear locally. 
\begin{definition}[Jones beta number]
Let $E,Q\subseteq \R^2$ where $Q$ has finite diameter. We define the $\beta$-number for $E$ in the ``window'' $Q$ by
\begin{equation*}
    \beta_{E}(Q) = \inf_L\sup_{x\in Q\cap E}\frac{\text{dist}(x,L)}{\diam(Q)},
\end{equation*}
where $L$ ranges over all affine lines in $\R^2$.
\end{definition}
\begin{theorem}[cf. \protect\cite{Jon90} Theorem 1, \cite{Ok92} in $\R^n,\ n > 2$]\label{t:Rn-tst}
Let $E\subseteq \R^2$. $E$ is contained in a rectifiable curve if and only if
\begin{equation*}
    \beta_E^2(\R^2)= \diam(E) + \sum_{Q\in\Delta(\R^2)}\beta_{E}(3Q)^2\diam(Q) < \infty
\end{equation*}
where $\Delta(\R^2)$ is the set of all dyadic cubes in $\R^2$ and $3Q$ is the cube with the same center as $Q$ but three times the side length. If $\Sigma$ is a connected set of shortest length containing $E$, then
\begin{equation}\label{e:Rn-necessary}
    \beta_\Sigma^2(\R^2) \lesssim \sH^1(\Sigma)
\end{equation}
and
\begin{equation}\label{e:Rn-sufficient}
      \beta_E^2(\R^2) \gtrsim \sH^1(\Sigma).
\end{equation}
\end{theorem}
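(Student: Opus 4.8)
The plan is to prove the two quantitative estimates \eqref{e:Rn-necessary} and \eqref{e:Rn-sufficient}; the biconditional then follows at once, the forward implication from \eqref{e:Rn-necessary} applied to any rectifiable curve containing $E$, the reverse from the construction behind \eqref{e:Rn-sufficient}. The two estimates rest on different ideas: \eqref{e:Rn-necessary} is a soft ``Pythagorean'' bound valid for \emph{any} connected set of finite length, while \eqref{e:Rn-sufficient} demands an explicit construction of a short connected set through $E$. First note that the minimizer $\Sigma$ exists and is a rectifiable curve: by Blaschke selection together with the semicontinuity of $\sH^1$ along Hausdorff-convergent sequences of continua (Go\l\k{a}b's theorem), whenever some connected set of finite length contains $\overline{E}$ there is one of least $\sH^1$-measure, and any connected set with $\sH^1<\infty$ is arcwise connected, hence a rectifiable curve. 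Passing from $E$ to $\overline{E}$ leaves every $\beta_E(3Q)$ unchanged, so I may assume $E$ compact.

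\emph{Necessity.} Let $\Gamma\subseteq\R^2$ be connected with $\sH^1(\Gamma)<\infty$. The geometric input is the detour lemma: if $|x-y|\le b$ and $\dist(p,[x,y])=h\le b$, then $|x-p|+|p-y|-|x-y|\gtrsim h^2/b$; a bump of height $h$ over a base $b$ costs excess length $\gtrsim h^2/b$. I would inscribe in $\Gamma$ a nested sequence of polygons $P_n$ with vertices on $\Gamma$ in cyclic order that are $\approx 2^{-n}$-dense, so that $\sH^1(P_n)\uparrow\sH^1(\Gamma)$ and the increments telescope, and then prove
\begin{equation*}
\sum_{Q\in\Delta(\R^2)}\beta_\Gamma(3Q)^2\diam Q\ \lesssim\ \sum_{n}\big(\sH^1(P_{n+1})-\sH^1(P_n)\big)\ \le\ \sH^1(\Gamma)
\end{equation*}
by locating, for each dyadic $Q$ with $\beta_\Gamma(3Q)$ nonnegligible, a point of $\Gamma\cap 3Q$ far from the best line of $3Q$, converting its deviation into excess polygon length via the detour lemma at a suitable refinement step, and invoking the bounded overlap of the dilates $\{3Q:\ell(Q)=2^{-n}\}$ (at most $9$ in $\R^2$) to keep each unit of excess length charged only $O(1)$ times. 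Adding $\diam\Gamma\le\sH^1(\Gamma)$ gives $\beta_\Gamma^2(\R^2)\lesssim\sH^1(\Gamma)$ for every such $\Gamma$, which is \eqref{e:Rn-necessary} when $\Gamma=\Sigma$; taking instead $\Gamma$ a rectifiable curve containing $E$ and using $\beta_E(3Q)\le\beta_\Gamma(3Q)$, $\diam E\le\diam\Gamma$ gives $\beta_E^2(\R^2)<\infty$, the ``only if'' implication.

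\emph{Sufficiency.} Assume $\beta_E^2(\R^2)<\infty$; after a similarity take $\diam E=1$ with $E$ inside a unit cube $Q_0$. I would build $\Sigma$ by a multiscale construction organized as a corona (stopping-time) decomposition, the device underlying the Lipschitz decompositions of this paper. Fix a small threshold $\epsilon_0$ and call a dyadic $Q$ with $3Q\cap E\ne\emptyset$ and $\ell(Q)\le 1$ \emph{flat} if $\beta_E(3Q)\le\epsilon_0$; partition the relevant cubes into maximal trees (``coronas''), each terminated when a descendant is non-flat or the best-fit line has turned past a fixed angle. On a corona with top $R$ the set $E$ sits in a thin tube around a slowly turning line and hence lies on a Lipschitz graph $\Gamma_R$ over the line of $3R$, and since $\sqrt{1+t^2}\le 1+\tfrac12 t^2$, the slope's scale-$\ell(Q)$ oscillation is $\lesssim\beta_E(3Q)$, and the projections of the $3Q$ onto that line have bounded overlap,
\begin{equation*}
\sH^1(\Gamma_R)\ \lesssim\ \diam R+\sum_{Q\subseteq R}\beta_E(3Q)^2\diam Q.
\end{equation*}
Joining consecutive $\Gamma_R$'s with an $O(\diam R)$ connector, and using that each non-flat cube has $\diam Q\le\epsilon_0^{-2}\beta_E(3Q)^2\diam Q$ while the corona tops obey a Carleson packing bound $\sum_R\diam R\lesssim\diam Q_0+\sum_Q\beta_E(3Q)^2\diam Q$, I would obtain a connected $\Sigma\supseteq E$ with $\sH^1(\Sigma)\lesssim\diam Q_0+\sum_Q\beta_E(3Q)^2\diam Q\lesssim\beta_E^2(\R^2)$; then $\overline{\Sigma}$ is a rectifiable curve containing $E$, and the shortest such set is no longer, proving \eqref{e:Rn-sufficient} and the ``if'' implication. (Jones's original route instead connects maximal $2^{-k}$-separated subsets of $E$ generation by generation, charging the step-$k$ detour near $Q$ to $\beta_E(3Q)^2\diam Q$; the bookkeeping is the same.)

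The main obstacle is the sufficiency direction: it is not a telescoping inequality but an actual covering, and the delicate points are (i) defining the coronas and proving the Carleson packing bound for their tops, (ii) assembling the corona graphs into a single connected set so that the connectors do not spoil the length bound, and (iii) the net/graph combinatorics limiting how many local pieces cluster near a given cube. For the $\R^n$, $n>2$ case of Okikiolu one runs the identical scheme with ``line'' meaning a $1$-dimensional affine subspace of $\R^n$: the detour lemma and the corona length estimate go through with $n$-dependent constants, and the only genuinely new point is that $2^{-k}$-nets in $\R^n$ have $O_n(1)$ neighbours, which again affects only constants.
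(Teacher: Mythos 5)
The paper does not actually prove this theorem: it is quoted as background, with the surrounding discussion attributing Jones's proof of the necessity bound \eqref{e:Rn-necessary} to the Lipschitz decomposition result (Theorem~\ref{pj-decomp}) --- one applies the decomposition to each component of $\D\setminus\Gamma$ and reads the $\beta$-numbers off the boundaries of the resulting Lipschitz graph domains. Your necessity argument is therefore a genuinely different route: rather than passing through the conformal map and a Lipschitz decomposition, you inscribe polygons in $\Gamma$ and convert each nonnegligible $\beta_\Gamma(3Q)$ into telescoping excess polygon length via the Pythagorean detour lemma, with bounded overlap of the $3Q$'s keeping the charging controlled. This is essentially the Okikiolu/Schul argument, and it is precisely the one that survives in $\R^n$, $n>2$ (and in Hilbert space), whereas Jones's decomposition-based proof is confined to the plane --- which is, after all, the motivating tension of the present paper. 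For the sufficiency bound \eqref{e:Rn-sufficient}, your corona/stopping-time construction and Jones's generation-by-generation ``farthest insertion'' net construction (which you mention parenthetically) are essentially the same bookkeeping reorganized; both appear in the literature. As a blind sketch the plan is sound. Two points you would need to make precise to close it: (i) for the inscribed-polygon step you should invoke the fact that any continuum of finite $\sH^1$ measure is a Lipschitz image of an interval (not just arcwise connected), so that ``polygons with vertices in cyclic order'' makes sense; (ii) the Carleson packing bound $\sum_R \diam R \lesssim \diam Q_0 + \sum_Q \beta_E(3Q)^2\diam Q$ for corona tops is exactly where the threshold $\epsilon_0$ and the ``angle has turned'' stopping rule must be balanced, and it deserves a proof of its own rather than an assertion.
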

There are now many results referred to as ``traveling salesman theorems'' that share the general structure and philosophy of Theorem \ref{t:Rn-tst} but take place in different spaces such as Hilbert space \cite{Sc07}, Banach spaces \cite{BM22a}, \cite{BM22b}, Carnot groups \cite{Li22}, graph inverse limit spaces \cite{DS16}, and general metric spaces \cite{DS21}, \cite{Ha05}. Many also apply to different geometric objects such as Jordan arcs \cite{Bi22}, H\"{o}lder curves \cite{BNV19}, higher-dimensional sets \cite{AS18}, \cite{Hyd21}, \cite{Hyd22}, \cite{Ghi17}, or measures \cite{BS17}, \cite{BLZ22}. 

Jones proves Theorem \ref{t:Rn-tst} essentially as a corollary of Theorem \ref{pj-decomp}. Roughly speaking, given a rectifiable curve $\Gamma\subseteq \mathbb{D}$, one can apply the Lipschitz decomposition result to each component of $\D\setminus\Gamma$ and use the boundaries of the produced Lipschitz graph domains to control the beta numbers of $\Gamma$. In fact, this shows that rectifiable curves in $\R^2$ admit extensions of controlled length that are quasiconvex: if one considers the union of the boundaries as a new curve $\widetilde{\Gamma} = \cup_j\partial\Omega_j\cup\Gamma$, then $\sH^1(\widetilde{\Gamma}) \lesssim \sH^1(\Gamma)$ and $\widetilde{\Gamma}$ is quasiconvex (see \cite{AS12} for a generalization of this corollary to higher dimensions). 

Jones's result is powerful, but it is confined to two dimensions. In this paper, we consider the following question:
\begin{question}
    For $\Omega\subseteq \R^{d+1},\ d > 1$, what geometric conditions on $\partial\Omega$ are sufficient for $\Omega$ to admit a Lipschitz decomposition?
\end{question}
One of the attractive features of Theorem \ref{pj-decomp} is the minimality of its assumptions on $\Omega$; Jones only assumes simple connectivity and finite boundary length. These assumptions suffice essentially because they give access to a nicely behaved parameterization in the form of a conformal map $\varphi:\D\rightarrow\Omega$. The lack of similar conformal maps in higher dimensions precludes one from directly translating Jones's original argument from $\R^2$ to higher dimensions, but, by assuming stronger control of the geometry of $\partial\Omega$, one does get access to nicely behaved parameterizations that are sufficient replacements. The vital geometric condition on $\partial\Omega$ is called \textit{Reifenberg flatness}, which states that $\partial\Omega$ is bilaterally close to a $d$-plane at all scales and all locations. This bilateral closeness is measured by the \textit{bilateral beta number}.
\begin{definition}[bilateral beta number]
    For $E\subseteq\R^n$, $P$ a $d-$plane, and $B$ a ball, the $d$-\textit{bilateral beta number relative to $P$} for $E$ inside $B$ is
    $$b\beta^d_E(B,P) = \frac{2}{\diam(B)}d_B(E, P)$$
    where $d_B(E,P)$ is the Hausdorff distance between $E$ and $P$ inside $B$ (See \eqref{e:haus-dist}.).
    The full \textit{bilateral beta number} for $E$ inside $B$ is then
    $$b\beta^d_E(B) = \inf_{P\ \text{$d$-plane}}b\beta_E^d(B,P).$$
\end{definition}
\begin{definition}[$(\epsilon,d)$-Reifenberg flatness]
    For fixed $\epsilon > 0$ and $d,n\in\N$ with $0 < d < n$, we say a set $E\subseteq\R^n$ is \textit{$(\epsilon,d)$-Reifenberg flat} if, for all $x\in E$ and $r > 0$,
    $$b\beta_E^d(B(x,r)) \leq \epsilon.$$
\end{definition}
Sets that are $(\epsilon,d)$-Reifenberg flat for small enough $\epsilon \leq \epsilon_0(d,n)$ admit bi-H{\"o}lder parameterizations which we informally call \textit{Reifenberg parameterizations}. This was first shown by Reifenberg in \cite{Re60}, but was later generalized by David and Toro \cite{DT12} to produce parameterizations of Reifenberg flat sets ``with holes'' along with giving a condition under which the parameterization can be upgraded from bi-H{\"o}lder to bi-Lipschitz.
\begin{theorem}[cf. \cite{DT12} Theorem 1.10]\label{t:reif-param-1}
    For any $d,n\in\N$ with $0 < d < n$ and $0 < \tau < \frac{1}{10}$, there exists a constant $\epsilon_0(d,n)$ such that if $\epsilon \leq\epsilon_0$ and $0\in E\subseteq\R^n$ is $(\epsilon,d)$-Reifenberg flat, then there exists a bijection $g:\R^n\rightarrow\R^n$ satisfying the following conditions: For any $z,x,y\in\R^n$ with $z$ arbitrary, $|x-y|\leq 1$,
    $$|g(z) - z| \leq \tau,$$
    $$\frac{1}{4}|x-y|^{1+\tau} \leq |g(x) - g(y)| \leq 3|x-y|^{1-\tau},$$
    and, for some $d$-plane $P$ such that $b\beta_E(B(0,10), P) \leq \epsilon$,
    $$E\cap B(0,1) = g(P) \cap B(0,1).$$
\end{theorem}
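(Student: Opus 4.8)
The plan is to deduce the statement from the Reifenberg parametrization machine of David and Toro \cite{DT12}, which produces a global homeomorphism of $\R^n$ as an infinite composition of maps, each close to the identity and adapted to a single dyadic scale, so that the work reduces to checking that uniform smallness of $b\beta^d_E$ supplies the compatibility data their construction needs, and then to reconciling their parameter conventions with the ones in the statement. First I would fix a base $d$-plane $P = P_0$ with $b\beta^d_E(B(0,10),P_0)\le\epsilon$ (such $P_0$ exists by Reifenberg flatness), and for each scale $k\ge 0$ choose a maximal $2^{-k}$-net $\{x_{k,j}\}_j$ in $E$ together with $d$-planes $P_{k,j}$ satisfying $b\beta^d_E(B(x_{k,j},10\cdot 2^{-k}),P_{k,j})\le\epsilon$. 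The key point is that Reifenberg flatness forces the coherence hypotheses of \cite{DT12}: two planes $P_{k,j},P_{k,j'}$ at the same scale with nearby centers, and a plane $P_{k,j}$ versus its parent at scale $2^{-(k-1)}$, make an angle $\lesssim\epsilon$ and are $\lesssim\epsilon 2^{-k}$-close on the relevant balls, simply because both well-approximate the same piece of $E$. Hence the David--Toro numbers $\epsilon_k(x)$ governing the construction are all $\lesssim\epsilon$.

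Feeding this data into \cite{DT12}, one builds for each $k$ a diffeomorphism $\sigma_k$ of $\R^n$, via a partition of unity subordinate to the balls $B(x_{k,j},2^{-k})$, that equals the identity outside a bounded neighborhood of $E\cap B(0,2)$, moves points by $\lesssim\epsilon 2^{-k}$, and distorts distances at scale $2^{-k}$ by a factor $1+C\epsilon$; the large scales $2^{-k}>1$ are handled once and for all by $P_0$. The desired map is the (locally finite, hence convergent) composition $g=\lim_{k\to\infty}\sigma_k\circ\cdots\circ\sigma_1\circ\sigma_0$. Since $g$ is the identity away from a bounded neighborhood of $E\cap B(0,2)$ and each $\sigma_k$ displaces by $\lesssim\epsilon 2^{-k}$, summing over $k\ge 0$ gives $|g(z)-z|\lesssim\epsilon\le\tau$ for all $z\in\R^n$ once $\epsilon_0$ is small.

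For the bi-Hölder estimate, note that the composition only distorts the distance between $x$ and $y$ through those scales $k$ with $2^{-k}\gtrsim|x-y|$, of which there are $\lesssim\log_2(1/|x-y|)$; multiplying the per-scale factors yields
\[
    |g(x)-g(y)|\le (1+C\epsilon)^{C\log_2(1/|x-y|)}\,|x-y|\le 3\,|x-y|^{1-\tau},
\]
and symmetrically $|g(x)-g(y)|\ge\tfrac14|x-y|^{1+\tau}$, provided $\epsilon_0=\epsilon_0(d,n,\tau)$ is chosen so that $C\epsilon_0\log_2(1+C\epsilon_0)/\log 2\le\tau$ and the residual multiplicative constants are absorbed into the $3$ and $\tfrac14$. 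Finally, $E\cap B(0,1)=g(P_0)\cap B(0,1)$: because $E$ is Reifenberg flat \emph{everywhere} (there are no "holes"), at every scale and location the construction has a net point of $E$ to follow, so the limiting image $g(P_0)$ contains $E$ inside $B(0,1)$; the reverse inclusion holds since $g$ fixes points far from $E$ and each $\sigma_k$ keeps the image within $C\epsilon 2^{-k}$ of $E$ at scale $2^{-k}$.

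The main obstacle is precisely the bookkeeping in the last two paragraphs: verifying that uniform smallness of $b\beta^d_E$ translates into the exact compatibility conditions of \cite{DT12}, and then calibrating $\epsilon_0$ against $\tau$ so that the distortion accumulated over the $\sim\log_2(1/|x-y|)$ relevant scales is exactly absorbed into the exponents $1\pm\tau$ (and the displacement into $\tau$), with the leftover constants not exceeding those in the statement. By contrast, the small-$|x-y|$ regime, where the net effectively "freezes," and the upgrade from the statement on $B(0,1)$ to a genuine bijection of all of $\R^n$ are routine features of the David--Toro framework.
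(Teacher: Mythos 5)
The paper does not prove this statement; it is quoted directly from David and Toro (\cite{DT12}, Theorem 1.10) as a black box, and the paper only reviews the ingredients of their construction in Section 2.2. Your sketch is therefore a reconstruction of the David--Toro proof rather than something to compare against an argument in the paper. The overall strategy you lay out is the right one: pick a base plane $P_0$, build nets and approximating planes at every scale, verify that uniform smallness of $b\beta^d_E$ forces the CCBP coherence conditions, and track displacement ($\lesssim\epsilon$) and per-scale distortion ($1+C\epsilon$, accumulating over $\sim\log_2(1/|x-y|)$ scales to a bi-H\"older exponent). Your observation that Reifenberg flatness supplies the coherent family of planes is in fact exactly what the paper does later (Lemma \ref{l:CubeCCBP}) when it builds a CCBP adapted to a Christ--David lattice.

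There is, however, a genuine gap in the way you define the map. You set $g=\lim_{k\to\infty}\sigma_k\circ\cdots\circ\sigma_0$, but this limit is not a bijection of $\R^n$ and cannot satisfy the stated bi-H\"older bounds. Inside the neighborhood $V_k^8$ of the $k$-th net the map $\sigma_k(y)=\sum_j\theta_{j,k}(y)\pi_{j,k}(y)$ is a weighted average of \emph{projections} onto the approximating $d$-planes; in the degenerate case where $E$ is exactly a $d$-plane and all the $P_{j,k}$ coincide, $\sigma_k$ literally equals orthogonal projection onto $P$ on a tube of radius $\sim 8r_k$, so that $\lim f_k$ collapses a full neighborhood of $E$ onto $E$. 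Correspondingly, the $(1+C\epsilon)$-bi-Lipschitz bound in \cite{DT12} (quoted here as Lemma \ref{DT-Dsig-1}) is only for the restriction of $D\sigma_k$ to tangent vectors $v\in T\Sigma_k(y)$; in the normal directions the derivative is of order $1$ away from the identity. David and Toro circumvent this by an entirely different definition of $g$: they fix tubular coordinates $z=(x,y)$ over $\Sigma_0$, introduce isometries $R_k(x)$ tracking the rotation of $T\Sigma_k(f_k(x))$, and set
\[
    g(z)=\sum_{k\ge 0}\rho_k(y)\bigl\{f_k(x)+R_k(x)\cdot y\bigr\},
\]
where $\rho_k$ is a radial partition of unity in the normal variable picking out the relevant scale (see the paper's Section 2.2.2 and Proposition \ref{DT-isometries}). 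That interpolation, not the naive composition, is what makes $g$ a bijection and yields the bi-H\"older estimate. So your heuristics for the estimates and the identification $E\cap B(0,1)=g(P_0)\cap B(0,1)$ point in the right direction, but the formula for $g$ you propose would not produce a homeomorphism, and the argument as written does not establish bijectivity or the distortion bounds for points off $\Sigma_0$.
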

Given a domain $\Omega\subseteq\R^{d+1}$ such that $\partial\Omega$ is $(\epsilon,d)$-Reifenberg flat, we use the Reifenberg parameterization $g$ produced by the bi-Lipschitz version of Theorem \ref{t:reif-param-1} (See Theorem \ref{DT-thm}.) as a replacement for the conformal map in Jones's original argument to first prove the following new result.
\begin{maintheorem}\label{t:thmA}
    Let $\Omega\subseteq\R^{d+1}$ be a domain with $0\in\partial\Omega$. There exists $\epsilon_0(d) > 0$ such that for any $L > 0$, if $\epsilon \leq \epsilon_0$ and 
    \begin{enumerate}[label=(\roman*)]
        \item $\partial\Omega$ is $(\epsilon,d)$-Reifenberg flat, \label{i:RF}
        \item $\sum_{k=1}^\infty\beta_{\partial\Omega}^{d,1}(B(x,2^{-k}))^2 \leq L$ for all $x\in\partial\Omega$, \label{item:beta-squared}
    \end{enumerate}
    then there exists an Ahlfors $d$-regular, $d$-rectifiable set $\Sigma$ such that
    $$\Omega\cap B(0,1) \setminus \Sigma = \bigcup_{j=1}^\infty\Omega_j$$
    and there exists $L_1(\epsilon,L,d) > 0$ such that $\scL = \{\Omega_j\}_{j\in J_\scL}$ is a collection of disjoint  $L_1$-Lipschitz graph domains. In addition, for any $y\in \partial\Omega\cap B(0,1)$ and $0 < r < 1$, we have
    $$\sum_{j=1}^\infty \sH^d(\partial\Omega_j\cap B(y,r)) \lesssim_{\epsilon,L,d} r^d.$$
\end{maintheorem}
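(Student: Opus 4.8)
The plan is to carry out the higher-dimensional analogue of Jones's proof of Theorem~\ref{pj-decomp}, using the Reifenberg parameterization of Theorem~\ref{t:reif-param-1} in place of the conformal map $\varphi\colon\D\to\Omega$. After a harmless rescaling I apply Theorem~\ref{t:reif-param-1} to $\partial\Omega$ with a small $\tau=\tau(d)$, obtaining a homeomorphism $g\colon\R^{d+1}\to\R^{d+1}$ and a $d$-plane which, after a rigid motion, I take to be $P=\R^d\times\{0\}$, with $\partial\Omega\cap B(0,1)=g(P)\cap B(0,1)$. A Reifenberg-flat set locally separates $\R^{d+1}$ into two components, so, possibly after reflecting, $g$ carries a neighborhood of $\{x_{d+1}>0\}$ inside $B(0,1)$ into $\Omega$. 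The crucial role of hypothesis~\ref{item:beta-squared} is that it is exactly the $\beta^{d,1}$-square-Carleson condition under which the David--Toro construction \cite{DT12} upgrades $g$ to a bi-Lipschitz map on a neighborhood of $\partial\Omega\cap B(0,1)$; in particular $\partial\Omega$ is $d$-Ahlfors regular there, so I may fix a Christ--David dyadic filtration $\Delta$ on $\partial\Omega\cap B(0,1)$ (comparable, via $g$, to the dyadic grid on $P$), and attach to each $Q\in\Delta$ a $d$-plane $P_Q$ nearly realizing the infimum in $b\beta^d_{\partial\Omega}(B_Q)$, where $B_Q$ is a ball of radius $\approx\diam Q$ centered on $Q$.

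Next I would run a stopping-time decomposition of $\Delta$: descending from a top cube $Q_0$, I accumulate the tilt $\Tilt(Q):=\sum\ang(P_{Q'},P_{\parent(Q')})$ over the cubes $Q'$ with $Q\subseteq Q'\subseteq Q_0$, and I mark $Q$ as a stopping cube the first time $\Tilt(Q)$ — or a single value $b\beta^d_{\partial\Omega}(B_{Q'})$ along the way — exceeds a small threshold $\tau_0=\tau_0(d)$. The cubes traversed form a coherent tree $S=S(Q_0)$, the stopping cubes become the new top cubes, and $\Delta$ is partitioned into trees $\{S(Q_0)\}$. Over each tree, $\partial\Omega$ is trapped within $C\tau_0\diam Q_0$ of the fixed plane $P_{Q_0}$ and, being Reifenberg flat, is the graph over $P_{Q_0}$ of a $C\tau_0$-Lipschitz function $f_S$. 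The quantitative heart is the packing of top cubes: each newly created top cube is charged a definite share of $\sum_{Q'}\big(b\beta^d_{\partial\Omega}(B_{Q'})^2+\ang(P_{Q'},P_{\parent(Q')})^2\big)\diam(Q')^d$ along its tree (or one unit of ``cube meeting $\partial B(0,1)$''), so summing and invoking hypothesis~\ref{item:beta-squared} together with Fubini over $(\partial\Omega,\sH^d)$ gives $\sum_{Q_0\in\Top,\,Q_0\subseteq R}\diam(Q_0)^d\lesssim_{\epsilon,L,d}\diam(R)^d$ for every cube $R$.

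With the corona in hand I would, for each tree $S=S(Q_0)$, form the Carleson-box region $\Omega_S\subseteq\Omega\cap B(0,1)$ lying above the graph $f_S$ (pushed slightly into $\Omega$), below the horizontal lid at height $\approx\diam Q_0$ over $Q_0$, laterally inside the vertical dividers over $\partial Q_0$, with the Whitney-separated cube-shaped notches over the stopping cubes of $S$ removed — each notch being precisely the top of a child box $\Omega_{S'}$ — and, when $Q_0$ is the root or meets $\partial B(0,1)$, capped with a spherical piece. Because $f_S$ and the child graphs $f_{S'}$ are $C\tau_0$-Lipschitz over planes making angles $\lesssim\tau_0$ with $P_{Q_0}$, and the notches obey a Whitney estimate, each $\Omega_S$ is a uniformly star-shaped, bi-Lipschitz perturbation of a box, hence an $L_1$-Lipschitz graph domain in the sense of Definition~\ref{def:lip-graph-domain} with $L_1=L_1(\epsilon,L,d)$. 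Setting $\scL=\{\Omega_S\}$ and letting $\Sigma$ be the union of $\partial\Omega\cap B(0,1)$ with all dividers and lids, one gets the disjoint decomposition $\Omega\cap B(0,1)\setminus\Sigma=\bigcup_S\Omega_S$; $\Sigma$ is a countable union of Lipschitz graph pieces, hence $d$-rectifiable, and since over a ball $B(y,r)$ it is covered by graphs over the cubes $Q_0$ and their boundaries, $\sH^d(\Sigma\cap B(y,r))\lesssim\sum_{Q_0\cap B(y,Cr)\neq\emptyset}\diam(Q_0)^d\lesssim r^d$ by the packing bound, with the lower Ahlfors bound coming from the dividers being spread throughout $\Omega$. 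Finally $\partial\Omega_S$ splits into a chunk of $\partial\Omega$ of area $\lesssim\diam(Q_0)^d$ together with dividers and lids each shared by at most two boxes, so $\sum_S\sH^d(\partial\Omega_S\cap B(y,r))\lesssim\sH^d(\partial\Omega\cap B(y,Cr))+\sH^d(\Sigma\cap B(y,Cr))\lesssim_{\epsilon,L,d} r^d$.

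I expect the principal obstacle to lie not in the packing estimate — a fairly mechanical consequence of hypothesis~\ref{item:beta-squared} via Fubini — but in establishing that the boxes $\Omega_S$ really are Lipschitz graph domains and assemble coherently: one must arrange that the floor of $\Omega_S$ matches the lids of its child boxes and a portion of $\Sigma$, so that the $\Omega_S$ are pairwise disjoint, exhaust $\Omega\cap B(0,1)\setminus\Sigma$, and each remains star-shaped about an interior point with a Lipschitz radial parameterization over $\mathbb{S}^d$ across the corners where dividers meet floors and across the scale jumps between a parent tree and its children. Controlling the Lipschitz constants of the $f_S$ — by choosing $\tau_0$, hence $\tau$, small in terms of $d$ alone — so that these box perturbations stay uniformly Lipschitz graph domains, while simultaneously keeping the Reifenberg parameterization bi-Lipschitz on the region in play, is the higher-dimensional counterpart of the delicate Lipschitz-domain verifications in \cite{Jon90} and is where I expect the bulk of the work to concentrate.
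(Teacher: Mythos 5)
Your overall architecture parallels the paper's: both upgrade a Reifenberg parameterization to bi-Lipschitz using hypothesis~\ref{item:beta-squared}, build a stopping-time decomposition of a Christ--David lattice on $\partial\Omega$ controlled by tilt and $\beta$-numbers, and then push forward Carleson-type regions over stopping-time trees. The main structural difference is that the paper works in the \emph{domain} of $g$, partitioning a refined Whitney lattice $\scR_w$ (boxes of aspect ratio $2^{-p}$ with $2^p\simeq L'$ to control stretching) into Whitney stopping-time regions that shadow a graph coronization of $\partial\Omega$, and pushes these forward by $g$; you instead construct Carleson boxes over the Lipschitz graphs $f_S$ directly in $\Omega$. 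Both are plausible routes, though the paper's choice makes the derivative-control lemma (Proposition~\ref{p:DgDginv-expanded}) the natural tool.

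The genuine gap is in the sentence ``each $\Omega_S$ is a uniformly star-shaped, bi-Lipschitz perturbation of a box, hence an $L_1$-Lipschitz graph domain.'' This is false as stated. The region you build over a single tree $S$ is a Carleson box with \emph{many} cube-shaped notches of widely varying scales removed from its floor, producing vertical cliff faces at the sides of each notch. Such a region is not star-shaped about any interior point: a ray from any candidate center grazes the vertical cliff of a small notch at a nearly tangential angle, so the radial parameterization $r_{\Omega_S}$ on $\mathbb{S}^d$ has Lipschitz constant blowing up with the ratio of the box size to the notch size, no matter how small $\tau_0$ is. The Whitney estimate on notch sizes does not fix this, because the cliffs remain vertical. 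This is exactly the difficulty Jones addressed in \cite{Jon90} and which the paper handles in Section~\ref{sec:lipschitz-graph-domains} via the $\Cover$/$\Divider$ construction (Proposition~\ref{p:DT-lip-graph}): over each stopped cube one introduces a $\pi/4$-slanted pyramidal ``cover'' that smooths the right angles, and a tree of vertical ``dividers'' that \emph{further subdivides} the single region $\sD_T$ into countably many pieces $\sD_T^j$, each of which genuinely is an $L_0(d)$-Lipschitz graph domain. Without some analogue of this further subdivision, your $\Omega_S$ are not Lipschitz graph domains and the theorem's conclusion fails; adding it is not a mere ``verification'' but an essential piece of the construction, and it also changes the bookkeeping in the surface-area estimate since each tree now contributes a whole family of boundaries $\{\partial\Omega_S^j\}_j$ rather than a single $\partial\Omega_S$.
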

In condition \ref{item:beta-squared}, $\beta_{\partial\Omega}^{d,1}(B(x,2^{-k}))$ denotes a \text{content beta number} that gives an integral average measurement of the deviation of $\partial\Omega\cap B(x,2^{-k})$ from a $d$-plane using the Hausdorff $d$-content (See Definition \ref{def:betas}). Condition \ref{item:beta-squared} is used to ensure that David and Toro's bi-Lipschitz condition for the Reifenberg parameterization is satisfied. If this hypothesis is not satisfied, then one can still run the proof of Theorem \ref{t:thmA} to produce a collection of Lipschitz graph domains whose total boundary measure and Lipschitz constants blow up near where the sum in \ref{item:beta-squared} diverges. However, we conjecture that a result similar to Theorem \ref{t:thmA} holds without assumption \ref{item:beta-squared}.

If one is willing to weaken the conclusion of $\{\Omega_j\}$ being disjoint to having bounded overlap, then one can show that similar Lipschitz decompositions exist for domains with weaker assumptions on the boundary. We prove the following result of this type:
\begin{maintheorem}\label{t:thmB}
    Let $\Omega\subseteq\R^{d+1}$ be a domain with $0\in\partial\Omega$. There exist constants $A(d), L(d), \epsilon(d) > 0$ such that if $0\in\partial\Omega$ is $(\epsilon,d)$-Reifenberg flat, then there exists a collection of $L$-Lipschitz graph domains $\{\Omega_j\}_{j\in\scL}$ such that
        \begin{enumerate}[label=(\roman*)]
            \item $\Omega_j\subseteq\Omega$, \label{i:om-subset}
            \item $\Omega\cap B(0,1) \subseteq \bigcup_{j=1}^\infty \Omega_j$, \label{i:om-cover}
            \item $\exists C(d) > 0$ such that $\forall x\in\Omega$, $x\in \Omega_j$ for at most $C$ values of $j$, \label{i:om-bounded-overlap}
            \item For any $y\in \partial\Omega\cap B(0,1)$ and $0 < r \leq 1$, we have 
            \begin{equation*}
                \sum_{j=1}^\infty \sH^d(\partial\Omega_j\cap B(y,r)) \lesssim_{\epsilon, d, L} \sH^d(\partial\Omega\cap B(y,Ar)).
            \end{equation*} \label{i:om-surface-measure}
        \end{enumerate}
\end{maintheorem}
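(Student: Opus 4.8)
The plan is to run a Jones-type decomposition directly on $\Omega\cap B(0,1)$, using approximating $d$-planes furnished by Reifenberg flatness at every scale in place of the conformal map of \cite{Jon90}; unlike the proof of Theorem \ref{t:thmA} this does \emph{not} go through the bi-Lipschitz version of the parameterization of Theorem \ref{t:reif-param-1}, and the price for having only bilateral flatness (not its square-sum strengthening) will be the overlap in \ref{i:om-bounded-overlap}. I would first record two standard consequences of $(\epsilon,d)$-Reifenberg flatness with $\epsilon=\epsilon(d)$ small: (a) for $x\in\partial\Omega$ and $r\leq 1$, $\partial\Omega$ separates $B(x,r)$ into exactly two components and $\Omega$ locally coincides with one of them, giving a well-defined inward normal direction to the approximating plane; and (b) $\partial\Omega$ is lower $d$-regular, $\sH^d(\partial\Omega\cap B(x,r))\gtrsim_d r^d$, since $\partial\Omega\cap B(x,r)$ is $\epsilon r$-dense in a $d$-disc of radius $\approx r$. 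Then set up the combinatorial skeleton: a system of pseudocubes (maximal $2^{-k}$-nets, or Christ--David cubes) $\scD$ on $\partial\Omega$ together with a compatible Whitney decomposition $\sW$ of $\Omega\cap B(0,1)$, assigning to each $Q\in\scD$ a Whitney region $W_Q\subseteq\Omega$ of size $\approx\ell(Q)$ at height $\approx\ell(Q)$ over $Q$, and a plane $P_Q$ with $b\beta^d_{\partial\Omega}(B(\xi_Q,K\ell(Q)),P_Q)\leq\epsilon$.

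Fixing a small angle threshold $\theta_0=\theta_0(d)$, a stopping-time argument on $\scD$ yields a forest of coherent trees $\scT$ — each with a top cube $\Top(\scT)$ and stopping cubes $\Stop(\scT)$ — along which the normals $\nu_Q$ stay in a cone of aperture $\lesssim\theta_0$ about $\nu_{\Top(\scT)}$, so that $\partial\Omega$ over $\scT$ lies in a thin double cone about $P_{\Top(\scT)}$. For each tree, define $\Omega_\scT$ to be a fattened and truncated union of $\{W_Q:Q\in\scT\}$: it reaches down to $\partial\Omega$ on the part of $\Top(\scT)$ where the tree never stops, is capped by a controlled surface floating at height $\approx\ell(Q^*)$ above each $Q^*\in\Stop(\scT)$, and is closed off above by a graph at height $\approx\ell(\Top(\scT))$. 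The cone control makes each such boundary piece an $L(d)$-Lipschitz graph over $P_{\Top(\scT)}$; since $\Omega_\scT$ is moreover comparable to, and star-shaped about, a central Whitney cube (the $\core$/$\buffer$ split of $\Omega_\scT$ is convenient here), it is an $L(d)$-Lipschitz graph domain in the sense of Definition \ref{def:lip-graph-domain} after an affine normalization. The overlapping family is $\{\Omega_\scT\}$ together with an easy auxiliary family covering the part of $\Omega\cap B(0,1)$ near $\partial B(0,1)$. Properties \ref{i:om-subset} and \ref{i:om-cover} are then immediate, and \ref{i:om-bounded-overlap} follows because each Whitney region $W_Q$ meets only boundedly many $\Omega_\scT$ (the tree of $Q$ plus a $\theta_0(d)$-controlled number of ancestor trees through the capping/truncation), and each $x\in\Omega$ lies in boundedly many $W_Q$.

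For \ref{i:om-surface-measure}, split $\partial\Omega_\scT=(\partial\Omega_\scT\cap\partial\Omega)\sqcup(\partial\Omega_\scT\cap\Omega)$. The pieces of the first type are essentially disjoint across trees (a point of $\partial\Omega$ is a ``descend-forever'' point of at most one tree), so they contribute at most $\sH^d(\partial\Omega\cap B(y,r))$. Each ``free'' piece of the second type — a top or a floating cap — lies in a ball $B_\scT$ centered on $\partial\Omega$ of radius comparable to the relevant scale $\ell$, with $\sH^d(\text{free piece})\lesssim_d\ell^d\lesssim_d\sH^d(\partial\Omega\cap B_\scT)$ by the lower regularity of step (b). The crux — and the step I expect to be the main obstacle — is then to choose the stopping rule and the truncation so that these auxiliary balls (equivalently, the tops and stopping cubes) enjoy a Carleson packing $\sum_{B_\scT\subseteq B(y,Ar)}\sH^d(\partial\Omega\cap B_\scT)\lesssim_{\epsilon,d,L}\sH^d(\partial\Omega\cap B(y,Ar))$: one must keep each tree flat enough that $\Omega_\scT$ is genuinely an $L(d)$-Lipschitz graph domain, yet keep the top/stopping cubes sparse enough to pack against $\partial\Omega$ itself (using the already-established bounded overlap and lower regularity rather than uniform rectifiability). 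This tension is exactly what forces the weakening to bounded overlap, and I expect it to be resolved through a layered ($\Layer$/$\Up$) organization of the forest in which the free boundaries of trees at a given layer are charged, with bounded multiplicity, to the portion of $\partial\Omega$ they sit over.
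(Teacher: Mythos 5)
Your route is genuinely different from the paper's: the paper does \emph{not} avoid the Reifenberg parameterization, it localizes it --- around each point $p$ of a Whitney-type net in $\Omega\cap B(0,1)$ it builds a local CCBP adapted to the single stopping time region $S_p$ from a graph coronization of $\partial\Omega$, producing a $(1+C\delta)$-bi-Lipschitz map $g_p$ (Lemmas \ref{l:gp-prop}, \ref{l:Dgp-change}), and then transports Whitney-box stopping time domains and their buffer cubes forward by $g_p$. You propose instead to assemble the domains directly from Whitney regions over a stopping tree on $\scD$ with no auxiliary parameterization. In principle this is a reasonable alternative, but as written it has two genuine gaps.

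First, the claim that $\Omega_{\scT}$ --- a ``fattened and truncated union'' of Whitney regions capped over $\Stop(\scT)$ --- ``is an $L(d)$-Lipschitz graph domain'' does not follow from cone control on the approximating planes and star-shapedness about a central Whitney cube. Two adjacent stopping cubes $Q^*,Q^{**}\in\Stop(\scT)$ can have very different side lengths, so the transition between their floating caps (at heights $\approx\ell(Q^*)$ and $\approx\ell(Q^{**})$) is a vertical cliff of height comparable to $\max(\ell(Q^*),\ell(Q^{**}))$, which is not Lipschitz-graphical over $P_{\Top(\scT)}$. This is exactly the obstruction Jones solves in \cite{Jon90} by inserting $\pi/4$-sloped ``covers'' over stopping cubes, and it is why the paper devotes Section \ref{sec:lipschitz-graph-domains} to constructing the set $\Sigma_T$ out of $\Cover(W)$ and $\Divider(W)$ and proving Proposition \ref{p:DT-lip-graph}: a single stopping time domain $\sD_T$ generally must be \emph{carved} into a countable family of Lipschitz graph domains with controlled extra boundary, it is not one such domain. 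Your proposal omits this step entirely.

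Second, the Carleson packing of the tops and stopping cubes that you identify as the crux cannot be obtained from ``the already-established bounded overlap and lower regularity.'' Lower regularity lets you charge a \emph{single} stopping cube $Q^*$ to $\sH^d(\partial\Omega\cap B_{Q^*})$, and bounded overlap controls the multiplicity at a fixed scale, but neither controls the total number of \emph{generations} of stopping below a fixed $Q_0$: in a Reifenberg flat set the approximating planes can turn back and forth arbitrarily many times (at the $\epsilon$-level) without violating either hypothesis. What rules this out quantitatively is the $d$-dimensional traveling salesman packing for lower content regular sets --- in the paper this is Theorem \ref{t:hyde-lcr-beta-bound} and Lemma \ref{l:reif-flat-angle-turn}, assembled into Proposition \ref{p:RF-graph-coronization}, which is precisely the statement that Reifenberg flat sets admit an $(M,\epsilon,\delta)$-graph coronization with the Carleson condition on $\scB$ and $\{Q(S)\}$. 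Your sketch of a $\Layer/\Up$ organization charging free boundaries to $\partial\Omega$ is aimed in the right direction (it is the Azzam--Schul/Hyde construction), but you should be explicit that this \emph{is} the hard theorem being invoked rather than a consequence of local estimates; without it, the bound \ref{i:om-surface-measure} is not established.
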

To prove this result, we use a collection of $(1+C\delta)$-bi-Lipschitz Reifenberg parameterizations to produce a large collection of disjoint Lipschitz graph domains with controlled boundaries and expand these domains with Whitney-type ``buffer zones'' to form a true covering of $\Omega\cap B(0,1)$. This method carries over to the well-known \textit{uniformly $d$-rectifiable} sets of David and Semmes who give many different equivalent definitions of $d$-uniform rectifiability \cite{DS91}, \cite{DS93}. The standard definition is a set that is Ahlfors $d$-regular and has ``big pieces of Lipschitz images of $\R^d$".
\begin{definition}[uniform $d$-rectifiability]\label{def:ur}
    A set $E\subseteq\R^n$ is \textit{uniformly $d$-rectifiable} if $E$ has \textit{big pieces of Lipschitz images of $\R^d$} (BPLI) and $E$ is \textit{Ahlfors $d$-regular}. By having BPLI, we mean there exist constants $L, \theta > 0$ such that for all $x\in E$ and $0 < r < \diam(E)$, there exists an $L$-Lipschitz map $f:A_{x,r}\subseteq B(0,r)\subseteq\R^d\rightarrow \R^n$ such that 
    \begin{equation}
        \sH^d(E\cap B(x,r)\cap f(A_{x,r})) \geq \theta r^d.
    \end{equation}
    By being Ahlfors $d$-regular, we mean that there exists $C_0 > 0$ such that for all $x\in E$ and $0 < r < \diam(X)$,
    \begin{equation}
        C_0^{-1}r^d \leq \sH^d(E\cap B(x,r)) \leq C_0r^d.
    \end{equation}
\end{definition}
The equivalent characterization of uniform rectifiability that is most relevant to us here involves the \textit{bilateral weak geometric lemma} (BWGL), which roughly says that $E$ looks Reifenberg flat on most scales and locations.
\begin{definition}[bilateral weak geometric lemma]
    Given a family of Christ-David cubes $\mathscr{D}$ for $E$ (see Theorem \ref{t:Christ}) and constants $M,\epsilon > 0$, define
    \begin{equation*}
        \BWGL(M,\epsilon) = \{Q\in\mathscr{D} : b\beta^d_E(MB_Q) > \epsilon \}.
    \end{equation*}
    For $Q\in\scD$, define
    \begin{equation*}
        \BWGL(Q,M,\epsilon) = \sum_{\substack{R\subseteq Q \\ R\in\BWGL(M,\epsilon)}}\ell(R)^d.
    \end{equation*}
    We say that $E$ satisfies the \textit{bilateral weak geometric lemma} if for any $M,\epsilon > 0$, there exists a constant $C_0(M,\epsilon)$ such that for all $Q\in\mathscr{D}$,
    \begin{equation}\label{e:BWGL-Carleson}
        \BWGL(Q,M,\epsilon)\leq C_0\ell(Q)^d.
    \end{equation}
\end{definition}
If $E$ is $(\epsilon,d)$-Reifenberg flat, then $\BWGL(Q,M,\epsilon) = 0$ for all $Q$ and $M$. Equation \eqref{e:BWGL-Carleson} is often referred to as a \textit{Carleson packing condition}. The following deep theorem says that one could equivalently define a uniformly $d$-rectifiable set as an Ahlfors $d$-regular set that satisfies the BWGL.
\begin{theorem}[\cite{DS93} Theorem I.2.4]\label{t:BWGL}
   An Ahlfors $d$-regular set $E\subseteq\R^n$ is \textit{uniformly $d$-rectifiable} if and only if $E$ satisfies the BWGL.
\end{theorem}
Making use of the characterization in Theorem \ref{t:BWGL}, we use similar methods to those of the proof of Theorem \ref{t:thmB} to prove an analogue of Theorem \ref{t:thmB} for uniformly $d$-rectifiable sets.
\begin{maintheorem}\label{t:thmC}
     Let $\Omega\subseteq\R^{d+1}$ be a domain with $0\in\partial\Omega$. If $\partial\Omega$ is uniformly $d$-rectifiable, then there exists $L(d),A(d)>0$ such that there exists a collection of $L$-Lipschitz graph domains $\{\Omega_j\}_{j\in J_\scL}$ such that conclusions \ref{i:om-subset}, \ref{i:om-cover}, \ref{i:om-bounded-overlap}, and \ref{i:om-surface-measure} (with additional dependence on uniform rectifiability constants) of Theorem \ref{t:thmB} hold.
\end{maintheorem}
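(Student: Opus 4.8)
The plan is to deduce Theorem \ref{t:thmC} from the construction proving Theorem \ref{t:thmB} by running that construction over a corona decomposition of $\partial\Omega$, with the bilateral weak geometric lemma substituting for global Reifenberg flatness. Fix a system of Christ--David cubes $\scD$ for $E := \partial\Omega$ and a top cube $Q_0$ with $\ell(Q_0) \simeq 1$ and $\partial\Omega \cap B(0,1) \subseteq B_{Q_0}$. Choose a large dilation constant $M = M(d)$, a small $\delta = \delta(d)$, and $\epsilon = \epsilon(d) > 0$ smaller than the Reifenberg threshold of Theorem \ref{t:reif-param-1} and than any smallness needed below. Since $\partial\Omega$ is $d$-uniformly rectifiable, the bad family $\mathscr{B} := \BWGL(M,\epsilon)$ satisfies a Carleson packing condition with constant $C_0 = C_0(M,\epsilon,d)$ depending only on $d$ and the uniform rectifiability constants of $\partial\Omega$. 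The usual stopping-time argument on $\{Q \in \scD : Q \subseteq Q_0\}$ --- descend from $Q_0$, stopping at the first descendant lying in $\mathscr{B}$, together with the customary coherence stopping conditions --- partitions these cubes into coherent stopping-time regions $\{S\}_{S \in \mathcal{F}}$, each with a maximal cube $Q(S)$, so that $b\beta^d_E(MB_Q) \leq \epsilon$ for every $Q \in S$ and the tops satisfy $\sum_{S : Q(S) \subseteq R} \ell(Q(S))^d \lesssim_{C_0} \ell(R)^d$ for every $R \in \scD$. By $d$-Ahlfors regularity of $E$ this is equivalent to $\sum_{S : Q(S) \subseteq R} \sH^d(E \cap B_{Q(S)}) \lesssim \sH^d(E \cap B_R)$.

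Within each region $S$, the hypotheses of the Theorem \ref{t:thmB} construction hold locally: inside the Carleson box of $S$, $\partial\Omega$ is $\epsilon$-bilaterally close to a $d$-plane on every $B_Q$, $Q \in S$, with slowly varying approximating plane, so at scales between the stopping scales of $S$ and $\ell(Q(S))$ it behaves like an $(\epsilon,d)$-Reifenberg flat set. I would therefore run the proof of Theorem \ref{t:thmB} localized to each $S$: replace $\partial\Omega$ below the relevant scale window by flat pieces to obtain an auxiliary $(C\epsilon,d)$-Reifenberg flat, $d$-Ahlfors regular set agreeing with $\partial\Omega$ at the scales in play; apply the bi-Lipschitz refinement of Theorem \ref{t:reif-param-1} (available since the $\beta$-squared sums are small on bounded scale windows) to produce $(1+C\delta)$-bi-Lipschitz Reifenberg parameterizations valid on those windows; and use their restrictions to transport the model Lipschitz graph pieces of that construction (spherical half-annuli) into $\Omega$. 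This yields, for each $S$, a $C(d)$-overlap family $\{\Omega_{S,k}\}_k$ of $L(d)$-Lipschitz graph domains with $\Omega_{S,k} \subseteq \Omega$, whose union over $k$ fills the part of $\Omega$ lying near $\partial\Omega$ at the scales of $S$, and with $\sum_k \sH^d(\partial\Omega_{S,k} \cap B(y,r)) \lesssim \sH^d(\partial\Omega \cap B(y,Ar)) \lesssim \ell(Q(S))^d$.

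It then remains to check conclusions \ref{i:om-subset}--\ref{i:om-surface-measure} for the global family $\{\Omega_{S,k}\}_{S \in \mathcal{F},\,k}$, augmented by finitely many ``cap'' domains at scale $\simeq 1$. Property \ref{i:om-subset} is built in. For \ref{i:om-cover}: if $x \in \Omega \cap B(0,1)$ then $\rho := \dist(x,\partial\Omega) \leq 1$, and choosing a cube $Q$ with $\ell(Q) \simeq \rho$ near the nearest boundary point, $Q$ lies in some region $S$, and the piece of the construction over $S$ at scale $\simeq \rho$ near $Q$ contains $x$; the standard coherence argument (passing to the parent region) covers $x$ when $Q$ is near $\partial B_{Q(S)}$ or near the minimal cubes of $S$. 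For \ref{i:om-bounded-overlap}: the overlap inside a single region is the dimensional constant from Theorem \ref{t:thmB}, and across regions, $x \in \Omega_{S,k}$ forces $\rho = \dist(x,\partial\Omega)$ to be comparable to a scale internal to $S$ near $x$; since only $O_d(1)$ Christ--David cubes sit at a fixed scale $\simeq \rho$ and location $\simeq x$, each in exactly one region, $x$ meets the pieces of only $O_d(1)$ regions, and the buffer zones alter this by a bounded factor. Finally, for \ref{i:om-surface-measure}, given $y \in \partial\Omega \cap B(0,1)$ and $0 < r \leq 1$,
\begin{equation*}
\sum_{S,k} \sH^d(\partial\Omega_{S,k} \cap B(y,r)) \;\lesssim\; \sum_{\substack{S \in \mathcal{F}\,:\, B_{Q(S)} \cap B(y,Cr) \neq \emptyset \\ \ell(Q(S)) \lesssim r}} \ell(Q(S))^d \;\lesssim\; \sH^d(\partial\Omega \cap B(y,Ar)),
\end{equation*}
by the per-region bound and then the Carleson packing of the tops together with Ahlfors regularity; the cap domains contribute $\lesssim \sH^d(\partial\Omega \cap B(y,Ar))$ directly.

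I expect the principal difficulty to be the compatibility of the per-region constructions: carrying out the flat completion and verifying the bi-Lipschitz Reifenberg hypothesis on each bounded scale-window inside $S$ (which fixes the relation among $M$, $\delta$, $\epsilon$ and the uniform rectifiability constants), and arranging the Whitney-type buffer zones and the ``core'' portion of each region uniformly across the whole forest $\mathcal{F}$, so that the domains from distinct regions both cover $\Omega \cap B(0,1)$ and overlap only $O_d(1)$ times. Everything else --- the stopping-time decomposition and its Carleson packing, the local invocation of the Theorem \ref{t:thmB} machinery, and the final summation --- is, once the corona is in place, essentially bookkeeping, with the uniform rectifiability constants entering only through $C_0(M,\epsilon)$ and the Ahlfors regularity constant of $\partial\Omega$.
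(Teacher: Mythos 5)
Your overall strategy --- decompose $\scD$ into corona-style stopping-time regions using the BWGL Carleson condition, run a localized version of the Theorem~\ref{t:thmB} machinery on each region, glue with buffer zones, and sum with Carleson packing --- is the right one and is in broad agreement with the paper's. In fact the paper's logical organization is slightly different: both Theorems~\ref{t:thmB} and~\ref{t:thmC} are derived from a single, more general statement (Theorem~\ref{t:gen-stop}), namely that the conclusions hold whenever $\partial\Omega$ admits a graph coronization (Definition~\ref{def:graph-coronization}); Theorem~\ref{t:thmC} then follows by Proposition~\ref{p:ur-equiv}. So B is not really ``more elementary'' than C --- in the paper, the Reifenberg-flat case already passes through the same coronization machinery. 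Your plan to deduce C by running B's construction over a corona is essentially a reformulation of this.

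There is, however, one genuine gap in the proposal as written. You form the stopping-time regions $S$ by stopping at the first descendant in $\scB = \BWGL(M,\epsilon)$ ``together with the customary coherence stopping conditions''. This gives $b\beta^d_E(MB_Q)\leq\epsilon$ for every $Q\in S$, but it gives no control on the \emph{accumulated} sums $\sum_{x\in Q\in S}\beta^{d,1}_E(MB_Q)^2$ along downward chains in $S$: each term is $\lesssim\epsilon^2$, but a chain of depth $N$ contributes $\simeq N\epsilon^2$, and $N$ is unbounded within a single region. Those chain sums are exactly what enter David--Toro's bi-Lipschitz hypothesis (the $\sum_k\epsilon_k'(f_k(z))^2\leq L$ criterion of Theorem~\ref{DT-thm}, via Lemma~\ref{l:ep-by-beta} and Lemma~\ref{l:gp-prop}\ref{i:p-epsilon-bound}). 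Without them, the per-region Reifenberg parameterizations need only be bi-H\"older, and the transported model pieces will not stay uniformly Lipschitz graphical. Your parenthetical justification ``(available since the $\beta$-squared sums are small on bounded scale windows)'' is therefore unjustified as stated: the strong geometric lemma gives a Carleson packing of $\beta^2\ell(Q)^d$, not a uniform chain bound inside $S$. The fix is the one the paper uses: replace the BWGL-only stopping by a full $(M,\epsilon,\delta)$-graph coronization, whose stopping time additionally enforces $\sum_{x\in Q\in S}\beta^{d,1}_E(MB_Q)^2\leq\epsilon^2$ and $\ang(P_Q,P_{Q(S)})\leq\delta$; such coronizations exist for UR sets by Proposition~\ref{p:ur-equiv}(iv), and the tops still satisfy a Carleson packing condition. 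Once that is in place, the remaining difficulties you flag (the flat completion, the per-region vs.\ per-net-point organization, the cross-region overlaps) are implementation choices; the paper sidesteps flat completion by building each CCBP directly from the centers and approximating planes of cubes \emph{inside} $S$ rather than from an auxiliary globally-flat set, which avoids the gluing verifications flat completion would require. Also a small correction: the model Lipschitz pieces in Theorem~\ref{t:thmB}'s construction are images of coherent Whitney-\emph{box} stopping-time domains decomposed via Proposition~\ref{p:DT-lip-graph}, not spherical half-annuli.
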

Uniform rectifiability was studied in detail by David and Semmes in \cite{DS93} where thea authors explore connections between the BWGL and numerous other equivalent definitions involving boundedness of singular integral operators, approximation by Lipschitz graphs (the existence of corona decompositions), ``big piece'' parameterizations by Lipschitz maps, and more. Uniform rectifiability has recently become of interest in the study of harmonic measure and the solvability of the homogeneous Dirichlet problem in rough domains. In \cite{AHMMT20}, the authors give a geometric characterization of open sets $\Omega\subseteq\R^{d+1}$ such that there exists $p < \infty$ such that the $L^p(\partial\Omega)$-Dirichlet problem is solvable given the background hypotheses that $\partial\Omega$ is Ahlfors $d$-regular and $\Omega$ satisfies the interior corkscrew condition. They prove that solvability is equivalent to $\partial\Omega$ being uniformly $d$-rectifiable and $\Omega$ satisfying a quantitative connectivity condition called the weak local John condition. 

A related line of research studies the $L^p$ regularity problem in rough domains. In recent work, Mourgoglu and Tolsa proved a major result in this area by showing that for $\frac{1}{p} + \frac{1}{p'} = 1$, the $L^{p'}$-Dirichlet problem and the $L^p$-regularity problem are equivalent in corkscrew domains with Ahlfors regular boundaries \cite{MT24}. Their construction is based on a corona-type decomposition of the domain into bounded star-shaped Lipschitz domains with disjoint closures and a ``buffer'' region. This decomposition is very similar in style to that used here in the proof of Theorem \ref{t:thmC}: We too have a collection of disjoint ``core'' Lipschitz graph domains and a an added ``buffer'' region, but our ``core'' Lipschitz graph domains often share boundaries so that their closures are not disjoint. The construction in \cite{MT24} was conceived independently from ours and is constructed using ideas more closely related to David and Semmes's original work than the ideas used here involving Reifenberg parameterizations. We also mention that recent related work of Mourgoglu, Poggi, and Tolsa aims to extend the result of \cite{MT24} to elliptic operators satisfying the Dahlberg-Kenig-Pipher (DKP) condition \cite{MPT22}. To do so, they use a refined version of the construction from \cite{MT24}. In this refined version, they must have domains that can be locally represented by Lipschitz graphs with small constants in which the DKP condition holds with small constant. 

In both of the above constructions, the authors use the corkscrew condition, making their setting slightly less general than that of Theorem \ref{t:thmC}. Although we cannot hope to get Lipschitz graph domains with small constant (See the discussion after the statement of Theorem \ref{pj-decomp}.), it seems likely that a refined version of the construction for Theorem \ref{t:thmC} could give domains that are a finite union of Lipschitz graphs with small constant as in \cite{MPT22}, although we do not pursue the details here.

\subsection{Outlines of the paper and of the proofs of the theorems}

In Section \ref{sec:preliminaries}, we introduce the necessary notation and basic facts about Reifenberg parameterizations, Whitney decompositions, Christ-David cubes, coronizations, Reifenberg flat sets, and uniformly rectifiable sets. In Section \ref{sec:lipschitz-graph-domains}, we prove a number of preliminary results on decomposing ``stopping time domains" of Whitney cubes into Lipschitz graph domains and mapping Lipschitz graph domains forward by small perturbations of affine maps. Section \ref{sec:Dg} contains further preliminary results on controlling the change in the derivative of a Reifenberg parameterization above geometrically nice regions. In Sections \ref{sec:thmA} and \ref{sec:thmB-thmC} we prove Theorems \ref{t:thmA} and Theorems \ref{t:thmB} and \ref{t:thmC} respectively.

Roughly speaking, the proof of Theorem \ref{t:thmA} in Section \ref{sec:thmA} proceeds as follows. The fact that $\partial\Omega$ is Reifenberg flat means that we can produce a Reifenberg parameterization $g:\R^{d+1}\rightarrow\R^{d+1}$ such that $g(\R^d\times\{0\})\cap B(0,1) \supseteq \partial\Omega\cap B(0,1)$. The uniform bound on the beta-squared sum in condition \ref{item:beta-squared} of Theorem \ref{t:thmA} ensures that $g$ is $L'(d,L)$-bi-Lipschitz so that $\partial\Omega$ is in fact a bi-Lipschitz image, hence uniformly rectifiable. This means that there exists a Christ-David lattice $\scD$ for $\partial\Omega$ with a graph coronization whose stopping time regions $\scF=\{S\}$ consist of cubes well-approximated by Lipschitz graphs with Lipschitz constant small in terms of $d$ and $L'$ (this is a coronization that produces a corona decomposition). Proposition \ref{p:DgDginv-expanded} implies that $Dg$ is nearly constant on parts of its domain which are mapped into regions of $\Omega$ sitting ``above'' a stopping time region $S$ on the scale of the cubes inside $S$. By the results of Section \ref{sec:lipschitz-graph-domains}, $g$ maps forward Lipschitz graph domains to Lipschitz graph domains when the change in $Dg$ is small compared to the Lipschitz constants of the mapped domains. Therefore, to produce a Lipschitz decomposition of $\Omega\cap B(0,1)$, it suffices to produce a Lipschitz decomposition $\scL_0$ (see Definition \ref{def:domain-decomp}) of the domain of $g$ into domains over which $Dg$ is nearly constant so that the collection of images $\scL = \{g(\sD) : \sD\in\scL_0\}$ is a Lipschitz decomposition.

In order to form this decomposition, we produce a ``coronization'' of a lattice of Whitney boxes which parallels the coronization for $\scD$ on $\partial\Omega$ (see \ref{def:whitney-coronization}). That is, we separate Whitney boxes into bad boxes that $g$ maps near bad cubes in $\scD\cap\scB$ or cubes on the ``edges'' in scale and location of stopping time regions in $\scD$. This decomposition then maps forward to a collection of domains whose total surface measure is bounded by the surface measure of $\partial\Omega$ plus the Carleson packing sums for the bad and ``edge'' cubes of $\scD$. 

The proofs of Theorems \ref{t:thmB} and \ref{t:thmC} both follow a single similar argument to that of Theorem \ref{t:thmA}. In the Reifenberg flat case, the difference is that any single global Reifenberg parameterization $g$ produced for the set is not in general bi-Lipschitz, so we have no uniform estimates on how $g$ distorts any given cube. In the uniformly rectifiable case, we have no global Reifenberg parameterization because there can be many scales and locations at which Reifenberg flatness fails. In either case, we sidestep these by producing a collection of local $(1+\delta)$-bi-Lipschitz parameterizations by parameterizing pieces of the domain above stopping time regions in a graph coronization (see Definition \ref{def:graph-coronization}) using single stopping time domains composed of Whitney cubes. By similar arguments, the surface measure of these domains is controlled by the surface measure of $\partial\Omega\cap B(0,1)$ plus the Carleson packing sum of the same bad set of cubes in $\scD\cap\scB$ and near ``edges'' of stopping time domains. We then fill parts of $\Omega\cap B(0,1)$ that are missed by these domains with ``buffer zones'' of cubes on the exterior of these domains as well as families of cubes which sit above surface cubes in the bad set. By similar reasoning, the surface measure of these domains is bounded by the same Carleson packing sums as above.

\section{Preliminaries}\label{sec:preliminaries}

\subsection{Conventions and basic definitions}
Whenever we write $A \lesssim B$, we mean that there exists some constant $C$ independent of $A$ and $B$ such that $A \leq CB$. If we write $ A \lesssim_{a,b,c} B$ for some constants $a,b,c$, then we mean that the implicit constant $C$ mentioned above is allowed to depend on $a,b,c$. We will sometimes write $A\simeq B$ to mean that both $A\lesssim B$ and $B\lesssim A$ hold.

In many computations, we use a constant $C$ to denote a catch-all, general constant which is allowed to vary significantly from one line to the next.
\begin{definition}[Hausdorff measure, Hausdorff distance, Nets]
    For $F,E\subseteq \R^{d+1}$ and $a\in\R^{d+1}$, we let
\begin{align*}
    \dist(E,F) &= \inf\{|x-y| : x\in F,\ y\in E\},\\
    \dist(a,E) &= \dist(\{a\},E)
\end{align*}
and define
\begin{equation*}
    \diam(F) = \sup\{|x-y| : x,y\in F\}.
\end{equation*}
For any $r > 0$, we let
\begin{equation*}
    B(E,r) = \{x\in\R^{d+1} : \dist(x,E) < r\}.
\end{equation*}

For any subset $F\subseteq\R^{d+1}$, an integer $m\geq 0$, and constant $0 < \delta \leq \infty$, we define
\begin{equation*}
    \sH^m(F) = \inf\left\{\sum \diam(E_i)^m : F\subseteq\bigcup E_i, \diam(E_i) < \delta\right\}.
\end{equation*}
The Hausdorff $m$-measure of $F$ is defined as
\begin{equation*}
    \sH^m(F) = \lim_{\delta\rightarrow 0}\sH^m_\delta(F).
\end{equation*}
We will only use this in the case $m=d$. We refer to the function $\sH^m_\infty$ as the $m$-dimensional Hausdorff content. Given two closed sets $E,F\subseteq\R^{d+1}$, and a third set $B\subseteq \R^{d+1}$ we define the Hausdorff distance between $E$ and $F$ inside $B$ as
\begin{equation}\label{e:haus-dist}
    d_B(E,F) = \frac{2}{\diam B}\max\left\{ \sup_{y\in E\cap B}\dist(y,F),\ \sup_{y\in F\cap B}\dist(y,E) \right\}.
\end{equation}
When $x\in\R^{d+1}$ and $r > 0$, we also define
\begin{equation*}
    d_{x,r}(E,F) = d_{B(x,r)}(E,F).
\end{equation*}
Given a subset $E\subseteq\R^{d+1}$ and $r > 0$, we let $\Net(E,r)$ denote the set of $r$-nets of $E$. That is, $X\in\Net(E,r)$ if $X\subseteq E$ is such that both
\begin{enumerate}
    \item[(i)] For any $x\not=y\in X$, $|x - y| \geq r$,
    \item[(ii)] $E\subseteq \bigcup_{x\in X} B(x,r)$.
\end{enumerate}

\end{definition}

\subsection{Reifenberg parameterizations}
In this section, we record the basic facts about Reifenberg parameterizations needed from \cite{DT12}.

\subsubsection{Coherent Collections of Balls and Planes (CCBP)}
Set $r_k = 10^{-k}$ and let $x_{j,k}\in\R^{d+1},\ j\in J_k$ satisfy
\begin{equation}\label{DT-net}
    |x_{j,k} - x_{i,k}| \geq r_k.
\end{equation}
Put $B_{j,k} = B(x_{j,k},r_k)$ and for $\lambda > 0$ define $V_{k}^\lambda = \bigcup_{j\in J_k}\lambda B_{j,k} = \bigcup_{j\in J_k}B(x_{j,k},\lambda r_k)$ where $\lambda B$ is always the ball with the same center as $B$ and radius dilated by a factor of $\lambda$. We also assume
\begin{equation}\label{DT-net-co}
    x_{j,k}\in V_{k-1}^2.
\end{equation}
We will always use a $d$-plane as the initial surface $\Sigma_0$. We require
\begin{equation}\label{DT-sig}
    \dist(x_{j,0},\Sigma_0) \leq \epsilon \text{ for $j\in J_0$}.
\end{equation}
Finally, the coherent collection of planes is a collection of planes (in general of any dimension $m < d+1$, although here we only take $d$-planes) $P_{j,k}$ associated to $x_{j,k}$ such that the following compatibility conditions are satisfied:
\begin{align}\label{DT-co1}
    d_{x_{j,k},100r_k}(P_{i,k},P_{j,k}) \leq \epsilon
    \ \hbox{ for $k \geq 0$ and $i,j \in J_k$ such that }
    |x_{i,k}-x_{j,k}| \leq 100 r_k,
\end{align}
\begin{equation}\label{DT-co2}
    d_{x_{i,0},100}(P_{i,0},P_{x}) \leq \varepsilon
    \ \hbox{ for $i \in J_0$ and $x\in \Sigma_0$ such that }
    |x_{i,0}-x| \leq 2, 
\end{equation}
and
\begin{equation}\label{DT-co3}
    d_{x_{i,k},20r_k}(P_{i,k},P_{j,k+1}) \leq \varepsilon
    \ \hbox{ for $i \in J_k$ and $j\in J_{k+1}$ such that }
    |x_{i,k}-x_{j,k+1}| \leq 2 r_k.
\end{equation}
With these conditions, we can define a CCBP.
\begin{definition}
A CCBP is a triple $(\Sigma_0,\{B_{j,k}\},\{P_{j,k}\})$ such that conditions \eqref{DT-net}, \eqref{DT-net-co}, \eqref{DT-sig}, \eqref{DT-co1}, \eqref{DT-co2}, and \eqref{DT-co3} are satisfied with $\epsilon$ sufficiently small in terms of $d$. 
\end{definition}
Given a family of planes $\{P_{i,k}\}_{i\in J_k}$, we can precisely measure the deviation between planes on scales $k$ and $k-1$ near a given point $y\in\R^{d+1}$ using the following quantity. Whenever $y\in V_k^{10}$, we define
\begin{align}\label{e:epsilonk'}
    \epsilon'_k(y) = \sup\big\{ &
    d_{x_{i,l},100r_{l}}(P_{j,k},P_{i,l}) \, ; \,
    j\in J_k, \, l \in \{ k-1, k \}, \\\nonumber
    & \hskip 3.5cm
    i\in J_{l}, \hbox{ and } y \in 10 B_{j,k} \cap 11 B_{i,l} 
    \big\}.
\end{align}
When $y\not\in V_k^{10}$, we set $\epsilon_k'(y) = 0$. We now give a small modification of a lemma in \cite{AS18} which gives criteria for a triple $(\Sigma_0, \{B_{j,k}\},\{P_{j,k}\})$ to be a CCBP using these numbers.
\begin{lemma}[cf. \cite{AS18} Theorem 2.5]\label{l:CCBP-AS}
For any $k\in\N\cup\{0\}$, let $r_k = 10^{-k}$. Let $\{x_{j,k}\}_{j\in J_k}$ be a collection of points such that for some $d$-plane $P_0$ we have
\begin{equation*}
    \dist(x_{j,0},P_0) < \epsilon,
\end{equation*}
\begin{equation*}
    |x_{j,k} - x_{i,k}| \geq r_k,
\end{equation*}
and, with $B_{j,k} = B(x_{j,k},r_k)$,
\begin{equation}\label{e:CCBP-tree}
    x_{i,k}\in V_{k-1}^2
\end{equation}
where
\begin{equation*}
    V_k^\lambda = \bigcup_{j\in J_k}\lambda B_{j,k}.
\end{equation*}
Let $P_{j,k}$ be a $d$-plane such that $x_{j,k}\in P_{j,k}$. There is $\epsilon_0 > 0$ such that for any $0 < \epsilon < \epsilon_0$, if 
\begin{equation*}
    \epsilon_k'(x_{j,k}) \lesssim \epsilon \text{ for all $k \geq 0$ and $j\in J_k$}
\end{equation*}
then $(P_0, \{B_{j,k}\},\{P_{j,k}\})$ is a CCBP.
\end{lemma}

CCBPs allow the construction of Reifenberg parameterizations which we will denote by the letter $g$. David and Toro give the following Theorem
\begin{theorem}[\cite{DT12} Theorems 2.15, 2.23]\label{DT-thm}
Let $(\Sigma_0,\{B_{j,k}\},\{P_{j,k}\})$ be a CCBP with $\epsilon$ sufficiently small. There exists a bijection $g:\R^n\rightarrow\R^n$ such that
\begin{equation}\label{DT-far-id}
    g(z) = z \ \hbox{ when } \dist(z,\Sigma_0) \geq 2,
\end{equation}
\begin{equation}
    |g(z)-z| \leq C \varepsilon
\ \hbox{ for } z\in \R^n,
\end{equation}
and
\begin{equation}
    {1 \over 4} |z'-z|^{1+C\varepsilon} \leq |g(z')-g(z)| 
\leq 3 |z'-z|^{1-C\varepsilon}
\end{equation}
for $z,z'\in \R^n$ such that $|z'-z| \leq 1$,
and $\Sigma = g(\Sigma_0)$ is a $C\varepsilon$-Reifenberg
flat set that contains the accumulation set
\begin{align*}
    E_\infty &= \big\{ x\in \R^n \, ; \, 
    x \hbox{ can be written as }
    x = \lim_{m\to +\infty} x_{j(m),k(m)}, 
    \hbox{ with } k(m) \in \mathbb{N} \\
    &\hskip 2.5cm\hbox{ and } 
    j(m) \in J_{k(m)} \hbox{ for } m \geq 0,  \hbox{ and }
    \lim_{m \to  +\infty} k(m) = +\infty \big\}.
\end{align*}
If in addition there exists $L>0$ such that
\begin{equation}\label{e:eps-sq}
    \sum_{k\geq0}\epsilon'_k(f_k(z))^2 \leq L\ \text{ for all $z\in\Sigma_0$},
\end{equation}
then $g$ is bi-Lipschitz: There is a constant $C(n,d,L) \geq 1$ such that for any $z,z'\in\R^n$
\begin{equation*}
    C(n,d,L)^{-1}|z-z'| \leq |g(z) - g(z')| \leq C(n,d,L)|z-z'|.
\end{equation*}
\end{theorem}

\subsubsection{The definition of $g$}
Following Chapter 3 of \cite{DT12}, we take $\psi_k$ to be a smooth function vanishing outside $V_k^8$ and $\theta_{j,k}$ to be a collection of smooth compactly supported functions in $10B_{j,k}$ such that $|\nabla^m\theta_{j,k}(y)|\leq C_mr_k^{-m}$ and $\psi_k(y) + \sum_{j\in J_k}\theta_{j,k}(y) = 1$. We then define a sequence of maps $f_k$ by
\begin{equation*}
    f_0(y) = y,\ f_{k+1} = \sigma_k\circ f_k
\end{equation*}
where
\begin{equation*}
    \sigma_k(y) =  y + \sum_{j \in J_k} \theta_{j,k}(y) \, [\pi_{j,k}(y)-y]
=\psi_k(y) y + \sum_{j \in J_k}\theta_{j,k}(y) \, \pi_{j,k}(y)
\end{equation*}
where $\pi_{j,k}$ is orthogonal projection onto $P_{j,k}$. In our application, we will usually only care about points inside $V_k^8$. In this case, $\psi_k(y) = 0$ and the formula simplifies to 
\begin{equation*}
    \sigma_k(y) = \sum_{j \in J_k}\theta_{j,k}(y) \, \pi_{j,k}(y).
\end{equation*}
Equation (6.8) in \cite{DT12} says that the map $\sigma_k$ also satisfies
\begin{equation}\label{DT-sigma-id}
    |\sigma_k(y) - y| \leq C\epsilon r_k
\end{equation}
for $k\geq 0$ and $y\in\Sigma_k$. It is not hard to show by iterating \eqref{DT-sigma-id} and using the triangle inequality that the following holds: for any $k\geq n \geq 0$ and $x\in\Sigma_0$,
\begin{equation}\label{e:fn-diff}
    |f_k(x) - f_n(x)| \leq C\epsilon r_n
\end{equation}
where $C$ is independent of $k$. In particular, this implies that the limiting mapping
\begin{equation}
    f(x) = \lim_{k}f_k(x)
\end{equation}
exists and is continuous. In fact, $f$ is also H{\"o}lder continuous. See \cite{DT12} Proposition 8.1. Taking the limit in $k$ in \eqref{e:fn-diff} then gives
\begin{equation}\label{e:f-diff}
    |f(x)-f_n(x)| \leq C\epsilon r_n.
\end{equation}

The map $g$ is constructed by interpolating between adjacent maps in the sequence $f_k$ at distance $r_k$ from the surface $\Sigma_k$ defined by
\begin{equation}\label{e:sigmak}
    \Sigma_k = f_k(\Sigma_0).
\end{equation}
As long as $\Sigma_0$ is $C^2$ (Recall we will always choose $\Sigma_0$ to be a plane, so this holds for us.), the fact that $\sigma_k$ is smooth for every $k$ implies that $\Sigma_k$ is a $C^2$ submanifold of $\R^{d+1}$. Therefore, $\Sigma_k$ has a well-defined affine tangent plane at every $z\in\Sigma_k$ that we denote by $T\Sigma_k(z)$. This also allows us to define a field of linear isometries $R_k$ on $\Sigma_0$ for each $k$ such that $R_k$ maps each $x\in\Sigma_0$ to an isometry that maps the tangent plane to $\Sigma_0$ at $x$ to the tangent plane to $\Sigma_k$ at $f_k(x)$. The following proposition summarizes the properties of $R_k$ that we need
\begin{proposition}[\cite{DT12} Proposition 9.29]\label{DT-isometries}
Let $\mathcal{R}$ denote the set of linear isometries of $\R^n$. Set
\begin{equation*}
    T_k(x) = T\Sigma_k(f_k(x))
    \ \hbox{ for } x \in \Sigma_0
    \hbox{ and } k \geq 0.
\end{equation*}
There exist $C^1$ mappings $R_k : \Sigma_0 \to {\mathcal{R}}$, with the
following properties:
\begin{equation*}
    R_0(x) = I \hbox{ for } x \in \Sigma_0,
\end{equation*}
\begin{equation*}
    R_k(x)(T_0(x))=T_k(x)
    \ \hbox{ for } x \in \Sigma_0
    \hbox{ and } k \geq 0,
\end{equation*}
and
\begin{equation}\label{DT-isometry-change}
    |R_{k+1}(x)-R_{k}(x)| \leq C \varepsilon
    \ \hbox{ for } x \in \Sigma_0
    \hbox{ and } k \geq 0.
\end{equation}
\end{proposition}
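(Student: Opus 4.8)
The plan is to construct the rotations $R_k$ generation by generation, as compositions of explicit ``minimal rotations'' that align the tangent plane $T_k(x)$ with $T_{k+1}(x)$. Everything rests on one quantitative input from \cite{DT12}, obtained by analysing $D\sigma_k$ via the coherence conditions \eqref{DT-co1}--\eqref{DT-co3} together with $\sum_j\theta_{j,k}\equiv 1$, $|\nabla\theta_{j,k}|\lesssim r_k^{-1}$, and $|\pi_{j,k}(y)-y|\lesssim\epsilon r_k$ on $\Sigma_k$: namely that each $\Sigma_k=f_k(\Sigma_0)$ is a $C^1$ $d$-submanifold, the Gauss map $x\mapsto T_k(x)$ is $C^1$ from $\Sigma_0$ into the Grassmannian, and, writing $V_0$ for the direction space of the $d$-plane $\Sigma_0$, one has $\ang(T_k(x),V_0)\le C\epsilon$ and $\ang(T_k(x),T_{k+1}(x))\le C\epsilon$ for all $x$ and all $k$. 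Since $\Sigma_0$ is a $d$-plane, $T_0(x)\equiv V_0$, so $R_0(x)=I$ satisfies $R_0(x)(T_0(x))=T_0(x)$ for free. In the paper's setting $\Sigma_0$ is a hyperplane, so I will align planes through their unit normals; for higher codimension the same scheme works using the minimal rotation between two close $d$-planes (the identity off the span of their principal-angle $2$-planes).

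Concretely, fix a unit normal $\nu_0$ of $V_0$; by the tilting bound and the $C^1$ regularity of the Gauss map, $T_k(x)$ has a unique unit normal $\nu_k(x)$ with $\langle\nu_k(x),\nu_0\rangle>\tfrac12$, and $x\mapsto\nu_k(x)$ is $C^1$. For unit vectors $a,b$ with $\langle a,b\rangle>0$ put
\[
\operatorname{Rot}(a,b)\,v \;=\; v-\frac{\langle a+b,v\rangle}{1+\langle a,b\rangle}(a+b)+2\langle a,v\rangle\,b ,
\]
which is an orthogonal map with $\operatorname{Rot}(a,b)a=b$, equal to the identity on $\{a,b\}^\perp$, real-analytic in $(a,b)$ off the antipodal set, and satisfying $|\operatorname{Rot}(a,b)-\mathrm{Id}|\lesssim|a-b|$ when $\langle a,b\rangle$ is bounded below. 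Set $R_0(x)=I$ and $R_{k+1}(x)=\operatorname{Rot}(\nu_k(x),\nu_{k+1}(x))\circ R_k(x)$. An immediate induction gives $R_k(x)\nu_0=\nu_k(x)$, hence $R_k(x)(V_0)=\nu_k(x)^\perp=T_k(x)$, which is the required intertwining identity. Since $R_k(x)$ is an isometry,
\[
|R_{k+1}(x)-R_k(x)|=|(\operatorname{Rot}(\nu_k(x),\nu_{k+1}(x))-\mathrm{Id})R_k(x)|\lesssim|\nu_k(x)-\nu_{k+1}(x)|\lesssim\ang(T_k(x),T_{k+1}(x))\le C\epsilon ,
\]
which is \eqref{DT-isometry-change}. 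Finally, each $R_k$ is a fixed finite composition of the $C^1$ maps $x\mapsto\operatorname{Rot}(\nu_j(x),\nu_{j+1}(x))$ under the smooth group law of $\mathcal R$, hence $R_k:\Sigma_0\to\mathcal R$ is $C^1$.

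The one step with genuine content — and the one I would quote from \cite{DT12} rather than reprove in detail — is the quantitative input above: that the $\Sigma_k$ are $C^1$ with Gauss maps that are $C^1$ in $x$ and tilt by at most $C\epsilon$ from one generation to the next. Once that is granted, the rest is soft: the assembly of the $R_k$ from the minimal-rotation formula and the verification of the three stated properties are routine. I therefore expect essentially all the difficulty to lie in that derivative estimate for $\sigma_k$ (equivalently, the $C^1$-control of the approximating surfaces), not in the construction of the $R_k$ themselves.
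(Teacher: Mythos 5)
This proposition is a direct citation of David--Toro \cite{DT12}~Proposition~9.29; the paper does not supply its own proof, so the comparison is with the source. Your construction is essentially theirs: build $R_k$ generation by generation as a product of the ``minimal'' rotation taking $\nu_k(x)$ to $\nu_{k+1}(x)$, with the $C^1$-regularity of the $\Sigma_k$ (Lemma~\ref{DT-loc-lip}) and the per-generation angle bound (Lemma~\ref{DT-plane-dist}) as the only quantitative inputs. Your Householder-product formula for $\mathrm{Rot}(a,b)$ is correct, orthogonal, and satisfies $|\mathrm{Rot}(a,b)-I|\lesssim|a-b|$ whenever $\langle a,b\rangle$ is bounded below; the intertwining identity $R_k(x)(T_0(x))=T_k(x)$ and the step bound \eqref{DT-isometry-change} then follow as you write them.

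There is one genuine slip. You assert a uniform tilting bound $\ang(T_k(x),V_0)\le C\epsilon$ and use it to single out $\nu_k(x)$ as the unit normal of $T_k(x)$ with $\langle\nu_k(x),\nu_0\rangle>\tfrac12$. But Lemma~\ref{DT-plane-dist} only controls the \emph{one-step} angle $\ang(T_{k+1}(x),T_k(x))\le C\epsilon$; summing gives $O(k\epsilon)$ with no cancellation, and in the generality of this proposition one does \emph{not} assume the $\epsilon_k'$-summability condition (that is exactly the extra hypothesis that upgrades $g$ from bi-H\"older to bi-Lipschitz), so the cumulative tilt of $T_k(x)$ away from $V_0$ can grow without bound as $k\to\infty$. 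Your characterization of $\nu_k(x)$ is then ill-posed for large $k$. The repair is small but should be stated: define the normals inductively, letting $\nu_0$ be a fixed unit normal of $V_0$ and, for each $k\ge 0$, letting $\nu_{k+1}(x)$ be the unique unit normal of $T_{k+1}(x)$ with $\langle\nu_{k+1}(x),\nu_k(x)\rangle>\tfrac12$; this is well-defined and $C^1$, and satisfies $|\nu_{k+1}(x)-\nu_k(x)|\lesssim\ang(T_{k+1}(x),T_k(x))\le C\epsilon$ because it only invokes the one-step bound. With that replacement your argument is unaffected, since $\mathrm{Rot}(\nu_k,\nu_{k+1})$ only ever compares consecutive normals.
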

In addition, we record bounds on the distance between consecutive generations of tangent planes and between planes at different locations. This will use the following distance between $d$-dimensional vector subspaces $V,V'$ of $\R^{d+1}$:
\begin{equation}
    D(V,V') = \max\left\{ \sup_{v\in V\cap B(0,1)}\dist(v,V'),\ \sup_{v'\in V'\cap B(0,1)}\dist(v',V) \right\}.
\end{equation}
This distance also extends to affine $d$-plane $P,P'$ by applying it to translations of $P$ and $P'$ that pass through the origin.
\begin{lemma}[\cite{DT12} Lemma 9.2]\label{DT-plane-dist}
For $k \geq 0$ and $x,x'\in \Sigma_0$,
such that $|x'-x| \leq 10$,
\begin{equation*}
    D(T\Sigma_{k+1}(f_{k+1}(x)),T\Sigma_k(f_k(x)))
    \leq C_1 \varepsilon,
\end{equation*}
and
\begin{equation*}
    D(T\Sigma_{k}(f_{k}(x')),T\Sigma_k(f_k(x)))
\leq C_2 \varepsilon \, r_k^{-1} |f_{k}(x')-f_{k}(x)|.
\end{equation*}
\end{lemma}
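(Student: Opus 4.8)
The plan is to reduce both inequalities to two derivative estimates on the single deformation $\sigma_k$ along $\Sigma_k$. Since $\Sigma_{k+1}=\sigma_k(\Sigma_k)$ and $f_{k+1}=\sigma_k\circ f_k$, differentiating gives
\[ T\Sigma_{k+1}(f_{k+1}(x)) = D\sigma_k(f_k(x))\big[T\Sigma_k(f_k(x))\big], \]
so everything follows from (i) the deviation of $D\sigma_k$ from a fixed projection near $\Sigma_k$, and (ii) the oscillation of $D\sigma_k$ as the base point moves along $\Sigma_k$. As a base case, $\Sigma_0$ is a $d$-plane, so $T\Sigma_0(f_0(x))=T\Sigma_0$ does not depend on $x$.

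For the local estimates, recall that on $V_k^8$ one has $\sigma_k(y)=\sum_{j\in J_k}\theta_{j,k}(y)\,\pi_{j,k}(y)$, hence
\[ D\sigma_k(y) = \sum_{j\in J_k}\nabla\theta_{j,k}(y)\otimes\pi_{j,k}(y) + \sum_{j\in J_k}\theta_{j,k}(y)\,\Pi_{j,k}, \]
where $\Pi_{j,k}$ is the (constant) linear part of $\pi_{j,k}$, i.e.\ orthogonal projection onto the direction of $P_{j,k}$. For $y\in\Sigma_k$ at most $C(d)$ indices $j$ are active, and fixing one such $j_0$ the compatibility condition \eqref{DT-co1} gives $|\pi_{j,k}(y)-\pi_{j_0,k}(y)|\lesssim\varepsilon r_k$ and $\|\Pi_{j,k}-\Pi_{j_0,k}\|\lesssim\varepsilon$ for every active $j$. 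Using $\sum_j\theta_{j,k}\equiv 1$ (so $\sum_j\nabla\theta_{j,k}\equiv 0$) to subtract the $j_0$-term, together with $|\nabla^m\theta_{j,k}|\lesssim r_k^{-m}$, one gets
\[ \|D\sigma_k(y)-\Pi_{j_0,k}\|\lesssim\varepsilon \qquad\text{and}\qquad \|D\sigma_k(y)-D\sigma_k(y')\|\lesssim\varepsilon\,r_k^{-1}|y-y'| \]
for $y,y'\in\Sigma_k$ with $|y-y'|\lesssim r_k$; for larger separations one chains through $\sim r_k^{-1}|y-y'|$ intermediate points on $\Sigma_k$, or simply uses $D\le 1$ once $\varepsilon r_k^{-1}|y-y'|\gtrsim 1$. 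By the construction in \cite{DT12}, $\Sigma_k$ is at scale $r_k$ a $C\varepsilon$-Lipschitz graph over $P_{j_0,k}$ (which can be carried along the same induction using \eqref{DT-co3}), so $T\Sigma_k(y)$ lies within $C\varepsilon$ of the range of $\Pi_{j_0,k}$; combined with the first estimate, $D\sigma_k(y)$ then acts on $T\Sigma_k(y)$ essentially as the identity, which already yields the first inequality with $C_1\varepsilon$. (Alternatively, the first inequality is immediate from Proposition \ref{DT-isometries}: writing $T\Sigma_k(f_k(x))=R_k(x)\,T\Sigma_0$, it follows from $\|R_{k+1}(x)-R_k(x)\|\le C\varepsilon$.)

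For the second inequality we induct on $k$. Write $z=f_k(x)$, $z'=f_k(x')$ and $\rho_k:=r_k^{-1}|z-z'|$. Splitting $D\big(D\sigma_k(z)[T\Sigma_k(z)],\,D\sigma_k(z')[T\Sigma_k(z')]\big)$ through the intermediate subspace $D\sigma_k(z)[T\Sigma_k(z')]$, using that $D\sigma_k(z)$ is $(1+C\varepsilon)$-bi-Lipschitz on the (thickened) range of $\Pi_{j_0,k}$ so that $D(D\sigma_k(z)W,D\sigma_k(z)W')\le(1+C\varepsilon)D(W,W')$ for subspaces $W,W'$ in that region, and applying the oscillation estimate, one obtains
\[ D\big(T\Sigma_{k+1}(f_{k+1}(x')),T\Sigma_{k+1}(f_{k+1}(x))\big) \le (1+C\varepsilon)\,D\big(T\Sigma_k(f_k(x')),T\Sigma_k(f_k(x))\big) + C\varepsilon\,\rho_k. \]
Because $\sigma_k$ is $(1+O(\varepsilon))$-bi-Lipschitz on $\Sigma_k$ at scale $r_k$ and $r_{k+1}=r_k/10$, we have $\rho_{k+1}=10(1+O(\varepsilon))\rho_k$; iterating from $D(\cdot)=0$ at $k=0$ gives $D(T\Sigma_k(f_k(x')),T\Sigma_k(f_k(x)))\lesssim\varepsilon\sum_{m<k}(1+C\varepsilon)^{k-1-m}\rho_m$, and since $\rho_m$ decays geometrically at rate $\tfrac1{10}$ as $m$ decreases from $k$, the sum is controlled by its top term $\sim\rho_{k-1}$, yielding $D(T\Sigma_k(f_k(x')),T\Sigma_k(f_k(x)))\lesssim\varepsilon\,\rho_k=C_2\varepsilon\,r_k^{-1}|f_k(x')-f_k(x)|$.

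The one genuinely delicate point is exactly this last bookkeeping: the factors $(1+C\varepsilon)^k$ accumulated through the chain rule are not harmless — they reflect that $g=\lim f_k$ is in general only bi-H\"older, not bi-Lipschitz — and one must verify that they are absorbed by the geometric growth $\rho_k\sim 10^k$ of the target, while simultaneously checking that the various local plane comparisons glue consistently across scales via \eqref{DT-co1} and \eqref{DT-co3}. This is precisely the part where David and Toro's argument is most careful.
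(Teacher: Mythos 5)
This lemma is quoted from \cite{DT12} without proof in the present paper, so there is no in-paper argument to compare against; I assess your proposal against the structure of the cited source, from which all the ingredients you use (Lemmas~\ref{DT-loc-lip}, \ref{DT-Dsig-1}, \ref{Ghi-Dsquare}) are drawn.

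Your primary route is substantively correct, but two points deserve attention. For the first inequality, re-deriving $\|D\sigma_k(y)-\Pi_{j_0,k}\|\lesssim\epsilon$ together with the graph structure of $\Sigma_k$ over $P_{j_0,k}$ amounts to re-proving the content of Lemma~\ref{DT-Dsig-1}: citing that lemma directly ($|D\sigma_k(y)\cdot v - v|\le C\epsilon|v|$ for tangent $v$, plus bi-Lipschitzness so nothing degenerates) gets you there in one line. More importantly, your parenthetical alternative via Proposition~\ref{DT-isometries} is \emph{circular} in the context of \cite{DT12}: the isometries $R_k$ and the bound $|R_{k+1}(x)-R_k(x)|\le C\epsilon$ are \emph{constructed from} the closeness of successive tangent planes, i.e.\ from precisely the first inequality of Lemma~9.2 (Proposition~9.29 comes after it in \cite{DT12} and relies on it). That aside should be dropped if the goal is to reproduce a proof of the original lemma.

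For the second inequality, the telescoping recursion
\[
D\big(T_{k+1}(x'),T_{k+1}(x)\big)\le(1+C\epsilon)\,D\big(T_k(x'),T_k(x)\big)+C\epsilon\,\rho_k
\]
and the geometric absorption of $(1+C\epsilon)^{k-m}$ against $10^{-(k-m)}$ is a correct scheme, and your observation that the sum is dominated by its top term is the right bookkeeping. But the gaps you yourself flag are genuine, not cosmetic. The oscillation estimate $\|D\sigma_k(z)-D\sigma_k(z')\|\lesssim\epsilon r_k^{-1}|z-z'|$ is established for $z,z'\in\Sigma_k\cap V_k^8$ with $|z-z'|\lesssim r_k$; extending it by chaining requires verifying that the chain stays where the explicit formula for $\sigma_k$, the partition of unity, and the compatibility conditions \eqref{DT-co1}, \eqref{DT-co3} actually apply, and that the curve of Lemma~\ref{l:short-curve} exists at the relevant scale. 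In \cite{DT12}, the inductive burden is carried in a different place: one first proves a quantitative local $C^{1,1}$ graph description of $\Sigma_k$ at scale $r_k$ (cf.\ Lemma~\ref{DT-loc-lip} together with second-derivative bounds of the type in Lemma~\ref{Ghi-Dsquare}), from which the single-scale Lipschitz dependence of $T\Sigma_k(y)$ on $y$ is read off directly, with no $(1+C\epsilon)^k$ prefactor to absorb on the plane distances themselves. Your version of the induction is equivalent in spirit, but it makes you re-absorb that prefactor explicitly, and the chaining/transition-region issues are exactly where informality can hide an error; they are handled carefully in the source and would need to be spelled out here too.
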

\begin{remark}\label{rem:tangent-plane-angles}
    The condition $|x-x_0| \leq 10$ above is technically used to control the angle between $T\Sigma_0(x)$ and $T\Sigma_0(x')$. When $\Sigma_0$ is a plane, this angle is $0$ so that it is easy to see the statement holds for any $x,x'\in\Sigma_0$ without extra work.
\end{remark}
Now, following Chapter 10 in \cite{DT12}, we define a collection $\rho_k$ of positive, smooth, radial functions such that $\sum_{k\geq 0}\rho_k(y) = 1$ for $y\in\R^n\setminus\{0\}$ and $\rho_k(y) = 0$ unless $r_k < |y| < 20r_k$. Because $[r_k,20r_k]\cap[r_{k-2},20r_{k-2}] = [r_k,20r_k]\cap[100r_{k},2000r_{k}] = \varnothing$, we always have at most two values of $k$ such that $\rho_k(y)\not=0$ for any fixed $y$. In order to single out specific values of $k$, we define functions $l,n:\R^+\rightarrow\N$ by 
\begin{align}
    l(y) &= \min\{k\in\N : \rho_k(y) > 0\},\label{e:def-l}\\
    n(y) &= \max\{k\in\N : \rho_k(y) > 0\} = l(y) + 1.\label{e:def-g}
\end{align}
More concretely, we have
\begin{equation}\label{e:g-char}
    n(y) = n \iff 20r_{n+1} = 2r_n < y \leq 20r_n
\end{equation}
because then $\rho_{n+1}(y) = 0$ while $\rho_n(y) > 0$. Roughly speaking, $n(y)$ gives the index such that $f_{n(y)}$ is most relevant for the behavior of $g$ on points roughly distance $|y|$ from $\Sigma_0$. We will now assume $\Sigma_0 = \R^d$ and define
\begin{equation*}
    g(z) = \sum_{k\geq 0}\rho_k(y)\left\{ f_k(x) + R_k(x)\cdot y \right\}\ \hbox{ for } z=(x,y)
\end{equation*}
when $y\not=0$ and
\begin{equation*}
    g(z) = f(z)
\end{equation*}
when $y = 0$. We will commonly use the notation $z = (x,y)$ as understood above when discussing points in the domain of $g$.
\begin{remark}\label{rem:f-bilip}
    The fact that $g$ is bi-Lipschitz when \eqref{e:eps-sq} is satisfied is a consequence of the fact that $f$ (and $f_n$ for all $n$) is similarly bi-Lipschitz when $\eqref{e:eps-sq}$ holds. This is proven in Proposition 8.34 in \cite{DT12}.
\end{remark}

\subsection{Coronizations for Reifenberg flat and uniformly rectifiable sets}

The boundary measure bounds for our Lipschitz decompositions come from Carleson packing conditions for well-chosen \textit{coronizations} of a \textit{Christ-David lattice for $\partial\Omega$}. A Christ-David lattice is best thought of as a family of intrinsic dyadic cubes for a doubling metric space. These were originally devised by Christ in \cite{Chr90}, but the formulation given here is due to Hytonen and Martikainen from \cite{HM12}. 

\begin{theorem}[Christ-David cubes]
Let $X$ be a doubling metric space. Let $X_{k}$ be a nested sequence of maximal $\rho^{k}$-nets for $X$ where $\rho<1/1000$ and let $c_{0}=1/500$. For each $k\in\bZ$ there is a collection $\mathscr{D}_{k}$ of ``cubes,'' which are Borel subsets of $X$ such that the following hold.
\begin{enumerate}[label=(\roman*)]
\item $X=\bigcup_{Q\in \mathscr{D}_{k}}Q$,
\item If $Q,Q'\in \mathscr{D}=\bigcup \mathscr{D}_{k}$ and $Q\cap Q'\neq\emptyset$, then $Q\subseteq Q'$ or $Q'\subseteq Q$,
\item For $Q\in \mathscr{D}$, let $k(Q)$ be the unique integer so that $Q\in \mathscr{D}_{k}$ and set $\ell(Q)=5\rho^{k(Q)}$. There is $x_{Q}\in X_{k}$ so that
\begin{equation*}
B_X(x_{Q},c_{0}\ell(Q) )\subseteq Q\subseteq B_X(x_{Q},\ell(Q))
\end{equation*}
and
\[ X_{k}=\{x_{Q}: Q\in \mathscr{D}_{k}\}.\]
\end{enumerate}
If in addition we assume $X\subseteq\R^{d+1}$ and that $X$ is Ahlfors $d$-regular, then these cubes also satisfy
\begin{enumerate}[label=(\roman*)]
    \setcounter{enumi}{3}
    \item $|Q| \simeq_d (\diam Q)^d \simeq_d \ell(Q)^d$.
\end{enumerate}
\label{t:Christ}
\end{theorem}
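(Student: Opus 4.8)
\medskip
\noindent\textbf{Proof proposal.} The plan is to carry out Christ's construction \cite{Chr90}, in the form given by Hyt\"onen--Martikainen \cite{HM12}, using the prescribed nested nets $X_k$ as the input. First I would fix a \emph{parent map}: for each $k$ and each $z\in X_{k+1}$, use that $X_k$ is a maximal $\rho^k$-separated subset of $X$ — hence a $\rho^k$-net — to choose a point of $X_k$ within distance $\rho^k$ of $z$, and, using nestedness, insist this point be $z$ itself whenever $z\in X_k$. Iterating the parent map, every $z\in X_\ell$ acquires a well-defined ancestor in $X_k$ for each $k\le\ell$; write $D_\ell(w)\subseteq X_\ell$ for the set of scale-$\ell$ points whose scale-$k$ ancestor is $w\in X_k$, so that $\{D_\ell(w)\}$ records a forest structure on $\bigsqcup_k X_k$. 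Then I would set up $\mathscr D_k$ with exactly one cube per point of $X_k$ (so $X_k=\{x_Q:Q\in\mathscr D_k\}$ and $\zeta_Q:=x_Q$ is automatic): for $w=x_Q\in X_k$ put $k(Q)=k$, $\ell(Q)=5\rho^{k}$, and associate to $Q$ a pre-cube built from small balls about all descendants of $w$, e.g.
\[
\widehat Q \;=\; \bigcup_{\ell\ge k}\ \bigcup_{z\in D_\ell(w)} B\bigl(z,\ 5c_0\rho^{\ell}\bigr)\cap X .
\]

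\medskip
Next I would record the structural estimates. For the outer bound, a descendant $z\in D_\ell(w)$ satisfies $d(z,w)\le\sum_{j=k}^{\ell-1}\rho^{j}<\rho^k/(1-\rho)$ by telescoping along its ancestor chain, so every point of $\widehat Q$ lies within $\rho^k/(1-\rho)+5c_0\rho^k<5\rho^k=\ell(Q)$ of $x_Q$ once $\rho<1/1000$ and $c_0=1/500$; thus $\widehat Q\subseteq B(x_Q,\ell(Q))$. For the inner bound, the $\ell=k$ term of the union is exactly $B(x_Q,5c_0\rho^k)\cap X=B(x_Q,c_0\ell(Q))\cap X$, so $B(x_Q,c_0\ell(Q))\cap X\subseteq\widehat Q$; moreover the inner balls $\{B(x_Q,c_0\ell(Q))\}_{Q\in\mathscr D_k}$ are pairwise disjoint since distinct centers in $X_k$ are $\rho^k$-separated while $c_0\ell(Q)=\rho^k/100<\rho^k/2$. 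The genuine content is then the passage from the pre-cubes, which can overlap in the ``annular fringes'' $B(x_Q,\ell(Q))\setminus B(x_Q,c_0\ell(Q))$, to honest Borel sets $Q$ with $\widehat Q\subseteq Q\subseteq\overline{\widehat Q}$ that partition $X$ at each fixed scale (property (i)) and nest exactly across scales (property (ii)): this forces one to choose the parent maps and pre-cubes compatibly in the scale parameter, distributing fringe points in a way that respects the descendant order — after which (i) follows from every point of $X$ being $\rho^k$-close to $X_k$ and (ii) from transitivity of that order, while the ball bounds of (iii) survive because they only constrain the undisputed inner balls. I expect this simultaneous bookkeeping — getting the partition, the exact nesting, and the ball bounds to hold together — to be the main obstacle, as it is precisely the technical heart of Christ's argument.

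\medskip
Finally, for the Ahlfors-regular addendum, assume $X\subseteq\R^{d+1}$ is $d$-Ahlfors regular with constant $C_0$ and restrict attention to cubes $Q$ with $\ell(Q)<\diam X$ (coarser cubes equal $X$, where there is nothing to prove). From (iii) we have $X\cap B(x_Q,c_0\ell(Q))\subseteq Q\subseteq B(x_Q,\ell(Q))$, so
\[
C_0^{-1}\bigl(c_0\ell(Q)\bigr)^{d}\le\scH^d\bigl(X\cap B(x_Q,c_0\ell(Q))\bigr)\le|Q|\le\scH^d\bigl(X\cap B(x_Q,\ell(Q))\bigr)\le C_0\,\ell(Q)^{d},
\]
giving $|Q|\simeq_d\ell(Q)^d$. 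Also $\diam Q\le2\ell(Q)$, while $\diam Q\ge\diam\bigl(X\cap B(x_Q,c_0\ell(Q))\bigr)\ge C_0^{-2/d}c_0\ell(Q)$: indeed, if $X\cap B(x,r)$ with $r<\diam X$ had diameter $s<r$, then $X\cap B(x,r)=X\cap B(x,s)$ (every point of $X\cap B(x,r)$ lies within $\operatorname{diam}(X\cap B(x,r))$ of $x\in X$), whence $C_0^{-1}r^d\le\scH^d(X\cap B(x,r))=\scH^d(X\cap B(x,s))\le C_0 s^d$, forcing $s\ge C_0^{-2/d}r$ since $d>0$ — and this lower bound on $\operatorname{diam}(X\cap B(x,r))$ holds trivially when $s\ge r$. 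Combining, $|Q|\simeq_d(\diam Q)^d\simeq_d\ell(Q)^d$, which is (iv).
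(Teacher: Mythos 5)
The paper does not prove Theorem~\ref{t:Christ}; it simply states it as a known result, attributing it to Christ \cite{Chr90} with the precise formulation taken from Hyt\"onen--Martikainen \cite{HM12}. So there is no internal proof in the paper to compare your proposal against — the relevant comparison is with those references.

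Your outline follows the standard Christ/David construction correctly in its broad strokes: a nested family of nets, parent maps, pre-cubes built from balls around descendants, and a final combinatorial step that cures the overlaps in the annular fringes. The outer-ball estimate (telescoping the ancestor chain to get $d(z,w)<\rho^k/(1-\rho)$ and checking $\rho^k/(1-\rho)+5c_0\rho^k<5\rho^k$ under $\rho<1/1000$, $c_0=1/500$), the inner-ball containment, and the disjointness of the inner balls are all correctly worked out. The derivation of (iv) from (iii) and Ahlfors regularity is also sound, including the argument that $\diam(X\cap B(x,r))\gtrsim_{C_0,d} r$ for $r<\diam X$ (though note that the implicit constants here really depend on the Ahlfors regularity constant as well as $d$, a common abuse in the theorem's notation; your claim that ``coarser cubes equal $X$'' is also not literally true for scales with $\rho^k<\diam X\leq 5\rho^k$, but this edge case is harmless).

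The genuine gap is exactly the one you flag: properties (i) and (ii) — that at each scale the cubes genuinely partition $X$ and that the nesting is exact across scales — are not actually established in your sketch. You observe that the fringe points must be ``distributed in a way that respects the descendant order,'' but this is the entire content of Christ's argument, not a detail: one needs a coherent tie-breaking rule (Christ uses a fixed total order on each net, applied in a scale-compatible way) so that the resulting assignments are simultaneously consistent with the tree structure at all scales, and one must verify that the ball bounds of (iii) survive the reassignment of fringe points. Without carrying this out, the proposal is a correct roadmap rather than a proof; since you explicitly acknowledge this as the ``technical heart'' you have correctly diagnosed where the work lies, but it remains to be done.
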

For any $Q\in\scD$, we will use the notation $Q^{(1)}$ to refer to the parent of $Q$. For any $\lambda \geq 1$, we also define
\begin{equation*}
    \lambda Q = \{x\in X : \dist(x,Q) \leq (\lambda-1)\diam(Q)\}.
\end{equation*}
We will now refer to any family of Christ-David cubes for $\partial\Omega$ by $\mathscr{D}$ and define
\begin{equation*}
    B_Q = B(x_Q, \ell(Q)).
\end{equation*}

A coronization consists of a partition of $\scD$ into ``good'' cubes $\scG$ and ``bad'' cubes $\scB$ and a further partition of $\scG$ into disjoint \textit{stopping time regions} $\scF = \{S_i\}_i$.

\begin{definition}[stopping time regions]\label{def:stopping-time-regions}
    We call $S\subseteq \scD$ a \textit{stopping time region} if it is \textit{coherent}. That is,
    \begin{enumerate}
        \item[(i)] There exists a \textit{top cube} $Q(S)\in S$ such that $R\subseteq Q(S)$ for all $R\in S$,
        \item[(ii)] If $Q\in S$ and $R\in\scD$ satisfies $Q\subseteq R \subseteq Q(S)$, then $R\in S$,
        \item[(iii)] If $Q\in S$, then either every child of $Q$ is in $S$, or none of them are.
    \end{enumerate}
\end{definition}
\begin{remark}\label{rem:Whitney-stop-region}
    We note that Definition \ref{def:stopping-time-regions} makes sense with any partially ordered collection of subsets of $\R^{d+1}$ in place of $\scD$ as long as the notion of \textit{child} is well-defined. For instance, we will use the term stopping time region to refer to such collections of Whitney boxes with the partial order defined in Definition \ref{def:whitney-cubes}.
\end{remark}

\begin{definition}[Coronizations (cf. \cite{DS93} Definition 3.13)]
    We say that a triple $(\scG, \scB, \scF)$ is a \textit{coronization} of $\scD$ if
    \begin{enumerate}[label=(\roman*)]
        \item $\scF$ is a collection of disjoint stopping time regions as in Definition \ref{def:stopping-time-regions} with $\scG = \bigcup_{S\in\scF}S$,
        \item $\scG\cup\scB = \scD$ and $\scG\cap\scB = \varnothing$,
        \item $\scB$ and $\{Q(S)\}_{S\in\scF}$ satisfy Carleson packing conditions. That is, there exist constants $C_1,C_2 > 0$ such that for any $Q\in \scD$
        $$\sum_{\substack{R\in\scB \\ R\subseteq Q}} \ell(R)^d \leq C_1\sH^d(Q),\ \text{ and } \sum_{\substack{S\in\scF \\ Q(S)\subseteq Q}}\ell(Q(S))^d \leq C_2\sH^d(Q). $$
    \end{enumerate}
\end{definition}

The stopping time regions in coronizations collect scales and locations into good, ``connected'' packages on which $\partial\Omega$ behaves well. David and Semmes used the concept of a coronization to produce a definition of uniform rectifiability involving \textit{corona decompositions}.

\begin{definition}[Corona decomposition (cf. \cite{DS93} Definition 3.19)]
    We say that a set $E\subseteq\R^n$ admits a $d$-dimensional \textit{corona decomposition} if for any constants $\eta,\theta > 0$, there exists a coronization $(\scG,\scB,\scF)$ of a $d$-dimensional lattice $\scD$ for $E$ such that for each $S\in\scF$, there exists a $d$-dimensional Lipschitz graph $\Gamma(S)$ with Lipschitz constant less than $\eta$ such that for each $x\in 2Q$ and $Q\in S$
    \begin{equation}\label{e:corona-decomp}
        \dist(x,\Gamma(S)) \leq \theta\diam(Q).
    \end{equation}
\end{definition}

If one has an appropriate coronization, then one can use Reifenberg parameterizations to produce the approximating Lipschitz graphs in the definition of the corona decomposition directly. We call these specific good coronizations \textit{graph coronizations}

\begin{definition}[graph coronizations]\label{def:graph-coronization}
    For constants $M,\epsilon,\delta > 0$, we say that $(\scG,\scB,\scF)$ is a $d$-dimensional $(M,\epsilon,\delta)-$\textit{graph coronization} if it is a coronization such that $\scB \supseteq \BWGL(M,\epsilon)$ and for each $S\in\scF$ and $Q\in S$, there exists a $d$-plane $P_Q\ni x_Q$ such that 
    \begin{enumerate}[label=(\roman*)]
        \item $b\beta_E(MB_Q,P_Q) \leq 2b\beta_E(MB_Q) \leq 2\epsilon$, \label{i:gc-plane}
        \item $\sum_{\substack{x\in Q\in S}} \beta^{d,1}_E(MB_Q)^2 \leq \epsilon^2$ for any $x\in Q(S)$, \label{i:gc-beta}
        \item $\ang(P_Q, P_{Q(S)}) \leq \delta$.\label{i:gc-angle}
    \end{enumerate}
    
\end{definition}
Condition \ref{i:gc-beta} above uses the \textit{content beta number} introduced by Azzam and Schul in \cite{AS18}. This is closely related to the more standard \textit{$L^p$ beta numbers} used by David and Semmes in characterizing uniform rectifiability via the \textit{strong geometric lemma} (See Proposition \ref{p:ur-equiv}.).
\begin{definition}[$L^p$ beta numbers and content beta numbers]\label{def:betas}
    Let $B = B(x,r) \subseteq \R^{d+1}$ and let $P$ be a $d$-plane. We define
    \begin{equation*}
        \beta_{E,p}^d(B,P) = \left(\frac{1}{r^d}\int_{B\cap E}\left( \frac{\dist(y,P)}{r} \right)^p\ d\sH^d(y) \right)^{1/p},
    \end{equation*}
    and we define the \textit{$L^p$ beta number} as
    \begin{equation*}
        \beta_{E,p}^d(B) = \inf\{\beta_{E,p}^d(B,P):\text{$P$ is a $d$-dimensional plane in $\R^{d+1}$}\}.
    \end{equation*}
    Similarly, we define
    \begin{equation*}
        \beta_E^{d,p}(B,P) = \left( \frac{1}{r_{B}^{d}}\int_{0}^{\infty}\sH^{d}_{\infty}\{x\in B\cap E: \dist(x,P)>tr_{B}\}t^{p-1}dt \right)^{1/p},
    \end{equation*}
    and we define the \textit{$L^p$ content beta number} as
    \begin{equation*}
        \beta_E^{d,p}(B) = \inf\{\beta_E^{d,p}(B,P) : P \text{ is a $d$-dimensional plane in $\R^{d+1}$}\}.
    \end{equation*}
    If $E$ is Ahlfors $d$-regular, then these two beta numbers are comparable with constants depending on $d$ and the regularity constant. They also satisfy a form of monotonicity property. For content beta numbers, Lemma 2.14 in \cite{AS18} says that, for any balls $B',B\subseteq\R^{d+1}$ centered on $E$ satisfying $B'\subseteq B$, we have
    \begin{equation}\label{e:beta-mono}
        \beta_E^{d,p}(B') \leq \left(\frac{r_B}{r_{B'}}\right)^{1+\frac{d}{p}}\beta_E^{d,p}(B).
    \end{equation}
\end{definition}

\begin{proposition}[cf. \cite{DS93} Part I, Theorem 1.57 and Theorem 2.4; Part IV Proposition 2.1]\label{p:ur-equiv}
Let $E\subseteq \R^{d+1}$ be Ahlfors $d$-regular for $d\geq 1$. The following are equivalent:
\begin{enumerate}[label=(\roman*)]
    \item $E$ is uniformly $d$-rectifiable, \label{i:ur1}
    \item $E$ satisfies the strong geometric lemma: For any $Q\in \mathscr{D}$, $M > 1$, and $1 \leq p < \frac{2d}{d-2}$, \label{i:ur2}
    \begin{equation*}
        \sum_{R\subseteq Q}\beta_{E,p}^d(MB_R)^2\ell(R)^d \lesssim_{M,d} \ell(Q)^d,
    \end{equation*}
    \item $E$ admits a corona decomposition,
    \item $E$ admits an $(M,\epsilon,\delta)$-graph coronization for any $M > 1$ and $\epsilon,\delta > 0$.
\end{enumerate}
\end{proposition}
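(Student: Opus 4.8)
The plan is as follows. The equivalences (i)$\Leftrightarrow$(ii)$\Leftrightarrow$(iii) are the classical theorems of David and Semmes, so I would simply cite \cite{DS93} (Part I, Theorems 1.57 and 2.4, and Part IV, Proposition 2.1) and spend the actual work on inserting the graph coronization (iv) into the chain by proving (iii)$\Rightarrow$(iv)$\Rightarrow$(iii); along the way I feel free to use (i) and (ii), since they are already known to be equivalent to (iii).

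\emph{(iii)$\Rightarrow$(iv).} Fix target parameters $M,\epsilon,\delta>0$. Begin with a corona decomposition $(\scG_0,\scB_0,\scF_0)$ for auxiliary parameters $\eta,\theta$ chosen small relative to $M,\epsilon,\delta$, so each $S_0\in\scF_0$ carries an $\eta$-Lipschitz graph $\Gamma(S_0)$ that $\theta$-approximates $E$ near all of its cubes. Subdivide each $S_0$ by a stopping-time rule: descending from the current top $Q(S)$, stop at $Q$ (making $Q$ the top of a new region) as soon as either (a) $b\beta_E(MB_Q)>\epsilon$, or (b) the running sum $\sum_{Q\subseteq R\subseteq Q(S)}\beta^{d,1}_E(MB_R)^2$ exceeds a fixed multiple of $\epsilon^2$. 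For surviving $Q$, choose $P_Q\ni x_Q$ nearly attaining the infimum in $b\beta_E(MB_Q)$. Then conditions \ref{i:gc-plane} and \ref{i:gc-beta} of Definition \ref{def:graph-coronization} hold by the choice of $P_Q$ and by rules (a), (b). Crucially, condition \ref{i:gc-angle} holds \emph{without any separate plane-tilt rule}: throughout $S$ the set $E$ lies $\theta$-close to the fixed $\eta$-Lipschitz graph $\Gamma(S_0)$, so Ahlfors regularity forces every near-optimal plane of a cube of $S$ --- in particular $P_Q$ and $P_{Q(S)}$ --- to make angle $\lesssim\eta+\theta\le\delta$ with the base plane of $\Gamma(S_0)$. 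Finally, Carleson packing: $\scB_0$ and $\{Q(S_0)\}$ pack by the corona decomposition, the cubes added by (a) form $\BWGL(M,\epsilon)$ which packs by \eqref{e:BWGL-Carleson} (valid since $E$ is UR), and the new tops produced by (b) pack by the standard fact that for a Carleson function $f\ge0$ the maximal cubes where a running sum first exceeds a threshold $t$ pack with constant $\lesssim t^{-1}$ times the Carleson constant --- applied to $f(R)=\beta^{d,1}_E(MB_R)^2$, Carleson by the strong geometric lemma (ii) and the comparisons $\beta^{d,1}_E(MB_R)\lesssim\beta^{d,2}_E(MB_R)\simeq\beta^d_{E,2}(MB_R)$ (the first a short estimate for Ahlfors regular $E$, the second from Definition \ref{def:betas}).

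\emph{(iv)$\Rightarrow$(iii).} Given an $(M,\epsilon,\delta)$-graph coronization with $\epsilon,\delta$ small and $S\in\scF$, feed the centers $x_Q$ ($Q\in S$, grouped into $\sim\ell(Q)$-scale nets after the harmless rescaling between the Christ--David scale and David--Toro's $10^{-k}$), the planes $P_Q$, and $\Sigma_0:=P_{Q(S)}$ into Lemma \ref{l:CCBP-AS}; its hypothesis $\epsilon'_k\lesssim\epsilon$ follows from \ref{i:gc-plane} and \ref{i:gc-angle}, since the $\epsilon'_k$ numbers control exactly the bilateral flatness $b\beta_E(MB_Q)$ and the angles between neighboring and consecutive planes, all $\lesssim\epsilon+\delta$. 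Theorem \ref{DT-thm} then produces a bijection $g_S$ whose image tracks $E$ to within $O(\epsilon\,\ell(Q))$ at the scale of each $Q\in S$, and condition \ref{i:gc-beta} supplies precisely the square-summability $\sum_k\epsilon'_k(f_k(z))^2\lesssim\epsilon^2$ needed to make $g_S$ bi-Lipschitz with constant $C(d,\epsilon^2)\to1$ as $\epsilon\to0$. Put $\Gamma(S)=g_S(P_{Q(S)})$: then \eqref{e:corona-decomp} follows from the $O(\epsilon\,\ell(Q))$-tracking once $\epsilon\ll\theta$, and it remains only to check $\Gamma(S)$ is an $\eta$-Lipschitz graph over $P_{Q(S)}$ with $\eta=o(1)$. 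For this, by Proposition \ref{DT-isometries} the derivative $Dg_S$ carries $P_{Q(S)}$ onto planes obtained via the isometries $R_k$, and since $R_k$ essentially rotates $P_{Q(S)}$ into the generation-$k$ tangent plane, which is $C(\epsilon+\delta)$-close to some $P_Q$ hence to $P_{Q(S)}$ by \ref{i:gc-angle}, the tangent planes of $\Gamma(S)$ all stay within $C(\epsilon+\delta)$ of $P_{Q(S)}$; with $g_S$ a near-isometry this makes orthogonal projection onto $P_{Q(S)}$ bi-Lipschitz on $\Gamma(S)$, so $\Gamma(S)$ is a graph with the desired small constant. The Carleson packing conditions are part of the definition of graph coronization, so this yields a corona decomposition for arbitrary $\eta,\theta$, i.e. (iii).

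\emph{Main obstacle.} The delicate step is the end of (iv)$\Rightarrow$(iii): converting the Reifenberg bijection $g_S$ into a genuine Lipschitz graph over the \emph{single} plane $P_{Q(S)}$ with arbitrarily small constant requires simultaneously controlling the bi-Lipschitz constant of $g_S$ (driven by the $\epsilon'_k{}^2$-sum) and the total rotation of $Dg_S$ (driven by $\epsilon$ and the angle bound $\delta$) --- exactly the fine analysis of $Dg_S$ and of the rotations $R_k$ carried out in Section \ref{sec:Dg}. Everything else is either cited from \cite{DS93,DT12} or a routine stopping-time/packing argument; the one point of care on the other side is that in (iii)$\Rightarrow$(iv) one must not use a literal plane-tilt stopping rule --- its stopping cubes would not pack from an $L^2$ ($\beta^2$) Carleson estimate alone --- which is why the construction is bootstrapped off an existing corona decomposition.
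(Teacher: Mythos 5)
The paper gives no proof of Proposition \ref{p:ur-equiv}; it is stated as a ``cf.'' citation to \cite{DS93}, and the only direction the paper uses downstream (Sections \ref{sec:thmA} and \ref{sec:thmB-thmC}) is (i)$\Rightarrow$(iv). The nearest actual argument in the paper is Proposition \ref{p:RF-graph-coronization} in Appendix \ref{sec:RF-appendix}, which is the Reifenberg-flat special case of (i)$\Rightarrow$(iv). So your proposal is supplying an argument the author treats as known.

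As a proof it is sound at the level of the rest of the paper, and your route to (iv) is genuinely different from the paper's Reifenberg-flat analogue. Appendix \ref{sec:RF-appendix} stops directly on plane tilt ($\ang(P_Q,P_{Q(S)})>\delta$) as well as on the $\beta^2$-sum, and packs the tilt-stopped cubes via Hyde's Lemma \ref{l:reif-flat-angle-turn}, which is only stated for Reifenberg flat $E$. You instead refine an existing corona decomposition, so the angle condition \ref{i:gc-angle} is automatic: once a surviving $Q\in S\subseteq S_0$ has $b\beta_E(MB_Q)\le\epsilon$ (enforced by the $\BWGL(M,\epsilon)$ stop) and $E\cap MB_Q$ sits $\theta$-close to the $\eta$-Lipschitz graph $\Gamma(S_0)$, Ahlfors regularity forces the near-optimal $P_Q$ to make angle $\lesssim\eta+\theta+\epsilon$ with the base plane of $\Gamma(S_0)$, hence with $P_{Q(S)}$. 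The only stops you then need --- $\BWGL(M,\epsilon)$ and $\beta^2$-overflow --- pack from \eqref{e:BWGL-Carleson} and the strong geometric lemma, so you avoid the Reifenberg-flat-only packing lemma entirely. That is the right choice for general UR sets, and your remark that a naked plane-tilt stop would not pack from an $L^2$ Carleson bound alone is exactly the ``technical disconnect'' the paper itself flags in Section \ref{sec:problems}. Your (iv)$\Rightarrow$(iii) via a per-$S$ CCBP and Theorem \ref{DT-thm} is also the expected argument; the one step I'd want written out is that $|g_S(z)-z|\lesssim\epsilon\,\ell(Q(S))$, combined with the tangent-plane control coming from $R_k$ and \ref{i:gc-angle}, makes the orthogonal projection of $\Gamma(S)=g_S(P_{Q(S)})$ onto $P_{Q(S)}$ a \emph{global} bijection, not merely a local diffeomorphism, so that $\Gamma(S)$ is genuinely a small-constant Lipschitz graph. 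These are elisions, not gaps.
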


The main tool we will use to create Lipschitz decompositions is the graph coronization. In Appendix \ref{sec:RF-appendix}, we review the $d$-dimensional traveling salesman results of \cite{AS18} and \cite{Hyd21} which give a similar analysis for general Reifenberg flat sets. By collecting these results, we prove the following proposition:

\begin{proposition}\label{p:RF-graph-coronization}
    For any $d,n\in\N$ with $0 < d < n$, there exists $\epsilon_0(d,n), \delta(d,n) > 0$ such that if $\epsilon \leq \epsilon_0 \ll \delta^4$ and $E\subseteq \R^n$ is $(\epsilon,d)$-Reifenberg flat, then $E$ admits an $(M,\epsilon,\delta)$-graph coronization for any $M > 0$.
\end{proposition}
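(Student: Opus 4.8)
The proof is in large part a matter of assembling, in a uniform form, the $d$-dimensional traveling salesman machinery for Reifenberg flat (more precisely, lower content $d$-regular) sets developed in \cite{AS18} and \cite{Hyd21}, and feeding it the approximating planes that Reifenberg flatness supplies for free. Fix $M>0$ and a Christ--David lattice $\scD$ for $E$ as in Theorem \ref{t:Christ}. Since $E$ is $(\epsilon,d)$-Reifenberg flat and each ball $MB_Q=B(x_Q,M\ell(Q))$ is centred at a point $x_Q\in E$, we have $b\beta^d_E(MB_Q)\le\epsilon$ for every $Q\in\scD$; in particular $\BWGL(M,\epsilon)=\varnothing$, so the requirement $\scB\supseteq\BWGL(M,\epsilon)$ costs nothing and we may take $\scB=\varnothing$. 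For each $Q$ pick a $d$-plane $P_Q\ni x_Q$ with $b\beta^d_E(MB_Q,P_Q)\le 2b\beta^d_E(MB_Q)\le 2\epsilon$: a plane $P'$ (nearly) attaining the infimum in $b\beta^d_E(MB_Q)$ satisfies $\dist(x_Q,P')\le M\ell(Q)\,b\beta^d_E(MB_Q)$ since $x_Q\in E\cap MB_Q$, so translating $P'$ to pass through $x_Q$ at most doubles its bilateral beta. This is exactly condition \ref{i:gc-plane} of Definition \ref{def:graph-coronization}.

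I will use two further facts about the $P_Q$, both consequences of the CCBP compatibility inequalities \eqref{DT-co1}--\eqref{DT-co3} and the David--Toro control on successive tangent planes and on the rotations $R_k$ (Lemma \ref{DT-plane-dist}, Proposition \ref{DT-isometries}): \emph{(a)} $\ang(P_Q,P_{Q^{(1)}})\lesssim\epsilon$ for every $Q$, so the planes drift slowly between generations; and \emph{(b)} an Azzam--Schul ``second difference'' estimate, to the effect that a rotation of the $P_Q$ by a definite amount must be paid for by the content-beta-squared Carleson sum, so that in a coherent region $S$ along each of whose branches $P_Q$ rotates by at least $\delta$ (measured from the top plane) one has $\sum_{Q\in S}\beta^{d,1}_E(MB_Q)^2\ell(Q)^d\gtrsim_{\epsilon,\delta}\ell(Q(S))^d$. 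Granting these, build $\scF$ by a stopping time: declare a cube $R$ the top $Q(S)$ of a new region and adjoin cubes generation by generation, putting every child of a cube $Q\in S$ into $S$ unless either \emph{(B1)} some child $Q'$ of $Q$ would satisfy $\sum_{Q'\subseteq Q''\subseteq R,\ Q''\in S}\beta^{d,1}_E(MB_{Q''})^2>\epsilon^2$, or \emph{(B2)} some child $Q'$ of $Q$ would satisfy $\ang(P_{Q'},P_R)>\delta$, in which case $Q$ is made a minimal cube of $S$ and each of its children becomes the top of a new region. Exhausting $\scD$ from the top in the usual way yields a partition $\scF$ of $\scD=\scG$ into coherent stopping-time regions in the sense of Definition \ref{def:stopping-time-regions}, with $\scB=\varnothing$. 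Condition \ref{i:gc-beta} is immediate from \emph{(B1)}: for $x\in Q(S)$ the cubes of $S$ containing $x$ form a chain to which \emph{(B1)} was applied at every step, so $\sum_{x\in Q\in S}\beta^{d,1}_E(MB_Q)^2\le\epsilon^2$. Condition \ref{i:gc-angle} follows from \emph{(B2)} together with \emph{(a)}: since $\ang(P_{Q'},P_{Q})\lesssim\epsilon\ll\delta$ for a child $Q'$ of $Q$, the angle $\ang(\cdot,P_R)$ cannot overshoot $\delta$ in a single generation, so $\ang(P_Q,P_{Q(S)})\le\delta$ for all $Q\in S$; this is one place the smallness $\epsilon\ll\delta$ (hence $\epsilon\ll\delta^4$) is used.

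It remains to pack the tops, i.e.\ to find a constant depending only on $M,\epsilon,\delta,d,n$ with $\sum_{Q(S)\subseteq Q}\ell(Q(S))^d\lesssim\scH^d(E\cap B_Q)$ for all $Q$ (the bound for $\scB=\varnothing$ is trivial). Each non-maximal top $Q(S)$ is a child of a minimal cube of its parent region, which was stopped by \emph{(B1)} or by \emph{(B2)}. The classical stopping-time counting argument (as in the construction of corona decompositions) gives $\sum_{Q(S)\subseteq Q,\ \text{(B1)}}\ell(Q(S))^d\lesssim\epsilon^{-2}\sum_{R\subseteq Q}\beta^{d,1}_E(MB_R)^2\ell(R)^d$, while \emph{(b)} gives $\sum_{Q(S)\subseteq Q,\ \text{(B2)}}\ell(Q(S))^d\lesssim_{\epsilon,\delta}\sum_{R\subseteq Q}\beta^{d,1}_E(MB_R)^2\ell(R)^d$. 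Thus everything reduces to the Carleson estimate
$$\sum_{R\subseteq Q}\beta^{d,1}_E(MB_R)^2\ell(R)^d\lesssim_{M,d,n}\scH^d(E\cap B_Q),$$
valid for lower content $d$-regular sets — in particular for $(\epsilon,d)$-Reifenberg flat sets — which is exactly the necessity direction of the higher-dimensional traveling salesman theorem of Azzam--Schul \cite{AS18}, as refined by Hyde \cite{Hyd21}; when $E$ is not locally $\scH^d$-finite this estimate, and hence the packing condition (with $\scH^d(Q)$ on the right), hold vacuously. Combining the three displays with conditions \ref{i:gc-plane}--\ref{i:gc-angle} already established shows that $(\scG,\scB,\scF)$ is an $(M,\epsilon,\delta)$-graph coronization.

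The genuinely nontrivial ingredients are fact \emph{(b)} — quantifying how a rotation of the approximating planes forces expenditure of the content-beta-squared sum, so that the rotation-stopped tops can be charged against it — and the extraction of the two packing displays from the Azzam--Schul/Hyde construction in the uniform form used above; granting these, the stopping-time construction and the verification of \ref{i:gc-plane}--\ref{i:gc-angle} are routine. I expect the precise smallness $\epsilon\ll\delta^4$ to be dictated less by this proposition itself than by the later use of the graph coronization: one needs the Reifenberg parametrisation built over each stopping-time region $S$ — where the base flatness is $\epsilon$ and the beta-squared sums along branches are at most $\epsilon^2$ — to be $(1+C\delta)$-bi-Lipschitz, which forces $\epsilon$ to be small relative to a power of $\delta$.
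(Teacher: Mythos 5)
Your overall plan — take $\scB=\varnothing$, build $\scF$ by a two-condition stopping time (beta-squared sum and plane angle), verify \ref{i:gc-plane}--\ref{i:gc-angle}, and pack the tops — matches the paper's, and your treatment of the (B1)-stopped tops (charging them to $\sum_{R\subseteq Q}\beta^{d,1}_E(MB_R)^2\ell(R)^d$ and then invoking the lower-content-regular Carleson estimate of \cite{Hyd21}, Theorem \ref{t:hyde-lcr-beta-bound}) is exactly what the paper does for $S_\beta$.

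The genuine gap is your fact \emph{(b)}, i.e.\ the claim that a region $S$ along each branch of which $P_Q$ rotates by $\delta$ must carry content-beta-squared mass $\sum_{Q\in S}\beta^{d,1}_E(MB_Q)^2\ell(Q)^d\gtrsim_{\epsilon,\delta}\ell(Q(S))^d$. The only tool available in this direction is the estimate $\ang(P_{Q},P_{Q^{(1)}})\lesssim\beta^{d,1}_E(MB_Q)+\beta^{d,1}_E(MB_{Q^{(1)}})$ (Lemma \ref{l:beta-angle-bound}/\ref{l:ep-by-beta}); feeding it into a chain from $Q(S)$ to a minimal cube $Q_k$ gives $\delta\lesssim\sum_{j\le k}\beta_j$, and Cauchy--Schwarz then only yields $\sum_{j\le k}\beta_j^2\ell_j^d\gtrsim\delta^2\ell_k^d$ (per chain), or equivalently $\sum_{j\le k}\beta_j^2\gtrsim\delta^2/k$. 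Summing over the $\sim 2^{kd}$ chains overcounts each ancestor cube $R$ by a factor $\sim 2^{(k-j_R)d}$, so the resulting lower bound on the Carleson sum of $S$ degrades like $\delta^2/k$ and is not bounded below in terms of $\epsilon,\delta$ alone. Put differently: your own stopping rule (B1) permits the region to be as deep as $k\gtrsim(\delta/\epsilon)^2$ before (B2) fires, and for such a region the Cauchy--Schwarz estimate leaves open the possibility that the Carleson sum is $\ll\ell(Q(S))^d$; the ``second difference'' square-function estimate $\sum_Q\ang(P_Q,P_{Q^{(1)}})^2\ell(Q)^d\lesssim\sum_Q\beta^2\ell^d$ goes the wrong way for this purpose, since it bounds the angle square function from above rather than bounding the beta sum from below.

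The paper avoids this by charging $S_\delta$ against a different object entirely. It fixes the auxiliary Azzam--Schul angle parameter $\alpha$ with $\delta=100\alpha$, observes that any cube $R\in S_\delta$ must have an ancestor in $\bigcup_N\Stop(N)$ of the Azzam--Schul stopping time inside the top, and then invokes Lemma \ref{l:reif-flat-angle-turn} (Hyde's Lemma 4.4(5)): $\sum_{N\ge 0}\sum_{Q\in\Stop(N),\,Q\subseteq Q_0}\ell(Q)^d\lesssim_{d,\alpha,\epsilon}\sH^d(Q_0)$. This packing of the angle-stops is a nontrivial consequence of the Reifenberg parametrization construction (the surfaces $\Sigma_N$ are uniformly bi-H\"older and their measures are comparable), not of a $\beta$-charging argument. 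You would need to replace fact \emph{(b)} by an appeal to precisely this lemma — at which point your construction and the paper's become essentially the same — or give a genuinely new proof of the packing of $S_\delta$.
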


We will use the existence of graph coronizations as in the previous two propositions to prove Theorems \ref{t:thmB} and \ref{t:thmC}. 

We also record some important facts about using beta numbers to control the Hausdorff distance of planes. Given a set $E\subseteq\R^{d+1}$ and a Christ-David lattice $\scD$ for $E$, we define epsilon numbers adapted to the lattice $\scD$ and a collection of planes $\{P_Q\}_{Q\in\scD}$. Fix $K = \frac{10^4}{\rho}$. We define
\begin{equation*}
    \epsilon(Q) = \sup\left\{d_{KB_R}(P_U,P_R) : k(R) \in \{k(Q),k(Q) - 1\},\ k(U) = k(Q),\ x_Q\in \frac{K}{10}B_U \cap \frac{K}{10}B_R\right\}.
\end{equation*}
This is essentially a version of David and Toro's $\epsilon_k'$ numbers that is adapted to a cube structure rather than a general collection of nets. Now, let $M \geq \frac{10K}{\rho^2}$. We will use these to control $\epsilon_k'$ in terms of $\beta_E^{d,1}(MB_Q)$ in the second lemma below. First, we recall a general result that allows one to bound the Hausdorff distance between planes by beta numbers:
\begin{lemma}[\cite{AS18} Lemma 2.16]\label{l:beta-angle-bound}
Suppose $E\subseteq\R^n$ and $B$ is a ball centered on $E$ such that for all balls $B'\subseteq B$ centered on $E$, $\sH^d_\infty(E\cap B') \geq cr_{B'}^d$. Let $P$ and $P'$ be two $d$-planes. Then
\begin{equation*}
    d_{B'}(P,P') \lesssim_{d,c} \left(\frac{r_B}{r_{B'}}\right)^{d+1}\beta^{d,1}_E(B,P) + \beta^{d,1}_E(B',P'). 
\end{equation*}
\end{lemma}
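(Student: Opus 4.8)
\emph{Proof idea.} The plan is to use lower content regularity to extract $d+1$ points of $E$ lying in the small ball that are simultaneously close to both $P$ and $P'$ and in quantitative ``general position,'' and then to read off the closeness of the two planes from elementary linear algebra.

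First I reduce to the case $B=B'$. Writing $r=r_B$, $r'=r_{B'}$ and substituting $t=(r/r')s$ in the defining integral of $\beta^{d,1}_E(B',P)$, and using $B'\cap E\subseteq B\cap E$ together with monotonicity of Hausdorff content, one gets
\begin{equation*}
    \beta^{d,1}_E(B',P)=\frac{r}{(r')^{d+1}}\int_0^\infty\scH^d_\infty\{x\in B'\cap E:\dist(x,P)>sr\}\,ds\le\Big(\frac{r}{r'}\Big)^{d+1}\beta^{d,1}_E(B,P).
\end{equation*}
Hence it suffices to prove the one-scale statement: \emph{if $B$ is centered on $E$ with $\scH^d_\infty(B\cap E)\ge c\,r_B^d$, then $d_B(P,P')\lesssim_{d,c}\beta$}, where $\beta:=\beta^{d,1}_E(B,P)+\beta^{d,1}_E(B,P')$; applying this with $B$ replaced by $B'$ (whose content bound is part of the hypothesis) and inserting the displayed inequality for the $P$-term yields the lemma. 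We may also assume $\beta$ is below a constant $\epsilon_0(d,c)$ to be fixed: otherwise lower content regularity produces points of $E\cap B$ close to each plane, giving the crude bound $d_B(P,P')\lesssim_{d,c}1+\beta\lesssim_{d,c}\beta$, which already suffices.

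Set $\tau=\tfrac{4}{c}\beta$ and let $F=\{x\in B\cap E:\dist(x,P)\le\tau r_B\ \text{and}\ \dist(x,P')\le\tau r_B\}$. Since $t\mapsto\scH^d_\infty\{x\in B\cap E:\dist(x,P)>t r_B\}$ is nonincreasing with integral $r_B^d\beta^{d,1}_E(B,P)$ over $(0,\infty)$, Chebyshev's inequality bounds its value at $t=\tau$ by $\tau^{-1}r_B^d\beta^{d,1}_E(B,P)\le\tfrac{c}{4}r_B^d$, and similarly for $P'$; by subadditivity of $\scH^d_\infty$ this forces $\scH^d_\infty(B\cap E\setminus F)\le\tfrac{c}{2}r_B^d$, hence $\scH^d_\infty(F)\ge\tfrac{c}{2}r_B^d$. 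Keeping $\tau$ proportional to $\beta$ (rather than to a fractional power of $\beta$) is possible precisely because we only need to beat a \emph{fixed} fraction of the content $c\,r_B^d$; this is what makes the final bound linear in $\beta$. Next I invoke the standard extraction fact: any $F\subseteq B(x_0,R)$ with $\scH^d_\infty(F)\ge aR^d$ contains points $p_0,\dots,p_d$ with $\dist(p_i,\mathrm{aff}(p_0,\dots,p_{i-1}))\ge c'(d,a)R$ for $i=1,\dots,d$. This is proved greedily: $\mathrm{aff}(p_0,\dots,p_{i-1})$ has dimension $\le d-1$, so its $c'R$-neighborhood meets $B(x_0,R)$ in a set of content $\lesssim_d c'R^d<aR^d$ once $c'$ is small in terms of $d$ and $a$, whence $F$ escapes that neighborhood and a valid $p_i$ exists. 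Applied to our $F$ (with $R=r_B$, $a=c/2$) it produces $p_0,\dots,p_d\in E\cap B$ that are $c'(d,c)\,r_B$-nondegenerate and each within $\tau r_B$ of both $P$ and $P'$.

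To finish, fix $x\in P\cap B$ and put $q_i=\pi_P(p_i)\in P$. Because $|q_i-p_i|\le\tau r_B$ with $\tau$ small, the $q_i$ retain (with slightly worse constants) the nondegeneracy of the $p_i$ inside the $d$-plane $P$, so $x$ has a representation $x=\sum_{i=0}^d\mu_iq_i$ with $\sum_i\mu_i=1$ and $|\mu_i|\le C(d,c)$. Since $\pi_{P'}$ is affine and the $\mu_i$ sum to $1$,
\begin{equation*}
    \dist(x,P')\le\Big|x-\sum_i\mu_ip_i\Big|+\dist\Big(\sum_i\mu_ip_i,P'\Big)\le\sum_i|\mu_i|\,|q_i-p_i|+\sum_i|\mu_i|\,\dist(p_i,P')\lesssim_{d,c}\tau r_B,
\end{equation*}
and the symmetric estimate holds for $x'\in P'\cap B$, so $d_B(P,P')\lesssim_{d,c}\tau\lesssim_{d,c}\beta$. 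The step I expect to require the most care is this last one: one must verify that orthogonally projecting the $p_i$ onto $P$ preserves their quantitative affine independence (a perturbation estimate valid once $\tau\lesssim_{d,c}1$, which is where the smallness reduction on $\beta$ gets used) and that the barycentric coordinates of points of $P\cap B$ relative to $q_0,\dots,q_d$ are bounded by a constant depending only on $d$ and $c$. None of this is deep, but it is where the numerology has to be tracked carefully.
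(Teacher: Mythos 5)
The paper cites this as \cite{AS18} Lemma 2.16 and does not reproduce a proof, so there is no in-paper argument to compare against; I will therefore only assess whether your proposed proof stands on its own. It does. The reduction to $B=B'$ via the change of variable $t=(r/r')s$ together with monotonicity of $\scH^d_\infty$ is correct and yields exactly the $(r_B/r_{B'})^{d+1}$ factor. The Chebyshev step is correct: with $\tau=4\beta/c$ and $\beta=\beta^{d,1}_E(B,P)+\beta^{d,1}_E(B,P')$, one gets $\scH^d_\infty\{x\in B\cap E:\dist(x,P)>\tau r_B\}\le\tau^{-1}r_B^d\beta^{d,1}_E(B,P)\le\tfrac{c}{4}r_B^d$, similarly for $P'$, and subadditivity gives $\scH^d_\infty(F)\ge\tfrac{c}{2}r_B^d$. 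The greedy extraction of $p_0,\dots,p_d$ is correct: the $c'R$-neighborhood of a $(\le d-1)$-dimensional affine subspace intersected with $B(x_0,R)$ has $\scH^d_\infty\lesssim_d c'R^d$, which is beaten by $\scH^d_\infty(F)\ge\tfrac{c}{2}r_B^d$ for $c'=c'(d,c)$ small. The final affine-interpolation step is correct, and you correctly flagged the only place that needs a quantitative check: that $q_i=\pi_P(p_i)$ remain in quantitative general position and that the barycentric coordinates of $x\in P\cap B$ relative to $q_0,\dots,q_d$ are $O_{d,c}(1)$. Both do hold once $\tau\lesssim_{d,c}c'(d,c)$, i.e.\ once $\beta\le\epsilon_0(d,c)$ — exactly the smallness reduction you made; one clean way to see it is that $\mathrm{aff}(p_0,\dots,p_d)$ makes principal angles $O(\tau/c')$ with $P$, so $\pi_P$ shrinks the $d$-volume of the simplex by a factor $1-O((\tau/c')^2)$, which preserves the nondegeneracy up to a constant. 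The inequalities $\bigl|x-\sum\mu_ip_i\bigr|\le\sum|\mu_i||q_i-p_i|$ and $\dist\bigl(\sum\mu_ip_i,P'\bigr)\le\sum|\mu_i|\dist(p_i,P')$ (using $\sum\mu_i=1$ and affineness of $\pi_{P'}$) are both correct, and assembling gives $d_{B'}(P,P')\lesssim_{d,c}\tau\lesssim_{d,c}\beta$ as required. One stylistic point: the hypothesis as printed in the paper reads $\scH^d_\infty(B')\ge cr_{B'}^d$, which is a typo for $\scH^d_\infty(E\cap B')\ge cr_{B'}^d$ (lower content regularity of $E$ in $B$), and the conclusion should be read for $B'\subseteq B$ centered on $E$; you implicitly and correctly used it in this form.
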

The next lemma applies this to bound $\epsilon(Q)$ by $\beta_E^{d,1}(MB_Q)$ on the class of \textit{$(c,d)$-lower content regular sets}.
\begin{definition}\label{def:lcr}
    We say that $E\subseteq\R^n$ is \textit{$(c,d)$-lower content regular} if for every $x\in E$ and $0 < r  <\diam(E)$, we have
    \begin{equation*}
        \sH_\infty^d(E\cap B(x,r)) \geq cr^d.
    \end{equation*}
\end{definition}

\begin{lemma}\label{l:ep-by-beta}
Let $\scD$ be a Christ-David lattice for a $(c,d)$ lower content regular set $E$ and $K,M > 0$ be constants such that $\frac{10^4}{\rho} \leq K \leq 10^{-1}\rho^2 M$. If $\{P_Q\}_{Q\in\scD}$ is a family of $d$ planes satisfying $\beta^{d,1}_E(2\rho^{-1}KB_Q,P_Q) \lesssim \beta^{d,1}_E(2\rho^{-1}K B_Q)$, then
\begin{equation*}
    \epsilon(Q) \lesssim_{\rho,M,d} \beta^{d,1}_E(MB_Q). 
\end{equation*}
\end{lemma}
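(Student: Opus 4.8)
The plan is to fix a single admissible pair $(U,R)$ appearing in the supremum defining $\epsilon(Q)$ and bound $d_{KB_R}(P_U,P_R)$ uniformly by $\beta^{d,1}_E(MB_Q)$; taking the supremum then gives the lemma. The conceptual point is that $P_U$ and $P_R$ are, by hypothesis on the family $\{P_Q\}$, near-optimal approximating $d$-planes on the enlarged balls $2\rho^{-1}KB_U$ and $2\rho^{-1}KB_R$, while all the balls in play are comparable to and contained in $MB_Q$. So Lemma~\ref{l:beta-angle-bound} is exactly the tool that converts ``two planes that are each nearly optimal'' into a bound on their Hausdorff distance in terms of content beta numbers, and its hypotheses are available because $E$ is lower content $d$-regular (the lower Hausdorff-content bound $\scH^d_\infty(B'\cap E)\gtrsim r_{B'}^d$ holds on every ball $B'$ centered on $E$, which is all that is used).

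First I would record the elementary geometry. Since $\ell(\cdot)=5\rho^{k(\cdot)}$ and $\rho<1$, the condition $k(R)\in\{k(Q),k(Q)-1\}$ gives $\ell(Q)\le\ell(R)\le\rho^{-1}\ell(Q)$, while $k(U)=k(Q)$ gives $\ell(U)=\ell(Q)$; and the conditions defining $\epsilon(Q)$ place $x_U$ and $x_R$ within $\tfrac{K}{10}$ times the relevant side length of $x_Q$. Plugging these into triangle inequalities for radii, and using only $K\ge 10^4/\rho$ together with $K\le 10^{-1}\rho^2M$, one checks the inclusions
\[
KB_R\ \subseteq\ 2\rho^{-1}KB_U,\qquad 2\rho^{-1}KB_U\ \subseteq\ MB_Q,\qquad 2\rho^{-1}KB_R\ \subseteq\ MB_Q ,
\]
where in each inclusion the ratio of radii is bounded by a constant depending only on $\rho$ and $M$ (the last two inclusions are where $K\le 10^{-1}\rho^2M$ enters, and the lower bound on $K$ makes $M\rho/K$ a $(\rho,M)$-constant).

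Then I would apply Lemma~\ref{l:beta-angle-bound} with big ball $B=2\rho^{-1}KB_U$, small ball $B'=KB_R$, and planes $P_U,P_R$, which yields
\[
d_{KB_R}(P_U,P_R)\ \lesssim_{d}\ \Big(\tfrac{r_{B}}{r_{B'}}\Big)^{d+1}\beta^{d,1}_E\big(2\rho^{-1}KB_U,P_U\big)\ +\ \beta^{d,1}_E\big(KB_R,P_R\big).
\]
Here the prefactor is $\big(2\rho^{-1}\ell(U)/\ell(R)\big)^{d+1}\le(2\rho^{-1})^{d+1}$ because $\ell(R)\ge\ell(U)$. For the first term I use the hypothesis on $\{P_Q\}$ at the cube $U$ to replace $\beta^{d,1}_E(2\rho^{-1}KB_U,P_U)$ by $\beta^{d,1}_E(2\rho^{-1}KB_U)$, and then the standard monotonicity of content beta numbers under ball enlargement (namely $\beta^{d,1}_E(B)\le(r_{B''}/r_B)^{d+1}\beta^{d,1}_E(B'')$ whenever $B\subseteq B''$) together with $2\rho^{-1}KB_U\subseteq MB_Q$ to bound it by $\lesssim_{\rho,M,d}\beta^{d,1}_E(MB_Q)$. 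For the second term I first enlarge $KB_R$ to $2\rho^{-1}KB_R$ at the cost of $(2\rho^{-1})^{d+1}$, apply the hypothesis on $\{P_Q\}$ at the cube $R$, and then use $2\rho^{-1}KB_R\subseteq MB_Q$ and the same monotonicity to get $\lesssim_{\rho,M,d}\beta^{d,1}_E(MB_Q)$. Combining the two estimates and taking the supremum over admissible pairs $(U,R)$ gives $\epsilon(Q)\lesssim_{\rho,M,d}\beta^{d,1}_E(MB_Q)$.

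The only genuine work is the bookkeeping in the geometric step: choosing the enlargement factor $2\rho^{-1}$ so that $KB_R$ nests inside $2\rho^{-1}KB_U$ while $2\rho^{-1}KB_U$ and $2\rho^{-1}KB_R$ still sit inside $MB_Q$, and verifying that every radius ratio that shows up is controlled purely in terms of $\rho$ and $M$ via $10^4/\rho\le K\le 10^{-1}\rho^2M$. Everything else is a direct invocation of Lemma~\ref{l:beta-angle-bound}, the defining property of the family $\{P_Q\}$, and the scaling behaviour of $\beta^{d,1}_E$.
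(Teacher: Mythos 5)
Your proposal is correct and follows essentially the same route as the paper: reduce to one admissible pair $(U,R)$, verify the ball inclusions $KB_R\subseteq 2\rho^{-1}KB_U$ and $2\rho^{-1}KB_U,\,2\rho^{-1}KB_R\subseteq MB_Q$, invoke Lemma~\ref{l:beta-angle-bound}, use the near-optimality hypothesis on $\{P_Q\}$, and finish with monotonicity of the content beta number. The only cosmetic difference is which balls you feed into Lemma~\ref{l:beta-angle-bound} (you take $B=2\rho^{-1}KB_U$, $B'=KB_R$, whereas the paper takes $B=B'=KB_R$ and enlarges both resulting beta numbers afterward); the two bookkeeping choices are interchangeable.
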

\begin{proof}
Let $U,R\in\scD$ be cubes which achieve the supremum in the definition of $\epsilon(Q)$. Then
\begin{equation*}
    \epsilon(Q) = d_{KB_R}(P_U,P_R).
\end{equation*}
We want to apply Lemma \ref{l:beta-angle-bound} with $B = B' = KB_R$. First, we prove some ball inclusions. We claim 
\begin{equation}\label{e:ball-contain-1}
    KB_R\subseteq 2\rho^{-1}KB_U.
\end{equation}
Indeed, we let $y\in KB_R$ and we compute
\begin{align*}
    |y - x_U| &\leq |y - x_R| + |x_R - x_Q| + |x_Q - x_U| \\
    &\leq K\ell(R) + \frac{K}{10}\ell(R) + \frac{K}{10}\ell(U) \leq 2K\ell(R) \leq 2\rho^{-1}K\ell(U).
\end{align*}
Second, we claim
\begin{equation}\label{e:ball-contain-2}
    2\rho^{-1}KB_U \subseteq MB_Q \text{ and } 2\rho^{-1}KB_R \subseteq MB_Q.
\end{equation}
Because $\ell(R) \geq \ell(U)$, it suffices to prove $2\rho^{-1}KB_R \subseteq MB_Q$. We let $y\in 2\rho^{-1}KB_R$ and compute
\begin{equation*}
    |y - x_Q| \leq |y-x_R| + |x_R - x_Q| \leq 4\rho^{-1}K\ell(R) + \frac{K}{10}\ell(R) \leq 10K\rho^{-2}\ell(Q) < M\ell(Q).
\end{equation*}
Now, we apply Lemma \ref{l:beta-angle-bound} with $B = B' = KB_R$, then 
\begin{align*}
    d_{KB_R}(P_U,P_R) &\lesssim \beta^{d,1}_E(KB_R,P_R) + \beta^{d,1}_E(KB_R,P_U)\\
    &\lesssim_\rho \beta^{d,1}_E(2\rho^{-1}KB_R,P_R) + \beta^{d,1}_E(2\rho^{-1}KB_U,P_U)\\
    &\lesssim\beta^{d,1}_E(2\rho^{-1}KB_R) + \beta^{d,1}_E(2\rho^{-1}KB_U)\\
    &\lesssim_M \beta^{d,1}_E(MB_Q).
\end{align*}
where the second line follows from \eqref{e:ball-contain-1}, the third line follows from the hypothesis on $P_Q$, and the final line from \eqref{e:ball-contain-2} using the monotonicity property in \eqref{e:beta-mono}.
\end{proof}

\section{Preliminaries on Lipschitz graph domains}\label{sec:lipschitz-graph-domains}
Our plan to produce Lipschitz graph domains is to produce a Reifenberg parameterization $g$ from Theorem \ref{DT-thm}, find a nice collection of Lipschitz graph domains in the domain of $g$ on which $g$ is a small perturbation of an affine map, then map these domains forward to Lipschitz graph domains in the image. In this section, we prove some preliminary geometric results that allow us to carry out the last two steps. 

\subsection{Whitney cubes and stopping time domains}
In Section \ref{sec:thmA}, we will show how to decompose the domain of $g$ into nice stopping time regions (Recall Definition \ref{def:stopping-time-regions}.) of a family of good \textit{Whitney cubes}. We will make significant use of the standard Whitney decomposition of the upper half-space with respect to $\R^d\times\{0\}\subseteq\R^{d+1}$. 
\begin{definition}[Whitney cubes]\label{def:whitney-cubes}
    Define
\begin{equation*}
    \mathscr{W} = \left\{[k_12^{-n},(k_1+1)2^{-n}]\times\cdots\times[k_d2^{-n},(k_d+1)2^{-n}]\times[2^{-n},2^{-n+1}]:k_1,\ldots,k_d,n\in\Z\right\}.
\end{equation*}
The collection $\mathscr{W}$ consists of exactly the dyadic cubes in $\R^{d}\times[0,\infty)$ which satisfy 
\begin{equation}\label{e:def-height}
    \ell(W) = h(W) \vcentcolon= \dist(W,\R^d)
\end{equation}
where $\ell(W)=h(W)$ denote the \textit{side length} of $W$ and the \textit{height} of $W$. Cubes $W,R\in\mathscr{W}$ have a natural partial order induced by distance to $\R^d\times\{0\}$. We define the projection $\pi:\R^{d+1}\rightarrow\R^{d}\times\{0\}$ by $\pi(x,y) = x$ where $(x,y)\in\R^{d}\times\R$ and write
$$W \leq R$$
if and only if $\pi(W)\subset\pi(R)$. If $W\leq R$ and $h(W) = \frac{1}{2}h(R)$, we call $W$ a \textit{child} of $R$ and $R$ a \textit{parent} of $W$. This gives a partial order on $\mathscr{W}$ which we use to define the \textit{descendants} of $W$
\begin{equation}\label{e:whit-desc}
    D(W) = \{R\in\mathscr{W} : R \leq W\}.
\end{equation}
\end{definition}
The partial order on Whitney cubes gives a natural tree structure on $\mathscr{W}$ which we will use in stopping time constructions.

\begin{definition}[Stopping time regions and domains of Whitney cubes]
    The partial order $\leq$ and definition of child cubes in Definition \ref{def:whitney-cubes} give a well-defined notion of \textit{stopping time region} for subsets of $\scW$. We say that $T\subseteq\scW$ is a stopping time region if it satisfies natural analogs of the three coherency conditions in Definition \ref{def:stopping-time-regions}. That is, the following hold:
    \begin{enumerate}
        \item[(i)] There exists a \textit{top cube} $W(T)\in T$ such that $R\subseteq W(T)$ for all $R\in T$,
        \item[(ii)] If $R\in T$ and $U\in\scD$ satisfies $R\subseteq U \subseteq W(T)$, then $U\in T$,
        \item[(iii)] If $R\in T$, then either every child of $R$ is in $T$, or none of them are.
    \end{enumerate}
    Given a stopping time region $T\subseteq\scW$, we can form an associated \textit{stopping time domain} $\sD_T$ by taking the union of cubes in $T$, defining
    \begin{equation}
        \sD_T = \bigcup_{W\in T}W.
    \end{equation}
    We also define the set of \text{minimal cubes} in $T$ to be
    \begin{equation}
        m(T) = \{W\in T: \text{ If $R\in T$ such that $R \leq W$, then $R = W$}\}.
    \end{equation}
    This is the set of cubes that have no children inside $T$.
\end{definition}

Observe that stopping time domains are not necessarily Lipschitz graph domains because their boundaries contain the faces of cubes, and any face whose normal vector is perpendicular to the vector between the center of the face and a choice of center point for the domain will not have a well-defined radius function in Definition \ref{def:lip-graph-domain}. Therefore, we need a process to carve stopping time domains into Lipschitz graph domains with uniform constant in a controlled way.

Fix some stopping time region $T\subseteq\scW$. In the rest of this section, we will construct an Ahlfors $d$-regular union of subsets of $d$-planes $\Sigma_T$ that carves $\sD_T$ into a collection of $c(d)$-Lipschitz graph domains. The images of these nicer domains under a Reifenberg parameterization whose derivative is nearly constant on the domain will then map them forward to Lipschitz graph domains as desired in the conclusions of Theorems \ref{t:thmA}, \ref{t:thmB}, and \ref{t:thmC}.

\subsection{Carving up stopping time domains}
We want to prove the following proposition:
\begin{proposition}\label{p:DT-lip-graph}
There exists a constant $L_0(d) > 0$ such that for any stopping time region $T\subseteq\scW$, there exists an Ahlfors upper $d$-regular set $\Sigma_{T}$ that is a union of subsets of $d$-planes such that 
\begin{equation*}
    \sD_T\setminus \Sigma_T = \bigcup_{j\in J_T}\sD_T^j
\end{equation*}
where
\begin{align}\label{e:DSj-bound}
    \sum_{j\in J_T}\mathcal{H}^d(\partial \sD_T^j) \lesssim_d \mathcal{H}^d(\partial\sD_T) \simeq_d \sH^d(\sD_T\cap\R^d) + \sum_{W\in m(T)} \ell(W)^d,
\end{align}
and $\sD_T^j$ is an $L_0$-Lipschitz graph domain.
\end{proposition}
The set $\Sigma_T$ will be defined as a union of more local sets $\Sigma_W$ for $W\in m(T)$. The basic idea is to use a ``cover'' emanating from the bottom face of every minimal cube $W$ downwards at a $\frac{\pi}{4}$ angle with the vertical in order to turn the jagged right angles made by stopped cubes into smoother $\frac{\pi}{4}$ angles which look Lipschitz to a point sitting above them higher up in the domain. This is essentially a modification of Peter Jones's algorithm for turning chord arc domains composed of Whitney boxes in the disk into Lipschitz graph domains in his proof of the Analyst's Traveling Salesman Theorem in the complex plane (see pg. 8 of \cite{Jon90}). We now construct $\Sigma_W$.

Fix $T$ and $W\in m(T)$. By translating and dilating, we can without loss of generality assume $W = [-1,1]^d\times[2,4]$. For any function $f:\bR^d \rightarrow \bR$, we let $\Graph(f)$ denote the graph of $f$ in $\R$ over $\R^d\times\{0\}$. We begin by defining, for $1 \leq j \leq d$,
\begin{align*}
    H_0(x) &= 2,\\
    H_{2j-1}(x) &= 3 + x_j,\\
    H_{2j}(x) &= 3 - x_j.
\end{align*}
The graphs of these functions (except $H_0$) over $\bR^d$ are planes which make an angle of $\frac{\pi}{4}$ with $\bR^d$ and contain the edges of $\Bot(W)$ with $x_j = -1$ and $x_j = 1$ respectively. We define
\begin{align*}
    H_W(x) &= \min_{0 \leq i \leq 2d} H_i(x),\\
    \Cover(W) &= \Graph(H_W)\cap\Hp. 
\end{align*}
\begin{figure}
    \centering
    \includegraphics[scale=0.5]{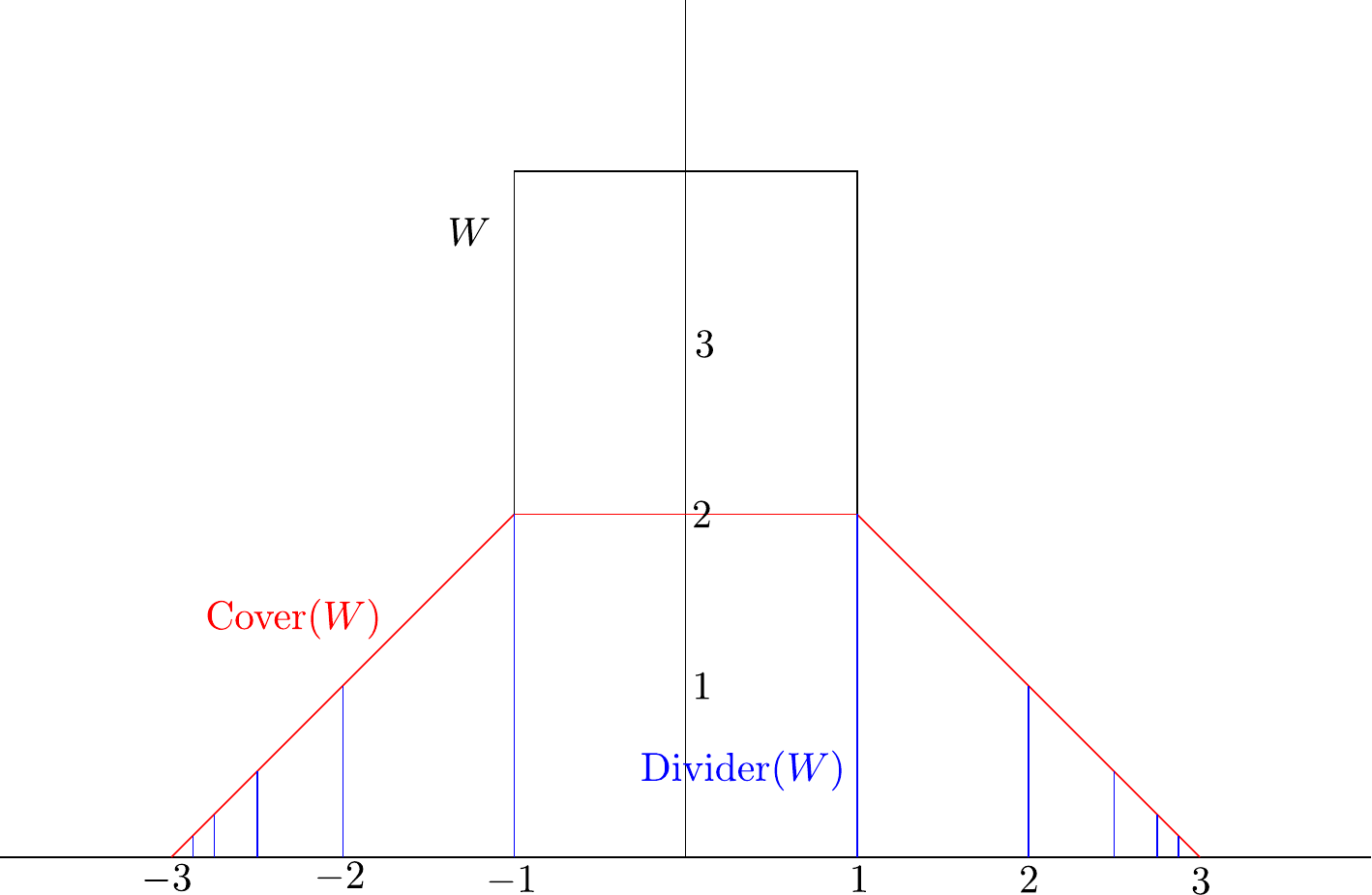}
    \caption{A representation of $W$, $\Cover(W),\ \text{and } \Divider(W)$ in $\R^2$.}
    \label{fig:Sigma_W_1}
\end{figure}
$\Cover(W)$ is the lower envelope of the collection of planes given by the graphs of the $H_i$. In $\bR^3$, $\Cover(W)$ forms the sides of a square pyramid minus its tip with base $[-3,3]^2\times\{0\}$. In general, $\Cover(W)$ divides $\Hp$ into two components: a bounded component $C_W$ with boundary $\Cover(W)\cup [-3,3]^d\times\{0\}$ and the unbounded complimentary component. It also follows that
\begin{equation}\label{e:cover}
    \sH^d(\Cover(W)) \lesssim_d \sH^d(\Bot(W)) = \ell(W)^d.
\end{equation}
$\Cover(W)$ is one of two parts of $\Sigma_W$. The second part will be called $\Divider(W)$ because its purpose will be to ensure that all future domains beneath $\Cover(W)$ look similar to the top domain by separating future domains from one another with vertical plane extensions of the sides of cubes sliced by $\Cover(W)$. See Figure \ref{fig:Sigma_W_1}.

We begin by defining $t_n = 1 + \sum_{j=0}^{n-1}2^{-j}$ and
\begin{align*}
    \mathscr{Q}_n = \bigg\{Q\in\Delta^d([-3,3]^d\times\{0\}) :\ &\ell(Q) = t_{n+1} - t_n = 2^{-n},\\
    &\exists j,\ 1 \leq j \leq d,\ a_j = \pm t_n,\ Q = \prod_{j=1}^d[a_j,b_j]\bigg\}
\end{align*}
where $\Delta^d([-3,3]^d\times\{0\})$ is the set of $d$-dimensional dyadic cubes contained in $[-3,3]^d\times\{0\}$. Intuitively, we think of $t_n$ as the radii of growing balls centered at 0 in the $\ell_\infty$ metric, and the cubes inside $\mathscr{Q}_n$ as the natural collection of dyadic cubes tiling the set difference between successive balls with side length exactly equal to the gap between the two square rings forming the boundaries of the $\ell_\infty$ balls. See Figure \ref{fig:Sigma_W_cubes}. Set $\mathscr{Q} = \bigcup_{n=1}^\infty \mathscr{Q}_n$ and define
\begin{equation*}
    \Divider(W) = C_W\cap\bigcup\left\{ F_j\times[0,2\ell(Q)] : F_j\in\face(Q),\ Q\in\mathscr{Q} \right\}.
\end{equation*}
Because $\sum_{j=1}^{2d}\sH^d(F_j\times[0,2\ell(Q)]) \lesssim_d \sH^d(Q)$ and $[-3,3]^d\times\{0\} = \bigcup_{Q\in\mathscr{Q}} Q$ is a disjoint union, it follows immediately that
\begin{equation}\label{e:divider-bound}
   \sH^d(\Divider(W)) \lesssim_d \sH^d(\Bot(W)) = \ell(W)^d.
\end{equation}
\begin{figure}
    \centering
    \includegraphics[scale=0.4]{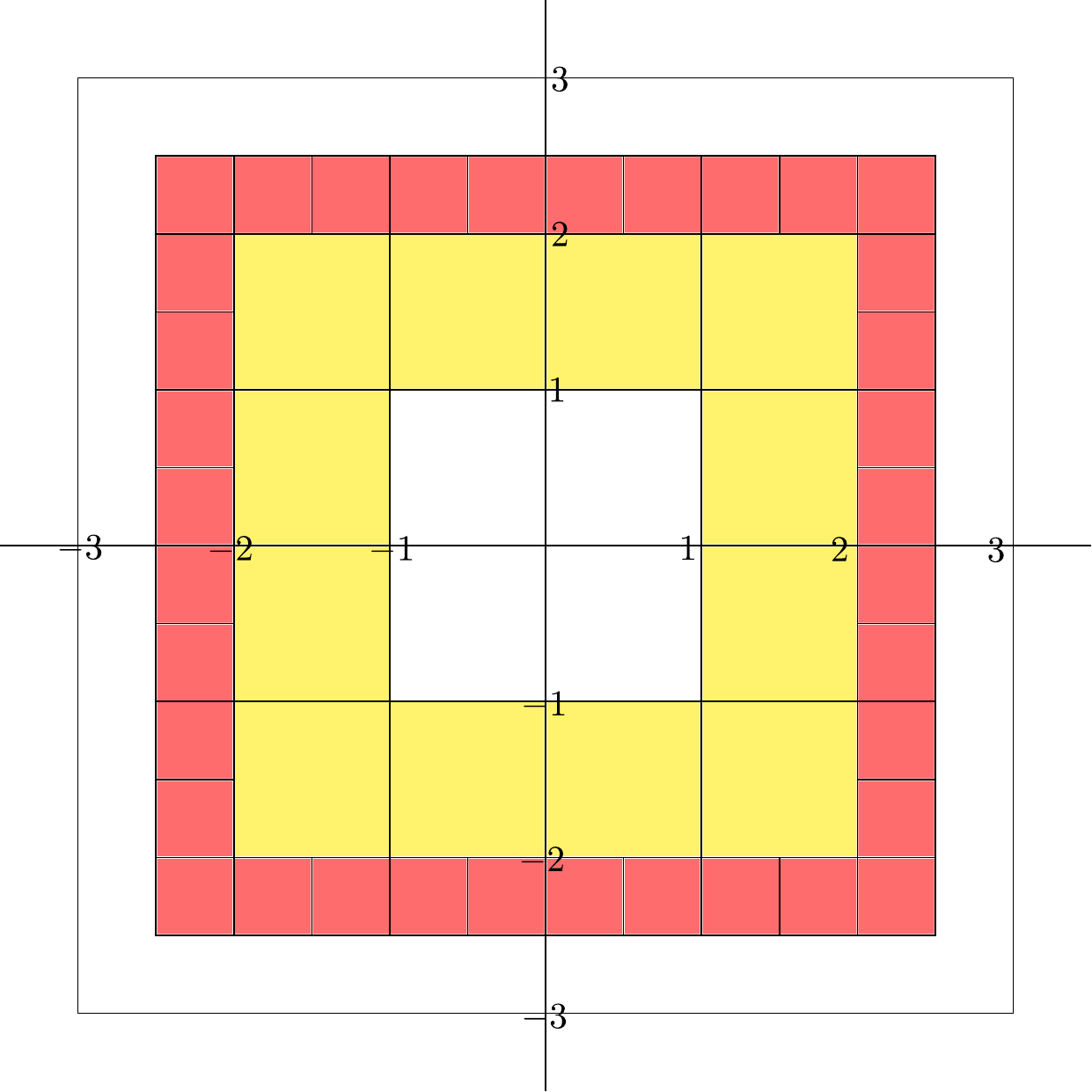}
    \caption{A representation of $[-3,3]^2\times\{0\}$ split into $\mathscr{Q}_1$ in yellow, $\mathscr{Q}_2$ in red, and $\cup_{n=3}^\infty\mathscr{Q}_n$ left uncolored at the edge of $\mathscr{Q}_2$ (The white square in the middle sits below the cube $W\in m(T)$, hence nothing above it lies in $\sD_T$). The set $\Divider(W)$ shoots out of the page as a union of extensions of the sides of the squares up to the points at which they hit the slanting top of $\Cover(W)$.}
    \label{fig:Sigma_W_cubes}
\end{figure}
Now, we define
\begin{equation*}
    \Sigma_W = \Cover(W)\cup \Divider(W),
\end{equation*}
\begin{equation*}
    \Sigma_T = \bigcup_{W\in m(T)}\Sigma_W\cap\sD_T.
\end{equation*}
We first prove the upper regularity claim of Proposition \ref{p:DT-lip-graph}.
\begin{lemma}\label{l:sigma-upper-reg}
$\Sigma_T$ is upper Ahlfors upper $d$-regular with constant $C\lesssim_d 1$.
\end{lemma}
\begin{proof}
Fix $R>0$ and $x\in\Sigma_W\subseteq \Sigma_T$ for some $W\in m(T)$. We write
\begin{equation*}
    \sH^d(\Sigma_T\cap B(x,R)) = \sum_{\substack{W\in m(T) \\ h(W) < 10R}}\sH^d(\Sigma_W \cap B(x,R)) + \sum_{\substack{W\in m(T) \\ h(W) \geq 10R}}\sH^d(\Sigma_W \cap B(x,R)).
\end{equation*}
We note that $\pi(W)$ and $\pi(W')$ have disjoint interiors for any $W,W'\in m(T)$ with $W\not=W'$, so that 
\begin{equation*}
    \sum_{\substack{W\in m(T) \\ h(W) < 10R}}\sH^d(\Sigma_W \cap B(x,R)) \lesssim_d \sum_{\substack{W\in m(T) \\ \Sigma_W\cap B(y,R)\not=\varnothing\\ h(W) < 10R}}\sH^d(\Bot(W)) \leq (20R)^d.
\end{equation*}
On the other hand, there are a uniformly bounded number of minimal cubes $N(d)$ with $h(W) \geq 10R$ such that $B(x,R)\cap \Sigma_W\not=\varnothing$ so that
\begin{equation*}
    \sum_{\substack{W\in m(T) \\ h(W) \geq 10R}}\sH^d(\Sigma_W \cap B(x,R)) \leq N(d)\cdot c(d)R^d \lesssim_d R^d
\end{equation*}
because $\sH^d(\Sigma_W\cap B(x,R)) \leq c(d)R^d$ for any particular $W$ by construction. Therefore, $\Sigma_T$ is upper regular.
\end{proof}
We now finish the proof of Proposition \ref{p:DT-lip-graph}.

\begin{proof}[Proof of Proposition \ref{p:DT-lip-graph}]
It follows from \eqref{e:cover} and \eqref{e:divider-bound} that
\begin{equation*}
    \sH^d(\Sigma_T) \leq \sum_{W\in m(T)}\sH^d(\Sigma_W) \lesssim_d \sum_{W\in m(T)}\sH^d(\Bot(W)) \leq \sH^d(\Bot(W(T))) \lesssim_d \sH^d(\partial \sD_T),
\end{equation*}
which proves \eqref{e:DSj-bound}. We now need to show that the resulting domains $\sD_T^j$ are Lipschitz-graphical. If $\sD_T^j$ is the domain containing $W(T)$, then the claim follows with the choice of central point $c_{W(T)}$. Indeed, the cube $W(T)$ is clearly Lipschitz-graphical with respect to $c_{W(T)}$, and any boundary point of $\sD_T^j$ not in $\partial W(T)$ is either in a vertical plane containing one of the vertical faces of $W(T)$, or is part of the Lipschitz graph consisting of the horizontally planar faces $\Bot(W)$ for $W\in m(T)$ and the planes of $\Cover(W)$ making $\frac{\pi}{4}$ angles with the bottom faces.

Now, suppose $\sD_T^j\cap W(T) = \varnothing$. We have set up the construction such that this will not differ too much from the top cube case. Let $W\in m(T)$ be a cube of minimal height such that $\sD_T^j\subseteq C_W$ and $\sH^d( \partial \sD_T^j\cap\Cover(W)) > 0$. Such $W$ exists because its minimality implies that for any $W'\in m(T)$ of smaller side length than $W$, $\Cover(W')$ can only be part of the ``lower'' boundary of $\sD_T^j$ while the only non-vertical planar pieces in $\Sigma_T$ are bottoms and covers of minimal cubes. Then the cube $R$ of maximal height such that $R\cap \sD_T^j\not=\varnothing$ is exactly the cube of length $\ell(Q)$ sitting above $Q\subseteq\R^{d}\times\{0\}, Q\in\mathscr{Q}$ used in the definition of $\Divider(W)$. 

Therefore, $R\cap\sD_T^j$ is a cube sliced by finitely many $d$-planes passing through its sides and corners at $\frac{\pi}{4}$ angles. By the geometry described above, $\sD_T^j$ contains the convex hull of $c_R$ and $\Bot(R)$, so we have that $\sD_T^j$ is Lipschitz-graphical with respect to $\frac{1}{2}(c_R + c_{\Bot(R)})$. Indeed, Lipschitz-graphicality follows for points in $R\cap \sD_T^j$ immediately, and follows for the rest of $\sD_T^j$ by the same argument as for the region containing $W(T)$ because the definition of $\Divider(W)$ ensures that all cubes which make up $\sD_T^j$ are children of $R$. Indeed, the boundary outside of $\partial R$ consists of vertical planes containing one of the vertical faces of $R$ or is part of a Lipschitz graph consisting of horizontally planar faces $\Bot(W')$ for $W'\in m(T)$ with $W'\leq R$ and the planes of $\Cover(W')$ making $\frac{\pi}{4}$ angles with the bottom faces.
\end{proof}

\subsection{Images of Lipschitz graph domains}

We now show that the Lipschitz graph domain property is preserved under images of maps whose derivatives are nearly constant. We begin by observing that linear transformations preserve Lipschitz graph domains

\begin{lemma}\label{l:linear-graph-domain}
    Let $\Omega\subseteq\R^{d+1}$ be an $L_0$-Lipschitz graph domain and let $A:\R^{d+1}\rightarrow\R^{d+1}$ be an $L'$-bi-Lipschitz affine map. Then there exists a constant $L_1(L_0,L')$ such that $A(\Omega)$ is an $L_1$-Lipschitz graph domain.
\end{lemma}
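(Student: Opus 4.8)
The plan is to reduce the statement to a direct computation with the radial function of the image domain. First I would exploit that both the hypothesis and the conclusion are invariant under composing $A$ with similarities on either side. Let $\Phi$ be the composition of a translation, dilation and rotation witnessing that $\Omega$ is an $L_0$-Lipschitz graph domain, so $\widetilde\Omega = \Phi(\Omega)$ is normalized as in Definition \ref{def:lip-graph-domain}. Writing $\Phi^{-1}$ as a translation composed with a dilation by $\lambda>0$ composed with a rotation, and composing the rotation part with the linear part of $A$, one obtains a linear $L'$-bi-Lipschitz map $T$ and a vector $w\in\R^{d+1}$ with
\[
  A(\Omega) \;=\; \lambda\, T(\widetilde\Omega) + w .
\]
Since dilations and translations are similarities, it suffices to prove that $T(\widetilde\Omega)$ is an $L_1(L_0,L')$-Lipschitz graph domain whenever $T$ is linear and $L'$-bi-Lipschitz and $\widetilde\Omega$ is normalized with $L_0$-Lipschitz radial function $r\colon\mathbb{S}^d\to[\tfrac{1}{1+L_0},1]$.

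Second, I would write down the radial function $R$ of $T(\widetilde\Omega)$. Because $\widetilde\Omega = \{t\theta : \theta\in\mathbb{S}^d,\ 0\le t<r(\theta)\}$ is star-shaped about the origin and $T$ is a linear homeomorphism, $T(\widetilde\Omega)$ is an open connected set star-shaped about the origin with $\partial T(\widetilde\Omega) = \{R(\psi)\psi : \psi\in\mathbb{S}^d\}$: indeed $s\psi\in T(\widetilde\Omega)$ iff $sT^{-1}\psi = s\,|T^{-1}\psi|\,\eta(\psi)\in\widetilde\Omega$, where $\eta(\psi):=T^{-1}\psi/|T^{-1}\psi|\in\mathbb{S}^d$, so that
\[
  R(\psi) \;=\; \frac{r\bigl(\eta(\psi)\bigr)}{|T^{-1}\psi|}.
\]

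Third, I would estimate $R = (r\circ\eta)\cdot g$ with $g(\psi) := |T^{-1}\psi|^{-1}$. Bi-Lipschitzness of $T$ forces $|T^{-1}\psi|\in[1/L',L']$ for $\psi\in\mathbb{S}^d$, hence $g\in[1/L',L']$ and $r\circ\eta\in[\tfrac{1}{1+L_0},1]$, which bounds $R$ between positive constants depending only on $L_0$ and $L'$. For the Lipschitz bound, $\psi\mapsto T^{-1}\psi$ and $\psi\mapsto|T^{-1}\psi|$ are $L'$-Lipschitz; since $v\mapsto v/|v|$ has Lipschitz constant $|v|^{-1}\le L'$ on $\{|v|\ge 1/L'\}$ and $t\mapsto t^{-1}$ has Lipschitz constant $t^{-2}\le L'^2$ on $[1/L',\infty)$, the maps $\eta$ and $g$ are $L'^2$- and $L'^3$-Lipschitz respectively, and then $r\circ\eta$ is $L_0L'^2$-Lipschitz. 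A product of two bounded Lipschitz functions is Lipschitz, so $R$ is Lipschitz with a constant depending only on $L_0$ and $L'$. Finally, applying the dilation by $(\sup_\psi R(\psi))^{-1}$ puts $T(\widetilde\Omega)$ into the form of Definition \ref{def:lip-graph-domain} with some $L_1 = L_1(L_0,L')$, which is the desired conclusion.

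The individual estimates are elementary; the one point that takes a little care — and the only genuine obstacle — is the Lipschitz estimate for the angular part $\eta(\psi) = T^{-1}\psi/|T^{-1}\psi|$ and the weight $g(\psi) = |T^{-1}\psi|^{-1}$. This goes through precisely because $T$ is bi-Lipschitz: $|T^{-1}\psi|$ stays in the fixed interval $[1/L',L']$, so the normalization $v\mapsto v/|v|$ and the inversion $t\mapsto 1/t$ are only ever composed on compact subsets of their domains bounded away from $0$, where they are Lipschitz with constants controlled by $L'$.
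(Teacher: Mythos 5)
Your argument is correct and follows essentially the same route as the paper: reduce to a linear bi-Lipschitz map, derive the radial function of the image as $R(\psi) = r(T^{-1}\psi/|T^{-1}\psi|)\cdot|T^{-1}\psi|^{-1}$, and conclude that it is bounded and Lipschitz as a product and composition of bounded Lipschitz functions. You simply spell out the normalization step and the Lipschitz constants of the individual pieces in more detail than the paper does.
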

\begin{proof}
    Without loss of generality, assume $A(0) = 0$ and set $\Omega' = A(\Omega)$. Then since $\Omega = \{t\theta : 0\leq t \leq r(\theta),\ \theta\in\mathbb{S}^d\}$, we know that $\Omega' = \{tA(\theta) : 0\leq t \leq r(\theta),\ \theta\in \mathbb{S}^d\}$ so that $\Omega'$ is star-shaped and $r_{\Omega'}$ is well-defined. We have 
    $$\partial\Omega' = A(\partial\Omega) = \{A(r(\theta)\theta) = r(\theta)A(\theta) : \theta\in\mathbb{S}^d\}.$$
    Therefore, given $\psi\in\mathbb{S}^d$, we see that
    \begin{equation*}
        r_{\Omega'}(\psi) = r_{\Omega}\left(\frac{A^{-1}(\psi)}{|A^{-1}(\psi)|}\right)\frac{1}{|A^{-1}(\psi)|}.
    \end{equation*}
    Because $A^{-1}$ is $L'$-bi-Lipschitz and $r_{\Omega}$ is $L_0$-Lipschitz on $\mathbb{S}^d$, $r_{\Omega'}$ is composed of products and compositions of bounded Lipschitz functions and it follows that there exists $L_1(L_0,L')$ such that $r_{\Omega'}$ satisfies the requirements of Definition \ref{def:lip-graph-domain} after scaling.
\end{proof}
We now move from affine maps to maps whose derivative is sufficiently close to the identity. In preparation, define $\ell_z$ for any $z\in \R^{d+1}$ to be the line passing through $0$ and $z$ and let $P_z = \ell_z^\perp + z$. Define the radial cone at $x$ of aperture $\alpha$ and radius $R$ as
\begin{equation*}
    C_x(\alpha,R) = \left\{y\in B(x,R) : \frac{\dist(y,\ell_x)}{\dist(y,P_x)} < \tan(\alpha)\right\}\setminus \{x\}.
\end{equation*}

\begin{lemma}\label{l:id-lip-graph}
Let $\Omega\subseteq \R^{d+1}$ be an $L_0$-Lipschitz graph domain. There exists a constant $\delta_0(L_0,d) > 0$ such that if $\delta < \delta_0$ and $\varphi:\overline{\Omega}\rightarrow\varphi(\overline{\Omega})$ is a $(1+\delta)$-bi-Lipschitz $C^1$ map satisfying
\begin{equation}\label{e:dphi-almost-id}
    \lvert D\varphi(z) - I\rvert \leq \delta
\end{equation}
for all $z\in \overline{\Omega}$, then there exists $L_1 \lesssim_{L_0,d} 1$ such that $\varphi(\Omega)$ is a $L_1$-Lipschitz graph domain.
\end{lemma}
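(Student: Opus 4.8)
The plan is to show that $\varphi(\Omega)$ inherits, with only slightly worse constants, a two-sided (interior/exterior) cone condition enjoyed by $\Omega$, and that any such cone condition with radial axes forces the image to be a Lipschitz graph domain.

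\textbf{Normalization and near-identity estimates.} Let $A$ be the similarity from Definition \ref{def:lip-graph-domain} witnessing that $\Omega$ is an $L_0$-Lipschitz graph domain. Conjugating by $A$ replaces $\varphi$ by $A\varphi A^{-1}$, whose derivative is $(DA)(D\varphi)(DA)^{-1}$; since the linear part of $A$ is a scalar multiple of an orthogonal map, the bound $|D(A\varphi A^{-1})-I|\le\delta$ still holds, and by similarity-invariance of the class it suffices to treat $\Omega=\widetilde\Omega$, i.e.\ $\partial\Omega=\{r(\theta)\theta:\theta\in\mathbb{S}^d\}$ with $r$ being $L_0$-Lipschitz and $\frac1{1+L_0}\le r\le1$. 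After subtracting a constant we may assume $\varphi(0)=0$. As $\Omega$ is star-shaped about $0$, integrating $D\varphi-I$ along the segment $[0,z]\subseteq\overline\Omega$ gives $|\varphi(z)-z|\le\delta|z|$ for all $z\in\overline\Omega$; dually, $\varphi^{-1}$ is $C^1$ on $\overline{\varphi(\Omega)}$ with $|D\varphi^{-1}-I|=|(D\varphi)^{-1}(I-D\varphi)|\le C\delta$ and $|\varphi^{-1}(w)-w|\le C\delta|w|$. A short degree argument then gives $B(0,a/2)\subseteq\varphi(\Omega)\subseteq B(0,2)$ with $a=\frac1{1+L_0}$, and (by invariance of domain) $\partial\varphi(\Omega)=\varphi(\partial\Omega)$.

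\textbf{The cone condition for $\Omega$ and its transfer.} Because $r$ is $L_0$-Lipschitz and bounded below, a routine computation produces $\alpha_0=\alpha_0(L_0)\in(0,\pi/2)$ and $\lambda_0=\lambda_0(L_0)\in(0,1)$ so that at every $x\in\partial\Omega$ the two components of $C_x(\alpha_0,\lambda_0|x|)$ (recall $\ell_x$ is the line through $0$ and $x$) satisfy: the component on the side of $x$ toward $0$ lies in $\Omega$, and the component on the far side is disjoint from $\overline\Omega$. I transfer this to $y=\varphi(x)\in\partial\varphi(\Omega)$. For the \emph{interior} cone, $\varphi$ maps the interior half-cone at $x$ into $\overline{\varphi(\Omega)}$, and since $|D\varphi-I|\le\delta$ and the directions $x/|x|$ and $y/|y|$ differ by an angle $\lesssim\delta$, this image contains the interior half-cone at $y$ toward $0$ with aperture $\alpha_0-C\delta$ and radius $\lambda_0|y|/2$. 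For the \emph{exterior} cone one cannot push forward, since $\varphi$ is only defined on $\overline\Omega$; instead one pulls back with $\varphi^{-1}$: if a point $p$ in the exterior half-cone at $y$ (aperture $\alpha_0-C\delta$, radius $\lambda_0|y|/2$) lay in $\varphi(\Omega)$, then the near-identity estimate for $\varphi^{-1}$ would place $z=\varphi^{-1}(p)$ in the exterior half-cone at $x$, contradicting that the latter misses $\overline\Omega$. Choosing $\delta_0=\delta_0(L_0,d)$ small enough that all $C\delta$ losses are absorbed, we conclude that $\varphi(\Omega)$ satisfies the two-sided cone condition with aperture $\alpha_1=\alpha_0/2$, radius factor $\lambda_1=\lambda_0/2$, and axis through $0$ at every boundary point.

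\textbf{From the cone condition to a Lipschitz radial graph.} Given this two-sided cone condition together with $B(0,a/2)\subseteq\varphi(\Omega)\subseteq B(0,2)$, each ray $R_\psi=\{t\psi:t>0\}$ meets $\partial\varphi(\Omega)$ in exactly one point $\rho(\psi)\psi$: the ray starts inside and ends outside, while at any boundary point $t\psi$ the interior cone forces $\{s\psi:t(1-\lambda_1)<s<t\}\subseteq\varphi(\Omega)$ and the exterior cone forces $\{s\psi:t<s<t(1+\lambda_1)\}$ to miss $\varphi(\Omega)$, so there can be only one crossing and $\varphi(\Omega)=\{t\psi:\psi\in\mathbb{S}^d,\ 0\le t<\rho(\psi)\}$. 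For $\psi,\psi'\in\mathbb{S}^d$ with $|\psi-\psi'|$ small, intersecting $R_{\psi'}$ with the exterior (resp.\ interior) half-cone at $\rho(\psi)\psi$ produces a point $t\psi'\notin\varphi(\Omega)$ with $t\le\rho(\psi)(1+C|\psi-\psi'|)$ (resp.\ a point in $\varphi(\Omega)$ with $t\ge\rho(\psi)(1-C|\psi-\psi'|)$), where $C=C(\alpha_1,\lambda_1)$; hence $|\rho(\psi)-\rho(\psi')|\lesssim_{L_0}|\psi-\psi'|$, with the bound trivial when $|\psi-\psi'|$ is bounded below. Rescaling by $1/\sup_{\mathbb{S}^d}\rho$, the function $\rho$ becomes Lipschitz on $\mathbb{S}^d$ with values in $[a/4,1]$, so $\varphi(\Omega)$ is an $L_1$-Lipschitz graph domain with $L_1\lesssim_{L_0,d}1$. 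The only genuinely delicate point is that exterior cones for $\varphi(\Omega)$ must be obtained by pulling back along $\varphi^{-1}$ rather than pushing forward along $\varphi$ (which is undefined outside $\overline\Omega$); the derivation of the cone condition for $\Omega$ from its Lipschitz graph structure and the passage back from a two-sided cone condition to a Lipschitz radial function are standard, and I will only sketch them.
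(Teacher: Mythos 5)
Your approach is genuinely different from the paper's. The paper proves star-shapedness by pulling back the ray $t\mapsto t\varphi(x)$ through $\varphi^{-1}$ and showing the tangent satisfies $|\tilde\gamma'(t)-x|\le 5\delta|x|$ uniformly in $t$ (so the whole pre-image curve lives in a thin double cone $C_x(10\delta,|x|)$ missing $\partial\Omega$), and then establishes the Lipschitz bound by a direct law-of-cosines comparison of angles at $x$, $\varphi(x)$, and an auxiliary radial point $c=(1-|a-b|)a$. Your plan instead extracts a two-sided radial cone condition from the Lipschitz radial graph, transfers it to $\varphi(\Omega)$, and reads off star-shapedness and Lipschitz regularity from the transferred cones. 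That route is clean and arguably more geometric, but as written the cone transfer has a gap.

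The gap is near the vertex. To show a point $p$ in the shrunk exterior cone at $y=\varphi(x)$ cannot lie in $\varphi(\Omega)$, you set $z=\varphi^{-1}(p)$ and argue that $z$ lands in the exterior cone at $x$. You invoke $|\varphi^{-1}(w)-w|\le C\delta|w|$, which gives $z-x=(z-p)+(p-y)+(y-x)=(p-y)+O(\delta|x|)$. The \emph{direction} of $z-x$ is then controlled only when the ``signal'' $|p-y|$ dominates the $O(\delta|x|)$ noise, i.e.\ when $|p-y|\gg\delta|y|$. For $p$ in the cone with $|p-y|\lesssim\delta|y|$, the vector $z-x$ can point anywhere, and the claimed containment $z\in C^{\mathrm{ext}}_x$ fails. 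The same defect appears in your interior cone transfer. Unfortunately this blind spot is exactly the regime $|\psi-\psi'|\lesssim\delta$ in your final estimate, so the Lipschitz bound on $\rho$ you derive only holds for angular separations bounded below by $\delta$, which is not enough to conclude $\rho$ is Lipschitz with a $\delta$-independent constant.

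The fix is to replace the absolute (value-level) near-identity estimate by the scale-local one obtained by integrating $D\varphi-I$ along a short path in $\overline\Omega$ from $x$ to $z$: since $L_0$-Lipschitz graph domains are quasiconvex with constant depending only on $L_0$ and $d$, one gets
$$\bigl|(\varphi(z)-\varphi(x))-(z-x)\bigr|\ =\ \bigl|(p-y)-(z-x)\bigr|\ \le\ C_{L_0,d}\,\delta\,|z-x|,$$
which controls both the length ratio and the angle between $z-x$ and $p-y$ uniformly, with no degeneration as $z\to x$. With this estimate both cone transfers hold with aperture loss $O_{L_0}(\delta)$ at every scale, and your argument goes through. (Alternatively, and closer to what the paper does, one can replace the interior cone transfer entirely: the exterior cone condition alone, applied at \emph{both} $\rho(\psi)\psi$ and $\rho(\psi')\psi'$, yields the two-sided Lipschitz inequality by symmetry, and together with $B(0,a/2)\subseteq\varphi(\Omega)\subseteq B(0,2)$ it already forces each ray to meet $\partial\varphi(\Omega)$ exactly once, since a re-entry point would have its exterior cone pointing into $\varphi(\Omega)$.) The paper sidesteps the vertex issue from the start by estimating the derivative of the pulled-back radial curve against the fixed vector $x$, which is an $|x|$-scale estimate holding uniformly along the whole ray, rather than comparing pre-images of nearby boundary points directly.

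Finally, you explicitly only sketch the two standard steps (Lipschitz graph $\Rightarrow$ two-sided radial cone, and two-sided radial cone $\Rightarrow$ Lipschitz radial function); both are indeed routine, but given that the cone transfer step in the middle is the heart of the argument and is also the locus of the gap above, the proof as written leaves the main work undone.
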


\begin{proof}
Assume without loss of generality that $\Omega$ is Lipschitz graphical with respect to $0$ and $\varphi(0) = 0$. We first verify that $r_{\Omega}:\mathbb{S}^d\rightarrow\R^+$ is well-defined, i.e., the domain is star-shaped with respect to 0. Let $\varphi(x)\in\partial \Omega$ and let $\gamma(t) = t\varphi(x)$. We want to show $\gamma\cap\partial\varphi(\Omega) = \{\varphi(x)\}$. Set $\tilde{\gamma}(t) = \varphi^{-1}(\gamma(t))$. We would like to prove
\begin{equation}\label{e:tgamma-x}
    |\tilde{\gamma}'(t) - x| \leq 5\delta|x|
\end{equation}
for all $t\in[0,1]$. First note that
\begin{equation*}
    \lvert D\varphi(z)^{-1} - I\rvert = \lvert D\varphi(z)^{-1}\cdot\left[I-D\varphi(z)\right]\rvert \leq 2\delta\lvert D\varphi(z)^{-1} \rvert \leq 3\delta
\end{equation*}
using the bound $|D\varphi(z)^{-1}| \leq \frac{1}{\sigma_{\text{min}}(D\varphi(z))} \leq (1+2\delta)$ where $\sigma_{\text{min}}(D\varphi(z))$ is the smallest singular value of $D\varphi(z)$. This means
\begin{align*}
    |\tilde{\gamma}'(t) - x| &= |D\varphi^{-1}(\gamma(t))\cdot\gamma'(t) - x| = \lvert\left[D\varphi(\tilde{\gamma}(t))^{-1} - I\right]\cdot \gamma'(t) + \gamma'(t) - x\rvert\\
    &\leq 3\delta|\varphi(x)| + |\varphi(x) - x| \leq 5\delta|x|
\end{align*}
where the final line follows from the fact that $\varphi(x) = \int_0^1D\varphi(tx)\cdot x\ dt = x + \int_0^1(D\varphi(tx) - I)\cdot x\ dt$ so that $|\varphi(x) - x|\leq \delta|x|$. It follows from the mean value theorem that $\tilde{\gamma}\subseteq C_x(10\delta,|x|)$. Since $C_x(10\delta,|x|)\cap\partial \Omega = \varnothing$ for $\delta$ sufficiently small in terms of $L_0$, it follows that choosing $\delta_0$ small enough gives $\tilde{\gamma}\cap\partial \Omega = \{x\}$ so that $\gamma\cap \partial\varphi(\Omega) = \{\varphi(x)\}$ as desired.

Set $\Omega' = \varphi(\Omega)$. Now, $r_{\Omega'}$ is well-defined and \eqref{e:dphi-almost-id} implies 
$$\frac{1}{2(L_0+1)} \leq r_{\Omega'}(\theta) \leq 2.$$
Using the fact that we only need a dilation and translation of $\Omega'$ to satisfy the inequalities in Definition \ref{def:lip-graph-domain}, we only need to show that $r_{\Omega'}$ satisfies the a Lipschitz bound as in Definition \ref{def:lip-graph-domain} for some constant $L_1(L_0,d)$. Let $a,b\in\partial \Omega'$ with $a = |a|\psi_1$ and $b = |b|\psi_2$. Let $\psi = |\psi_1-\psi_2|$. If $\psi \geq \frac{\pi}{4}$, then the result follows directly from the fact that $\varphi$ is $(1+\delta)$-bi-Lipschitz. If instead $\psi < \frac{\pi}{4}$, then there exist unique $x,y\in \partial\Omega$ such that $a = \varphi(x)$ and $b = \varphi(y)$ and we assume without loss of generality that $|x| \geq |y|$. Let $x = r_\Omega(\theta_1)\theta_1 = |x|\theta_1,\ y = r_{\Omega}(\theta_2)\theta_2 = |y|\theta_2$ and set $\theta = |\theta_1 - \theta_2|$.

We first claim that it suffices to show 
\begin{equation}\label{e:ang-low-bound}
    |\theta_1-\theta_2| \lesssim_{L_0,d} |\psi_1-\psi_2|.
\end{equation}
Indeed, if \eqref{e:ang-low-bound} holds, then
\begin{align*}
    |r_{\Omega'}(\psi_1) - r_{\Omega'}(\psi_2)| &= \left||a| - |b|\right| \leq |a-b| = |\varphi(x) - \varphi(y)| \leq (1+\delta)|x - y|\\ &\leq (1+\delta)(|x - \theta_1|y|| + |\theta_1|y| - y|)\\
    &= (1+\delta)(r_{\Omega}(\theta_1) - r_\Omega(\theta_2) + |y||\theta_1 - \theta_2|)\\
    &\leq (1+\delta)(L_0 + 1)|\theta_1 - \theta_2| \lesssim_{L_0,d}|\psi_1 - \psi_2|.
\end{align*}
Now, we concentrate on proving \ref{e:ang-low-bound}. Put $z = (1-|x-y|)x$ and $c = (1-|a-b|)a$ and define 
\begin{align*}
    \alpha = \angle zxy,\  \alpha' = \angle\varphi(z)ab,\ \beta = \angle cab.
\end{align*}
By the law of cosines,
\begin{align*}
    \cos\alpha &= \frac{|z-x|^2 + |x-y|^2 - |z-y|^2}{2|z-x||x-y|} = 1 - \frac{|z-y|^2}{2|z-x|^2},\\
    \cos\alpha' &= \frac{|\varphi(z)-\varphi(x)|^2 + |\varphi(x)-\varphi(y)|^2 - |\varphi(z) - \varphi(y)|^2}{2|\varphi(x)-\varphi(z)||\varphi(x)-\varphi(y)|} \\
    &\leq \frac{2(1+\delta)^2|z-x|^2 - (1-\delta)^2|z-y|^2}{2(1-\delta)^2|z-x|^2} \leq 1 - \frac{|z-y|^2}{2|z-x|^2} + 5\delta = \cos\alpha + 5\delta.
\end{align*}
Because $\Omega$ is $L_0$-Lipschitz-graphical, $\alpha \gtrsim_{L_0} 1$ so that if $\delta$ is sufficiently small, then $\alpha' \geq \frac{\alpha}{2}$. In addition, \eqref{e:tgamma-x} implies that $\varphi([x,z])\subseteq C_{\varphi(x)}(10\delta,2(|\varphi(x)|-|c|)$ so that $|\beta - \alpha'| \leq 20\delta$, meaning $\beta \geq \frac{\alpha}{4}$ as long as $\delta$ is small enough. To complete the proof, observe that $|\psi_1-\psi_2| \simeq \angle a0b$, $|\theta_1 - \theta_2| \simeq \angle x0y$, $\beta = \angle 0ab$, and $\alpha = \angle 0xy$ so that $\beta \geq \frac{\alpha}{4}$ implies \eqref{e:ang-low-bound} using the fact that $\varphi$ is $(1+\delta)$-bi-Lipschitz.
\end{proof}

Finally, by chaining Lemmas \ref{l:linear-graph-domain} and \ref{l:id-lip-graph}, we can prove the following desired proposition:
\begin{proposition}\label{p:slow-vary-lip-graph}
    Let $\Omega\subseteq \R^{d+1}$ be an $L_0$-Lipschitz graph domain and suppose $g:\Omega\rightarrow g(\Omega)\subseteq\R^{d+1}$ is $C^1$ and $L$-bi-Lipschitz. There exist constants $L_1,\delta_0(L_0,L) > 0$ such that if $\delta < \delta_0$ and 
    \begin{equation}
        |Dg(z)\cdot Dg(w)^{-1} - I| \leq \delta
    \end{equation}
    for all $z,w\in \Omega$, then $g(\Omega)$ is an $L_1$-Lipschitz graph domain.
\end{proposition}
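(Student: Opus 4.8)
The plan is to factor $g$ as a linear map composed with a near-identity map and then invoke Lemmas~\ref{l:linear-graph-domain} and~\ref{l:id-lip-graph} in turn. First I would normalize the picture: composing $\Omega$ with the affine map from Definition~\ref{def:lip-graph-domain} (so $\Omega$ is presented with constant $L_0$, and in particular $0\in\Omega$ since then $B(0,\tfrac{1}{1+L_0})\subseteq\Omega$) and composing the target with the translation sending $g(0)$ to $0$ is harmless. Indeed, by Lemma~\ref{l:linear-graph-domain} an affine bi-Lipschitz change of coordinates changes the Lipschitz-graph constant only in a controlled way; a dilation-rotation has linear part whose norm and inverse-norm multiply to $1$, so the bi-Lipschitz constant of $g$ is unchanged; and conjugating $Dg$ by that linear part (and adding a constant to $g$) leaves the hypothesis $|Dg(z)\cdot Dg(w)^{-1}-I|\leq\delta$ intact. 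So I may assume $0\in\Omega$ and $g(0)=0$ (extending $g$ to $\overline{\Omega}$ as needed).

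Set $A=Dg(0)$. Since $g$ is $L$-bi-Lipschitz and $C^1$, $A$ is an invertible linear map with $\|A\|\leq L$ and $\|A^{-1}\|\leq L$, hence an $L$-bi-Lipschitz affine map. Put $\varphi=A^{-1}\circ g\colon\overline\Omega\to\varphi(\overline\Omega)$. From $D\varphi(z)-I=A^{-1}(Dg(z)-A)=Dg(0)^{-1}[\,Dg(z)\,Dg(0)^{-1}-I\,]\,Dg(0)$ and the hypothesis with $w=0$ I get $|D\varphi(z)-I|\leq\|A^{-1}\|\,\|A\|\,\delta\leq L^{2}\delta$ for all $z\in\overline\Omega$. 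Moreover $\varphi$ is $L^{2}$-bi-Lipschitz, being a composition of two $L$-bi-Lipschitz maps, and $g(\Omega)=A(\varphi(\Omega))$.

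Next I would verify that $\varphi$ fits the hypotheses of Lemma~\ref{l:id-lip-graph}. The derivative bound is the estimate just obtained. For the quantitative bi-Lipschitz bound I would use that a Lipschitz graph domain is quasiconvex with constant depending only on $L_0$ and $d$: any two points of $\Omega$ can be joined inside $\Omega$ by a rectifiable path of length $\lesssim_{L_0,d}$ their Euclidean distance, using the radial structure together with the fixed ball $B(0,\tfrac{1}{1+L_0})\subseteq\Omega$. Integrating $D\varphi=I+(D\varphi-I)$ along such a path upgrades the pointwise bound $|D\varphi-I|\leq L^{2}\delta$ to $\bigl|\,|\varphi(x)-\varphi(y)|-|x-y|\,\bigr|\leq C_0(L_0,d)L^{2}\delta\,|x-y|$, i.e.\ $\varphi$ is $(1+\delta')$-bi-Lipschitz with $\delta'=C_0(L_0,d)L^{2}\delta$. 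Choosing $\delta_0=\delta_0(L_0,L,d)$ small enough that $\delta<\delta_0$ forces $\delta'$ below the threshold in Lemma~\ref{l:id-lip-graph}, that lemma yields $L_1'=L_1'(L_0,d)$ with $\varphi(\Omega)$ an $L_1'$-Lipschitz graph domain. Finally, applying Lemma~\ref{l:linear-graph-domain} to the $L$-bi-Lipschitz affine map $A$ produces $L_1=L_1(L_0,d,L)$ for which $g(\Omega)=A(\varphi(\Omega))$ is an $L_1$-Lipschitz graph domain, which is the claim.

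Essentially everything here is bookkeeping once Lemmas~\ref{l:linear-graph-domain} and~\ref{l:id-lip-graph} are in hand; the one step needing genuine (if standard) input is the quasiconvexity of Lipschitz graph domains, used to pass from $|D\varphi-I|\leq L^{2}\delta$ to a $(1+\delta')$-bi-Lipschitz bound for $\varphi$. Alternatively, this could be folded into Lemma~\ref{l:id-lip-graph} itself, since every use there of the global bi-Lipschitz hypothesis is for configurations of points that the same quasiconvexity joins by short interior paths; one would then weaken that lemma's hypothesis to a pointwise derivative bound.
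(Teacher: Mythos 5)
Your proof is correct and follows essentially the same route as the paper: factor $g$ through its linear approximation $Dg(0)$ and invoke Lemma~\ref{l:linear-graph-domain} and Lemma~\ref{l:id-lip-graph}; the only cosmetic difference is that you write $g=A\circ\varphi$ rather than the paper's $g=\varphi\circ L$, which costs you a factor of $L^2$ in the derivative bound but changes nothing substantive. Your explicit observation that quasiconvexity of the Lipschitz graph domain is needed to upgrade the pointwise bound $|D\varphi-I|\le L^2\delta$ to the $(1+\delta')$-bi-Lipschitz hypothesis of Lemma~\ref{l:id-lip-graph} is in fact a detail the paper's two-line proof glosses over, so you have filled a small gap rather than introduced one.
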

\begin{proof}
    Suppose $\Omega$ is Lipschitz graphical around $0$ and set
    $$L(z) = Dg(0)\cdot z.$$
    By Lemma \ref{l:linear-graph-domain}, $L(\Omega)$ is $L_0'(L,L_0)$-Lipschitz graphical. The map $\varphi:L(\Omega)\rightarrow g(\Omega)$ given by
    $$\varphi = g\circ L^{-1}$$
    satisfies
    $$D\varphi(z)(L(w)) = Dg(L^{-1}(L(w)))\cdot DL^{-1}(L(w)) = Dg(w)\cdot Dg(0)^{-1}\cdot w$$ 
    so that 
    $$|D\varphi - I| \leq \delta.$$
    By taking $\delta_0$ sufficiently small in terms of $L_0'$, Lemma \ref{l:id-lip-graph} implies that there exists $L_1(L_0')$ such that $g(\Omega)$ is $L_1$-Lipschitz graphical.
\end{proof}

\section{Controlling the change in the derivative of Reifenberg parameterizations}\label{sec:Dg}
The goal of this section is to give conditions under which we can say that the change in the derivative of a Reifenberg parameterization $g:\R^{d+1}\rightarrow\R^{d+1}$ over some subset is small, and hence we have a chance of applying a result like Lemma \ref{l:id-lip-graph}. This is specified exactly in Proposition \ref{p:DgDginv-expanded} below.

\subsection{Preliminary derivative estimates and regularity}
In this subsection, we review some properties of the maps used in the construction of a Reifenberg parameterization $g$ that we need to make specific estimates on the change in $Dg$. First, the surface $\Sigma_k$ has a nice local Lipschitz representation:
\begin{lemma}[\cite{DT12} Lemma 6.12]\label{DT-loc-lip}
For $k \geq 0$ and $y\in \Sigma_k$, there is an affine $d$-plane
$P$ through $y$ and a $C\varepsilon$-Lipschitz and $C^2$ function 
$A : P \to P^\perp$ such that  $|A(x)| \leq C\epsilon r_k$ for all $x\in B(y,19r_k)$ and
$$
\Sigma_k \cap B(y,19r_k) = \Gamma \cap B(y,19r_k).
$$
where $\Gamma$ denotes the graph of $A$ over $P$.
\end{lemma}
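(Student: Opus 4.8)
The plan is to reconstruct David and Toro's argument (their Lemma~6.12), which is a statement purely about the structure of the approximating surfaces $\Sigma_k=f_k(\Sigma_0)$. Fix $k\geq 0$ and $y\in\Sigma_k$, write $y=f_k(x)$ with $x\in\Sigma_0=\R^d$, and take $P$ to be the affine $d$-plane $y+T_k(x)$; this is well defined since $T_k(x)=R_k(x)(T_0(x))$ by Proposition~\ref{DT-isometries}. Let $\pi_P$ be orthogonal projection onto the direction of $P$ and $\pi_{P^\perp}$ the complementary projection. I would produce the graph function by projecting onto $P$ the portion of $f_k(\Sigma_0)$ lying over $B(y,19r_k)$: if $q:=\pi_P\circ f_k|_{\Sigma_0}$ maps a neighbourhood of $x$ homeomorphically onto a neighbourhood of $y$ in $P$, then $A:=\pi_{P^\perp}\circ f_k\circ q^{-1}$ (normalized so that $A(y)=0$) is the desired function, and its graph over $P$ coincides with $\Sigma_k$ near $y$.

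The technical heart, and the step I expect to be the main obstacle, is the flatness estimate $d_{y,19r_k}(\Sigma_k,P)\lesssim\epsilon$. I would prove this by an induction over the generations $j=0,\dots,k$ that propagates $C\epsilon$-flatness: at generation $j$ one shows that near $f_j(x)$, at scale $r_j$, $\Sigma_j$ lies within Hausdorff distance $C\epsilon r_j$ of a single net plane $P_{i,j}$ with $x_{i,j}$ close to $f_j(x)$, using that $\sigma_j$ displaces points of $\Sigma_j$ by at most $C\epsilon r_j$ (see \eqref{DT-sigma-id}), that the coherence conditions \eqref{DT-co1} and \eqref{DT-co3} force the planes $P_{i,j}$ over nearby net points and over consecutive generations to be $C\epsilon$-close in the relevant balls, and that each $f_j$ distorts distances by a factor $1\pm C\epsilon$ so that scales are essentially preserved. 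Passing to generation $k$ gives the estimate with some net plane $P_{i,k}$ in place of $P$, and Lemma~\ref{DT-plane-dist} together with Proposition~\ref{DT-isometries} (which ties $T_k(x)$ to the local net planes) upgrades this to $P=y+T_k(x)$ at the cost of another $C\epsilon$. This generation-by-generation analysis is exactly the part of \cite{DT12} that requires the most bookkeeping, and I expect it to be the bulk of a full write-up.

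Granting the flatness estimate, the graph structure is essentially soft. For $x'$ with $f_k(x')\in B(y,19r_k)$ the map $Df_k(x')$ carries $T_0(x')$ onto $T_k(x')$, and $T_k(x')$ stays within $C\epsilon$ of $T_k(x)=$ the direction of $P$ by Lemma~\ref{DT-plane-dist}; since $Df_k(x')|_{T_0(x')}$ has singular values in $[1-C\epsilon,1+C\epsilon]$, the composition $Dq(x')=\pi_P\circ Df_k(x')|_{T_0(x')}$ is a $(1+C\epsilon)$-bi-Lipschitz isomorphism onto the direction of $P$. Hence $q$ is $(1+C\epsilon)$-bi-Lipschitz on a neighbourhood of $x$ whose image contains a ball $B_P(y,18r_k)$, so $q^{-1}$ is well defined there and $A=\pi_{P^\perp}\circ f_k\circ q^{-1}$ satisfies $DA=(\pi_{P^\perp}\circ Df_k)\circ(\pi_P\circ Df_k)^{-1}$. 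The first factor has operator norm $\lesssim\epsilon$ because $Df_k$ sends $T_0$ into directions within $C\epsilon$ of the direction of $P$, while the second has norm $\lesssim 1$, so $A$ is $C\epsilon$-Lipschitz; the bound $|A|\leq C\epsilon r_k$ on $B(y,19r_k)$ is immediate from the flatness estimate, since $|A(p)|$ equals the distance from the corresponding point of $\Sigma_k$ to $P$. For regularity, each $\sigma_j$ is built from the $C^\infty$ functions $\theta_{i,j}$ and the affine projections $\pi_{i,j}$, so $f_k$ is $C^\infty$; since $Dq$ is invertible near $x$, the inverse function theorem makes $q^{-1}$, hence $A$, of class $C^\infty\subseteq C^2$.

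Finally, the identity $\Sigma_k\cap B(y,19r_k)=\Gamma\cap B(y,19r_k)$ would follow from the fact that $f_k$ is a global homeomorphism of $\R^{d+1}$ distorting distances by $1\pm C\epsilon$ at each relevant scale (part of David and Toro's construction), so points of $\Sigma_0$ far from $x$ map far from $y$: any $z\in\Sigma_k\cap B(y,19r_k)$ is $f_k(x')$ with $|x'-x|\lesssim r_k$, hence lies on $\Gamma$ by construction, giving $\subseteq$; conversely, defining $q$ on a slightly larger neighbourhood of $x$ (whose image contains $B_P(y,20r_k)$) one sees that the part of $\Gamma$ over $B(y,19r_k)$ consists of $f_k$-images of points of $\Sigma_0$ within distance $\lesssim r_k$ of $x$, hence lies in $\Sigma_k$. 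Tracking the numerical constant $19$ through the various $1\pm C\epsilon$ comparisons is routine once the preceding steps are in place.
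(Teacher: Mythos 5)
This lemma is quoted from David--Toro's monograph (\cite{DT12}, Lemma 6.12); the paper does not prove it, so there is no internal proof to compare against. Your reconstruction identifies the right ingredients — the choice $P=y+T_k(x)$, the flatness induction over generations using \eqref{DT-sigma-id}, \eqref{DT-co1}, \eqref{DT-co3}, and the tangent-plane control via Lemma~\ref{DT-plane-dist} — but there is a genuine error in the step that derives the graph structure.

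You assert that ``$Df_k(x')|_{T_0(x')}$ has singular values in $[1-C\epsilon,1+C\epsilon]$'' with a constant uniform in $k$, and use this to conclude that $q=\pi_P\circ f_k|_{\Sigma_0}$ is $(1+C\epsilon)$-bi-Lipschitz and that the two factors of $DA=(\pi_{P^\perp}\circ Df_k)\circ(\pi_P\circ Df_k)^{-1}$ have norms $\lesssim\epsilon$ and $\lesssim 1$ respectively. None of this is true uniformly in $k$: Lemma~\ref{DT-Dsig-1} only gives $(1+C\epsilon)$-bi-Lipschitz bounds for each individual $D\sigma_j|_{T\Sigma_j}$, so composing from $\Sigma_0$ up to $\Sigma_k$ yields singular values in $[(1-C\epsilon)^k,(1+C\epsilon)^k]$. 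Uniform control of the product is precisely what would make the global map $g$ bi-Lipschitz, which David--Toro obtain only under the extra Carleson-type hypothesis $\sum_k\epsilon_k'(f_k(z))^2\le L$ (Theorem~\ref{DT-thm}); Lemma 6.12 holds without that hypothesis, so its proof cannot invoke uniform singular-value bounds on $Df_k|_{T_0}$.

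The correct observation — which repairs the argument — is that the Lipschitz bound on $A$ is a \emph{ratio} statement that is insensitive to the overall stretching of $Df_k$. For a unit $u\in P$, set $v=(\pi_P\circ Df_k)^{-1}u$ and $w=Df_k(x')v\in T\Sigma_k(f_k(x'))$; then $u=\pi_P(w)$ and $DA\,u=\pi_{P^\perp}(w)$, so
\begin{equation*}
\frac{|DA\,u|}{|u|}=\frac{|\pi_{P^\perp}(w)|}{|\pi_P(w)|}\le\frac{C\epsilon|w|}{(1-C\epsilon)|w|}\lesssim\epsilon,
\end{equation*}
using only $\ang(T\Sigma_k(f_k(x')),P)\le C\epsilon$ from Lemma~\ref{DT-plane-dist}, and not any quantitative bound on $|Df_k|$. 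For the well-definedness of $q^{-1}$ over $B_P(y,18r_k)$ (and hence the set equality $\Sigma_k\cap B=\Gamma\cap B$), one should not appeal to bi-Lipschitzness of $q$ either, but rather run a degree or connectedness argument for $\pi_P|_{\Sigma_k}$ on $B(y,19r_k)$ based on the flatness estimate and the fact that $D(\pi_P|_{\Sigma_k})$ is everywhere invertible there (again from the angle bound). That is in fact much closer to what David--Toro do: their Lemma 6.12 is a statement about the surface $\Sigma_k$ near $y$ at scale $r_k$, proved from the local flatness and tangent-plane control of $\Sigma_k$ itself, not by pulling all the way back to $\Sigma_0$ via $f_k$. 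As written, the intermediate claims are false, so the argument as stated is not a proof; the conclusion is salvageable with the fix above.
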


Now, we record distortion estimates for $D\sigma_k$ as in \cite{DT12} chapter 7. Importantly, $D\sigma_k$ is very close to the identity in the following sense:
\begin{lemma}[\cite{DT12} Lemma 7.1]\label{DT-Dsig-1}
For $k \geq 0$, $\sigma_k$ is a $C^2$-diffeomorphism
from $\Sigma_k$ to $\Sigma_{k+1}$ and, for $y\in \Sigma_k$,
\begin{equation*}
    D\sigma_k(y) : T\Sigma_k(y) \to T\Sigma_{k+1}(\sigma_k(y))
    \hbox{ is bijective and $(1+C\varepsilon)$-bi-Lipschitz.}
\end{equation*}
In addition,
\begin{equation*}
    |D\sigma_k(y) \cdot v - v| \leq C \varepsilon |v|
    \ \hbox{ for $y\in \Sigma_k$ and } v \in T\Sigma_k(y),
\end{equation*}
\begin{equation*}
    |\sigma_k(y)-\sigma_k(y') - y+y'| \leq C \varepsilon |y-y'|
    \ \hbox{ for } y,y'\in \Sigma_k.
\end{equation*}
\end{lemma}
More precise estimates can be obtained when restricting $D\sigma_k$ to its action on vectors tangent to $\Sigma_k$. The best way to capture this is to use the $\epsilon_k'$ numbers defined in \eqref{e:epsilonk'}.
\begin{lemma}[\cite{DT12} Lemma 7.32]\label{DT-Dsig-2}
For $k \geq 1$ and $y\in \Sigma_k \cap V^8_k$, choose $i\in J_k$
such that $|y-x_{i,k}| \leq 10r_k$. Then
\begin{equation}\label{DT-epk-1}
    |D\pi_{i,k} \circ D\sigma_{k}(y) \circ D\pi_{i,k} -D\pi_{i,k}| 
\leq C \varepsilon'_k(y)^2,
\end{equation}
and 
\begin{equation}\label{DT-epk-2}
    \big| |D\sigma_k(y)\cdot v| - 1 \big| \leq C \varepsilon'_k(y)^2 
    \ \hbox{ for every unit vector } v \in T\Sigma_k(y).
\end{equation}
\end{lemma}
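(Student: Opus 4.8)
The plan is to differentiate the explicit formula for $\sigma_k$ on $V_k^8$ and use the partition-of-unity identity to cancel the only term that is first order in $r_k^{-1}$. On $V_k^8$ one has $\sigma_k(y)=\sum_{j\in J_k}\theta_{j,k}(y)\,\pi_{j,k}(y)$ with $\psi_k$ vanishing identically near $\Sigma_k\cap V_k^8$, so $\sum_{j}\nabla\theta_{j,k}(y)=0$ there. Since each $\pi_{j,k}$ is affine with constant linear part $D\pi_{j,k}$ (orthogonal projection onto the direction of $P_{j,k}$), I would first write
\begin{equation*}
    D\sigma_k(y)\cdot w=\sum_{j\in J_k}(\nabla\theta_{j,k}(y)\cdot w)\,\pi_{j,k}(y)+\sum_{j\in J_k}\theta_{j,k}(y)\,D\pi_{j,k}\cdot w,
\end{equation*}
and then subtract $\big(\sum_j\nabla\theta_{j,k}(y)\cdot w\big)\pi_{i,k}(y)=0$ to recast the first sum as $\sum_{j}(\nabla\theta_{j,k}(y)\cdot w)\,[\pi_{j,k}(y)-\pi_{i,k}(y)]$. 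Only a dimension-controlled number of indices $j$ contribute, namely those with $y\in 10B_{j,k}$; for such $j$ we have $y\in 10B_{j,k}\cap 11B_{i,k}$, so $d_{x_{i,k},100r_k}(P_{j,k},P_{i,k})\le\epsilon_k'(y)$ directly from the definition \eqref{e:epsilonk'}. Composing on both sides with $D\pi_{i,k}$ as in \eqref{DT-epk-1} (take $w=D\pi_{i,k}v$ and apply $D\pi_{i,k}$ to the output) and using $|\nabla\theta_{j,k}|\lesssim r_k^{-1}$, I would reduce the whole statement to the two local estimates
\begin{enumerate}[label=(\alph*)]
    \item $\big|D\pi_{i,k}\big(\pi_{j,k}(y)-\pi_{i,k}(y)\big)\big|\lesssim\epsilon_k'(y)^2\,r_k$,
    \item $\big|D\pi_{i,k}\,D\pi_{j,k}\,D\pi_{i,k}-D\pi_{i,k}\big|\lesssim\epsilon_k'(y)^2$,
\end{enumerate}
for each relevant $j$; summing over such $j$ then yields \eqref{DT-epk-1}.

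The estimates (a) and (b) are where the quadratic gain is produced, and the mechanism is the same in both: $D\pi_{i,k}$ annihilates the normal space of $P_{i,k}$, which is $\epsilon_k'(y)$-close to the normal space of $P_{j,k}$, while the vector being projected lies — to first order in $\epsilon_k'(y)$ — in one of these normal spaces. For (b) I would write $D\pi_{i,k}D\pi_{j,k}D\pi_{i,k}-D\pi_{i,k}=D\pi_{i,k}(D\pi_{j,k}-D\pi_{i,k})D\pi_{i,k}$; on the range of $D\pi_{i,k}$ the operator $D\pi_{j,k}-D\pi_{i,k}$ has norm $\lesssim\epsilon_k'(y)$ and its output lies in $\ker D\pi_{j,k}$, hence within a factor $\epsilon_k'(y)$ of $\ker D\pi_{i,k}$, so a further application of $D\pi_{i,k}$ contributes the second $\epsilon_k'(y)$. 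For (a) I would use that $y-\pi_{i,k}(y)\in\ker D\pi_{i,k}$ and $y-\pi_{j,k}(y)\in\ker D\pi_{j,k}$, so that $\pi_{j,k}(y)-\pi_{i,k}(y)=(y-\pi_{i,k}(y))-(y-\pi_{j,k}(y))$ and therefore $D\pi_{i,k}(\pi_{j,k}(y)-\pi_{i,k}(y))=-D\pi_{i,k}(y-\pi_{j,k}(y))$; applying $D\pi_{i,k}$ to $y-\pi_{j,k}(y)\in\ker D\pi_{j,k}$ gains one factor $\epsilon_k'(y)$, and the second factor comes from the David–Toro estimate bounding $|y-\pi_{j,k}(y)|=\dist(y,P_{j,k})$ by $\epsilon_k'(y)\,r_k$ — i.e. the fact that $\Sigma_k$ hugs the planes $P_{j,k}$ at scale $r_k$, not just the a priori bound $\dist(y,P_{j,k})\lesssim\epsilon r_k$.

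Finally, I would deduce \eqref{DT-epk-2} from \eqref{DT-epk-1} together with Lemma \ref{DT-Dsig-1}. For a unit $v\in T\Sigma_k(y)$, the plane $T\Sigma_k(y)$ is $\epsilon_k'(y)$-close to the direction of $P_{i,k}$ and $T\Sigma_{k+1}(\sigma_k(y))$ is $\epsilon_k'(y)$-close to the range of $D\pi_{i,k}$ (cf. Lemma \ref{DT-plane-dist}), while $D\sigma_k(y)v\in T\Sigma_{k+1}(\sigma_k(y))$; hence $\big||D\pi_{i,k}v|-1\big|\lesssim\epsilon_k'(y)^2$ and $\big||D\sigma_k(y)v|-|D\pi_{i,k}D\sigma_k(y)D\pi_{i,k}v|\big|\lesssim\epsilon_k'(y)^2$, and combining with \eqref{DT-epk-1} gives $\big||D\sigma_k(y)v|-1\big|\lesssim\epsilon_k'(y)^2$. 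The hard part will be the bookkeeping inside (a) and (b): I must check at every step that an estimate which naively degrades to order $\epsilon$ is in fact killed by an orthogonality or partition-of-unity cancellation, so that the final bound is genuinely quadratic in $\epsilon_k'(y)$ rather than in $\epsilon$ — in particular securing the refined input $\dist(y,P_{j,k})\lesssim\epsilon_k'(y)\,r_k$, which is the sharpest of the David–Toro ingredients used here.
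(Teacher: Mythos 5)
This lemma is quoted verbatim from \cite{DT12} (their Lemma~7.32); the paper you are reading contains no proof of it, so there is no ``paper's own proof'' to compare against.

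Your sketch does reproduce the core mechanism of David--Toro's original argument: on $V_k^8$ one has $\psi_k\equiv 0$, so differentiating $\sigma_k=\sum_j\theta_{j,k}\pi_{j,k}$ and using $\sum_j\nabla\theta_{j,k}=0$ to replace $\pi_{j,k}(y)$ by $\pi_{j,k}(y)-\pi_{i,k}(y)$ is exactly the cancellation that kills the $O(r_k^{-1})$ term, and the quadratic gain in both of your items (a) and (b) comes, as you say, from composing two $\epsilon_k'$-small effects which live respectively in nearly-orthogonal subspaces. Your reduction in (b) is clean and correct: $(D\pi_{j,k}-D\pi_{i,k})$ sends the range of $D\pi_{i,k}$ into $\ker D\pi_{j,k}$, and a second application of $D\pi_{i,k}$ supplies the second factor of $\epsilon_k'(y)$.

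Two points deserve to be made explicit. First, your item (a) hinges on the refined surface-to-plane estimate $\dist(y,P_{j,k})\lesssim\epsilon_k'(y)\,r_k$ rather than the crude $\lesssim\epsilon\,r_k$; this is indeed available in David--Toro (it is established earlier in their Chapter~7, essentially by comparing $P_{j,k}$ to the previous-generation plane through $\epsilon_k'$ and using that $\Sigma_k=\sigma_{k-1}(\Sigma_{k-1})$ inherits closeness to the previous generation's planes). You correctly identify it as the sharpest input, but in a full write-up you would need to cite or reprove it rather than treat it as obvious. Second, your deduction of \eqref{DT-epk-2} from \eqref{DT-epk-1} invokes closeness of $T\Sigma_{k+1}(\sigma_k(y))$ to the range of $D\pi_{i,k}$ at scale $\epsilon_k'(y)$; the cleaner route is to decompose $v=D\pi_{i,k}v+(v-D\pi_{i,k}v)$, observe that the explicit formula for $D\sigma_k$ (after the partition-of-unity cancellation) contracts the normal component $v-D\pi_{i,k}v$ by a further factor $\lesssim\epsilon_k'(y)$, and separately note that $(I-D\pi_{i,k})D\sigma_k(y)D\pi_{i,k}v$ has size $\lesssim\epsilon_k'(y)$ so contributes only $O(\epsilon_k'(y)^2)$ to the Euclidean norm by Pythagoras. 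That gives $||D\sigma_k(y)v|-1|\lesssim\epsilon_k'(y)^2$ without appealing to tangent-plane comparisons across generations, which, as defined, $\epsilon_k'$ does not directly control (it compares generations $k$ and $k-1$, not $k$ and $k+1$). With these two caveats addressed, your strategy is the right one.
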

Similarly, these numbers also control the distance between tangent planes to the surface and nearby $P_{j,k}$. For any $k\geq 0$ and $y\in\Sigma_k\cap V_k^8$ and $i\in J_k$ such that $|y-x_{i,k}|\leq 10r_k$, we have (\cite{DT12} (7.22))
\begin{equation}\label{DT-plane-comparison}
    \ang(T\Sigma_k(y),P_{i,k})\leq C\epsilon_k'(y).
\end{equation}

Finally, we also use an estimate on $D^2\sigma_k$ obtained in by Ghinassi in \cite{Ghi17} in work on constructing $C^{1,\alpha}$ parametrizations.
\begin{proposition}[\cite{Ghi17} Proposition 3.12 \label{Ghi-Dsquare}]
For $k\geq 0$, $j\in J_k,\ y\in\Sigma_k\cap 45B_{j,k}$, we have
\begin{equation*}
    |D^2\sigma_k(y) - 2D\psi_k(y)D\pi_{j,k}^\perp|\leq C\frac{\epsilon}{r_k}.
\end{equation*}
\end{proposition}
where we interpret the norm on the tensor $D^2\sigma_k$ as the Euclidean norm on $\R^{n^3}$. We will apply this result to get the following bound on $D^2\sigma_k$ when acting on vectors tangent to $\Sigma_k$.
\begin{lemma}\label{l:D2-sig-control}
    Let $y\in\Sigma_k$ and $v\in T\Sigma_k(y)$. We have
    \begin{equation*}
        |D^2\sigma_k(y)\cdot v| \leq C\frac{\epsilon}{r_k}.
    \end{equation*}
\end{lemma}
\begin{proof}
    If $y\not\in V_k^{11}$, then $\sigma_k$ is the identity in a neighborhood of $y$ and the result follows. Otherwise, $y\in V_k^{11}$ and there exists $j\in J_k$ such that $y\in\Sigma_k\cap11B_{j,k} \subseteq\Sigma_k\cap 45B_{j,k}$. This means we can apply Proposition \ref{Ghi-Dsquare} to get
    \begin{equation*}
        |D^2\sigma_k\cdot v| \leq C\frac{\epsilon}{r_k}|v| + C|D\psi_k(y)\cdot D\pi_{j,k}^\perp\cdot v| \leq C\frac{\epsilon}{r_k}|v|
    \end{equation*}
    where in the last inequality we used the fact $|D\psi_k|\leq \frac{C}{r_k}$ and that $v\in T\Sigma_k(y)$ implies $|D\pi_{j,k}^\perp\cdot v| \leq C\epsilon|v|$ due to the fact that $\ang(P_{j,k},T\Sigma_k(y)) \leq C\epsilon$.
\end{proof}

We also provide the following lemma and proof adapted from a proof of \cite{DT12} to fit our needs.
\begin{lemma}[cf. \cite{DT12} (11.22)]\label{l:short-curve}
Suppose $\Sigma_0$ is such that for any $x,x'\in\Sigma_0$, there exists a curve $\gamma_0$ connecting $x$ and $x'$ with $\ell(\gamma_0)\leq (1+C\epsilon)|x-x'|$. Let $1 \leq M^4\epsilon < c(d) < 1$ with $c(d)$ sufficiently small and $k\geq 0$ be such that $|f_k(x)-f_k(x')| < Mr_k$. There is a curve $\gamma:I\rightarrow\Sigma_k$ connecting $f_k(x)$ and $f_k(x')$ such that $\ell(\gamma)\leq 2|f_k(x) - f_k(x')|$. 
\end{lemma}
\begin{proof}
We first prove the following claim:
\begin{claim}
For any $0\leq p \leq k$, 
\begin{equation}\label{e:induct-distance}
    |f_{k-p}(x) - f_{k-p}(x^\prime)| < \frac{Mr_{k-p}}{5^p}.
\end{equation}
\end{claim}
\begin{claimproof}
We prove this by induction. Indeed, observe that the third display in Lemma \ref{DT-Dsig-1} implies $\sigma_{k-p-1}^{-1}$ is $(1+C\epsilon)$-Lipschitz on points in $\Sigma_{k-p}$. This means 
\begin{equation}\nonumber
    |f_{k-p-1}(x) - f_{k-p-1}(x^\prime)| = |\sigma_{k-p-1}^{-1}(f_{k-p}(x)) - \sigma_{k-p-1}^{-1}(f_{k-p}(x^\prime))| \leq (1+C\epsilon)|f_{k-p}(x) - f_{k-p}(x^\prime)|.
\end{equation}
Applying this for $p=1$ gives
\begin{equation}\nonumber
    |f_{k-1}(x) - f_{k-1}(x^\prime)| < (1+C\epsilon)(Mr_k) < \frac{Mr_{k-1}}{5}.
\end{equation}
This proves the base case. Assuming the claim holds for some $p$, we get
\begin{equation}\nonumber
    |f_{k-p-1}(x) - f_{k-p-1}(x^\prime)| \leq (1+C\epsilon)\frac{Mr_{k-p}}{5^p} < \frac{Mr_{k-p-1}}{5^{p+1}}.
\end{equation}
\end{claimproof}

To continue the proof of the lemma, we modify the proof of \cite{DT12} (11.22). If $|f_k(x) - f_k(x')| < 18r_k$, then the claim follows immediately from the local Lipschitz graph description of $\Sigma_k$ in Lemma \ref{DT-loc-lip}. So, assume $|f_k(x) - f_k(x')| > 18r_k$ and suppose first that there exists an integer $0\leq m\leq k$ such that $|f_m(x) - f_m(x')| < 5r_m$. We calculate
\begin{align*}
    \frac{Mr_m}{5^{k-m}} < 5r_m \iff 
    \log_5M - 1 < k-m
\end{align*}
so that by the above claim we can assume $k - m < \log_5M$. Applying the Lipschitz graph lemma for $B(f_m(x),19r_m)$, we see that there exists a path $\gamma_m\subseteq\Sigma_m$ such that
\begin{align*}
    \ell(\gamma_m) \leq (1+C\epsilon)|f_m(x) - f_m(x')| &\leq (1+C\epsilon)(C\epsilon r_m + |f_k(x) - f_k(x')|)\\
    &\leq (1+C\epsilon)|f_k(x)-f_k(x')| + C\epsilon r_k\cdot10^{k-m}\\
    &\leq (1+C\epsilon)|f_k(x)-f_k(x')| + C\epsilon r_kM^\lambda
\end{align*}
where $1 < \lambda < 2$ solves $10^{\log_5M} = M^\lambda$.
On the other hand, since $|f_m(x) - f_m(x')| < 5r_m$, we get $\ell(\gamma_m) \leq 10r_m$ and so we can choose a maximal chain of $N \leq \frac{10r_m}{10r_k} = 10^{k-m} \leq M^\lambda$ points contained in $\gamma_m$ with consecutive points separated by a distance of at least $11r_k$ beginning at $f_m(x)$ and ending at $f_m(x')$. Call this collection of points $\{f_m(x_l)\}_{l=1}^N$ for $x_l\in\Sigma_0$. This implies the total length of the string of points $\{f_k(x_l)\}$ is
\begin{align*}
    L' = \sum_{l=1}^N|f_k(x_l) - f_k(x_{l+1})| &\leq \sum_{l=1}^N[C\epsilon r_m + |f_m(x_l) - f_m(x_{l+1})|] \leq C\epsilon r_kM^\lambda\cdot M^\lambda+ \ell(\gamma_m)\\
    &\leq C\epsilon r_k M^4 + (1+C\epsilon)|f_k(x) - f_k(x')|.
\end{align*}
In addition, for any admissible $l$ we can calculate
\begin{align}\label{e:consecutive-gamma-pts}
    |f_k(x_l) - f_k(x_{l+1})| \leq C\epsilon r_m + |f_m(x_l) - f_m(x_{l+1})| \leq C\epsilon r_kM^2+ 11r_k < 12r_k.
\end{align}
Using \eqref{e:consecutive-gamma-pts} and Lemma \ref{DT-loc-lip} once again, we connect each pair $(f_k(x_l),f_k(x_{l+1}))$ by a curve $\gamma_l\subseteq\Sigma_k$ of length $\ell(\gamma_l)\leq (1+C\epsilon)|f_k(x_l) - f_k(x_{l+1})|$ to get a curve $\gamma$ with
\begin{align*}
    \ell(\gamma) \leq (1+C\epsilon)L' \leq (1+C\epsilon)|f_k(x) - f_k(x)| + C\epsilon r_k M^4 \leq 2|f_k(x) - f_k(x')|
\end{align*}
using the fact that $|f_k(x) - f_k(x')| > 18r_k$ and $M^4\epsilon$ is sufficiently small. This completes the proof if there exists such an $m$ where $|f_m(x)-f_m(x')| < 5r_m$. If there does not exist such an $m$ ($k$ is too small compared to $|f_k(x) - f_k(x')|$), then we instead use the assumed curve $\gamma_0\subseteq\Sigma_0$ in place of $\gamma_m$ and argue as in the previous case.
\end{proof}
We also recall a reverse triangle inequality.
\begin{lemma}(Reverse Triangle Inequality)\label{l:reverse-triangle}
Let $u,v\in\R^{d+1}$ with $\langle u,v\rangle \geq -\frac{1}{2}|u||v|$. Then
\begin{equation}
    |u| + |v| \leq 2|u+v|.
\end{equation}
\end{lemma}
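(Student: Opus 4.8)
The plan is to reduce everything to a one-variable inequality in the two norms $a=|u|$ and $b=|v|$. First I would expand $|u+v|^2 = |u|^2 + 2\langle u,v\rangle + |v|^2$ and feed in the hypothesis $\langle u,v\rangle \geq -\tfrac12|u||v|$ to obtain the lower bound
\begin{equation*}
    |u+v|^2 \;\geq\; |u|^2 + |v|^2 - |u||v| \;=\; a^2 + b^2 - ab.
\end{equation*}
Since both sides of the desired inequality $|u|+|v|\leq 2|u+v|$ are nonnegative, it is equivalent to its square $(|u|+|v|)^2 \leq 4|u+v|^2$, so it suffices to verify $(a+b)^2 \leq 4(a^2+b^2-ab)$.

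The next step is the elementary algebra: $4(a^2+b^2-ab) - (a+b)^2 = 3a^2 - 6ab + 3b^2 = 3(a-b)^2 \geq 0$. Combining this with the displayed lower bound gives $(|u|+|v|)^2 = (a+b)^2 \leq 4(a^2+b^2-ab) \leq 4|u+v|^2$, and taking square roots yields the claim. One should also note the degenerate case where $u+v=0$: then the hypothesis forces $0 = |u+v|^2 \geq a^2+b^2-ab \geq ab \geq 0$ (using $a^2+b^2\geq 2ab$), so $ab=0$ and in fact $a=b=0$, and the inequality holds trivially; but this case is already subsumed in the computation above, so no separate argument is strictly needed.

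There is essentially no obstacle here — the only thing to be a little careful about is that squaring an inequality is reversible precisely because both sides are nonnegative, which holds since norms are nonnegative. The lemma is a quantitative replacement for the trivial observation that when $u$ and $v$ do not point in nearly opposite directions, cancellation in $u+v$ is controlled; it will be applied later to glue length estimates for curves built piece by piece, where consecutive tangent directions make a bounded angle.
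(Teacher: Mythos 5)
Your proof is correct and complete: the expansion $|u+v|^2 = |u|^2 + 2\langle u,v\rangle + |v|^2 \geq a^2 - ab + b^2$, followed by the identity $4(a^2 - ab + b^2) - (a+b)^2 = 3(a-b)^2 \geq 0$, gives exactly what is needed. The paper states this lemma without proof, so there is nothing to compare against; your argument is the natural one and is the one any reader would supply.
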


\subsection{Controlling the change in $Dg$}

Proposition \ref{p:DgDginv-expanded} follows from a series of computations involving the derivative of the map $g$ produced by Theorem \ref{DT-thm} for a given CCBP. Proposition \ref{p:DgDginv-expanded} says that given a ``central'' point $z\in\R^{d+1}$ and an inflation factor $1 \leq M_0$ such that $M_0^4\epsilon$ is sufficiently small, we can get a set $G_z^{M_0}$ such that $w\in G_z^{M_0}$ means $Dg(w)$ is very close to $Dg(z)$ in the sense of \eqref{e:DgDginv}.

Proposition \ref{p:DgDginv-expanded} follows from Lemmas \ref{l:horizontal} and \ref{l:vertical} which give separate horizontal and vertical estimates respectively. Define the sets of horizontal and vertical vectors by
\begin{align*}
    H = \R^d\times\{0\},\ V = \{0\}^d\times\R.
\end{align*}
These lemmas show how to appropriately bound the individual pieces of the difference $Dg(x,y) - Dg(x',y)$ and $Dg(x',y) - Dg(x',y')$ between points $z=(x,y),z'=(x',y')$ respectively when acting on $v\in\ H\cup V$. Corollaries \ref{c:Dg-horizontal} and \ref{c:Dg-vertical} put these pieces together to get the requisite $Dg$ estimates, from which we prove Proposition \ref{p:DgDginv-expanded}.

For the rest of this section, fix a CCBP $(\Sigma_0, \{B_{j,k}\},\{P_{j,k}\})$ with a sufficiently small constant $\epsilon > 0$. Fix its associated collection of surfaces $\{\Sigma_k\}$ and its associated mappings $\{R_k\},\ \{f_k\}$, and $g:\R^{d+1}\rightarrow\R^{d+1}$. Let $z=(x,y)\in\R^{d+1}$ and suppose $f_{n(y)}(x)\in V_{n(y)}^8$ (See \eqref{e:def-g}.). Define
\begin{equation}
    N_z = \{z'=(x',y')\in\R^{d+1}:\ |y'|\leq |y|,\ \text{and } f_{n(y')}(x')\in V_{n(y')}^8\}.
\end{equation}
We think of $N_z$ as a sort of neighborhood of points within the domain of $g$ whose images are ``connected'' to $g(z)$ through the CCBP. It is for $z'\in N_z$ that we will be able to control $|Dg(z) - Dg(z')|$ appropriately.

\begin{proposition}\label{p:DgDginv-expanded}
Let $M_0,\delta > 0$ be such that $1 \leq M_0^4\epsilon < \delta < c(d)$ with $c(d)$ sufficiently small.  Let $z=(x,y)\in\R^{d}\times\R$ with $f_{n(y)}(x)\in V_{n(y)}^8$. Define
\begin{align*}
    G_z^{M_0} = \big\{z'=(x',y')\in N_z:|f_{n(y)}(x)-f_{n(y)}(x')| < M_0r_{n(y)}&,\ \sum_{k=n(y)}^{n(y')}\epsilon_k'(f_k(x'))^2 < \epsilon,\ \\
    &\ang(T_{n(y')}(x'),T_{n(y)}(x') \leq \delta\big\}.
\end{align*}
There exists $C(d)>0$ such that for any $w\in G_z^{M_0}$, we have
\begin{equation}\label{e:DgDginv}
    |Dg(w)\cdot Dg(z)^{-1} - I|\leq C(d)\delta.
\end{equation}
\end{proposition}

\begin{lemma}[Horizontal Estimates]\label{l:horizontal}
Let $z, M_0, \epsilon$ be as in Proposition \ref{p:DgDginv-expanded}, let $z'\in N_z$, and let $v\in H\cup V$. Let $k$ be such that $\rho_k(y) > 0$. If $|f_k(x) - f_k(x^\prime)| < M_0r_k$, then
\begin{equation}\label{e:Dfchange}
    |Df_k(x)\cdot v - Df_k(x^\prime)\cdot v| \leq CM_0\epsilon|Df_k(x)\cdot v|,
\end{equation}
\begin{equation}\label{e:DRchange}
    \left| (D_x(R_k(x)\cdot y))\cdot v - (D_x(R_k(x^\prime)\cdot y))\cdot v\right| \leq CM_0\epsilon|Df_k(x)\cdot v|.
\end{equation}
In any case,
\begin{equation}\label{e:Dgyhorizchange}
    \left|\frac{\partial g}{\partial y}(x,y) - \frac{\partial g}{\partial y}(x',y)\right| \leq CM_0\epsilon \left|\frac{\partial g}{\partial y}(x,y)\right|.
\end{equation}
\end{lemma}
\begin{proof}
We begin with proving (\ref{e:Dfchange}). We have
\begin{align*}
     |Df_k(x)\cdot v - Df_k(x^\prime)\cdot v| &= \big|\big[D\sigma_{k-1}(f_{k-1}(x)) - D\sigma_{k-1}(f_{k-1}(x^\prime))\big]Df_{k-1}(x)\cdot v \\
     &\qquad\qquad + D\sigma_{k-1}(f_{k-1}(x^\prime))\big[Df_{k-1}(x) - Df_{k-1}(x^\prime)\big]\cdot v\big|\\
     &\leq |Df_{k-1}(x)\cdot v||D\sigma_{k-1}(f_{k-1}(x)) - D\sigma_{k-1}(f_{k-1}(x^\prime))| \\
     &\qquad\qquad + |D\sigma_{k-1}(f_{k-1}(x^\prime))||Df_{k-1}(x)\cdot v - Df_{k-1}(x^\prime)\cdot v|.\\
\end{align*}
Recursively applying this inequality for decreasing values of $k$ gives
\begin{align}\nonumber
    |&Df_k(x)\cdot v - Df_k(x^\prime)\cdot v| \\\nonumber
    &\leq |Df_{k-1}(x)\cdot v||D\sigma_{k-1}(f_{k-1}(x)) - D\sigma_{k-1}(f_{k-1}(x^\prime))|\\\nonumber
    &\qquad + |D\sigma_{k-1}(f_{k-1}(x^\prime))|\big(|Df_{k-2}(x)\cdot v||D\sigma_{k-2}(f_{k-2}(x)) - D\sigma_{k-2}(f_{k-2}(x^\prime))|\\\nonumber
    &\qquad + |D\sigma_{k-2}(f_{k-2}(x^\prime))||Df_{k-2}(x)\cdot v - Df_{k-2}(x^\prime)\cdot v|\big)\\\label{e:DfMaster}
    &\leq|Df_{k-1}(x)\cdot v||D\sigma_{k-1}(f_{k-1}(x)) - D\sigma_{k-1}(f_{k-1}(x^\prime))|\\\nonumber
    &+\sum_{p=1}^k\left(\prod_{m=1}^p|D\sigma_{k-m}(f_{k-m}(x^\prime))|\right)|Df_{k-p-1}(x)\cdot v|\cdot|D\sigma_{k-p-1}(f_{k-p-1}(x)) - D\sigma_{k-p-1}(f_{k-p-1}(x^\prime)|.
\end{align}
Now, Lemma \ref{DT-Dsig-1} implies
\begin{equation}\label{e:Dsbrutal}
    \prod_{m=1}^p|D\sigma_{k-m}(f_{k-m}(x^\prime))| \leq (1+C\epsilon)^p,
\end{equation}
and
\begin{equation}\label{e:Dfbrutal}
    |Df_{k-p-1}(x)\cdot v| = \left|\prod_{m=1}^{p+1}D\sigma^{-1}_{k-m}(f_{k-m+1}(x))Df_k(x)\cdot v\right| \leq (1+C\epsilon)^{p+1}|Df_k(x)\cdot v|.
\end{equation}
Using Lemma \ref{l:short-curve}, we see that \eqref{e:induct-distance} implies $|f_{k-p-1}(x) - f_{k-p-1}(x')| < \frac{M_0r_{k-p-1}}{5^{p+1}} \leq M_0r_{k-p-1}$ so that we get a rectifiable curve $\gamma_{k-p-1} \subseteq \Sigma_{k-p-1}$ (with its arc length parameterization $\gamma_{k-p-1}(t))$ connecting $f_{k-p-1}(x)$ and $f_{k-p-1}(x')$ such that $\ell(\gamma_{k-p-1}) \leq 2|f_{k-p-1}(x) - f_{k-p-1}(x')|$. Because $\gamma_{k-p-1}'(t)\in T\Sigma_k(\gamma_{k-p-1}(t))$, Lemma \ref{l:D2-sig-control} gives
\begin{align}\nonumber
    |D\sigma_{k-p-1}(f_{k-p-1}(x)) - D\sigma_{k-p-1}(f_{k-p-1}(x^\prime))| &= \left| \int_I D^2\sigma_{k-p-1}(\gamma_{k-p-1}(t))\cdot\gamma_{k-p-1}^\prime(t)\ dt \right|\\\nonumber
    &\leq \int_I|D^2\sigma_{k-p-1}(\gamma_{k-p-1}(t))\cdot\gamma_{k-p-1}^\prime(t)|dt\\\nonumber
    &\leq C\frac{\epsilon}{r_{k-p-1}}\cdot2|f_{k-p-1}(x) - f_{k-p-1}(x^\prime)|\\\nonumber
    &\leq C\frac{\epsilon}{r_{k-p-1}}\frac{M_0r_{k-p-1}}{5^{p+1}}\\
    &\leq CM_0\frac{\epsilon}{5^{p+1}} \label{e:DfDs}.
\end{align}
Applying (\ref{e:Dsbrutal}), (\ref{e:Dfbrutal}), and (\ref{e:DfDs}) to (\ref{e:DfMaster}) gives
\begin{align}\nonumber
    |Df_k(x)\cdot v - Df_k(x^\prime)\cdot v| &\leq (1+C\epsilon)C\frac{\epsilon}{5}|Df_k(x)\cdot v| + \sum_{p=1}^k(1+C\epsilon)^p(1+C\epsilon)^{p+1}|Df_k(x)\cdot v|CM_0\frac{\epsilon}{5^{p+1}}\\\nonumber
    &\leq C\epsilon|Df_k(x)\cdot v| + C\epsilon M_0|Df_k(x)\cdot v|\sum_{p=1}^k\frac{(1+C\epsilon)^{2p}}{5^{p+1}}\\\nonumber
    &\leq CM_0\epsilon|Df_k(x)\cdot v|.
\end{align}

We now prove (\ref{e:DRchange}). For any $t>0$, Proposition \ref{DT-isometries} implies that the quantity $R_k(x+tv)\cdot e_{d+1} - R_k(x)\cdot e_{d+1}$ is the difference between the unit normal vectors to the linear subspaces $T\Sigma_k(f_k(x+tv))$ and $T\Sigma_k(f_k(x))$. But by Lemma \ref{DT-plane-dist}, we have
\begin{equation}\label{e:Rclosediff}
    |R_k(x+tv)\cdot e_{d+1}-R_k(x)\cdot e_{d+1}| \leq D(T\Sigma_k(f_k(x+tv)), T\Sigma_k(f_k(x))) \leq C\frac{\epsilon}{r_k}|f_k(x+tv)-f_k(x)|.
\end{equation}
Hence, we can write
\begin{align}\nonumber
    |(D_x(R_k(x)\cdot y))\cdot v| &\leq |y|\lim_{t\rightarrow 0} \frac{|R_k(x+tv)\cdot e_{d+1}-R_k(x)\cdot e_{d+1}|}{|t|} \leq C\epsilon\frac{|y|}{r_k}\lim_{t\rightarrow0}\frac{|f_k(x+tv) - f_k(x)|}{|t|}\\
    &\leq C\epsilon|Df_k(x)\cdot v|\label{e:DRlimest}
\end{align}
where $|y|\lesssim r_k$ since $\rho_k(y) > 0$. We then have
\begin{equation}\nonumber
    \left| (D_x(R_k(x)\cdot y))\cdot v - (D_x(R_k(x^\prime)\cdot y))\cdot v\right| \leq C\epsilon(|Df_k(x)\cdot v| + |Df_k(x^\prime)\cdot v|) \leq CM_0\epsilon|Df_k(x)\cdot v|
\end{equation}
using (\ref{e:Dfchange}).

Finally, we prove (\ref{e:Dgyhorizchange}). First, we compute
\begin{align}\nonumber
    \frac{\partial g}{\partial y}(x,y) - \frac{\partial g }{\partial y}(x',y) &= \sum_{k\geq 0} \frac{\partial\rho_k}{\partial y}(y)\left\{ f_k(x) - f_k(x^\prime) + R_k(x)\cdot y - R_k(x^\prime)\cdot y \right\}\\\nonumber
    &\qquad + \sum_{k\geq0} \rho_k(y)(R_k(x)\cdot e_{d+1} - R_k(x^\prime)\cdot e_{d+1})\\\nonumber
    &=: I + II.
\end{align}
Let $p = l(y)$ so that $p$ and $p+1$ are the values of $k$ such that $\rho_k(y) > 0$. Since $\rho_p(y) + \rho_{p+1}(y) = 1$, we have $\frac{\partial\rho_p}{\partial y}(y) + \frac{\partial \rho_{p+1}}{\partial y}(y) = 0$. This implies
\begin{align}\nonumber
    I &= \frac{\partial \rho_p}{\partial y}(y)\left(f_{p}(x) - f_{p+1}(x) + R_p(x)\cdot y - R_{p+1}(x)\cdot y\right)\\\nonumber
    &\ \ + \frac{\partial\rho_p}{\partial y}(y)\left(f_{p}(x^\prime) - f_{p+1}(x^\prime) + R_p(x^\prime)\cdot y - R_{p+1}(x^\prime)\cdot y\right)
\end{align}\
But using \eqref{DT-sigma-id} and \eqref{DT-isometry-change}, we have
\begin{equation}\nonumber
    |I| \leq \frac{C}{r_p}(C\epsilon r_p + C\epsilon|y|) \leq C\epsilon.
\end{equation}
By applying (\ref{e:Rclosediff}) with $x+tv = x'$ (and recalling Remark \ref{rem:tangent-plane-angles}), we have
\begin{align}\nonumber
    |II| &\leq |\rho_p(y)|\frac{C\epsilon}{r_p}|f_p(x) - f_p(x^\prime)| + |\rho_{p+1}(y)|\frac{C\epsilon}{r_{p+1}}|f_{p+1}(x)-f_{p+1}(x^\prime)|\\\nonumber
    &\leq CM_0\epsilon(|\rho_p(y)| + |\rho_{p+1}(y)|) \leq CM_0\epsilon
\end{align}
since $\rho_p(y) + \rho_{p+1}(y) = |\rho_p(y)|+|\rho_{p+1}(y)| = 1$. We've proven that $\left| \frac{\partial g}{\partial y}(x,y) - \frac{\partial g }{\partial y}(x',y) \right| \leq CM_0\epsilon$. We will complete the proof of (\ref{e:Dgyhorizchange}) by showing that $\left|\frac{\partial g}{\partial y}(x,y)\right| \gtrsim 1$. Indeed,
\begin{align}\label{e:Dgy}
    \frac{\partial g}{\partial y}(x,y) &= \bigg| \left[\frac{\partial\rho_p}{\partial y}(y)\left(f_p(x) - f_{p+1}(x) + R_p(x)\cdot y - R_{p+1}(x)\cdot y\right)\right]\\ \nonumber
    &\qquad + \left[\rho_p(y)R_p(x)\cdot e_{d+1} + \rho_{p+1}(y)R_{p+1}(x)\cdot e_{d+1}\right]\bigg|.
\end{align}
But the previous computation shows that the first expression has norm $\leq CM_0\epsilon$, while the second expression is a convex combination of two nearly parallel unit vectors because $R_{p}(x)$ and $R_{p+1}(x)$ are orthogonal matrices which are $C\epsilon$ close. Hence, we get
\begin{equation}\label{e:partial-g-big}
    \left|\frac{\partial g}{\partial y}\right| \gtrsim 1.
\qedhere\end{equation}
\end{proof}
\begin{corollary}\label{c:Dg-horizontal}
Let $z,z'$ be as in Lemma \ref{l:horizontal}. For any vector $v\in H\cup V$, we have 
\begin{equation*}
    |Dg(x,y)\cdot v - Dg(x',y)\cdot v|\leq CM_0\epsilon|Dg(x,y)\cdot v|
\end{equation*}
\end{corollary}
\begin{proof}
First, suppose $v = v_x\in H$. Since $v_x\cdot e_{d+1} = 0$, we have
\begin{equation}\label{e:Dgx}
    Dg(x,y)\cdot v_x = \sum_{k\geq 0}\rho_k(y)\left\{ Df_k(x)\cdot v_x + D(R_k(x)\cdot y)\cdot v_x \right\}.
\end{equation}
Therefore, we get
\begin{align}\label{e:Dgxhorizchange}
    |Dg(x,y)&\cdot v_x - Dg(x',y)\cdot v_x| \\\nonumber
    &\leq \sum_{k\geq 0}\rho_k(y)\big\{ |Df_k(x)\cdot v_x - Df_k(x')\cdot v_x| + |D(R_k(x)\cdot y)\cdot v_x - D(R_k(x'))\cdot v_x|\big\} \\\nonumber
    &\leq CM_0\epsilon\sum_{k\geq0}\rho_k(y)\left\{|Df_k(x)\cdot v_x|\right\}.
\end{align}
using (\ref{e:Dfchange}) and (\ref{e:DRchange}). We now want to bound $\sum_{k\geq0}\rho_k(y)\left\{|Df_k(x)\cdot v_x|\right\}$ by $|Dg(x,y)\cdot v_x|$. In order to do so, we first simplify notation by setting $p = l(y)$ and 
\begin{align*}
    s = \rho_p(y)&,\ t = \rho_{p+1}(y),\\
    v_1 = Df_p(x)\cdot v_x, &\ u_1 = Df_{p+1}(x)\cdot v_x,\\
    v_2 = D(R_p(x)\cdot y)\cdot v_x, &\ u_2 = D(R_{p+1}(x)\cdot y)\cdot v_x.
\end{align*}
Putting $v = v_1+v_2,\ u = u_1+u_2$, we have $Dg(x,y)\cdot v_x = sv + tu$. In this notation,
\begin{align}\label{e:v-diff}
    |v_1-u_1| &\leq C\epsilon|v_1|,\\\label{e:v2-small}
    |v_2|,|u_2| &\leq C\epsilon|v_1|
\end{align}
by Lemma \ref{DT-Dsig-1} and (\ref{e:DRlimest}). We then want to prove the following claim:
\vspace{1ex}
\begin{claim}
$s|v_1| + t|u_1| \lesssim |sv+tu|$.
\end{claim}
\vspace{1ex}
\begin{claimproof}
Using (\ref{e:v2-small}), we get $s|v_1| \leq s|v_1| + s|v_2| \leq s|v|$ and similarly $t|u_1| \leq t|u|$. We now just need to show that $|sv| + |tu| \lesssim |sv+tu|$. By Lemma \ref{l:reverse-triangle}, this follows if we can show $\langle sv, tu \rangle \geq -\frac{1}{2}|sv||tu|$. Indeed, we have
\begin{align*}
    \langle sv, tu\rangle &= st\left(\langle v_1,u_1\rangle + \langle v_1, u_2 \rangle + \langle v_2, u_1 \rangle + \langle v_2, u_2 \rangle \right),\\ 
    &\geq st( |v_1|^2 - \langle v_1,u_1-v_1\rangle - C\epsilon|v_1|^2) \geq st(1-C\epsilon)|v_1|^2 \geq 0.
\end{align*}
\end{claimproof}

This completes the proof for $v = v_x$. If instead $v = v_y\in V$, then $Dg(z)\cdot v_y = v_y\cdot\frac{\partial g}{\partial y}(z)$ and the result follows from \eqref{e:Dgyhorizchange} in Lemma \ref{l:horizontal}. 
\end{proof}

\begin{lemma}[Vertical Estimates]\label{l:vertical}
Let $z,z^\prime,$ and $v$ be as in Corollary \ref{c:Dg-horizontal}. Let $p = l(y)$ and $m = n(y')$. If $\sum_{k=p}^{m}\epsilon_k'(f_k(x'))^2 \leq C\epsilon$ and $\ang(T\Sigma_p(f_p(x')), T\Sigma_m(f_m(x'))) \leq C\delta$, we have
\begin{equation}\label{e:Dfvertchange}
    \left|\sum_{k\geq 0} (\rho_k(y) - \rho_k(y^\prime))Df_k(x')\cdot v\right| \leq C\delta|Df_p(x')\cdot v|,
\end{equation}
\begin{equation}\label{e:DRvertchange}
    \left|\sum_{k\geq0}\left[\rho_k(y)D(R_k(x')\cdot y)\cdot v - \rho_k(y^\prime)D(R_k(x')\cdot y^\prime)\cdot v\right] \right| \leq C\delta|Df_p(x')\cdot v|,
\end{equation}
\begin{equation}\label{e:Dgvertchange}
    \left|\frac{\partial g}{\partial y}(x',y) - \frac{\partial g}{\partial y}(x',y')\right| \leq C\delta \left|\frac{\partial g}{\partial y}(x',y)\right|.
\end{equation}
\end{lemma}
\begin{proof}
We being by proving (\ref{e:Dfvertchange}). First, we can use Lemma \ref{DT-Dsig-1} to get
\begin{equation*}
    |Df_{p+1}(x')\cdot v - Df_p(x')\cdot v| \leq C\epsilon|Df_p(x')\cdot v|.
\end{equation*}
This implies
\begin{align}\label{e:Dfsmoothrho}
    \left|\sum_{k\geq0}\rho_k(y)Df_k(x')\cdot v - Df_p(x')\cdot v\right| &\leq \sum_{k\geq0}\rho_k(y)|Df_k(x')\cdot v - Df_p(x')\cdot v|\\\nonumber
    &\leq C\epsilon|Df_p(x')|
\end{align}
because $\rho_k(y)\not=0$ only for $k=p,p+1$ and $\sum_{k\geq0}\rho_{k}(y) = 1$. An identical argument gives (\ref{e:Dfsmoothrho}) with $y$ replaced by $y'$ and $p$ replaced by $m$. We now want a similar bound for $|Df_m(x')\cdot v - Df_p(x')\cdot v|$. For ease of notation, define $u = Df_p(x')\cdot v$ and $w = \prod_{k=p}^{m-1}D\sigma_k(f_k(x'))\cdot u$. We can then write
\begin{align*}
    |Df_m(x')\cdot v - Df_p(x')\cdot v| &= \left| \left[\prod_{k=p}^{m-1}D\sigma_k(f_k(x'))\right]\cdot u - u \right| = |w-u|.
\end{align*}
Recall that $\sum_{k=p}^{m}\epsilon_k'(f_k(x'))^2 \leq  C\epsilon^2$, and observe that the tree-like structure of CCBP net points from \eqref{e:CCBP-tree} combined with our constraint on $z'$ that $f_{n(y')}(x')\in V_{n(y')}^8$ means that $f_k(x')\in V_k^8$ for any $k \leq n(y')$. These facts mean we can use \eqref{DT-plane-comparison} to get
\begin{equation}
    \left|\prod_{k=p}^{m-1}D\sigma_k(f_k(x'))\right| \leq \prod_{k=p}^{m-1}1+C\epsilon_k'(f_k(x'))^2 \leq 1+C\epsilon^2.
\end{equation}
Hence, $\left||w|-|u|\right| \leq C\epsilon^2|u|$. Since $w\in T\Sigma_m(f_m(x')),\ u\in T\Sigma_p(f_p(x'))$, and we've assumed that $\ang(T\Sigma_p(f_p(x')), T\Sigma_m(f_m(x'))) \leq C\delta$, we have $\ang(w,u)\leq C\delta$. It follows that
\begin{equation}\label{e:small-tilt}
    |w-u| \leq C\delta|u|.
\end{equation}
Finally, using \eqref{e:Dfsmoothrho} and \eqref{e:small-tilt}, we see
\begin{align}
    \bigg|\sum_{k\geq 0} (\rho_k(y)& - \rho_k(y^\prime))Df_k(x')\cdot v\bigg|\nonumber\\
    &\leq  \left|\sum_{k\geq0}\rho_k(y)Df_k(x')\cdot v - Df_p(x')\cdot v \right| +  \left|\sum_{k\geq0}\rho_k(y^\prime)Df_k(x')\cdot v - Df_m(x')\cdot v \right|\nonumber\\
    &\qquad + |Df_p(x')\cdot v - Df_m(x')\cdot v |\nonumber\\
    &\leq C\epsilon|Df_p(x')\cdot v| + C\epsilon|Df_m(x')\cdot v| + C\delta|Df_p(x')\cdot v| \nonumber\\
    &\leq C\delta|Df_p(x')\cdot v|.\nonumber
\end{align}
The proof of (\ref{e:DRvertchange}) follows from (\ref{e:DRlimest}) and (\ref{e:Dfvertchange}). Indeed,
\begin{align*}
    \bigg|\sum_{k\geq0}\rho_k(y)D(R_k(x')\cdot y)&\cdot v - \rho_k(y^\prime)D(R_k(x')\cdot y^\prime)\cdot v \bigg| \\
    &\leq C\epsilon|Df_p(x')\cdot v| + C\delta|Df_p(x')\cdot v| + C\epsilon|Df_m(x')\cdot v|\\
    &\leq C\delta|Df_p(x')\cdot v|.
\end{align*}
Finally, we prove (\ref{e:Dgvertchange}). We have
\begin{align}\nonumber
    \left|\frac{\partial g}{\partial y}(x',y) - \frac{\partial g}{\partial y}(x',y')\right| &\leq \sum_{k\geq0}\bigg| \frac{\partial\rho_k}{\partial y}(y)\left\{f_k(x') + R_k(x')\cdot y\right\}\bigg| + \bigg|\frac{\partial\rho_k}{\partial y}(y^\prime)\left\{f_k(x') + R_k(x')\cdot y^\prime\right\}\bigg|\\\nonumber
    &\qquad + |(\rho_k(y) - \rho_k(y^\prime))R_k(x')\cdot e_{d+1}|\\\nonumber
    &=: \delta_1 + \delta_2 + \delta_3.
\end{align}
We first handle $\delta_1$ and $\delta_2$. We have
\begin{align}\nonumber
    \delta_1 \leq \left|\frac{\partial\rho_p}{\partial y}(y)\right|\left(|f_p(x') - f_{p+1}(x')| + |R_p(x') - R_{p+1}(x')||y|\right) \leq \frac{C}{r_p}(C\epsilon r_p + C\epsilon r_p) \leq C\epsilon
\end{align}
by \eqref{DT-sigma-id} and \eqref{DT-isometry-change}. A nearly identical calculation gives the same bound for $\delta_2$. We now handle $\delta_3$. First, notice that
\begin{align}\label{e:Rsmooth}
    \left|\sum_{k\geq0}\rho_k(y)R_k(x')\cdot e_{d+1} - R_p(x')\cdot e_{d+1}\right| &\leq \sum_{k\geq0}|\rho_k(y)||R_k(x')\cdot e_{d+1}-R_p(x')\cdot e_{d+1}|\leq C\epsilon
\end{align}
by \eqref{DT-isometry-change} and the fact that $\sum_{k\geq0}\rho_k(y) = 1$. Because $R_k(x')$ is an isometry such that $R_k(x')(T\Sigma_0(x')) = T\Sigma_k(f_k(x))$, $R_k(x')\cdot e_{d+1}$ is the unit normal to $T\Sigma_k(f_k(x'))$ so that 
\begin{equation}\label{e:Rverttriangle}
    |R_p(x')\cdot e_{d+1} - R_m(x')\cdot e_{d+1}| \leq C\ang(T\Sigma_p(f_p(x')), T\Sigma_m(f_m(x'))) \leq C\delta.
\end{equation}
Finally, (\ref{e:Rsmooth}) and (\ref{e:Rverttriangle}) imply
\begin{align}\nonumber
    \delta_3 &\leq \left|\sum_{k\geq0}\rho_k(y)R_k(x')\cdot e_{d+1} - R_p(x')\cdot e_{d+1}\right| + \left|\sum_{k\geq0}\rho_k(y
    ^\prime)R_k(x')\cdot e_{d+1} - R_m(x')\cdot e_{d+1}\right| \\\nonumber
    &\qquad + |R_p(x')\cdot e_{d+1} - R_m(x')\cdot e_{d+1}|\\\nonumber
    &\leq C\epsilon + C\epsilon + C\delta \leq C\delta\left|\frac{\partial g}{\partial y}(y^\prime)\right|.
\end{align}
where the final inequality uses \eqref{e:partial-g-big}.
\end{proof}

\begin{corollary}\label{c:Dg-vertical}
Let $z,z'$ be as in Lemma \ref{l:vertical}. For any vector $v\in H\cup V$, we have
\begin{equation*}
    |Dg(x',y)\cdot v - Dg(x',y')\cdot v| \leq C\delta|Dg(x',y)\cdot v|.
\end{equation*}
\end{corollary}
\begin{proof}
Suppose first that $v = v_x\in H$. Using (\ref{e:Dgx}), we compute
\begin{align}\label{e:Dgxvertchange}
    |Dg(x',y)&\cdot v_x - Dg(x',y')\cdot v_x|\\
    &= \bigg| \sum_{k\geq0}(\rho_k(y) - \rho_k(y'))Df_k(x')\cdot v_x + \rho_k(y)D(R_k(x')\cdot y)\cdot v_x - \rho_k(y')D(R_k(x')\cdot y')\cdot v_x \bigg|\\\nonumber
    &\leq C\delta|Df_p(x')\cdot v_x| \leq C\delta|Dg(x',y)\cdot v_x| \\
    &\leq C\delta(1+C\delta)|Dg(x,y)\cdot v_x| \leq C\delta|Dg(x,y)\cdot v_x| 
\end{align}
using (\ref{e:Dfvertchange}) and (\ref{e:DRvertchange}) in the first inequality, (\ref{e:DRlimest}) in the second, and (\ref{e:Dgxhorizchange}) in the third. If instead $v = v_y\in V$, then $Dg(x',y) = v_y\cdot\frac{\partial g}{\partial y}(x',y)$ and the result follows from \eqref{e:Dgvertchange} and \eqref{e:Dgyhorizchange}.
\end{proof}

Using Corollaries \ref{c:Dg-horizontal} and \ref{c:Dg-vertical}, we can prove Proposition \ref{p:DgDginv-expanded}.
\begin{proof}[Proof of Proposition \ref{p:DgDginv-expanded}]
Let $z' = (x',y')\in G_z^{M_0}$. We will show that for any vector $v\in H\cup V$,
\begin{equation}\label{Dg-special-vectors}
    |Dg(x,y)\cdot v - Dg(x',y)\cdot v| \leq C\delta|Dg(x,y)\cdot v|.
\end{equation}
The set $G_z^{M_0}$ is designed exactly so that $z'\in G_z^{M_0}$ implies that the hypotheses of Lemmas \ref{l:horizontal} and \ref{l:vertical} are satisfied. Hence, we can apply Corollaries \ref{c:Dg-horizontal} and \ref{c:Dg-vertical} so that
\begin{align*}
    |Dg(x,y)\cdot v -& Dg(x',y')\cdot v| \\
    &\leq |Dg(x,y)\cdot v - Dg(x',y)\cdot v| + |Dg(x',y)\cdot v - Dg(x',y')\cdot v|\\
    &\leq C\delta|Dg(x,y)\cdot v| + C\delta|Dg(x',y)\cdot v|\\
    &\leq C\delta|Dg(x,y)\cdot v|.
\end{align*}
By decomposing an arbitrary $v'\in\R^{d+1}$ as $v' = v_x + v_y$ where $v_x\in H $ and $v_y\in V$, we write
\begin{align}\nonumber
    |Dg(x,y)\cdot v' -& Dg(x',y')\cdot v'| \\
    &\leq |Dg(x,y)\cdot v_x - Dg(x',y')\cdot v_x| + |Dg(x,y)\cdot v_y - Dg(x',y')\cdot v_y| \nonumber \\
    \nonumber&\leq C\delta(|Dg(x,y)\cdot v_x| + |Dg(x,y)\cdot v_y|)\\
    &\leq C\delta|Dg(x,y)\cdot v'|\label{e:Dg-arbitrary-vectors}
\end{align}
where the final inequality follows from an application of the reverse triangle inequality in Lemma \ref{l:reverse-triangle}. We justify the application of the lemma by looking at the equations (\ref{e:Dgx}) and (\ref{e:Dgy}). These imply that the vector $Dg(x,y)\cdot v_x$ is nearly parallel to $T\Sigma_k(x)$ while the vector $Dg(x,y)\cdot v_y$ is nearly perpendicular to $T\Sigma_k(x)$ for some value of $k$ where the deviations described are on the order of $\epsilon$. This implies $|\langle Dg(x,y)\cdot v_x, Dg(x,y)\cdot v_y \rangle| \leq \frac{1}{2}|Dg(x,y)\cdot v_x|\cdot|Dg(x,y)\cdot v_y|$ so that the lemma applies. With this, we now compute,
\begin{align}\nonumber
    |Dg(z')\cdot Dg(z)^{-1}\cdot v' - v'| = |[Dg(z') - Dg(z)]\cdot Dg(z)^{-1}\cdot v'| &\leq C\delta|Dg(z)\cdot Dg(z)^{-1}\cdot v'|\\
    &= C\delta|v'|.\nonumber
\qedhere\end{align}
\end{proof}

This concludes the computations we need to bound the change in $Dg$. By integrating $Dg$ over paths in a quasiconvex domain $\Omega$, we get a companion result to Proposition \ref{p:DgDginv-expanded} that roughly states that the map $g|_{\Omega}$ is a $(1+C\delta)$-bi-Lipschitz perturbation of $Dg(z_0)$ for any $z_0\in\Omega$. More precisely, for any $z\in\R^{d+1}$ define
\begin{equation}
    L_{z_0}(z) = z_0 + Dg(z_0)(z-z_0).
\end{equation}
This is the affine transformation that approximates $g$ near $z_0$. Define 
\begin{equation}
    \varphi_{z_0} = g\circ L_{z_0}^{-1}
\end{equation}

\begin{proposition}\label{p:phibilip}
Let $z_0, M_0,\epsilon$ be as in Proposition \ref{p:DgDginv-expanded} where $z = z_0$. Let $\Omega\subseteq\R^{d+1}$ be a quasiconvex domain with constant $M_0$ such that $z_0\in\Omega\subseteq G_{z_0}^{M_0}$. Then the map $\varphi_{z_0}:L_{z_0}(\Omega)\rightarrow g(\Omega)$ is $(1+C\delta)$-bi-Lipschitz and
\begin{equation}\label{e:Dphi-id}
    |D\varphi_{z_0}(w) - I|\leq C\delta
\end{equation}
for all $w\in L_{z_0}(\Omega)$.
\end{proposition}
\begin{proof}
Because $w\in L_{z_0}(G_{z_0}^{M_0})$ by assumption, we get
\begin{equation*}
    D\varphi_{z_0}(w) = Dg(L_{z_0}^{-1}(w))\cdot DL_{z_0}^{-1}(w) = Dg(z)\cdot Dg(z_0)^{-1}
\end{equation*}
for $z=L_{z_0}^{-1}(w)\in G_{z_0}^{M_0}$. Equation \eqref{e:Dphi-id} follows from \eqref{p:DgDginv-expanded}. 

To prove that $\varphi_{z_0}$ is $(1+C\delta)$-bi-Lipschitz, let $\gamma:[0,1]\rightarrow\R^{d+1}$ be a path with $\gamma(0) = z_0,\ \gamma(1) = z$, and $\ell(\gamma)\lesssim_{M_0} |z_0-z|$. Put $\tilde{\gamma}(t) = L_{z_0}(\gamma(t))$, $w = L_{z_0}(z)$, and $w_0 = L_{z_0}(z_0) = z_0$. Observe that
\begin{equation}\nonumber
    L_{z_0}^{-1}(w) = z_0 + Dg(z_0)^{-1}(w-z_0).
\end{equation}
We estimate
\begin{align}\nonumber
    |\varphi_{z_0}(w) - \varphi_{z_0}(w_0)| &= \left| \int_0^1 D(g\circ L_{z_0}^{-1})(\tilde{\gamma}(t))\cdot\tilde{\gamma}^\prime(t)dt \right|\\\nonumber
    &= \left| \int_0^1 Dg(\gamma(t))\cdot DL_{z_0}^{-1}(\tilde{\gamma}(t))\cdot\tilde{\gamma}^\prime(t)dt \right|\\\nonumber
    &= \left| \int_0^1 Dg(\gamma(t))\cdot Dg(z_0)^{-1}\cdot\tilde{\gamma}^\prime(t)dt \right|\\\nonumber
    &= \left| w-w_0 + \int_0^1 \left[Dg(\gamma(t))\cdot Dg(z_0)^{-1} - I\right]\cdot\tilde{\gamma}^\prime(t)dt\right|.
\end{align}
Using the fact that $\gamma(t)\in G_{z_0}^{M_0}$ for all $t$, Proposition \ref{p:DgDginv-expanded} implies, on one hand
\begin{align}\nonumber
    |\varphi_{z_0}(w) - \varphi_{z_0}(w_0)| &\leq | w-w_0 | + \int_0^1 \left|Dg(\gamma(t))\cdot Dg(z_0)^{-1} - I\right|\cdot|\tilde{\gamma}^\prime(t)|dt\\\nonumber
    &\leq |w-w_0| + C\delta|Dg(z_0)|\cdot\ell(\gamma) \\\nonumber
    &\leq (1+C\delta)|w-w_0|.
\end{align}
On the other,
\begin{align}\nonumber
    |\varphi_{z_0}(w) - \varphi_{z_0}(w_0)| &\geq | w-w_0 | - \int_0^1 \left|Dg(\gamma(t))\cdot Dg(z_0)^{-1} - I\right|\cdot|\tilde{\gamma}^\prime(t)|dt\\\nonumber
    &\geq |w-w_0| - C\delta|Dg(z_0)|\cdot\ell(\gamma) \\\nonumber
    &\geq (1-C\delta)|w-w_0|
\end{align}
where the final inequality on both hands comes from the fact that $|w'-w| = |Dg(z)\cdot(z'-z)| \leq |Dg(z)|\cdot|z-z'|$ and our assumption that $\ell(\gamma)\lesssim_{M_0} |z-z'|$.
\end{proof}

\section{The proof of Theorem \ref{t:thmA}}\label{sec:thmA}
Fix constants $\rho = \frac{1}{1000},\ K = \frac{10^4}{\rho},\ M = \frac{10K}{\rho^2},$ and  $A_0 = \frac{1000\sqrt{d}}{c_0\rho}$. Throughout this section, assume that $\Omega\subseteq\R^{d+1}$ satisfies the hypotheses of Theorem \ref{t:thmA}. We begin by constructing a Reifenberg parameterization for $\partial\Omega\cap B(0,1)$.

\subsection{The CCBP adapted to $\scD$}\label{subsec: CCBP}
We want to form a CCBP adapted to the Christ-David lattice $\scD$ for $\partial\Omega$ with the aim of applying Theorem \ref{DT-thm}, David and Toro's bi-Lipschitz Reifenberg parameterization result. For any $k\in\Z$, let $s(k)$ be an integer such that
\begin{equation}\label{e:net-cube-radii}
    50\rho^{s(k)} \leq r_k < 50\rho^{s(k)-1}.
\end{equation}
We note that if $Q\in\scD_{s(k)}$, then this means
\begin{equation}\label{e:diamQ-rk}
    10\ell(Q) \leq r_k < 10\rho^{-1}\ell(Q)
\end{equation}
and
\begin{equation*}
    \frac{\rho}{5000}r_k \leq \frac{c_0}{10\rho}r_k \leq c_0\ell(Q) \leq \diam Q \leq \ell(Q) \leq \frac{r_k}{10}.
\end{equation*}
For any $k \geq 0$, define
\begin{align}\label{e:Y_k-CCBP}
    Y_k &= \{x_Q : Q\in\scD_{s(k)},\ Q\cap B(0,A_0)\not=\varnothing\},\\
    X_k &\in \Net(Y_k, r_k).
\end{align}
We enumerate $X_k = \{x_{j,k}\}_{j\in J_k}$ and often use the notation $x_{j,k} = x_Q = x_{Q_{j,k}}$. Let $P_0$ achieve the infimum in the definition of $b\beta_{\partial\Omega}(B(0,10A_0))$ and define
\begin{align*}
    B_{j,k} &= B(x_{j,k},r_k),\\
    P_{j,k} &= P_{Q_{j,k}}
\end{align*}
where $P_{Q_{j,k}}\ni x_{Q_{j,k}}$ are such that $\beta_{\partial\Omega}^{d,1}(2\rho^{-1}KB_{Q_{j,k}}) \lesssim \beta_{\partial\Omega}^{d,1}(2\rho^{-1}KB_{Q_{j,k}})$ as in the hypotheses of Lemma \ref{l:ep-by-beta}. We first show that $\epsilon_k'(x_{j,k})$ is controlled by $\epsilon(Q_{j,k})$.
\begin{lemma}\label{l:epsilonQ}
Fix $k \geq 0$ and $Q\in\scD_{s(k)}$. For any $z\in\R^{d+1}$ such that $|z - x_{Q}| < 200\rho^{-1}\ell(Q)$,
\begin{equation*}
    \epsilon_k'(z) \leq K\epsilon(Q).
\end{equation*}
\end{lemma}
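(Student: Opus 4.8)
The plan is to observe that every plane entering the supremum defining $\epsilon_k'(z)$ in \eqref{e:epsilonk'} is one of the cube planes $P_R$, and that the proximity and generation constraints built into \eqref{e:epsilonk'} coincide, after rescaling the window, with those in the definition of $\epsilon(Q)$. Concretely, I would first unpack \eqref{e:epsilonk'} as a supremum of quantities $d_{B(x_{i,\ell},\,c\,r_\ell)}(P_{i,\ell},P_{j,m})$, where $x_{i,\ell}=x_{Q_{i,\ell}}$ and $x_{j,m}=x_{Q_{j,m}}$ are net points with indices $\ell,m\in\{k-1,k\}$ lying within a bounded multiple of $r_k$ of $z$ and of one another, at least one of the two having index $k$, $c$ an absolute constant, and $P_{i,\ell}=P_{Q_{i,\ell}}$. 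It then suffices to check that each such pair $(Q_{i,\ell},Q_{j,m})$ is an admissible pair $(U,R)$ in the definition of $\epsilon(Q)$ and that $B(x_{i,\ell},c\,r_\ell)\subseteq KB_R$ with distortion at most $K$.

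For the generations, recall from \eqref{e:net-cube-radii}--\eqref{e:diamQ-rk} that a net point of index $\ell$ is the center of a cube in $\scD_{s(\ell)}$ with $10\ell(Q_{i,\ell})\le r_\ell<10\rho^{-1}\ell(Q_{i,\ell})$, and that $\ell\mapsto s(\ell)$ is nondecreasing with steps $0$ or $1$. Hence $s(\ell)\in\{k(Q)-1,k(Q)\}$ for all $\ell\in\{k-1,k\}$ (using $k(Q)=s(k)$ since $Q\in\scD_{s(k)}$), while the index-$k$ member of each sampled pair has generation exactly $s(k)=k(Q)$; taking $U$ to be that cube and $R$ the other, we get $k(U)=k(Q)$ and $k(R)\in\{k(Q)-1,k(Q)\}$. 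For the location, $Q\in\scD_{s(k)}$ gives $10\ell(Q)\le r_k<10\rho^{-1}\ell(Q)$, while $|z-x_Q|<200\rho^{-1}\ell(Q)$ by hypothesis and $|x_{i,\ell}-z|\lesssim r_k$ from \eqref{e:epsilonk'}, so the triangle inequality yields $|x_{i,\ell}-x_Q|\le\tfrac{K}{10}\ell(Q_{i,\ell})$, i.e.\ $x_Q\in\tfrac{K}{10}B_{Q_{i,\ell}}$, and likewise $x_Q\in\tfrac{K}{10}B_{Q_{j,m}}$; this is exactly the proximity condition in $\epsilon(Q)$, and it is here that the value $K=10^4/\rho$ is used.

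Finally, the location bounds above and the size of $K$ give $B(x_{i,\ell},c\,r_\ell)\subseteq KB_R$, and since $B'\subseteq B$ forces $d_{B'}(P,P')\le\tfrac{\diam B}{\diam B'}\,d_B(P,P')$ for all planes $P,P'$, we obtain
\[
d_{B(x_{i,\ell},\,c\,r_\ell)}(P_{i,\ell},P_{j,m})\ \le\ \frac{\diam(KB_R)}{\diam\bigl(B(x_{i,\ell},c\,r_\ell)\bigr)}\;d_{KB_R}(P_U,P_R)\ =\ \frac{K\,\ell(R)}{c\,r_\ell}\;d_{KB_R}(P_U,P_R).
\]
By \eqref{e:diamQ-rk}, $\ell(R)\le\rho^{-1}\ell(Q)$ and $r_\ell\ge r_k\ge 10\ell(Q)$, so the prefactor is at most $K\rho^{-1}/(10c)\le K$ (the absolute constant $c$ appearing in \eqref{e:epsilonk'} is $\ge\rho^{-1}/10$); hence $d_{B(x_{i,\ell},\,c\,r_\ell)}(P_{i,\ell},P_{j,m})\le K\,d_{KB_R}(P_U,P_R)\le K\epsilon(Q)$, and taking the supremum over the sampled pairs gives $\epsilon_k'(z)\le K\epsilon(Q)$.

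The only real work is the bookkeeping in the second paragraph: the index $k$ determines a cube generation only through the shift $s(\cdot)$, whose steps are $0$ or $1$, so one must be sure the coarser cube of each sampled pair never has generation below $k(Q)-1$ (which is why \eqref{e:epsilonk'} must be read as comparing index-$k$ planes only against indices $\le k$), and squeezing the window-rescaling factor down to the clean constant $K$, rather than a loose $\lesssim 1$, forces tracking the exact values of $K$ and $\rho$ instead of hiding them in implicit constants.
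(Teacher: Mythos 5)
Your proposal is correct and follows essentially the same route as the paper's proof: identify each plane appearing in $\epsilon_k'(z)$ as a cube plane $P_{Q_{i,\ell}}$, verify via the triangle inequality that $x_Q \in \frac{K}{10}B_{Q_{i,\ell}}$ (so the pair is admissible in the supremum defining $\epsilon(Q)$), show the ball inclusion $B(x_{i,\ell},100r_\ell)\subseteq KB_R$, and rescale $d_{B'}\leq\frac{\diam B}{\diam B'}d_B$ to obtain the factor $K$. The only caveat is that the intermediate shorthand $|x_{i,\ell}-z|\lesssim r_k$ must be replaced by the exact bound $|x_{i,\ell}-z|\leq 11r_\ell$ and then compared against $\ell(Q_{i,\ell})$ rather than $\ell(Q)$ (as the paper does), to keep the final constant at $\frac{K}{10}$, but you flag this bookkeeping issue yourself.
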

\begin{proof}
If $z\not\in V_k^{10}$, then $\epsilon_k'(z) = 0$, and the desired inequality holds trivially. We now assume $z\in V_k^{10}$. We first show that the supremum in the definition of $\epsilon(Q)$ is over a larger collection of pairs of planes than that in the definition of $\epsilon_k'(z)$. Let $i\in J_k$ be such that $z\in 10B_{i,k}$. By \eqref{e:diamQ-rk},
\begin{equation*}
    |x_Q - x_{i,k}| < |x_Q - z| + |z - x_{i,k}| < 200\rho^{-1}\ell(Q) + 10r_k < 300\rho^{-1}\ell(Q_{i,k}) < \frac{K}{10}\ell(Q)
\end{equation*}
because $K \geq 10^4\rho^{-1}$ and $\ell(Q) = \ell(Q_{i,k})$. Therefore, $x_Q\in\frac{K}{10}B_{Q_{i,k}}$. If instead $z\in 11B_{i,k-1}$ for some $i\in J_{k-1}$, then 
\begin{equation*}
    |x_Q - x_{i,k-1}| < |x_Q - z| + |z - x_{i,k-1}| < 200\rho^{-1}\ell(Q) +  11r_{k-1} < 310\rho^{-1}\ell(Q_{i,k-1}) < \frac{K}{10}\ell\left(Q_{i,k-1}\right).
\end{equation*}
Therefore, $x_Q\in\frac{K}{10}B_{Q_{i,k-1}}$. In addition, for any admissible $x_{i,l}$ in the definition of $\epsilon_k'(z)$ we can write $100r_l \leq 1000\rho^{-1}\ell(Q_{i,l}) < K\ell(Q_{i,l})$ so that $100B_{i,l} \subseteq K B_{Q_{i,l}}$. Let $P_{i,k}$ and $P_{m,l}$ be planes which achieve the supremum in the definition of $\epsilon_k'(z)$. Then
\begin{equation*}
    d_{x_{m,l},100B_{m,l}}(P_{i,k},P_{m,l}) \leq \frac{K\ell(B_{Q_{m,l}})}{100r_l}d_{KB_{Q_{m,l}}}(P_{Q_{i,k}},P_{Q_{m,l}}) \leq Kd_{KB_{Q_{m,l}}}(P_{Q_{i,k}},P_{Q_{m,l}})
\end{equation*}
using the fact that $\ell(Q_{m,l}) < r_l$.
\end{proof}
Applying this result for $z = x_{j,k}$ shows that $\epsilon_k'(x_{j,k}) \lesssim \epsilon(Q_{j,k})$ which we can use to prove that the triple $\mathscr{Z} = (P_0,\{B_{j,k}\}, \{P_{j,k}\})$ is a CCBP.

\begin{lemma}\label{l:CubeCCBP}
$\mathscr{Z}$ is a CCBP.
\end{lemma}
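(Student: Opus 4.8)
The plan is to read Lemma~\ref{l:CubeCCBP} off of Lemma~\ref{l:CCBP-AS}, whose hypotheses split into three groups: (a) the combinatorial net conditions on the points $\{x_{j,k}\}$; (b) the incidence condition that each $P_{j,k}$ is a $d$-plane through $x_{j,k}$ together with the initial flatness $\dist(x_{j,0},P_0)\lesssim\epsilon$; and (c) the quantitative bound $\epsilon_k'(x_{j,k})\lesssim\epsilon$ for all $k\geq0$, $j\in J_k$. Once (a)--(c) are checked with implicit constants depending only on $d$, choosing $\epsilon_0(d)$ small enough that $C(d)\epsilon_0$ lies below the threshold produced by Lemma~\ref{l:CCBP-AS} finishes the proof.

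For (a): the separation $|x_{j,k}-x_{i,k}|\geq r_k$ is immediate since $X_k\in\Net(Y_k,r_k)$. For the coherence $x_{i,k}\in V_{k-1}^2$, I would pass from $Q_{i,k}\in\scD_{s(k)}$ to its ancestor $\widetilde Q\in\scD_{s(k-1)}$ (note $s(k-1)\leq s(k)$ because $r_{k-1}>r_k$). Since $\widetilde Q\supseteq Q_{i,k}$ meets $B(0,A_0)$, we have $x_{\widetilde Q}\in Y_{k-1}$, and by \eqref{e:diamQ-rk}, $|x_{i,k}-x_{\widetilde Q}|\leq\ell(\widetilde Q)\leq r_{k-1}/10$; covering $Y_{k-1}$ by the $r_{k-1}$-net $X_{k-1}$ then puts $x_{i,k}$ within $2r_{k-1}$ of some $x_{j,k-1}$, i.e. $x_{i,k}\in V_{k-1}^2$. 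For (b): the incidence $x_{j,k}\in P_{j,k}=P_{Q_{j,k}}$ holds by the choice of the planes $P_{Q_{j,k}}\ni x_{Q_{j,k}}$. For the initial flatness, each $x_{j,0}=x_Q$ with $Q\in\scD_{s(0)}$ meeting $B(0,A_0)$ lies in $B(0,10A_0)\cap\partial\Omega$ (as $\ell(Q)\leq\tfrac1{10}$ and $A_0=A_0(d)$ is large), and since $P_0$ realizes $b\beta_{\partial\Omega}(B(0,10A_0))\leq\epsilon$ by Reifenberg flatness, $\dist(x_{j,0},P_0)\leq 10A_0\,\epsilon\lesssim_d\epsilon$.

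For (c), which is the heart of the argument, I would chain the three preceding lemmas. Lemma~\ref{l:epsilonQ} applied at $z=x_{j,k}$ gives $\epsilon_k'(x_{j,k})\leq K\epsilon(Q_{j,k})$. Then Lemma~\ref{l:ep-by-beta} applies: its hypotheses $\tfrac{10^4}{\rho}\leq K\leq 10^{-1}\rho^2M$ hold by the fixed choice $M=\tfrac{10K}{\rho^2}$, $K=\tfrac{10^4}{\rho}$, and the condition $\beta^{d,1}_{\partial\Omega}(2\rho^{-1}KB_{Q_{j,k}},P_{j,k})\lesssim\beta^{d,1}_{\partial\Omega}(2\rho^{-1}KB_{Q_{j,k}})$ is exactly how $P_{j,k}=P_{Q_{j,k}}$ was selected; hence $\epsilon(Q_{j,k})\lesssim_{\rho,M,d}\beta^{d,1}_{\partial\Omega}(MB_{Q_{j,k}})$. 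Finally, since $MB_{Q_{j,k}}$ is centered on $\partial\Omega$ and $\partial\Omega$ is lower content $d$-regular (true for any $(\epsilon,d)$-Reifenberg flat set), the one-sided content beta number is dominated by the bilateral one, $\beta^{d,1}_{\partial\Omega}(MB_{Q_{j,k}})\lesssim_d b\beta^d_{\partial\Omega}(MB_{Q_{j,k}})\leq\epsilon$. Composing, $\epsilon_k'(x_{j,k})\leq C(d)\epsilon$ uniformly.

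The one genuinely delicate point is the comparison $\beta^{d,1}_{\partial\Omega}\lesssim b\beta^d_{\partial\Omega}$, which relies on the (standard but nontrivial) fact that Reifenberg flat sets are lower content $d$-regular, so that the Hausdorff content in the numerator of the content beta number is harmless: if $\partial\Omega\cap B$ lies within Hausdorff distance $\eta r_B$ of a plane $P$ in $B$, then $\mathcal H^d_\infty\{x\in B\cap\partial\Omega:\dist(x,P)>tr_B\}$ vanishes for $t>\eta$ and is $\lesssim r_B^d$ for $t\leq\eta$, giving $\beta^{d,1}_{\partial\Omega}(B,P)\lesssim_d\eta$. Everything else is bookkeeping with the fixed absolute constants $\rho,K,M,A_0$ and a final choice of $\epsilon_0(d)$.
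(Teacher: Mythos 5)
Your proof is correct and takes essentially the same route as the paper: verify the hypotheses of Lemma~\ref{l:CCBP-AS} by checking the net and coherence conditions directly, the initial flatness from $b\beta_{\partial\Omega}(B(0,10A_0),P_0)\leq\epsilon$, and the bound $\epsilon_k'(x_{j,k})\lesssim\epsilon$ via the chain Lemma~\ref{l:epsilonQ} $\to$ Lemma~\ref{l:ep-by-beta} $\to$ $\beta^{d,1}\lesssim b\beta\leq\epsilon$. One small misattribution in your final paragraph: the bound $\beta^{d,1}_{\partial\Omega}(B,P)\lesssim\eta$ does not in fact require lower content regularity, since $\scH^d_\infty(B\cap\partial\Omega)\leq\diam(B)^d$ holds for any set; lower content regularity \emph{is} needed, but as a hypothesis of Lemma~\ref{l:ep-by-beta} (through Lemma~\ref{l:beta-angle-bound}), not for this comparison.
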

\begin{proof}
We will use Lemma \ref{l:CCBP-AS}. First, we will show that for any $j\in J_0$, $\dist(x_{j,0}, P_0) \lesssim \epsilon$. Indeed, $x_{j,0} = x_Q$ for some $Q\in\scD_{s(0)}$ with $Q\cap B(0,A_0)\not=\varnothing$. Hence, $x_Q\in B(0,2A_0)\cap \partial\Omega$ so that $b\beta_{\partial\Omega}(B(0,10A_0), P_0) \leq \epsilon$ implies
\begin{equation*}
    \dist(x_Q,P_0) \lesssim b\beta(B(0,10A_0))\cdot 10A_0 \lesssim_d \epsilon.
\end{equation*}
Now, we fix $k > 0$ and $j\in J_k$ and prove the following claim:
\vspace{1ex}
\begin{claim}
There exists $i\in J_{k-1}$ such that $x_{j,k}\in 2B_{i,k-1}$
\end{claim}
\vspace{1ex}
\begin{claimproof}
Indeed, let $x_{j,k} = x_{Q_{j,k}}$. If $s(k) = s(k-1)$, then $Y_{k-1} = Y_k$ so that $x_{Q_{j,k}}\in Y_{k-1}$. The claim follows since $X_{k-1}$ is an $r_{k-1}$-net for $Y_{k-1}$. If instead $s(k) > s(k-1)$, then $x_{Q_{j,k}^{(1)}}\in Y_{k-1}$ so that there exists $i\in J_{k-1}$ such that $x_{Q_{j,k}^{(1)}}\in B_{i,k-1}$. We have
\begin{equation*}
    \ell\left(Q_{j,k}^{(1)}\right) = 5\rho^{s(k-1)} \leq 5\rho^{s(k) - 1} \leq r_{k-1}
\end{equation*}
so that
\begin{equation*}
    \dist(x_{Q_{j,k}}, x_{i,k-1}) \leq \dist(x_{Q_{j,k}},x_{Q_{j,k}^{(1)}}) + \dist(x_{Q_{j,k}^{(1)}}, x_{i,k-1}) \leq \ell\left(Q_{j,k}^{(1)}\right) + r_{k-1} \leq 2r_{k-1},
\end{equation*}
which proves $x_{Q_{j,k}}\in 2B_{i,k-1}$.
\end{claimproof}

By Lemma \ref{l:epsilonQ}, it suffices to show that $\epsilon(Q_{j,k}) \lesssim \epsilon$. But by the definition of $P_{Q_{j,k}}$ and Lemma \ref{l:ep-by-beta}, we have $\epsilon(Q_{j,k})\lesssim_{M,d}\beta^{d,1}_{\partial\Omega}(MB_{Q_{j,k}}) \lesssim \epsilon$.
\end{proof}

Since we've shown that $\mathscr{Z}$ is a CCBP, Theorem \ref{DT-thm} gives a Reifenberg parameterization $g:\R^{d+1}\rightarrow\R^{d+1}$ such that
$$g(P_0)\cap B(0,1) \supseteq \partial\Omega\cap B(0,1).$$
Without loss of generality, we can assume $P_0 = \R^d\times\{0\}$ and translate the Whitney decomposition $\mathscr{W}$ as in Definition \ref{def:whitney-cubes} to a new decomposition $\mathscr{W}'$ such that $W_0 = [-2,2]^d\times[4,8]\in\mathscr{W}'$. We have that
$$\Omega\cap B(0,1) \subseteq g([-2,2]^d\times[0,8])$$
because $\partial\Omega$ is contained in the closure of $\cup_kX_k$, so in practice we only need to consider the set $D(W_0)$ (See \eqref{e:whit-desc}.) of descendants of $W_0$ to cover $\Omega\cap B(0,1)$:
\begin{equation}\label{e:centered-Whitney}
    \mathscr{W}_0 = \{W\in\mathscr{W}' : W\in D(W_0)\}.
\end{equation}

We can now derive some useful properties of $g$.

\begin{lemma}[Properties of $g$]\label{l:g-prop}\ 
\begin{enumerate}[label=(\roman*)]
    \item For any $x\in[-2,2]^d\times\{0\}$ and $n\in\N$, 
    $$f_n(x)\in V_n^8.$$ \label{i:nearby-net} \label{i:net}
    \item For any $z = (x,y)\in[-2,2]^d\times[0,8]$
    \begin{equation}\label{e:g-dist-E}
        (1-C(d)\epsilon)|y| \leq \dist(g(z),\partial\Omega) \leq (1+C(d)\epsilon)|y|.
    \end{equation} \label{i:g-dist}
    \item For any $x\in [-2,2]^d\times\{0\}$ and $p,n\in\N$ with $p < n$, there exists a collection of cubes $Q_{n}\subseteq Q_{n-1} \subseteq\cdots\subseteq Q_{p}$ such that for any $k$ with $p \leq k \leq n,\ \dist(g(x,r_k), Q_k) \lesssim r_k$, and
    \begin{equation*}
        \sum_{k=p}^n\epsilon'(f_k(x))^2 \lesssim_{M,\rho,d} \sum_{k=p}^n\beta^{d,1}_{\partial\Omega}(MB_{Q_k})^2.
    \end{equation*}
    In particular, $g$ is $L'(L,\rho,M,d)$-bi-Lipschitz.\label{i:epsilon-bound}
\end{enumerate}  
\end{lemma}
\begin{proof}
    Let $z = (x,y)$ be as in \ref{i:g-dist} with $n = n(y)$ (See \eqref{e:def-g}.). We first prove \ref{i:g-dist} with the added hypothesis that $f_n(x)\in V_n^8$. We will then prove \ref{i:net}, which will complete the proof of \ref{i:g-dist}. 

    Observe that
    \begin{equation*}
        g(z) - f_n(x) = \sum_{k}\rho_k(y)\left\{f_k(x) - f_n(x) + R_k(x)\cdot y\right\}.
    \end{equation*}
    Recall that $|f_k(x) - f_n(x)|\lesssim \epsilon r_n$ by \eqref{e:fn-diff} using the fact that $\rho_k(y)\not=0$ only if $k\in \{n-1,n\}$ by \eqref{e:def-g}. We also know $R_k(x)$ is an isometry such that $R_k(x)\cdot y$ is a vector of norm $|y|$ that is orthogonal to the tangent plane $T_k(x)$ to $\Sigma_k$ at $f_k(x)$. The fact that $f_n(x)\in V_n^8$ implies the existence of $Q\in\scD_{s(n)}$ such that $|f_n(x) - x_Q| \leq 8r_n$. The fact that $b\beta_{\partial\Omega}(MB_Q,P_Q) \lesssim \epsilon$ combined with Lemma \ref{DT-loc-lip} and \eqref{DT-plane-comparison} implies
    \begin{equation}\label{e:Sigma-E-close2}
        d_{f_n(x),19r_n}(\Sigma_n,\partial\Omega) \leq d_{f_n(x),19r_n}(\Sigma_n,P_Q) + d_{f_n(x),19r_n}(P_Q,\partial\Omega) \lesssim \epsilon.
    \end{equation}
    We conclude $d_{f_n(x),19r_n}(T_n(x) + f_n(x),\partial\Omega) < C\epsilon$, which implies $(1-C\epsilon)|y| \leq \dist(g(z),\partial\Omega) \leq (1 + C\epsilon)|y|$ as desired.
    
    We now prove \ref{i:net} by induction on $n$. For the base case $n=0$, notice that $f_0(x) = x\in B(0,5\sqrt{d})\cap P_0$ so that $b\beta_{\partial\Omega}(B(0,10A_0), P_0) \leq \epsilon$ implies the existence of $y\in\partial\Omega$ with $\dist(y,x) \lesssim_{A_0}\epsilon$. There exists $Q_0\in\scD_{s(0)}$ such that $y\in Q_0$ and $\dist(Q_0,0) \leq 10\sqrt{d}$ so that $x_{Q_0}$ is a member of the set $Y_0$ (see \eqref{e:Y_k-CCBP}) from which the maximal net $X_0$ forming the CCBP is taken. Notice that
    \begin{equation*}
        |f_0(x) - x_{Q_0}| \leq |x - y| + |y - x_{Q_0}| \leq C\epsilon + \ell(Q_0) \leq 2r_0.
    \end{equation*}
    Hence, we are done if $x_{Q_0}\in X_0$. Otherwise, there exists $x_{Q_0'} \in X_0$ such that $|x_{Q_0} - x_{Q_0'}| \leq r_0$ so that $|f_0(x) - x_{Q_0'}| \leq 3r_0$ implying $f_0(x)\in V_0^3$. This proves the base case for \ref{i:net}. 

    For the inductive step, assume that $f_n(x)\in V_n^8$ for some $n\in\N$. Using \eqref{e:Sigma-E-close2}, we find $y\in\partial\Omega$ such that
    \begin{equation*}
        |f_{n+1}(x) - y| \leq |f_{n+1}(x) - f_{n}(x)| + |f_n(x) - y| \lesssim \epsilon r_{n+1}
    \end{equation*}
    and hence there exists $Q_{n+1}\in \scD_{s(n+1)}$ with $\dist(Q_{n+1},0)\leq 10\sqrt{d}$ such that $|f_{n+1}(x) - x_{Q_{n+1}}| \leq 2r_{n+1}$. By a similar argument to the base case, this finishes the proof of \ref{i:net}.

    To prove \ref{i:epsilon-bound}, we first claim that \ref{i:net} implies $f(x)\in \partial\Omega$. Indeed, the continuity of $\dist(\cdot,\partial\Omega)$ and the fact that $x_{j,n}\in\partial\Omega$ for any $x_{j,n}\in X_n$ give
    \begin{equation}
        \dist(f(x),\partial\Omega) = \lim_n\dist(f_n(x),\partial\Omega) \leq \lim_n 8r_n = 0.
    \end{equation}
    This proves the claim. This means there exists an infinite chain of (possibly repeating) cubes $Q_{0}\supseteq Q_{1} \supseteq \cdots \ni f(x)$ where $Q_k\in\scD_{s(k)}$. We claim that
    \begin{equation*}
        \epsilon_{k}'(f_k(x)) \lesssim \epsilon(Q_k) \lesssim \beta^{d,1}_{\partial\Omega}(MB_{Q_k}).
    \end{equation*}
     Indeed, by Lemmas \ref{l:epsilonQ} and \ref{l:ep-by-beta}, we only need to show that $|f_k(x) - x_{Q_k}| < 200\rho^{-1}\ell(Q_k)$ to verify the first inequality. But we have $|f_k(x) - f(x)|\leq C\epsilon r_k$ by \eqref{e:f-diff} so that
    \begin{align*}
        |f_k(x) - x_{Q_k}| &\leq |f_k(x) - f(x)| + |f(x) - x_{Q_k}|\\
        &\leq C\epsilon r_k + 10r_k + \ell(Q_k) \leq (C\epsilon + 100)\rho^{-1}\ell(Q_k) +    \ell(Q_k) \leq 102\rho^{-1}\ell(Q_k)
    \end{align*}
    as desired. Because the set $\{n:s(k) = s(n)\}$ has a uniformly bounded number of elements in terms of $\rho$, it follows that
    \begin{equation*}
        \sum_{k=p}^n \epsilon_k'(f_k(x))^2 \lesssim_{M,\rho} \sum_{k=p}^n\epsilon(Q_k)^2 \lesssim_{M,\rho,d} \sum_{k=p}^n \beta^{d,1}_{\partial\Omega}(MB_{Q_k})^2.
    \end{equation*}
    The claim that $\dist(g(x,r_k), Q_k) \lesssim r_k$ follows from \ref{i:g-dist}. By the hypotheses on $\Omega$, we have $\sum_{k=1}^\infty \epsilon'(f_k(x))^2 \lesssim \sum_{f(x)\in Q} \beta^{d,1}_{\partial\Omega}(MB_{Q})^2 \leq L$ so that $g$ is $L'(L,\rho, M, d)$-bi-Lipschitz by Theorem \ref{DT-thm}.
\end{proof}

Now that we know that $g$ is $L'$-bi-Lipschitz, we have an absolute bound on the degree it stretches the cubes in $\scW_0$. Since we would like each element of our stopping time regions to not be stretched too far (Otherwise, they would see too much of $\partial \Omega$.), we now refine our cubes into thin rectangular Whitney boxes in which the side length of the boxes in the first $d$ coordinate directions is allowed to vary.
\begin{definition}[Whitney boxes]\label{def:whitney-boxes}
    We define the set of $p$-th order \textit{Whitney boxes} by
    $$\mathscr{R}_p = \left\{[k_12^{-p-n},(k_1+1)2^{-p-n}]\times\cdots\times[k_d2^{-p-n},(k_d+1)2^{-p-n}]\times[2^{-n},2^{-n+1}]:k_1,\ldots,k_d,n\in\Z\right\}.$$
    These are like Whitney cubes, but they have lengths along the first $d$ coordinate directions contracted by a factor of $2^{p}$. Given $R\in\mathscr{R}_p$, we call $\ell(R) = 2^{-p-n}$ the \textit{side length} and $h(R) = 2^{-n} = \dist(R,\R^d)$ the \textit{height} so that
    $$\ell(R) = 2^{-p}h(R).$$ 
    Any collection of Whitney boxes has a tree structure induced by the same partial order as in Definition \ref{def:whitney-cubes}. We set $\mathscr{R} = \cup_p\mathscr{R}_p$.
\end{definition}

Now, we define $p(L')\in\Z$ such that
\begin{equation}\label{e:def-of-p}
    2^{p-1} \leq L' < 2^p,
\end{equation}
and we replace $\scW_0$ with
\begin{equation}\label{e:Rw}
    \scR_w = \{R\in\scR_p : \exists W\in\scW_0,\ R\subseteq W\}.
\end{equation}
That is, $\scR_w$ is the set of Whitney boxes $R$ with $\ell(R) = 2^{-p}h(R)$ which are contained in some member of $\scW_0$. This ensures that
\begin{equation}\label{e:side-length-stretch}
    L'\ell(R) = L'2^{-p}h(R) \leq h(R)
\end{equation}
so that $g$ does not stretch $R$ across too far of a region on the scale of $h(R)$. 

We say more about the shape of image boxes in the following lemma:

\begin{lemma}[Image boxes]\label{l:image-boxes}
    For any $W\in\scR_w$, we have
    \begin{equation}\label{e:im-dist-omega}
        (1-C\epsilon)h(W) \leq \dist(g(W),\partial\Omega) \leq (1+C\epsilon)h(W),  
    \end{equation}
    and
    \begin{equation}\label{e:im-diam-height}
        (1-C\epsilon)h(W) \leq \diam g(W) \leq 5\sqrt{d} h(W).
    \end{equation}
    There exists constants $C_0(L'),C_1(d)$ such that
    \begin{equation}\label{e:im-balls}
        B(g(c_W), C_0^{-1}h(W)) \subseteq g(W) \subseteq B(g(c_W), C_1h(W))
    \end{equation}
    where $c_W$ is the center of $W$.
\end{lemma}
\begin{proof}
    We first note that \eqref{e:im-dist-omega} follows from \eqref{e:g-dist-E} and the fact that $\dist(W,\R^d) = h(W)$ by definition. To prove \eqref{e:im-diam-height}, let $z,z'\in R$ with $z = (x,y), z' = (x',y')$. We write
    \begin{equation}\label{e:vert-diff}
        |g(z) - g(z')| \leq |g(x,y) - g(x,y')| + |g(x,y') - g(x',y')|,
    \end{equation}
    and we will control each term separately. The first term can be written as
    \begin{align}\label{e:vertical-diff}
        |g(x,y) - g(x,y')| = \left|\sum_{k\geq 0}\rho_k(y)\{f_k(x) + R_k(x)(y)\} - \rho_k(y')\{f_k(x) + R_k(x)(y')\}\right|.
    \end{align}
    Now, the fact that $z,z'\in R\in\scR_w$ means that $|y-y'| \leq h(R) = 2^{-m}$ for some $m$. Since $r_k = 10^{-k}$ and $\rho_k$ is supported on $[r_k,20r_k]$, we claim there are at most two values of $k$ such that either $\rho_k(y)\not=0$ or $\rho_k(y') \not = 0$. Indeed, if $20r_k = 2^{-m}$, which is the $y$-value of points on the bottom of $R$, then $r_{k-2} = 100r_{k} > 40r_k = 2h(R)$ so that $\rho_{k-2} = 0$ on $[h(R),2h(R)]$. Set $n = n(y)$. Using \eqref{e:fn-diff} and \eqref{DT-isometry-change}, we have
    \begin{equation}
        \left|\sum_{k\geq 0}\rho_k(y)f_k(x) - f_{n}(x)\right| = \left|\sum_{k\geq 0}\rho_k(y)(f_k(x) - f_{n}(x))\right|  \leq C\epsilon r_{n}
    \end{equation}
    and
    \begin{equation}
        \left|\sum_{k\geq 0}\rho_k(y)R_k(x)(y) - R_{n}(x)(y)\right| = \left|\sum_{k\geq 0}\rho_k(y)(R_k(x)(y) - R_{n}(x)(y))\right| \leq C\epsilon r_{n}.
    \end{equation}
    Analogous inequalities hold with $y$ replaced by $y'$ since $|n(y) - n(y')| \leq 1$. Combining these with \eqref{e:vert-diff} by subtracting $f_n(x)$ in each of the two outer terms on the right of \eqref{e:vertical-diff} and adding and subtracting $R_n(x)(y)$ and $R_n(x)(y')$ gives
    \begin{equation}\label{e:final-vert-bound}
        |g(x,y) - g(x,y')| \leq C\epsilon r_n + |R_n(x)(y) - R_n(x)(y')| \leq C\epsilon r_n + |y-y'| \leq 2h(R)
    \end{equation}
    using that $R_k(x)$ is an isometry, $r_n\simeq h(R)$, and $|y-y'|\leq h(R)$. Now, using \eqref{e:final-vert-bound} and the fact that $g$ is $L'$-Lipschitz,
    \begin{align*}
        |g(x,y) - g(x',y')| &\leq |g(x,y)-g(x,y')| + |g(x,y') - g(x',y')|\\
        &\leq 2h(R) + L'|x-x'|\\
        &\leq 2h(R) + L'\sqrt{d}\ell(R)\\
        &\leq 5\sqrt{d}h(R)
    \end{align*}
    The lower bound follows from \eqref{e:g-dist-E} by considering the distance between images of points in the lower and upper faces of $W$. To prove \eqref{e:im-balls}, we first observe that each box $W\in\scR$ contains a small ball $B(c_W, c(L')h(W))$ around its center. Since $g$ is $L'$-bi-Lipschitz, we get a larger constant $C_0(L')$ such that the lower containment in \eqref{e:im-balls} holds. The existence of $C_1(d)$ as in the upper containment follows from the upper inequality in \eqref{e:im-diam-height}. We also note that because $g$ is injective and distinct boxes $R,W\in\scR_w$ have disjoint interiors, we have
    \begin{equation}\label{e:images-have-disjoint-centers}
        B(g(c_W), C_0^{-1}h(W)) \cap B(g(c_R), C_0^{-1}h(R)) = \varnothing.
    \qedhere\end{equation}
\end{proof}

\subsection{Whitney coronizations and the Lipschitz decomposition}
In item \ref{i:epsilon-bound} of Lemma \ref{l:g-prop}, we showed that the mapping $g$ was bi-Lipschitz so that $\partial\Omega$ is locally a bi-Lipschitz image. Hence, $\partial\Omega$ is uniformly $d$-rectifiable and therefore has an $(M,\epsilon,\delta)$-graph coronization for arbitrarily small values of $\epsilon$ and $\delta$  by Proposition \ref{p:ur-equiv}. Take $\epsilon'(d,L), \delta'(d,L) > 0$ fixed later sufficiently small and let $\scC = (\scG,\scB,\scF)$ be an $(M,\epsilon',\delta')$-graph coronization for $\partial\Omega$.

The plan for the proof of Theorem \ref{t:thmA} is to construct a ``coronization'' of $\scR_w$ which ``follows'' the coronization $\scC$ of $\partial\Omega$. That is, we will construct a triple
\begin{equation*}
    \scC_w = (\scG_w,\scB_w,\scT)
\end{equation*}
of good boxes, bad boxes, and stopping time regions $\scT = \{T\}_{T\in\scT}$ (see Remark \ref{rem:Whitney-stop-region}) partitioning $\scG_w$ such that for each $T\in\scT$, there exists some $S\in\scF$ such that the images of all boxes in $T$ under $g$ are ``surrounded'' in scale and location by cubes in $S$. We will need the following notion of ``closeness''.
\begin{definition}[$A$-close subsets]\label{def:Aclose}
     We call two subsets $W,R\subseteq\R^{d+1}$ \textit{$A$-close} (as in \cite{DS93} pg. 59) if the following hold:
\begin{align*}
    \frac{1}{A}\diam W &\leq \diam R \leq A\diam W, \\
    \dist(W,R) &\leq A(\diam W + \diam R).
\end{align*}
We will also use the notation
\begin{equation*}
    W \simeq_A R
\end{equation*}
when $W$ is $A$-close to $R$.
\end{definition}

\begin{definition}[$g$-Whitney coronizations]\label{def:whitney-coronization}
     Let $g,\scR_w$ be as above. We now give a partition of $\scR_w$ into a bad set $\scB_w$ and good set $\scG_w$ that picks out all Whitney boxes whose images under $g$ are ``$A_0$-surrounded'' by surface cubes within a single stopping time region $S\in\scF$:
    \begin{align}\label{e:G0}
        \scG_w &= \left\{W\in \scR_w : \exists S\in\scF,\ \forall Q\in\scD \text{ such that }Q\simeq_{A_0} g(W) \text{ we have }Q\in S\right\},\\\label{e:B0}
        \scB_w &= \scR_w \setminus \scG_w.
    \end{align}
    (See Definition \ref{def:Aclose}.) Given a root box $W\in\scG_w$, we define the stopping time region $T_W$ with top cube $W$ to be the maximal sub tree of $D(W)\cap\scG_w$ such that for any $R\in T_W$, either all of its children are in $T_W$, or none are. Any such stopping time region has associated minimal cubes and stopped cubes
    \begin{align*}
        m(T_W) &= \{R\in T_W:R \text{ has a child not in $T_W$} \},\\
        \Stop(T_W) &= \{R\in\scR_w:R \text{ has a parent in $m(T_W)$}\}. 
    \end{align*}
    We initialize our construction with the lattice $\scR_w$ and triple $(\scG_w,\scB_w,\scT_0 = \varnothing)$. Given the $k$-th stage stopping time collection $\scT_k$, we choose a maximal root box $W\in\scG_w\setminus \cup_{T\in\scT_k}T$ and form the stopping time region $T_W$. We set $\scT_{k+1} = \scT_k\cup\{T_W\}$. Repeating this process inductively, we obtain a partition $\scT = \bigcup_{k=1}^\infty \scT_k$ of $\scG_w$ into coherent stopping time regions. This gives the triple $\scC_w = (\scG_w, \scB_w, \scT)$. We call $\scC_w$ the \textit{$g$-Whitney coronization} of $\scR_w$ with respect to $\scC = (\scG,\scB,\scF)$. Given any $T\in\scT$, we also refer to the root box as $W(T)$.
\end{definition}
\begin{remark}[improving the stopping time]\label{rem:better-refine}
    In this construction, we used Whitney boxes with side length $\ell(R) = 2^{-p}h(R)$ to ensure that for any $R\in\scR_w$, $\diam g(R) \lesssim_d h(R)$. Without this condition or some other method of controlling the size of image boxes, we could have $z=(x,y),\ z=(x',y')$ such that $h(R) \ll |g(z) - g(z')|$ which would cause us to lose control of the change in $Dg$ across $R$ we desire in Lemma \ref{l:Dg-change} below.
    
    What we really want are image pieces of some kind that satisfy the conclusions of Lemma \ref{l:image-boxes} along with small parameterization derivative change across the pieces as in Lemma \ref{l:Dg-change} below. If one could form reasonable stopping time domains out of similar pieces whose images satisfy the conclusions of \ref{l:image-boxes} with constant $C_0$ dependent only on $d$, this would essentially prove a version of Theorem \ref{t:thmA} without hypothesis \ref{item:beta-squared}. If $g$ were $K(d)$-quasiconformal, then this could likely be accomplished by adding modifications to the stopping time by dynamically either combining or cutting apart children boxes for a given top box $W(T)$ along coordinate directions according to the size and shape of $Dg$ inside. In general though, $Dg$ can distort boxes so badly that coordinate boxes cannot be mapped forward appropriately in general, so one would need to devise a better way of partitioning the domain into pieces that are mapped forward well under a more wild parameterization.
\end{remark}

We will use $\scC_w$ to break up $\scR_w$ into regions which will map forward under $g$ to the Lipschitz graph domains we desire as in the conclusion of Theorem \ref{t:thmA}.
\begin{definition}[Stopping time domains]\label{def:stopping-time-domains}
    Let $\scC_w = (\scG_w,\scB_w,\scT)$ be a $g$-Whitney coronization as above. For each $T\in\scT$, we define a \textit{stopping time domain}
    \begin{equation*}
        \mathcal{D}_T = \bigcup_{W\in T}W.
    \end{equation*}
    For each $W\in\scB_w$, we note that $\ell(W) = 2^{-p}h(W)$ where $p$ is as in \eqref{e:def-of-p} and define a collection of associated trivial domains by chopping $W$ into $2^{p}$ cubes of common side length $\ell(W)$. That is, assuming $W = [0,\ell(W)]^d\times[h(W),2h(W)]$, we set
    \begin{equation*}
        \scL_W = \{[0,\ell(W)]^d\times[h(W)+k\ell(W),h(W)+(k+1)\ell(W)]: 0\leq k \leq 2^{p}-1\}.
    \end{equation*}
    The collection $\mathscr{L}' = \{\mathcal{D}_T\}_{T\in\scT}\cup\bigcup_{W\in\scB_w}\scL_W$ is a partition of $\bigcup_{W\in\scR}W = [-2,2]^d\times[0,8]$ up to finite overlaps on boundaries. Each cube domain $R_W\in\scL_W$ is $C(d)$-Lipschitz graphical, but the domain $\mathcal{D}_T$ is not Lipschitz graphical in general. However, $T$ consists of a coherent collection of boxes (See \ref{def:stopping-time-regions} and Remark \ref{rem:Whitney-stop-region}.) of a given ratio of side length to height $\ell(R) = 2^{-p}h(R)$. Therefore, applying a dilation $A_p$ by a factor of $2^p$ in the first $d$ coordinates gives a domain $A_p(\sD_T)$ consisting of cubes. Proposition \ref{p:DT-lip-graph} then gives the existence of a $d$-rectifiable, Ahlfors upper $d$-regular set $\Sigma_T$ such that there exists a collection of subdomains $\{\sD_T^j\}_{j\in J_T}$ fo $\sD_T$ satisfying
    \begin{equation*}
        A_p(\mathcal{D}_T) \setminus \Sigma_T = \bigcup_{j\in J_T}A_p(\mathcal{D}_T^j)
    \end{equation*}
    where $\{A_p(\mathcal{D}_T^j)\}_{j\in J_T}$ is a collection of $C(d)$-Lipschitz graph domains with disjoint interiors. By Lemma \ref{l:linear-graph-domain}, we then get the existence of a constant $C'(L,d)$ such that $\sD_T^j$ is a $C'(L,d)$-Lipschitz graph domain. We then finally define
    \begin{equation*}
        \mathscr{L}_0 = \left\{ \mathcal{D}_T^j \right\}_{T\in\scT,\ j\in J_T}\cup \bigcup_{W\in\scB_w}\scL_W.
    \end{equation*}
\end{definition}
We can now define the collection of Lipschitz graph domains $\scL$ as desired in Theorem \ref{t:thmA}:
\begin{definition}[Lipschitz decomposition]\label{def:domain-decomp}
    Let $\mathscr{L}_0$ be as in Definition \ref{def:stopping-time-domains}. We define the \textit{Lipschitz decomposition} of $\Omega\cap B(0,1)$ as
    \begin{equation}\label{e:def-of-Lambda}
        \mathscr{L} = \{g(\mathcal{D}) : \mathcal{D}\in\mathscr{L}_0\}.
    \end{equation}
    Additionally, for any $T\in\scT$ and $j\in J_T$ we define
    \begin{align*}
        \Omega_T &= g(\sD_T),\\
        \Omega_T^j &= g(\sD_T^j)
    \end{align*}
    so that we can equivalently write
    \begin{equation}
        \mathscr{L} = \{\Omega_T^j\}_{T\in\scT,\ j\in J_T} \cup \{g(R)\}_{W\in\scB_w,\ R\in\scL_W}.
    \end{equation}
\end{definition}
In order to prove Theorem \ref{t:thmA}, it suffices to prove Propositions \ref{p:RF-Lipschitz} and \ref{p:RF-surface-msr} below.

\begin{proposition}\label{p:RF-Lipschitz}
    Let $\Omega$ be as in Theorem \ref{t:thmA} and $\mathscr{L} = \{\Omega_j\}_{j\in J_{\scL}}$ be as in \eqref{e:def-of-Lambda}. There exists $L_1(L,d,\epsilon) > 0$ such that for any $j\in J_{\scL},\ \Omega_j$ is an $L_1$-Lipschitz graph domain.
\end{proposition}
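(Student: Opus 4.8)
The plan is to reduce Proposition~\ref{p:RF-Lipschitz} to two inputs and then invoke the machinery of Section~\ref{sec:lipschitz-graph-domains}, which relates images under $g$ of Lipschitz graph domains to the size of $Dg$. The two inputs are: \emph{(a)} every domain $\mathcal{D}\in\mathscr{L}_0$ is a $C_2$-Lipschitz graph domain for one fixed constant $C_2=C_2(L,d)$, independent of $\mathcal{D}$; and \emph{(b)} on each $\mathcal{D}\in\mathscr{L}_0$ the derivative $Dg$ stays within distance $\delta$ of a fixed invertible linear map $A_{\mathcal{D}}$ of condition number at most $(L')^2$, where $L'=L'(L,\rho,M,d)$ is the bi-Lipschitz constant from Lemma~\ref{l:g-prop} and where $\delta=\delta(d,\epsilon,\epsilon',\delta')$ can be made as small as we wish by shrinking $\epsilon$ together with the graph-coronization parameters $\epsilon',\delta'$. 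Granting \emph{(a)} and \emph{(b)}, I would factor $g|_{\mathcal{D}}=A_{\mathcal{D}}\circ\Phi_{\mathcal{D}}$ with $\Phi_{\mathcal{D}}=A_{\mathcal{D}}^{-1}\circ g|_{\mathcal{D}}$, so that $|D\Phi_{\mathcal{D}}-I|\le L'\delta$ on $\mathcal{D}$; the near-identity image results of Section~\ref{sec:lipschitz-graph-domains} then make $\Phi_{\mathcal{D}}(\mathcal{D})$ a $C(C_2)$-Lipschitz graph domain once $L'\delta$ is small enough, and Lemma~\ref{l:linear-graph-domain} applied to the linear map $A_{\mathcal{D}}$ upgrades this to $g(\mathcal{D})$ being an $L_1$-Lipschitz graph domain. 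Taking the supremum of the constants so produced over $\mathcal{D}\in\mathscr{L}_0$ yields the uniform $L_1=L_1(L,d,\epsilon)$ claimed.

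Input \emph{(a)} is immediate: a cube domain $R_W\in\scL_W$ coming from a bad box $W\in\scB_w$ is an axis-parallel cube, hence a $C(d)$-Lipschitz graph domain, while the domains $\mathcal{D}_T^j$ were already exhibited in Definition~\ref{def:stopping-time-domains}---via Proposition~\ref{p:DT-lip-graph} and Lemma~\ref{l:linear-graph-domain}---as $C'(L,d)$-Lipschitz graph domains. Take $C_2=\max\{C(d),C'(L,d)\}$.

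To establish \emph{(b)} I would distinguish the two kinds of domain. If $\mathcal{D}=R_W$ with $W\in\scB_w$, then $R_W$ is a cube of side length $\le h(W)/L'$ sitting at height $\simeq h(W)$, so it lives essentially at a single Whitney scale; at such a scale the structure of $g$---through Proposition~\ref{DT-isometries}, \eqref{DT-isometry-change}, Lemma~\ref{DT-plane-dist}, and the CCBP compatibility conditions \eqref{DT-co1}--\eqref{DT-co3}, with the thinness of $R_W$ and the bi-Lipschitz bound on $g$ keeping the horizontal spread of the $f_k$ across $R_W$ of order $h(W)\simeq r_k$---forces $Dg$ to vary by at most $C(d)\epsilon$ on $R_W$. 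So one may take $A_{\mathcal{D}}$ to be the value of $Dg$ at the center of $R_W$ and $\delta=C(d)\epsilon$, and $A_{\mathcal{D}}$ has condition number $\le(L')^2$ since $g$ is $L'$-bi-Lipschitz. If instead $\mathcal{D}=\mathcal{D}_T^j\subseteq\mathcal{D}_T$, then by the construction of the $g$-Whitney coronization in Definition~\ref{def:whitney-coronization} every box of $T$ is mapped by $g$ into a region that is $A_0$-surrounded by cubes of a single stopping time region $S\in\scF$ of the graph coronization $\scC$; the graph-coronization hypotheses---the angle bound \ref{i:gc-angle}, keeping every plane $P_Q$ with $Q\in S$ within $\delta'$ of $P_{Q(S)}$, and the $\ell^2$ bound \ref{i:gc-beta} on the content beta numbers $\beta^{d,1}_{\partial\Omega}(MB_Q)$ along chains in $S$---feed into Proposition~\ref{p:DgDginv-expanded} (equivalently Lemma~\ref{l:Dg-change}) to show that $Dg$ stays within $C(d)(\delta'+\epsilon')$ of a single linear map $A_{\mathcal{D}}$ throughout all of $\mathcal{D}_T$, again of condition number $\le(L')^2$. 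So $\delta=C(d)(\delta'+\epsilon')$ in this case.

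The main obstacle is the stopping-time part of \emph{(b)}: one has to control the \emph{total} variation of $Dg$ over an entire corona $\mathcal{D}_T$, not merely its infinitesimal fluctuation, and simply summing the per-scale changes $\epsilon_k'$ along a long chain need not converge. It is precisely the uniform angle control \ref{i:gc-angle}, which pins every relevant tangent plane near $P_{Q(S)}$, together with the $\ell^2$-Carleson structure of the graph coronization as exploited in Section~\ref{sec:Dg}, that makes $A_{\mathcal{D}}$ genuinely constant over $\mathcal{D}_T$ rather than merely locally. Once that is in place, it only remains to keep $\epsilon\le\epsilon_0(d)$ (a standing hypothesis of Theorem~\ref{t:thmA}) and choose $\epsilon'(d,L),\delta'(d,L)$ small enough that $L'\delta$ falls below the threshold the Section~\ref{sec:lipschitz-graph-domains} perturbation estimate requires relative to $C_2(L,d)$; Proposition~\ref{p:RF-Lipschitz} then follows.
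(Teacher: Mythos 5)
Your proposal matches the paper's own argument: for the cube domains $R_W$ you use that axis-parallel cubes are $C(d)$-Lipschitz graph domains and that $Dg$ varies by $O(\epsilon)$ across them (the paper invokes the proof of Lemma~\ref{l:Dg-change} with only Reifenberg flatness), and for the stopping-time pieces $\mathcal{D}_T^j$ you use Proposition~\ref{p:DT-lip-graph} together with the dilation $A_p$ and Lemma~\ref{l:linear-graph-domain} to obtain uniform $C'(L,d)$-Lipschitz graphicality, and then control the variation of $Dg$ over $\mathcal{D}_T$ via the angle bound~\ref{i:gc-angle} and the $\ell^2$ beta bound~\ref{i:gc-beta} feeding into Proposition~\ref{p:DgDginv-expanded}/Lemma~\ref{l:Dg-change}. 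The only surface-level difference is that you re-derive the content of Proposition~\ref{p:slow-vary-lip-graph} (factoring $g|_{\mathcal{D}}=A_{\mathcal{D}}\circ\Phi_{\mathcal{D}}$ and applying Lemmas~\ref{l:linear-graph-domain} and~\ref{l:id-lip-graph}) instead of citing it as a black box, but this is the same mechanism.
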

To prove Proposition \ref{p:RF-Lipschitz}, we use the fact that the graph coronization $\scC$ of $\partial\Omega$ and the Whitney coronization $\scC_w$ of Definition \ref{def:whitney-coronization} adapted to $\scC$ were chosen so that $Dg$ is very close to being constant on any given domain $\mathcal{D}\in\mathscr{L}_0$. This uses the explicit calculations for $Dg$ given in Proposition \ref{p:DgDginv-expanded}. This means $g$ distorts $\mathcal{D}$ only slightly such that $g(\mathcal{D})$ remains a Lipschitz graph domain (see Proposition \ref{p:slow-vary-lip-graph}). The refinement of Whitney cubes to smaller Whitney boxes ensures that $\diam(g(W)) \simeq_d h(W)$ holds for any box $W$ so that $g(W)$ does not stretch across too long of a region of $\partial\Omega$ compared to its distance from $\partial\Omega$. If $W\in\scB_W$, then this ensures that $Dg|_W$ varies at a rate determined at worst by the Reifenberg flatness constant $\epsilon$. Because in this case, $W$ is divided into the set $\scL_W$ of cubes, which are $C(d)$-Lipschitz graph domains (note $C$ is independent of $L$), $g$ maps them forward to Lipschitz graph domains given that $\epsilon$ is fixed small enough with respect to $d$.

The construction of stopping time regions $\scT$ proceeds in such a way that any $T\in\scT$ is a coherent collection of (potentially long and thin) Whitney boxes such that the change in $Dg$ on $\mathcal{D}_T$ is controlled by the geometry of $\partial\Omega$ inside some surface stopping time region $S\in\scF$. These regions are defined such that $\partial\Omega$ looks like a Lipschitz graph with small constant $\epsilon'(L,d)$ and angle variation $\delta'(L,d)$ on the scale of cubes in $S$ from which we derive that $Dg|_{\mathcal{D}_T}$ varies at a rate determined by $\delta'(L,d)$ (See Lemma \ref{l:Dg-change}), giving Lipschitz graphicality for domains in $\{g(\mathcal{D}_T^j)\}_{j\in J_T,T\in\scT}$ by Proposition \ref{p:slow-vary-lip-graph} again as long as $\epsilon',\delta'$ are fixed small enough with respect to $L$ and $d$.

\begin{proposition}\label{p:RF-surface-msr}
     Let $\Omega$ be as in Theorem \ref{t:thmA} and $\mathscr{L} = \{\Omega_j\}_{j\in J_{\scL}}$ be as in \eqref{e:def-of-Lambda}. For any $y\in\partial\Omega\cap B(0,1)$ and $0 < r < 1$, we have
    \begin{equation}\label{e:RF-surface-msr}
        \sum_{j\in j_\scL}\sH^d(\partial\Omega_j\cap B(y,r)) \lesssim_{\epsilon,L,d} r^d.
    \end{equation}
\end{proposition}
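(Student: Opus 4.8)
The plan is to transport the whole sum through the bi-Lipschitz map $g$, organise the resulting boundary surface according to the Whitney coronization $\scC_w=(\scG_w,\scB_w,\scT)$, bound the part that comes from $\partial\Omega$ itself by the $d$-Ahlfors regularity of $\partial\Omega$, and charge the remaining ``new'' boundary pieces to surface cubes lying in the bad set $\scB$ of the graph coronization $\scC$ or near an ``edge'' of one of its stopping time regions $S\in\scF$; both of these families are Carleson packed, and summing their packing estimates over the $\lesssim_{\epsilon,L,d}1$ cubes of size $\simeq r$ near $y$ will give $\lesssim_{\epsilon,L,d}r^d$. To set up, fix $y\in\partial\Omega\cap B(0,1)$, $0<r<1$ and put $x_0=g^{-1}(y)$. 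Since $P_0=\R^d\times\{0\}$ and $g(P_0)\cap B(0,1)=\partial\Omega\cap B(0,1)$ we have $x_0\in\R^d\times\{0\}$, and since $g$ is $L'$-bi-Lipschitz, $g^{-1}(B(y,r))\subseteq B(x_0,L'r)$ and $\sH^d(g(F)\cap B(y,r))\lesssim_{L,d}\sH^d(F\cap B(x_0,L'r))$ for every set $F$. As $\partial\Omega_j=g(\partial\sD)$ for the corresponding $\sD\in\mathscr{L}_0$, it suffices to show $\sum_{\sD\in\mathscr{L}_0}\sH^d(\partial\sD\cap B(x_0,L'r))\lesssim_{\epsilon,L,d}r^d$. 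The gain from passing to $x_0$ is that it lies on the bottom plane, so only the portions of the $\partial\sD$ at height $\lesssim_{L,d}r$ are relevant.

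Next, write $\bigcup_{\sD\in\mathscr{L}_0}\partial\sD$ as the union of: (a) the faces of the cubes of $\scL_W$ for $W\in\scB_w$; (b) the outer boundaries $\partial\sD_T$, $T\in\scT$, namely the top face of the top box $W(T)$ of $T$, the bottom faces of the minimal boxes $m(T)$, the lateral faces of boxes of $T$, and (when $T$ descends to the bottom) a piece of $\R^d\times\{0\}$; and (c) the cut sets $\Sigma_T$ from Proposition \ref{p:DT-lip-graph}. Every face of a Whitney box $W\in\scR_w$ has $\sH^d$-measure $\lesssim_{L,d}\ell(W)^d$, since $\ell(W)=2^{-p}h(W)$ with $p$ as in \eqref{e:def-of-p}; because $T\subseteq D(W(T))$, each $\sD_T$ is a laterally thin sliver ($\pi(\sD_T)\subseteq\pi(W(T))$, total height $\leq 2^p\ell(W(T))$), so the top face of $W(T)$ and the lateral faces lying on $\partial\pi(W(T))$ contribute $\lesssim_{L,d}\ell(W(T))^d$, while the $\pi(R)$, $R\in m(T)$, tile $\pi(W(T))$, so the bottom faces contribute $\lesssim_{L,d}\ell(W(T))^d$; a lateral face interior to $\pi(W(T))$ separates $T$ either from a box of $\scB_w$ (and is then matched by an equal-size face of that box) or from a box of another Whitney stopping time region (dealt with in the charging step). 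Since $\Sigma_T\subseteq\sD_T$ is $d$-Ahlfors upper regular with a constant depending only on $L,d$ and $\diam\sD_T\lesssim_{L,d}\ell(W(T))$, we get $\sH^d(\Sigma_T\cap B(x_0,L'r))\lesssim_{L,d}\min(\ell(W(T)),r)^d$. Finally, the piece of (b) on $\R^d\times\{0\}$ maps under $g$ with multiplicity $\lesssim_d 1$ into $\partial\Omega\cap B(y,CL'r)$, hence contributes $\lesssim_d\sH^d(\partial\Omega\cap B(y,CL'r))\lesssim_{L,d}r^d$ by the $d$-Ahlfors regularity of $\partial\Omega$ (which holds since $g$ is bi-Lipschitz). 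Collecting terms, it remains to bound $\sum_{W\in\scB_w}\ell(W)^d+\sum_{T\in\scT}\min(\ell(W(T)),r)^d$ over boxes and regions meeting a fixed dilate of $B(x_0,L'r)$, together with the total measure of the lateral faces of the $\sD_T$ separating distinct stopping time regions.

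For the region tops, observe that by Definition \ref{def:whitney-coronization} a new Whitney stopping time region starts only where a box's parent lies in $\scB_w$ or has a child in $\scB_w$; hence each $W(T)$ is the global root box, or a child or a sibling of a box of $\scB_w$, and each $R\in m(T)$ has a child in $\scB_w$, so $\sum_T\ell(W(T))^d\lesssim_d\sum_{W\in\scB_w}\ell(W)^d$ up to the contribution of the root box, which is irrelevant for small $r$ since its top face sits at height $8$. The ``large'' regions with $\ell(W(T))\gtrsim r$ whose boundary still reaches $B(x_0,L'r)$ are $\lesssim_{L,d}1$ in number — this uses that $x_0$ lies on the bottom plane and that the $\sD_T$ are pairwise disjoint — so they contribute $\lesssim_{L,d}r^d$ to the $\Sigma_T$ sum. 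To bound $\sum_{W\in\scB_w}\ell(W)^d$ (and the inter-region lateral faces), note that if $W\in\scB_w$ then Definition \ref{def:whitney-coronization}, together with the $A_0$-closeness of $g$-images of adjacent boxes, yields a surface cube $Q$ with $Q\simeq_{A_0}g(W)$ that lies in $\scB$ or is $A_0$-close to a cube outside its own stopping time region of $\scF$; moreover $\ell(Q)\simeq_{L,d}\ell(W)$, $Q$ lies within $\lesssim_{L,d}r$ of $y$, and each surface cube is $A_0$-close to $g(W)$ for at most $\lesssim_{L,d}1$ boxes $W$ since $g$ is bi-Lipschitz. The collection of such cubes — i.e. $\scB$ together with the ``edge'' cubes of $\scF$ — satisfies a Carleson packing condition with constant depending only on $d$ and the packing constants of $\scB$ and $\{Q(S)\}_{S\in\scF}$ in the graph coronization; summing this estimate over the boundedly many cubes of $\scD$ of size $\simeq r$ near $y$ gives $\sum\ell(Q)^d\lesssim_{\epsilon,L,d}r^d$, which completes the bound.

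The step I expect to be the main obstacle is the charging of the ``new'' boundary pieces, specifically the lateral faces that separate two good boxes of the same generation lying in distinct Whitney stopping time regions: one must show that any such configuration forces a surface cube in $\scB$, or an edge cube of $\scF$, at a comparable scale and nearby location (equivalently, that it is always witnessed by a nearby bad Whitney box), which requires a careful walk up the tree structure of the $\scG_w$-coronization; and one must establish the Carleson packing for the edge cubes of $\scF$, which is routine in the David--Semmes framework but technical. A secondary point requiring a short geometric argument is the claim that only boundedly many ``large'' stopping time domains have boundary reaching the scale-$r$ neighbourhood of a point on the bottom plane, which again relies on the disjointness of the $\sD_T$ and the placement of $x_0$ on $\R^d\times\{0\}$.
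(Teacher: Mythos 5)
Your proposal follows the same global strategy as the paper: split the stopping time regions into those with large and small top boxes, charge the ``new'' boundary area (bad box faces and minimal box bottoms) to the bad set $\scB_e$ of $\scD$-cubes that are either in $\scB$ or on the edges of stopping time regions of $\scF$, invoke a Carleson packing estimate for $\scB_e$, and handle the $\partial\Omega$-contribution and the boundedly many large regions separately. The main difference is one of bookkeeping: you pull the whole estimate back to the domain of $g$ and decompose $\bigcup\partial\sD$ into its constituent faces by hand, whereas the paper works on the image side and lets two prepared tools do the accounting: Lemma \ref{l:boxes-whitney} says $\{g(W):W\in\scR_w\}$ is a Whitney family, which immediately gives the cardinality bound for the large-top regions (Lemma \ref{l:large-top-cubes}) and the bounded-multiplicity charging (Lemma \ref{l:whit-card}); and the $\simeq_d$ estimate in Proposition \ref{p:DT-lip-graph} gives $\sH^d(\partial\sD_T)\lesssim_d \sH^d(\sD_T\cap\R^d)+\sum_{W\in m(T)}\ell(W)^d$ in one stroke, so that all the lateral faces of $\sD_T$ (including the inter-region ones) are already absorbed into the bottoms of minimal boxes. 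In particular, the step you flag as the ``main obstacle'' --- charging the lateral faces that separate good boxes in distinct Whitney stopping time regions to $\scB_e$ --- is not needed as a separate step: those faces are part of $\partial\sD_T$, bounded by the right-hand side of \eqref{e:DSj-bound}, and the charging only needs to happen for the minimal boxes $W\in m(T)$, each of which has a child in $\scB_w$ by maximality of $T$ (this is exactly the paper's observation that $W\in m(T)\Rightarrow\exists W'\in\Stop(T)\cap\scB_w$, whence Lemma \ref{l:scB-packing} provides a $Q\in\scB_e$ with $g(W')\simeq_{A_0}Q$). Your observation that each root $W(T)$ is a child or sibling of a $\scB_w$-box is correct and is essentially how the paper gets the same charging, though it routes through the stopped children of minimal boxes rather than the roots. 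The Carleson packing for $\scB_e$ is the paper's Lemma \ref{l:scB-packing}, which rests on Lemma \ref{l:DS-CP}, so that ``technical but routine'' step is indeed off-the-shelf. Net: the proposal is correct in substance; the paper's route is shorter because Proposition \ref{p:DT-lip-graph} and the Whitney-family lemma replace your face-by-face enumeration and remove the need to charge inter-region lateral faces directly.
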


To prove \ref{p:RF-surface-msr} we use the fact that the Whitney coronization is chosen in such a way that the images of boxes in the bad set $\scB_w$ have surface measure controlled by the measure of the $A_0$-close bad cubes $\scB$ or cubes in $\scD$ on the ``edges'' of stopping time regions which we collect in the set $\scB_e$ in \eqref{e:bad-scD} below. These cubes form a Carleson set (See Lemma \ref{l:scB-packing}.). This gives Carleson packing type estimates for the surface measure of the image cubes $\{g(R_W)\}_{W\in\scB_w,\ R_W\in\scL_W}$. Because the only time we stop in the construction of $T\in\scT$ is when we hit some $W\in\scB_w$, the surface measure of domains in $\{g(\mathcal{D}_T)\}_{T\in\scT}$ is controlled by the measure of nearby cubes in $\scB_e$. The fact that $g$ is bi-Lipschitz and preserves distances to the boundary means that the family $\{g(W)\}_{W\in\scR_w}$ behaves in many ways like a Whitney decomposition itself (see Lemma \ref{l:boxes-whitney}) so that we can bound the number of image boxes which are $A_0$-close to any fixed bad cube $Q\in\scB_e$, giving the desired Carleson type estimates.

\subsection{Lipschitz bounds for Theorem \ref{t:thmA}}
The goal of this section is to prove Proposition \ref{p:RF-Lipschitz}. The following lemma allows us to control the change in $Dg$ on any stopping time domain $T$.

\begin{lemma}[Variation of $Dg$]\label{l:Dg-change}
    For any $T\in\scT$ and $z,w\in \mathcal{D}_T$, we have
    \begin{equation}\label{e:Dg-change}
        |Dg(z)\cdot Dg(w)^{-1} - I| \leq C\delta'.
    \end{equation}
\end{lemma}
\begin{proof}
    First, fix some $T\in\scT$ and without loss of generality normalize so that $W(T) \subseteq W_0 = [-2,2]^d\times[4,8]$. We want to apply Proposition \ref{p:DgDginv-expanded} with some choice of $M_0\lesssim_d 1$ and $z = (x,y)$ a point in the top face of $W(T)$ by showing that $\mathcal{D}_T\subseteq G_{z}^{M_0}$. So, let $z' = (x',y')\in R\in T$ and let $n = n(y'),\ p = l(y)$. The choice of $z$ means that $|y'|\leq |y|$, and we know $f_{n(y)}(x)\in V_{n(y)}^8$ and $f_{n(y')}(x')\in V_{n(y')}^8$ by Lemma \ref{l:g-prop} item \ref{i:net} using the fact that $D_T\subseteq[-2,2]^d\times[0,8]$ because every box composing $D_T$ is a member of $\scR_w$, (See \eqref{e:Rw}.) which consists of box refinements of members of $\scW_0$ (See \eqref{e:centered-Whitney}.). This shows $z'\in N_z$, so we now only need to prove the following three statements to show $z'\in G_z^{M_0}$:
    \begin{enumerate}[label=(\roman*)]
        \item $|f_p(x) - f_p(x')| \lesssim_d r_p$, \label{i:base-dist}
        \item $\sum_{k=p}^n \epsilon'_k(f_k(x'))^2 \lesssim \epsilon'$, \label{i:epsilon_k-small}
        \item $\ang(T_n(x'), T_p(x')) \lesssim \delta'$. \label{i:angle-small}
    \end{enumerate}

    We begin by observing that \ref{i:base-dist} follows from the fact that $f_p$ is $L'$-bi-Lipschitz due to Remark \ref{rem:f-bilip} so that
    \begin{equation*}
        |f_p(x) - f_p(x')| \leq L'|x-x'| \leq 2L'\sqrt{d}\ell(W(T)) \lesssim_d h(W(T)) \lesssim r_p
    \end{equation*}
    using \eqref{e:side-length-stretch}. To prove \ref{i:epsilon_k-small}, let $Q_p\supseteq Q_{p+1} \supseteq\cdots\supseteq Q_n$ be the cubes given by Lemma \ref{l:g-prop}. For any $k$ with $p\leq k \leq n$ the fact that $\dist(g(x',r_k), Q_k)\lesssim r_k$ means that $(x',r_k)\in R \leq W(T)$ with 
    \begin{align*}
        \diam Q_k \geq c_0\ell(Q_{k+1}) \geq \frac{c_0\rho}{10}r_k \geq \frac{c_0\rho}{200}h(R) \geq \frac{c_0\rho}{1000\sqrt{d}}\diam(R) = A_0\diam R
    \end{align*}
    so that $Q_k \simeq_{A_0} R$. This means there exists $S\in \scF$ such that $Q_k\in S$ for any $k$ by the definition of the stopping time region $T$. We conclude that
    \begin{equation*}
        \sum_{k=p}^n\epsilon'_k(f_k(x'))^2 \lesssim \sum_{k=p}^n \beta^{d,1}_{\partial\Omega}(MB_{Q_k})^2 \lesssim \epsilon'.
    \end{equation*}
    
    To prove \ref{i:angle-small}, observe that
    \begin{equation*}
        \ang(T_p(x'), T_n(x')) \leq \ang(T_p(x'), P_{Q_p}) + \ang(P_{Q_p}, P_{Q_n}) + \ang(P_{Q_n}, T_n(x')) \lesssim \epsilon' + \delta' + \epsilon' \lesssim \delta'
    \end{equation*}
    where we used Lemma \ref{DT-loc-lip} and the fact that $Q_p,Q_n\in S$.
\end{proof}

Using the results of Section \ref{sec:lipschitz-graph-domains}, we can now prove Proposition \ref{p:RF-Lipschitz}.

\begin{proof}[Proof of Proposition \ref{p:RF-Lipschitz}]
    Every domain in $\scL$ is either of the form $g(\sD_T^j)$ for some $T\in\scT$ and $j\in J_T$ or $g(R_W)$ for some $W\in\scB_w,$ and $R\in\scL_W$. We first consider domains of the first form.
    
    Let $T\in\scT$ and let $A_p:\R^{d}\times\R\rightarrow\R^{d+1}$ be given by $A_p(x,y) = (2^px,y)$. By definition, the image stopping time region $\sD_T' = A_p(\sD_T)$ is composed of cubes and Proposition \ref{p:DT-lip-graph} implies there exists a constant $L_0(d)$ such that $\sD_T'$ has a decomposition into $L_0$-Lipschitz graph domains which passes to a decomposition of $\sD_T$ into $L_0'(d,L')$-Lipschitz graph domains $\{\sD_T^j\}_{j\in J_T}$ by applying $A_p^{-1}$. Now, using Lemma \ref{l:Dg-change}, we see \eqref{e:Dg-change} holds on $\sD_T$ so that by taking $\epsilon'(L',d), \delta'(L',d)$ sufficiently small, Proposition \ref{p:slow-vary-lip-graph} implies $g(\sD_T^j)$ is an $L_1(L',d)$-Lipschitz graph domain.

    Now, let $W\in\scB_w$ and $R_W\in\scL_W$. The proof of Lemma \ref{l:Dg-change} shows that $|Dg(z)\cdot Dg(w)^{-1} - I| \leq C\epsilon$ using only the fact that $\partial\Omega$ is $(\epsilon,d)$-Reifenberg flat. Since $R_W$ is a cube, it is a $C(d)$-Lipschitz graph domain so that Proposition \ref{p:slow-vary-lip-graph} implies $g(R_W)$ is a $C'(d)$-Lipschitz graph domain as long as $\epsilon$ is sufficiently small with respect to $d$.
\end{proof}

\subsection{Surface area bounds for Theorem \ref{t:thmA}}

We now focus on proving Proposition \ref{p:RF-surface-msr}. We will justify the name coronization by proving Carleson estimates for the $g$-Whitney coronization which will imply the desired estimates for our domains.

\begin{definition}[$C_0$-Whitney family]\label{def:whit-family}
    Let $\Omega_0\subseteq\R^{d+1}$ be a domain and let $C_0 \geq 1$. We say that a collection $\scV$ of subsets of $\Omega_0$ is a \textit{$C_0$-Whitney family} if for every $V\in \scV$, we have
    \begin{align}\label{e:whit-dist}
        C_0^{-1}\diam V \leq \dist(V,\Omega_0^c) \leq C_0\diam V,
    \end{align}
    there exists $c_V\in V$ such that
    \begin{equation}\label{e:whit-ball}
        B(c_V, C_0^{-1}\diam V) \subseteq V,
    \end{equation}
    and, if $V\not= V'$, then
    \begin{equation}
        B(c_V,C_0^{-1}\diam V) \cap B(c_{V'}, C_0^{-1}\diam V') = \varnothing.
    \end{equation}
\end{definition}

\begin{lemma}\label{l:whit-card}
    Let $\Omega_0,\scV$ be as in Definition \ref{def:whit-family}. Let $A \geq 1$, $U\subseteq \R^{d+1}$ and set
    $$\scV_{A,U} = \{V\in\scV : V\simeq_A U\}.$$
    We have
    \begin{equation}\label{e:whit-card}
        \#(\scV_{A,U}) \lesssim_{A, C_0, d} 1.
    \end{equation}
    If $\scU$ is a collection of subsets such that for any $V\in\scV$, there exists $U\in\scU$ such that $V\simeq_A U$, then
    \begin{equation}\label{e:whit-sum}
        \sum_{V\in\scV}(\diam V)^d \lesssim_{A,C_0,d} \sum_{U\in\scU}(\diam U)^d.
    \end{equation}
\end{lemma}
\begin{proof}
    For any $V\in\scV_{A,U}$, we have
    \begin{equation*}
        \dist(U,V) \leq A \diam U,
    \end{equation*}
    and
    \begin{equation*}
        A^{-1}\diam U \leq \diam V \leq A\diam U.
    \end{equation*}
    Let $B_V = B(c_V,C_0^{-1}\diam V)$ and fix $u\in U$. It follows that $B_V\subseteq V\subseteq B(u,3A\diam U)$ and $C_0^{-1}\diam V \geq (C_0A)^{-1}\diam U$ so that $\{B_V\}_{V\in\scV_{A,U}}$ is a collection of disjoint balls with radius $r(B_V) \geq (C_0A)^{-1}\diam U$ contained in the ball $B(u,3A\diam U)$ and hence has cardinality bounded in terms of $C_0,A,$ and $d$. This proves \eqref{e:whit-card}.

    To prove \eqref{e:whit-sum}, notice that
    \begin{equation*}
        \sum_{V\in\scV}(\diam V)^d \leq \sum_{U\in\scU}\sum_{V\in\scV_{A,U}} (\diam V)^d \lesssim_A \sum_{U\in\scU}\#(\scV_{A,U})(\diam U)^d \lesssim_{A,C_0,d} \sum_{U\in\scU}(\diam U)^d.
    \qedhere\end{equation*}
\end{proof}

 We define
\begin{equation}
    \mathcal{G}_0 = \{g(W) : W\in\scR_w\}
\end{equation}
and observe that $\mathcal{G}_0$ is a $\Lambda_0(L',d)$-Whitney family by equations \eqref{e:im-dist-omega} - \eqref{e:images-have-disjoint-centers}:
\begin{lemma}\label{l:boxes-whitney}
    There exists a constant $\Lambda_0(L',d) > 0$ such that $\mathcal{G}_0$ is a $\Lambda_0(L',d)$-Whitney family.
\end{lemma}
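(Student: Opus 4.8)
The plan is to verify the three defining conditions of a $C_0$-Whitney family (Definition \ref{def:whit-family}) for $\mathcal{G}_0 = \{g(W) : W\in\scR_w\}$ with $\Omega_0 = \Omega\cap B(0,1)$, directly reading off each condition from the image-box estimates already established in Lemma \ref{l:image-boxes}. The key point is that all three estimates \eqref{e:im-dist-omega}, \eqref{e:im-diam-height}, and \eqref{e:im-balls} are phrased in terms of the single quantity $h(W)$, so the comparability $\diam g(W) \simeq_d h(W)$ from \eqref{e:im-diam-height} lets us convert every $h(W)$-bound into a $\diam g(W)$-bound at the cost of a dimensional constant. One sets $\Lambda_0(L',d)$ to be a large enough constant absorbing the various $C(d)$, $C_0(L')$, and $C_1(d)$ appearing in Lemma \ref{l:image-boxes} together with the implied constants from the comparison $(1-C\epsilon)h(W) \leq \diam g(W) \leq 5\sqrt{d}\,h(W)$ (here $\epsilon \leq \epsilon_0(d)$ is small, so $1-C\epsilon \geq 1/2$, say).

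The steps, in order: First, for the distance condition \eqref{e:whit-dist}, note that $\dist(g(W), \Omega_0^c) \geq \dist(g(W),\partial\Omega)$ since $\partial(\Omega\cap B(0,1)) \subseteq \partial\Omega \cup \partial B(0,1)$ and the descendants of $W_0$ stay well inside $B(0,1)$ — more carefully, for $W \in \scR_w$ we have $g(W) \subseteq g([-2,2]^d\times[0,8])$ and one checks $\dist(g(W),\partial B(0,1))$ is never the binding constraint because $\dist(g(W),\partial\Omega) \lesssim h(W) \leq h(W_0) \lesssim 1$ while points of $g(W)$ near $\partial B(0,1)$ would already be far from $\partial\Omega$; thus $\dist(g(W),\Omega_0^c) = \dist(g(W),\partial\Omega)$ up to the usual harmless reductions, and \eqref{e:im-dist-omega} combined with \eqref{e:im-diam-height} gives $C_0^{-1}\diam g(W) \leq \dist(g(W),\Omega_0^c) \leq C_0 \diam g(W)$. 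Second, the interior ball condition \eqref{e:whit-ball} is exactly the lower containment in \eqref{e:im-balls}, $B(g(c_W), C_0(L')^{-1}h(W)) \subseteq g(W)$, rewritten using $h(W) \simeq_d \diam g(W)$ with center $c_{g(W)} := g(c_W)$. Third, the disjointness of the interior balls for distinct image boxes is precisely \eqref{e:images-have-disjoint-centers}, again after the same rescaling of the radius.

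The only genuine issue — and it is minor — is the boundary-versus-sphere subtlety in the first step: one must confirm that replacing $\dist(\cdot,\partial\Omega)$ by $\dist(\cdot,(\Omega\cap B(0,1))^c)$ does not spoil the two-sided bound. I expect this to be handled by the observation that $\scR_w$ consists of descendants of $W_0 = [-2,2]^d\times[4,8]$, so every $W\in\scR_w$ satisfies $\pi(W)\subseteq[-2,2]^d$ and $h(W)\leq 8$; since $g$ moves points by at most $C\epsilon$ (Theorem \ref{DT-thm}), any $z\in g(W)$ with $\dist(z,\partial B(0,1))$ small would have to lie near $\partial\Omega\cap B(0,1)$ as well, making $\dist(z,\partial\Omega)$ the smaller of the two and hence the one that governs. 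Alternatively, and more cleanly, one simply notes that the statement of the lemma only requires $\mathcal{G}_0$ to be a Whitney family for the purpose of applying Lemma \ref{l:whit-card}, where only the comparisons among diameters and distances to $\Omega_0^c$ are used, and these follow verbatim from Lemma \ref{l:image-boxes} once $\Lambda_0$ is chosen as above. No new computation beyond bookkeeping of constants is needed.
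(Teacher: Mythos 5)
Your overall approach is the same as the paper's: Lemma \ref{l:boxes-whitney} is stated as an immediate consequence of the three image-box estimates \eqref{e:im-dist-omega}, \eqref{e:im-diam-height}, \eqref{e:im-balls} (and the disjointness \eqref{e:images-have-disjoint-centers}), and you correctly propose to read off each condition of Definition \ref{def:whit-family} from these, converting $h(W)$ to $\diam g(W)$ via \eqref{e:im-diam-height}. However, your choice $\Omega_0 = \Omega\cap B(0,1)$ is wrong, and the issue you flag as ``minor'' is in fact the thing that would sink the argument as you wrote it. First, $\mathcal{G}_0$ is not even a collection of subsets of $\Omega\cap B(0,1)$: the root box $W_0 = [-2,2]^d\times[4,8]$ has $h(W_0)=4$, so by \eqref{e:im-dist-omega} the image $g(W_0)$ lies at distance $\simeq 4$ from $\partial\Omega\ni 0$ and therefore far outside $B(0,1)$. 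Your claim that ``the descendants of $W_0$ stay well inside $B(0,1)$'' is false. Second, the inequality $\dist(g(W),\Omega_0^c) \geq \dist(g(W),\partial\Omega)$ you assert is reversed: since $\partial\Omega \subseteq \Omega^c \subseteq (\Omega\cap B(0,1))^c$, one has $\dist(g(W),\Omega_0^c) \leq \dist(g(W),\partial\Omega)$. And third, with this $\Omega_0$ the lower bound in \eqref{e:whit-dist} genuinely fails for boxes whose image approaches the sphere $\partial B(0,1)$: such a $g(W)$ can be arbitrarily close to $\partial B(0,1)$ while remaining at distance $\simeq h(W) \simeq \diam g(W)$ from $\partial\Omega$, so $\dist(g(W),\Omega_0^c)/\diam g(W)$ is not bounded below.

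The correct reading is $\Omega_0 = \Omega$. Then $\dist(g(W),\Omega_0^c)=\dist(g(W),\partial\Omega) \simeq h(W) \simeq_d \diam g(W)$ by \eqref{e:im-dist-omega}--\eqref{e:im-diam-height}, the interior ball condition is the lower inclusion in \eqref{e:im-balls}, and disjointness is \eqref{e:images-have-disjoint-centers}; no further reduction is needed and the constant $\Lambda_0(L',d)$ absorbs $C_0(L')$, $C_1(d)$, $5\sqrt d$, and $(1-C\epsilon)^{-1}$. Your fallback remark --- that the downstream application in Lemma \ref{l:whit-card} never invokes the distance-to-complement condition, only the disjoint interior balls --- is essentially right (and a cleaner observation than you give it credit for), but the lemma as stated still needs the correct $\Omega_0$; fix that and delete the $B(0,1)$ discussion, and the proof is complete.
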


Combining this fact with Lemma \ref{l:whit-card} will allow us to bound the surface measure of images of stopped boxes in terms of the side-length of $A_0$-close bad and stopped cubes in $\scD$. The following two lemmas will give a Carleson packing condition on this bad subset $\scB_e\subseteq\scD$ defined in \eqref{e:bad-scD} below from which we will be able to conclude the desired surface measure bound \eqref{e:RF-surface-msr}. We begin with the following lemma due to David and Semmes.
\begin{lemma}[cf. \cite{DS93} Part I Lemma 3.27, (3.28)]\label{l:DS-CP}
    Let $A \geq 1$, let $\scD$ be a Christ-David lattice with coronization $(\scG,\scB,\scF)$. Then, 
    \begin{enumerate}
        \item[(a)] The set
        $$\mathscr{A} = \{Q\in\scG : \exists Q'\in S' \not= S\ni Q \text{ such that } Q\simeq_A Q'\}$$
        satisfies a Carleson packing condition.
        \item[(b)] Suppose $\scH\subseteq\scD$ satisfies a Carleson packing condition. The set
        $$\scH_A = \{Q\in\scD: \exists Q'\in\scH \text{ such that } Q\simeq_{A_0}Q'\}$$
        satisfies a Carleson packing condition.
    \end{enumerate}
\end{lemma}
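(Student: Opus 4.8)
Both parts follow \cite{DS93}; the plan is to adapt those arguments.

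For part (b) the key point is a bounded-multiplicity estimate: for a fixed $R\in\scH$, the collection $\{Q\in\scD:Q\simeq_A R\}$ has cardinality $\lesssim_{A,d}1$, since every such $Q$ has $\ell(Q)\simeq_A\ell(R)$, lies in $C(A)B_R$, and satisfies $|Q|\simeq_d\ell(Q)^d$, so only $O_{A,d}(1)$ of them fit. Given $Q_0\in\scD$, each $Q\in\scH_A$ with $Q\subseteq Q_0$ has a witness $R\in\scH$, $Q\simeq_A R$, hence $\ell(R)\lesssim_A\ell(Q_0)$ and $R\subseteq C(A)B_{Q_0}$. Covering $C(A)B_{Q_0}\cap E$ by $\lesssim_{A,d}1$ cubes $Q_0^i\in\scD$ of generation $k(Q_0)-C'(A)$ — with $C'(A)$ large enough that each such $R$ lies inside some $Q_0^i$ — and using the Carleson condition for $\scH$, one gets
\[
  \sum_{\substack{Q\in\scH_A\\ Q\subseteq Q_0}}\ell(Q)^d
  \;\lesssim_{A,d}\;\sum_{\substack{R\in\scH\\ R\subseteq C(A)B_{Q_0}}}\ell(R)^d
  \;\le\;\sum_i\sum_{\substack{R\in\scH\\ R\subseteq Q_0^i}}\ell(R)^d
  \;\lesssim_{A,d}\;\sum_i\ell(Q_0^i)^d
  \;\lesssim_{A,d}\;\ell(Q_0)^d,
\]
with the evident modification when $Q_0$ sits near the coarsest scale of $E$.

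For part (a) the plan is to charge each $Q\in\mathscr{A}$ to a top cube $Q(S)$, $S\in\scF$, of comparable scale and with bounded multiplicity, and then to conclude from the Carleson condition on $\{Q(S)\}_{S\in\scF}$ together with the covering device of part (b) (using also that, since the regions in $\scF$ are disjoint and coherent, at most one $S$ has $Q(S)\supsetneq Q_0$ while still meeting a cube contained in $Q_0$). Fix $N=N(A,\rho,c_0)$ large and let $Q\in\mathscr{A}$ lie in $S$, with witness $Q'\in S'\neq S$, $Q\simeq_A Q'$. If $Q$ is within $N$ generations of $Q(S)$, then $Q\simeq_{C(A)}Q(S)$ and I would charge $Q$ to $Q(S)$; if $Q'$ is within $N$ generations of $Q(S')$, then $Q\simeq_{C(A)}Q'\simeq_{C(A)}Q(S')$ and I would charge $Q$ to $Q(S')$. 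Otherwise, by coherence the $N$th ancestors $\widehat Q$ and $\widehat Q'$ lie in $S$ and $S'$ respectively, and with $N$ large their displacement is negligible on their common scale, so $\widehat Q\simeq_A\widehat Q'$ and therefore $\widehat Q\in\mathscr{A}$ with $S(\widehat Q)=S$; iterating, the chain of $N$th ancestors of $Q$ ascends inside $S$ until it falls into the first case and is charged to $Q(S)$, and along the way $Q$ and all these ancestors must sit near the ``interface'' of $S$ with another region.

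To convert this into the Carleson estimate one must show that the $\ell^d$-mass of the cubes of $S$ charged to $T:=Q(S)$ and having side length $\rho^m\ell(T)$ is $\lesssim_{A,d,\rho}\rho^{m\gamma}\ell(T)^d$ for some $\gamma=\gamma(d)>0$; summing the geometric series in $m$ then yields a bound $\lesssim\ell(T)^d$ per region, and packing the tops $T$ gives $\lesssim C_2\ell(Q_0)^d$. This last estimate is the only substantial step, and it is where I expect the main difficulty: a cube of $S$ that is $\simeq_A$-close to a cube of a \emph{different} stopping-time region is forced to lie in a thin collar of $\partial Q(S)$ or in a thin collar of one of the minimal cubes of $S$, so the mass bound should reduce to the small-boundary property of Christ--David cubes together with the pairwise disjointness of the minimal cubes of a coherent region (which gives $\sum_{R\in m(S)}\ell(R)^d\lesssim_d\ell(Q(S))^d$, where $m(S)$ denotes the minimal cubes of $S$). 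Everything else is routine bookkeeping with the Carleson packing conditions built into the notion of a coronization, and the interface estimate itself is carried out in \cite{DS93}.
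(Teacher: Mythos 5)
The paper does not prove this lemma; it is cited verbatim to \cite{DS93} (Part I, Lemma 3.27 and (3.28)). So there is no internal proof to compare against, and your task was effectively to reconstruct the David--Semmes argument. With that understanding:

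Your argument for part (b) is correct and is the standard one: bounded multiplicity of $\{Q:Q\simeq_A R\}$ for fixed $R$, then covering $C(A)B_{Q_0}\cap E$ by $O_{A,d}(1)$ cubes a few generations coarser than $Q_0$ and invoking the Carleson condition for $\scH$ on each. Nothing to add.

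For part (a) you have the right skeleton --- charge cubes of $\scA$ to top cubes $Q(S)$ (or $Q(S')$), then control the per-region mass via the small-boundary property of Christ--David cubes together with the fact that $\sum_{R\in m(S)}\ell(R)^d\lesssim\ell(Q(S))^d$ and finally the Carleson packing for the tops. Two points deserve attention. First, your per-scale claim (``cubes of $S$ charged to $T$ at side length $\rho^m\ell(T)$ have total mass $\lesssim\rho^{m\gamma}\ell(T)^d$'') is correct \emph{only} for the branch of the iteration that terminates at $Q(S)$: you should verify, as you can, that whenever the witness $Q'$ lies \emph{inside} $Q(S)$, then $Q(S')\subseteq R_1\subsetneq Q(S)$ for a stopped cube $R_1$ of $S$, so the witness chain reaches $Q(S')$ strictly before the $Q$-chain reaches $Q(S)$ and $Q$ is charged to $Q(S')$; consequently the cubes charged to $T=Q(S)$ really do have witnesses outside $Q(S)$ and lie in the thin collar of $\partial Q(S)$, which is what makes the geometric-series estimate you propose valid. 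Without that observation, the collar-of-minimal-cube contributions do not decay like $\rho^{m\gamma}\ell(T)^d$ at a fixed $m$ (only after first summing over scales within each $R\in m(S)$), and the per-scale statement as written would be false. Second, and more substantively, the small-boundary property
\[
  \sH^d\bigl(\{x\in Q:\dist(x,E\setminus Q)\le t\,\ell(Q)\}\bigr)\lesssim t^\gamma\,\ell(Q)^d
\]
that you invoke as a black box is a feature of Christ--David cubes \emph{on Ahlfors-regular sets}; it is not part of the statement of Theorem~\ref{t:Christ} as given in the paper, and it is not available for general lower-content-regular (e.g.\ non--Ahlfors-regular Reifenberg flat) boundaries without an extra argument. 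Since the paper applies Lemma~\ref{l:DS-CP} in both the uniformly rectifiable and the Reifenberg flat settings, this hypothesis should be flagged explicitly rather than left implicit. Apart from these two gaps --- both of which you signal honestly by deferring to \cite{DS93} for ``the interface estimate'' --- your sketch is a faithful reconstruction of the cited argument.
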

This lemma will directly give us a Carleson packing condition on the set
\begin{equation}\label{e:bad-scD}
    \scB_e = \scB \cup \{Q\in\scG : \exists Q'\in S'\not=S\ni Q \text{ such that } Q\simeq_{2A_0^2}Q'\}.
\end{equation}
\begin{lemma}[$\scB_e$ Carleson packing condition]\label{l:scB-packing}
    The family $\scB_e$ satisfies a Carleson packing condition. For any $W\in\scB_w$, there exists $Q_W\in\scB_e$ such that $g(W)\simeq_{A_0}Q_W$.
\end{lemma}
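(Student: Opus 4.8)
The plan is to establish the two assertions separately. For the packing bound, I would note that $\scB_e$ is a union of two families each already known to pack: the bad cubes $\scB$, which satisfy a Carleson packing condition by the very definition of a coronization, and the family
\[
  \mathscr{A} = \{Q\in\scG : \exists\, Q'\in S'\neq S\ni Q \text{ such that } Q\simeq_{2A_0^2}Q'\},
\]
which packs by Lemma~\ref{l:DS-CP}(a) applied with $A = 2A_0^2$. Since a finite union of families satisfying Carleson packing conditions does as well (the packing sum over $\scB_e$ below a cube $Q$ is at most the sum of the two individual packing sums), $\scB_e = \scB\cup\mathscr{A}$ packs. This part requires no further work.

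For the second assertion, fix $W\in\scB_w$ and set $\mathcal{Q}_W = \{Q\in\scD : Q\simeq_{A_0}g(W)\}$. First I would observe that $\mathcal{Q}_W\neq\varnothing$: directly, since $\dist(g(W),\partial\Omega)\simeq_d h(W)\simeq_d\diam g(W)$ by \eqref{e:im-dist-omega}--\eqref{e:im-diam-height} and $\partial\Omega$ is Ahlfors regular, one can take a Christ--David cube of side length $\simeq h(W)$ through a boundary point nearest $g(W)$, which is $A_0$-close to $g(W)$ precisely because $A_0 = 1000\sqrt{d}/(c_0\rho)$ was chosen for this purpose; alternatively, an empty $\mathcal{Q}_W$ would place $W$ vacuously in $\scG_w$. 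Now split into cases. If some $Q\in\mathcal{Q}_W$ lies in $\scB$, take $Q_W = Q\in\scB\subseteq\scB_e$ and we are done. Otherwise $\mathcal{Q}_W\subseteq\scG$; since $W\notin\scG_w$, the cubes of $\mathcal{Q}_W$ cannot all lie in a single stopping time region of $\scF$ (such a region would witness $W\in\scG_w$), so there exist $Q_1\in S_1$ and $Q_2\in S_2$ in $\mathcal{Q}_W$ with $S_1\neq S_2$.

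It then remains to check that $Q_1\simeq_{2A_0^2}Q_2$, for this puts $Q_1\in\mathscr{A}\subseteq\scB_e$ while keeping $g(W)\simeq_{A_0}Q_1$, so $Q_W = Q_1$ works. This is the only genuine computation: from $Q_1,Q_2\simeq_{A_0}g(W)$ one gets $\diam Q_1\leq A_0^2\diam Q_2\leq 2A_0^2\diam Q_2$ and, symmetrically, the reverse diameter bound, while a three-term triangle inequality routed through a point of $g(W)$ gives
\[
  \dist(Q_1,Q_2) \leq A_0\diam Q_1 + A_0\diam Q_2 + (2A_0+1)\diam g(W) \leq \tfrac{2A_0^2+3A_0}{2}\,(\diam Q_1+\diam Q_2),
\]
where the last step uses $\diam g(W)\leq\tfrac{A_0}{2}(\diam Q_1+\diam Q_2)$; since $A_0 = 1000\sqrt{d}/(c_0\rho)\geq 3/2$ (with $c_0=1/500$, $\rho=1/1000$), the right-hand side is $\leq 2A_0^2(\diam Q_1+\diam Q_2)$, as required. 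I expect this constant-chasing, together with pinning down why $\mathcal{Q}_W$ is nonempty, to be the only slightly delicate points; there is no real obstacle here.
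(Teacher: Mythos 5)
Your proof is correct and follows essentially the same route as the paper's: both obtain the packing condition directly from Lemma~\ref{l:DS-CP} (the $\scB$ part from the coronization plus part (a) of the lemma, then sum the two), and both unwind $W\notin\scG_w$ into the same dichotomy — either a bad cube $Q\in\scB$ is $A_0$-close to $g(W)$, or two $A_0$-close good cubes land in distinct stopping time regions, whence $Q_1\simeq_{2A_0^2}Q_2$ by the triangle-inequality computation you carry out. The paper labels that last step ``a calculation using the definition of $A_0$-closeness'' without writing it down, and it glosses over the nonemptiness of $\mathcal{Q}_W$, so your version is the same argument with those details made explicit.
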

\begin{proof}
    The fact that $\scB_e$ satisfies a Carleson packing condition follows from Lemma \ref{l:DS-CP}. For the second statement, let $W\in\scB_w$. By definition, $W\not\in\scG_w$ so that one of the following two holds:
    \begin{enumerate}[label=(\roman*)]
        \item $\exists Q\in\scB$ such that $g(W)\simeq_{A_0} Q$,
        \item $\exists S_1, S_2\in\scF$ such that $Q_1\in S_1\not= S_2\ni Q_2$ with $g(W)\simeq_{A_0} Q_1$ and $g(W)\simeq_{A_0} Q_2$.
    \end{enumerate}
    The first case gives the desired cube $Q_W$ immediately. In the second case, a calculation using the definition of $A_0$-closeness shows that $Q_1\simeq_{2A^2}Q_2$ so that $Q_1,Q_2\in\scB_e$ and we can set $Q_W = Q_1$.
\end{proof}

We now fix $y\in \partial\Omega\cap B(0,1)$ and $0 < r \leq 1$. In order to pick out the pieces of the domains which actually intersect $B(y,r)$, for any $T\in\scT$ we define
\begin{equation*}
    \scT_{y,r}' = \{T\in\scT : \Omega_T\cap B(y,r)\not=\varnothing\}.
\end{equation*}
We break up $\scT_{y,r}'$ into regions with large and small top cubes:
\begin{align*}
    \scT_{L,r} &= \{T\in\scT_{y,r'} : h(W(T)) > 10r\},\\
    \scT_{y,r} &= \scT_{y,r}'\setminus \scT_L.
\end{align*}
It is also convenient to collect all of the boundaries associated with a given stopping time domain $T\in\scT$ into one set:
\begin{equation*}
    \mathcal{B}_T = \bigcup_{j\in J_T}\partial\Omega_T^j.
\end{equation*}
We note that $\sB_T$ is Ahlfors upper $d$-regular by Proposition \ref{p:DT-lip-graph}. Proposition \ref{p:RF-surface-msr} will follow from the following three lemmas below. The first gives a bound for the domains in $\scT_{L,r}$ while the second gives a bound for those in $\scT_{y,r}$. 

\begin{lemma}\label{l:large-top-cubes}
\begin{equation*}
    \sum_{T\in\scT_{L,r}}\sH^d(\sB_T\cap B(y,r)) \lesssim_{L',d} r^d \leq \sH^d(\partial\Omega\cap B(y,r)).
\end{equation*}
\end{lemma}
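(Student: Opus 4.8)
The plan is to split the bound into two ingredients: a counting estimate, $\#\scT_{L,r}\lesssim_{L',d}1$, and the uniform $d$-upper Ahlfors regularity of the sets $\sB_T$. Granting both: since $\sB_T$ is $d$-upper Ahlfors regular by Proposition \ref{p:DT-lip-graph} with a constant depending only on $d$ and the Lipschitz constants ($\lesssim_{L',d}1$) of its pieces $\sD_T^j$, we get $\sH^d(\sB_T\cap B(y,r))\lesssim_{L',d}r^d$ for every $T$, whence $\sum_{T\in\scT_{L,r}}\sH^d(\sB_T\cap B(y,r))\le\#\scT_{L,r}\cdot\sup_{T\in\scT_{L,r}}\sH^d(\sB_T\cap B(y,r))\lesssim_{L',d}r^d$. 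The remaining inequality $r^d\lesssim\sH^d(\partial\Omega\cap B(y,r))$ holds because $\partial\Omega$, being $(\epsilon,d)$-Reifenberg flat, is lower content $d$-regular; chaining these gives the lemma.

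The heart of the matter is the counting estimate, which I would prove as follows. Fix $T\in\scT_{L,r}$. Since $\Omega_T$ meets $B(y,r)$ and $\Omega_T\subseteq g(\sD_T)=\bigcup_{W\in T}g(W)$, there is a box $R\in T$ with $g(R)\cap B(y,r)\neq\varnothing$; as $y\in\partial\Omega$ this gives $\dist(g(R),\partial\Omega)<r$, so \eqref{e:im-dist-omega} forces $h(R)<(1-C\epsilon)^{-1}r<2r$ once $\epsilon=\epsilon(d)$ is small enough. Because $y\in\partial\Omega\cap B(0,1)=g(\R^d\times\{0\})\cap B(0,1)$ we have $g^{-1}(y)\in\R^d\times\{0\}$, and since $g$ is $L'$-bi-Lipschitz, \eqref{e:im-diam-height} yields $R\subseteq B(g^{-1}(y),C(L',d)\,r)$. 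Now $h(W(T))>10r>h(R)$ with $R\le W(T)$, so by coherence of the stopping time region $T$ (see Remark \ref{rem:Whitney-stop-region} and Definition \ref{def:stopping-time-regions}) the ancestor $R'$ of $R$ whose height is the unique dyadic value in $[2r,4r)$ lies in $T$; since $\pi(R')\supseteq\pi(R)$ and $h(R')<4r$, this $R'$ also lies in a ball of radius $\lesssim_{L',d}r$ about $g^{-1}(y)$. The boxes of $\scR_w$ whose height lies in the single dyadic scale $[2r,4r)$ and which are contained in such a ball number at most $\lesssim(CL'r\cdot 2^{p+n})^d$ with $2^{-n}\simeq r$, and since $2^{p}\simeq L'$ by \eqref{e:def-of-p} this is $\lesssim_{L',d}1$. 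Finally, the regions of $\scT$ partition $\scG_w$, so distinct $T\in\scT_{L,r}$ yield distinct such boxes $R'$, and therefore $\#\scT_{L,r}\lesssim_{L',d}1$.

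I expect the main obstacle to be the bookkeeping inside this counting step: verifying that the intermediate box $R'$ at scale $\simeq r$ genuinely belongs to $T$ (which uses coherence of the stopping time regions together with the fact that in the Whitney tree the ancestors of $R$ form a single chain passing through $W(T)$), and tracking the $L'$-bi-Lipschitz distortion carefully enough that the horizontal count collapses to $\lesssim_{L',d}1$ — this is precisely where the earlier refinement of Whitney cubes to boxes with $\ell(R)=2^{-p}h(R)$, $2^{p}\simeq L'$, is used. The remaining ingredient, uniformity of the upper-regularity constant of $\sB_T$ over $T$, is routine: $\sB_T=g\bigl(\bigcup_{j}\partial\sD_T^j\bigr)$ is the $L'$-bi-Lipschitz image of the $d$-upper regular set produced by Proposition \ref{p:DT-lip-graph} (whose constant depends only on $d$), and bi-Lipschitz maps preserve upper Ahlfors regularity with controlled constants.
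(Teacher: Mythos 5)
Your proposal is correct and mirrors the paper's structure exactly: bound $\#\scT_{L,r}$ by a constant depending only on $L'$ and $d$, then use the uniform $d$-upper Ahlfors regularity of the sets $\sB_T$ (coming from Proposition~\ref{p:DT-lip-graph} and the $L'$-bi-Lipschitz map $g$) to conclude, and close with lower regularity of $\partial\Omega$.

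The one place where your implementation diverges is the counting step. The paper selects, for each $T\in\scT_{L,r}$, an ancestor box $W_T\in T$ with $h(W_T)\simeq r$ and $\dist(g(W_T),y)\simeq r$, and then invokes Lemma~\ref{l:boxes-whitney} together with Lemma~\ref{l:whit-card}: the images $\{g(W_T)\}$ lie in a $\Lambda_0(L',d)$-Whitney family, are pairwise distinct, and are all $A$-close to $B(y,r)$, so their number is bounded. You instead pull back to the domain of $g$, locate the unique ancestor $R'$ at the single dyadic height in $[2r,4r)$, observe that all such $R'$ sit in a ball of radius $\lesssim_{L',d}r$ about $g^{-1}(y)$, and count boxes at a single height by disjointness of projections, using $\ell(R')\simeq r/L'$ and $2^p\simeq L'$ from \eqref{e:def-of-p}. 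Both arguments work; yours is more elementary and makes the role of the refinement $\scR_p$ (Whitney boxes with $\ell=2^{-p}h$) explicit, while the paper's reuses the abstract Whitney-family machinery that it needs again in Lemma~\ref{l:small-top-cubes} and in the proofs of Theorems~\ref{t:thmB} and \ref{t:thmC}. No gaps.
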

\begin{proof}
We will show that $\#(\scT_{L,r})$ is bounded independent of $y$ and $r$. For any $T\in\scT_{L,r}$ we claim that there exists some $W_T\in T$ such that $h(W_T)\simeq r$ and $\dist(g(W_T), y) \simeq r$. Indeed, by definition there exists $R_T\in T$ such that $g(R_T)\cap B(y,r)\not=\varnothing$. There then exists a box $W_T\in T$ with $W_T\geq R_T$ with the desired properties because of \eqref{e:g-dist-E} and the inequality $h(W(T)) > 10r$. But, since the collection $\{g(W_T)\}_{T\in\scT_{L,r}}$ is a Whitney family, it follows that $N = \#(\scT_{L,r}) = \#(\{g(W_T)\}_{T\in\scT_{L,r}}) \lesssim_{L',d} 1$. Therefore, since $\sB_T$ is Ahlfors upper $d$-regular,
\begin{equation*}
     \sum_{T\in\scT_{L,r}}\sH^d(\sB_T\cap B(y,r)) \lesssim_{d} \#(\scT_{L,r})r^d \lesssim_{L',d} r^d.\qedhere
\end{equation*}
\end{proof}

We now handle the regions with small top boxes:
\begin{lemma}\label{l:small-top-cubes}
\begin{equation}\label{e:small-top-cubes}
    \sum_{T\in\scT_{y,r}}\sH^d(\mathcal{B}_T\cap B(y,r)) \lesssim_{L',d,\epsilon'} \sH^d(\partial\Omega\cap B(y,A_0^2r)) \lesssim_{L',d} r^d.
\end{equation}
\end{lemma}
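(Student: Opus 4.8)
The goal is to bound the total surface measure of the boundaries $\mathcal{B}_T$, for stopping time regions $T$ with small top box ($h(W(T)) \le 10r$) whose image meets $B(y,r)$, by $\sH^d(\partial\Omega\cap B(y,A_0^2 r))$, which is itself $\lesssim r^d$ by $d$-Ahlfors regularity of $\partial\Omega$. The plan is to reduce the left-hand side to a Carleson packing sum over the bad cubes $\scB_e$ of Lemma \ref{l:scB-packing}, localized to a ball of radius comparable to $r$ around $y$, and then invoke that Carleson estimate together with Ahlfors regularity.

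First I would observe that for each $T\in\scT_{y,r}$, the set $\sB_T$ is $d$-upper Ahlfors regular (Proposition \ref{p:DT-lip-graph}) with $\diam\sB_T \lesssim_d \diam\sD_T \lesssim_d h(W(T))$, so $\sH^d(\sB_T\cap B(y,r)) \lesssim_{L',d} \diam(g(\sD_T))^d \lesssim_d h(W(T))^d$. It therefore suffices to show $\sum_{T\in\scT_{y,r}} h(W(T))^d \lesssim_{L',d,\epsilon'} \sH^d(\partial\Omega\cap B(y,A_0^2 r))$. The key point is that each $T\in\scT_{y,r}$ stops — i.e.\ $W(T)$ has a parent, or a child leaving $\scG_w$, or is itself a root of the construction that was forced — and in every case, by the construction in Definition \ref{def:whitney-coronization} and the second statement of Lemma \ref{l:scB-packing}, there is a Whitney box $W^*$ with $W^* \simeq_{c(d)} W(T)$ (either the parent of $W(T)$ or a stopped child's parent) lying in $\scB_w$, hence a cube $Q_{W^*}\in\scB_e$ with $g(W^*)\simeq_{A_0} Q_{W^*}$ and thus $\ell(Q_{W^*}) \simeq_{L',d} h(W(T))$. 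Moreover $g(W^*)$, and therefore $Q_{W^*}$, lies within distance $\lesssim_{L',d} r$ of $y$, so $Q_{W^*}\subseteq B(y, A_0^2 r)$ after adjusting the constant $A_0^2$ appropriately (this is where the constant in the statement is used). Since $\mathcal{G}_0 = \{g(W) : W\in\scR_w\}$ is a $\Lambda_0(L',d)$-Whitney family by Lemma \ref{l:boxes-whitney}, Lemma \ref{l:whit-card} bounds the number of boxes $W(T)$ (equivalently the number of $T$) that can be associated to any single cube $Q\in\scB_e$ by a constant depending only on $L'$ and $d$. Hence
\begin{equation*}
    \sum_{T\in\scT_{y,r}} h(W(T))^d \lesssim_{L',d} \sum_{\substack{Q\in\scB_e \\ Q\subseteq B(y,A_0^2 r)}} \ell(Q)^d.
\end{equation*}

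Now I would apply the Carleson packing condition for $\scB_e$ from Lemma \ref{l:scB-packing}: choosing a bounded collection of cubes $Q_1,\dots,Q_N\in\scD$ with $N\lesssim_d 1$, each of side length $\simeq_d r$, whose union covers $\partial\Omega\cap B(y,A_0^2 r)$ (such a covering exists by Ahlfors regularity and the Christ-David ball inclusions), the packing condition gives $\sum_{Q\in\scB_e,\, Q\subseteq Q_i} \ell(Q)^d \lesssim_{L',d,\epsilon'} \sH^d(Q_i) \lesssim_d r^d$ for each $i$; summing over $i$ yields $\sum_{Q\in\scB_e,\, Q\subseteq B(y,A_0^2 r)}\ell(Q)^d \lesssim_{L',d,\epsilon'} r^d \lesssim_d \sH^d(\partial\Omega\cap B(y,A_0^2 r))$, using lower Ahlfors regularity at the center $y\in\partial\Omega$ for the last step. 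Combining the displays gives \eqref{e:small-top-cubes}. The main obstacle I anticipate is the bookkeeping in the first step: correctly identifying, for each flavor of stopping (hitting a bad box $W\in\scB_w$ versus being cut off at a child), a single companion box in $\scB_w$ that is $c(d)$-close to $W(T)$ and whose image sits inside $B(y,A_0^2 r)$, and then checking that the Whitney-family multiplicity bound of Lemma \ref{l:whit-card} applies uniformly — the geometric closeness constants and the radius $A_0^2 r$ must be tracked carefully so that the final ball is exactly $B(y, A_0^2 r)$ as claimed.
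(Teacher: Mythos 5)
Your approach is correct but structured differently from the paper's, and the difference is worth noting. The paper decomposes $\sH^d(\partial\Omega_T)\lesssim\sH^d(\partial\Omega_T\cap\partial\Omega)+\sum_{W\in m(T)}h(W)^d$, sums the first ``surface'' term over $T$ using essential disjointness of the sets $\partial\Omega_T\cap\partial\Omega$, and for the second term assigns to each \emph{minimal} cube $W\in m(T)$ a bad child in $\scB_w$ and hence a companion cube $Q\in\scB_e$, finishing with the Carleson estimate. You instead collapse the entire $\sH^d(\sB_T)$ to a single top-box term $h(W(T))^d$ --- this is legitimate because, by \eqref{e:DSj-bound} and the disjointness of the shadows $\pi(W)$, $W\in m(T)$, one has $\sH^d(\sD_T\cap\R^d)+\sum_{W\in m(T)}\ell(W)^d\le\ell(W(T))^d$ --- and then you assign a companion in $\scB_e$ to the \emph{top} box $W(T)$ rather than to each minimal box. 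Both routes funnel everything through the same Carleson packing for $\scB_e$ localized to $B(y,A_0^2r)$ plus Ahlfors regularity of $\partial\Omega$. Your version is slightly more economical (one term instead of two, one companion per region instead of one per minimal cube), at the cost of needing to identify the companion for $W(T)$, which requires the maximality property of the stopping-time construction: if $W(T)\neq W_0$, its parent is either already in $\scB_w$, or lies in another region $T'$ as a minimal cube, in which case (by coherence plus maximality of $T'$) some sibling of $W(T)$ lies in $\scB_w$. Two points to tighten up: first, your case label ``a stopped child's parent'' is confusing --- a stopped child of $W(T)$ has parent $W(T)\in\scG_w$, which cannot serve as $W^*$; the companion must be the parent or a sibling of $W(T)$, not a child's parent. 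Second, the case $W(T)=W_0$ (the global root $[-2,2]^d\times[4,8]$, which has neither parent nor sibling in $\scR_w$) has no companion; but this only arises when $r\gtrsim 1$, where $h(W_0)^d\lesssim_d r^d$ trivially, so the exceptional term is absorbed --- it should be stated. The ``adjust the constant'' step for the ball radius is fine and matches what the paper's \eqref{e:final-inflated-ball} makes precise.
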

\begin{proof}
We first note that since $\sH^d(\sB_T) \lesssim_{L',d} \sH^d(\partial\Omega_T)$ by \eqref{e:DSj-bound} and the fact that $g$ is $L'$-bi-Lipschitz, we have
\begin{equation*}
    \sum_{T\in\scT_{y,r}}\sH^d(\mathcal{B}_T\cap B(y,r)) \leq \sum_{T\in\scT_{y,r}} \sH^d(\sB_T) \lesssim_{L',d} \sum_{T\in\scT_{y,r}} \sH^d(\partial\Omega_T).
\end{equation*}
Therefore, it suffices to prove $\sum_{T\in\scT_{y,r}}\sH^d(\partial\Omega_T)\lesssim_{L',d,\epsilon} \sH^d(\partial\Omega\cap B(y,A_0^2r))$.

For any $T\in \scT_{y,r}$, \eqref{e:side-length-stretch} gives $\diam g(\Bot(W))) \lesssim_{d} h(W)$ so that Lemma \ref{l:Dg-change} and the fact that $g$ is $L'$-bi-Lipschitz give an analog of \eqref{e:DSj-bound}:
\begin{align}\nonumber 
    \sH^d(\partial\Omega_T) &\lesssim_{d,L'} \sH^d(\partial\Omega_T\cap\partial\Omega) + \sum_{W\in m(T)} \sH^d(g(\Bot(W)))\\ \label{e:partial-omega-bound}
    &\lesssim_d \sH^d(\partial\Omega_T\cap\partial\Omega) + \sum_{W\in m(T)} h(W)^d.
\end{align}
Now, $W\in m(T)$ implies that there exists a child $W'\in\Stop(T)\cap\scB_w$ for which we have $Q\in \scB_e$ with $g(W')\simeq_{A_0} Q$ by Lemma \ref{l:scB-packing}. For any $x\in Q$, we compute
\begin{align}\nonumber
    |x-y| &\leq \diam Q + \dist(Q,g(W')) + \diam g(W') + \dist(y,g(W')) \\\nonumber
    &\leq 2A_0\diam g(W') + 2A_0\diam g(W') + \diam g(W') + 10r \\\label{e:final-inflated-ball}
    &\leq 10\sqrt{d}A_0h(W') + 10r \leq 100\sqrt{d}A_0r \leq A_0^2r.
\end{align}
This shows that $Q\subseteq B(y,A_0^2r)$. Hence, applying Lemma \ref{l:whit-card} with $\scV = \{g(W): W\in m(T)\}$ and $\scU = \{Q\in\scB_e: Q\subseteq B(y,A_0^2r)\}$, we get
\begin{align}
    \sum_{W\in m(T)} h(W)^d \lesssim_{A_0,d,L'} \sum_{\substack{Q\in\scB_e \\ Q\subseteq B(y,A_0^2r)}}\ell(Q)^d \lesssim_{d,\epsilon'}\sH^d(\partial\Omega\cap B(y,A_0^2r))
\end{align}
where the last inequality follows from the Carleson packing condition for $\scB_e$. By observing that $\partial\Omega_T\cap\partial\Omega\subseteq B(y,50\sqrt{d}r)$ for any $T\in\scT_{y,r}$ and $\sH^d(\partial\Omega_T\cap\partial\Omega_{T'}\cap\partial\Omega) = 0$ for $T\not=T'$, \eqref{e:partial-omega-bound} implies 
\begin{equation*}
    \sum_{T\in\scT_{y,r}}\sH^d(\partial\Omega_T) \lesssim_{A_0,L',d,\epsilon'} \sH^d(\partial\Omega \cap B(y,A_0^2r)) \lesssim_{d,L'} r^d
\end{equation*}
using the fact that $g$ is bi-Lipschitz and parameterizes $\partial\Omega$ in the last inequality.
\end{proof}

Finally, we handle the boundaries of ``trivial'' cube domains associated to the bad boxes in $\scB_w$. To do so, we collect the boundaries associated to fixed $W\in\scB_w$ into the set
\begin{equation*}
    \sB_W = \bigcup_{R\in\scL_W}\partial g(R).
\end{equation*}
\begin{lemma}\label{l:bad-cubes-measure}
We have
    \begin{equation*}
        \sum_{W\in\scB_w}\sH^d(\sB_W\cap B(y,r)) \lesssim_{L',d,\epsilon} \sH^d(\partial\Omega\cap B(y,A_0^2r)) \lesssim_{A_0,d,L'} r^d.
    \end{equation*}
\end{lemma}
\begin{proof}
    We first note that
    \begin{equation*}
        \sum_{W\in\scB_w}\sH^d(\sB_W\cap B(y,r)) \leq \sum_{\substack{W\in\scB_w \\ g(W)\cap B(y,r)\not=\varnothing}}\sH^d(\sB_W) \lesssim_{L'} \sH^d(\partial g(W)) \lesssim h(W)^d
    \end{equation*}
     using $\sH^d(g(\Bot(W))) \lesssim_d h(W)^d$ as in \eqref{e:partial-omega-bound} above. In addition, there exists some cube $Q\in\scB_e$ such that $g(W)\simeq_{A_0}Q$ and, as in \eqref{e:final-inflated-ball}, $Q\subseteq B(y,A_0^2r)$. Hence, we have
    \begin{align*}
        \sum_{W\in\scB_w}\sH^d(\sB_W\cap B(y,r)) &\lesssim_d \sum_{\substack{W\in\scB_w \\ g(W)\cap B(y,r)\not=\varnothing}}h(W)^d \lesssim_{A_0,d,L'} \sum_{\substack{Q\in\scB_e \\ Q\subseteq B(y,A_0^2r)}}\ell(Q)^d \\
        &\lesssim_{d,\epsilon'} \sH^d(\partial\Omega\cap B(y,A_0^2r)) \lesssim_{d,L'} r^d.\qedhere
    \end{align*}
\end{proof}

\begin{proof}[Proof of Proposition \ref{p:RF-surface-msr}]
    First, consider $\Omega_j\in\scL$ such that either there exists $j_0, T_0$ such that $\Omega_j = \Omega_{T_0}^{j_0}$ or there exists $W\in\scB_w$ and $R\in\scL_W$ such that $\Omega_j = g(R)$. Therefore, we have
    \begin{align*}
        \sum_{j\in J_{\scL}}&\sH^d(\partial\Omega_j\cap B(y,r))\\
        &\leq\sum_{T\in\scT_{L,r}}\sum_{j\in J_T}\sH^d(\partial\Omega^j_T\cap B(y,r)) + \sum_{T\in\scT_{y,r}}\sum_{j\in J_T}\sH^d(\partial\Omega_T^j\cap B(y,r)) + \sum_{W\in\scB_w}\sum_{R\in\scL_W}\sH^d(\partial R\cap B(y,r))\\
        &\lesssim \sum_{T\in\scT_{L,r}}\sH^d(\sB_T\cap B(y,r)) + \sum_{T\in\scT_{y,r}}\sH^d(\sB_T\cap B(y,r)) + \sum_{W\in\scB_w}\sH^d(\sB_W\cap B(y,r))\\
        &\lesssim_{L',d,\epsilon'} r^d
    \end{align*}
    by Lemmas \ref{l:large-top-cubes}, \ref{l:small-top-cubes}, and \ref{l:bad-cubes-measure}.
\end{proof}

This completes the proof of Theorem \ref{t:thmA}.

\section{The proofs of Theorems \ref{t:thmB} and \ref{t:thmC}}\label{sec:thmB-thmC}
We now turn to proving Theorems \ref{t:thmB} and \ref{t:thmC}. Both of these theorems will follow from the following result.
\begin{theorem}\label{t:gen-stop}
    Let $\Omega\subseteq\R^{d+1}$ be a domain. There exist constants $A(d), L(d), M_0(d) \geq 1$ and $\epsilon_0(d),\delta_0(d) > 0$ such that if $\partial\Omega$ admits a $d$-dimensional $(M,\epsilon,\delta)$-graph coronization with $M \geq M_0$, $\epsilon \leq \epsilon_0$, and $\delta \leq \delta_0$, then there exists a collection $\mathscr{L} = \{\Omega_j\}_{j\in J_\scL}$ of $L$-Lipschitz graph domains such that 
    \begin{enumerate}[label=(\roman*)]
        \item $\Omega_j\subseteq\Omega$, \label{i:om-subset}
        \item $\Omega\cap B(0,1) \subseteq \bigcup_{j=1}^\infty \Omega_j$,\label{i:om-cover}
        \item $\exists C(d) > 0$ such that $\forall x\in\R^{d+1}$, $x\in \Omega_j$ for at most $C$ values of $j$,\label{i:om-bounded-overlap}
        \item For any $y\in \partial\Omega\cap B(0,1)$ and $0 < r \leq 1$, we have 
            \begin{equation*}
                \sum_{j=1}^\infty \sH^d(\partial\Omega_j\cap B(y,r)) \lesssim_{\epsilon, d} \sH^d(\partial\Omega\cap B(y,Ar)).
            \end{equation*}\label{i:om-surface-measure}
    \end{enumerate}
\end{theorem}

The proof will be via relatively minor modifications of the argument for Theorem \ref{t:thmA}. The idea is to construct a collection of CCBPs with associated maps $\{g_i\}_{i\in I}$ where $g_i:\mathcal{D}_i\rightarrow\overline{\Omega}$ which individually parameterize only a little piece of $\overline{\Omega}$ at a time. These maps will be $(1+C\delta)$-bi-Lipschitz at the cost of introducing an outer ``buffer zone'' of domains in the images of these mappings. The ``core'' domains will have disjoint interiors while the domains in the buffer zone will have finite overlap.

We now fix constants $\rho,A_0,K$ as in Section \ref{sec:thmA} and set $A_1 = \max\left\{20A_0^2,\ \frac{2000\sqrt{d}A_0}{c_0\rho}\right\}$ and $M_0 = \max\{\frac{10K}{\rho^2},\ A_1^2\}.$ We fix a lattice $\scD$ for $\partial\Omega$, let $M = M_0$, and let $(\scG,\scB,\scF)$ be an $(M,\epsilon,\delta)$-graph coronization assumed for $\partial\Omega$ in the hypotheses of Theorem \ref{t:gen-stop}. The constants $\epsilon_0$ and $\delta_0$ will be taken small depending on inequalities arising in the proof.  

\subsection{Local CCBPs adapted to $\scD$}
We will construct Reifenberg parameterizations as in subsection \ref{subsec: CCBP} centered around the points of a Whitney-like net $\sC_0$ of $\Omega\cap B(0,1)$ rather than having a single global map. For every $n \geq 0$, define
\begin{align*}
    s_n &= 3\cdot 2^{-n+1},\\
    D_n &= \{z\in B(0,1):\dist(z,\partial\Omega) = s_n\},\\
    C_n &= \Net(D_n, s_n) = \{p_{i,n}\}_{i\in I_n}.
\end{align*}
Set $\sC_0 = \bigcup_{n}C_n$.
\begin{definition}[flat and non-flat points]
    Let $p\in\Omega\cap B(0,1)$. Define
    \begin{equation*}
        \scQ_p = \left\{Q\in\scD : Q\simeq_{10\sqrt{d}A_1} B\left(p,\frac{1}{2}\dist(p,\partial\Omega)\right)\right\}.
    \end{equation*}
    We say that $p$ is \textit{flat} if there exists $S\in\scF$ such that $\scQ_p\subseteq S$. Otherwise, we say that $p$ is \textit{non-flat}. Given the set $\sC_0$ above, we define the flat and non-flat points of $\sC_0$ by
    \begin{align*}
        \sF_0 &= \{p\in \sC_0 : \exists S\in\scF,\ \scQ_p\subseteq S\},\\
        \sN_0 &= \sC_0 \setminus \sF_0.
    \end{align*}
\end{definition}

Fix $p\in\sF_0$ and let $S_p\in\scF$ be such that $\scQ_p\subseteq S_p$. Without loss of generality, assume that $\dist(p, 0) = \dist(p,\partial\Omega) = 6 = s_0$. The fact that $p\in\sF_0$ implies there exists $Q_p\in\scD_{s(0)}$ with $\dist(p,Q_p) \leq 6$ and $c_0\rho \leq \diam(Q_p) \leq 1 = r_0$ so that $Q_p\in\scQ_p$ because $A_1 \geq 10(c_0\rho)^{-1}$. Hence, $b\beta_{\partial\Omega}(MB_{Q_p}) \leq \epsilon$. Without loss of generality, suppose $P_{Q_p} = \R^d$ achieves the infimum in the definition of $b\beta_{\partial\Omega}(MB_{Q_p})$. 

For any $k \geq 0$, let
\begin{align}\label{e:Y_k-local-CCBP}
    Y_k^p &= \{x_Q : Q\in S_p\cap\scD_{s(k)}\},\\
    X_k^p &\in \Net(Y_k^p, r_k).
\end{align}
We enumerate $X_k^p = \{x_{j,k}\}_{j\in J_k}$ and define
\begin{align*}
    B_{j,k} &= B(x_{j,k},r_k),\\
    P_{j,k} &= P_{Q_{j,k}},\\
    \scZ_p &= (P_{Q_p}, \{B_{j,k}\}, \{P_{j,k}\})
\end{align*}
where $P_{Q_{j,k}}\ni x_{Q_{j,k}}$ satisfy $\beta^{d,1}_{\partial\Omega}(2\rho^{-1}KB_{Q_{j,k}}, P_{Q_{j,k}}) \lesssim \beta^{d,1}_{\partial\Omega}(2\rho^{-1}KB_{Q_{j,k}}B_{Q_{j,k}})$ as in the hypotheses of Lemma \ref{l:ep-by-beta}. Using the fact that $Q\in S_p\subseteq\scG$ so that $b\beta(MB_Q) \leq \epsilon$, a nearly identical argument to that of Lemma \ref{l:CubeCCBP} gives that $\scZ_p$ is a CCBP:
\begin{lemma}\label{l:Cube-local-CCBP}
    For any $p\in\sF_0$, $\scZ_p$ is a CCBP.
\end{lemma}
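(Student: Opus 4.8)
The plan is to verify the hypotheses of Lemma~\ref{l:CCBP-AS} for the triple $\scZ_p = (P_{Q_p},\{B_{j,k}\},\{P_{j,k}\})$, following the proof of Lemma~\ref{l:CubeCCBP} almost verbatim; the only structural difference is that the local net $Y_k^p$ of \eqref{e:Y_k-local-CCBP} is cut out of a single stopping time region $S_p$ rather than out of the cubes meeting a fixed ball $B(0,A_0)$. Concretely, writing $x_{j,k}=x_{Q_{j,k}}$ with $Q_{j,k}\in S_p\cap\scD_{s(k)}$, I would check: (i) $\dist(x_{j,0},P_{Q_p})\lesssim_d\epsilon$ for $j\in J_0$; (ii) the net-coherence \eqref{DT-net-co}, i.e.\ for $k\geq 1$ and $j\in J_k$ there is $i\in J_{k-1}$ with $x_{j,k}\in 2B_{i,k-1}$; and (iii) $\epsilon_k'(x_{j,k})\lesssim_d\epsilon$.

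For (i) and (iii) the arguments are those of Lemma~\ref{l:CubeCCBP} with $MB_{Q_p}$ playing the role of $B(0,10A_0)$. By \eqref{e:diamQ-rk} with $r_0=1$ one has $\ell(Q_p)\simeq 1$, so $MB_{Q_p}$ is a ball whose radius is a fixed dimensional constant (recall $M=M(d)$) and the relevant net cubes lie inside it; since $Q_p\in S_p\subseteq\scG$, the graph coronization gives $b\beta_{\partial\Omega}(MB_{Q_p},\R^d)\leq\epsilon$, whence $\dist(x_{Q},\R^d)\leq M\ell(Q_p)\epsilon\lesssim_d\epsilon$ for every net cube $Q$, giving (i). For (iii), Lemma~\ref{l:epsilonQ} applied at $z=x_{Q_{j,k}}$ gives $\epsilon_k'(x_{j,k})\leq K\epsilon(Q_{j,k})$; the choice of $P_{Q_{j,k}}$ as a near-minimizer for $\beta^{d,1}_{\partial\Omega}(2\rho^{-1}KB_{Q_{j,k}})$ lets one invoke Lemma~\ref{l:ep-by-beta} to get $\epsilon(Q_{j,k})\lesssim_{M,d}\beta^{d,1}_{\partial\Omega}(MB_{Q_{j,k}})$; and $Q_{j,k}\in S_p\subseteq\scG$ together with the upper Ahlfors regularity of $\partial\Omega$ converts $b\beta_{\partial\Omega}(MB_{Q_{j,k}})\leq\epsilon$ into $\beta^{d,1}_{\partial\Omega}(MB_{Q_{j,k}})\lesssim\epsilon$. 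Since $K,M$ depend only on $d$, this chain yields (iii).

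Step (ii) is the only place where a genuinely different (though still short) argument is needed, since the local nets $Y_k^p$ are not manifestly nested the way the global $Y_k$ of Lemma~\ref{l:CubeCCBP} are. Fix $k\geq 1$ and $Q_{j,k}\in S_p\cap\scD_{s(k)}$. If $s(k)=s(k-1)$ then $Y_k^p=Y_{k-1}^p$ and $x_{j,k}$ is within $r_{k-1}$ of $X_{k-1}^p$ by the net property. If $s(k)>s(k-1)$, then $s(k)=s(k-1)+1$ (consecutive values of $s$ differ by at most one, by \eqref{e:net-cube-radii}), so $Q_{j,k}^{(1)}\in\scD_{s(k-1)}$; moreover $Q_p\in S_p$ forces $k(Q(S_p))\leq k(Q_p)=s(0)<s(k)$, so $Q_{j,k}\subsetneq Q(S_p)$ and hence $Q_{j,k}\subseteq Q_{j,k}^{(1)}\subseteq Q(S_p)$. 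By coherence of the stopping time region $S_p$ (Definition~\ref{def:stopping-time-regions}(ii)) we conclude $Q_{j,k}^{(1)}\in S_p\cap\scD_{s(k-1)}$, so $x_{Q_{j,k}^{(1)}}\in Y_{k-1}^p$ lies within $r_{k-1}$ of some $x_{i,k-1}\in X_{k-1}^p$; since also $|x_{j,k}-x_{Q_{j,k}^{(1)}}|\leq\ell(Q_{j,k}^{(1)})\leq r_{k-1}/10$ by \eqref{e:net-cube-radii}, we get $x_{j,k}\in 2B_{i,k-1}$. The same coherence argument shows $J_{k-1}=\varnothing\Rightarrow J_k=\varnothing$, so the nets never break down. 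With (i)--(iii) in hand, Lemma~\ref{l:CCBP-AS} yields that $\scZ_p$ is a CCBP once $\epsilon\leq\epsilon_0(d)$.

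I expect step (ii) — transferring net-coherence from the ball-based construction of Lemma~\ref{l:CubeCCBP} to one anchored at the single stopping region $S_p$, via the coherence property of $S_p$ and the observation $k(Q(S_p))\le s(0)$ — to be the only point that needs attention; everything else is a transcription of Lemma~\ref{l:CubeCCBP} with $B(0,10A_0)$ replaced by $MB_{Q_p}$.
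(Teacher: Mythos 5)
Your proposal is correct and follows the paper's own (one-line) proof, which simply asserts that ``a nearly identical argument to that of Lemma~\ref{l:CubeCCBP}'' works once one notes $b\beta_{\partial\Omega}(MB_Q)\le\epsilon$ for $Q\in S_p$; your expansion of the net-coherence step (ii) — deriving $Q_{j,k}^{(1)}\in S_p$ from the coherence axiom for $S_p$ together with $s(k-1)\ge s(0)\ge k(Q(S_p))$ in the nontrivial case $s(k)>s(k-1)$ — is exactly the one place the earlier argument needs modification, and you identify and handle it correctly. One small repair in your step (iii): passing from $b\beta_{\partial\Omega}(MB_Q)\le\epsilon$ to $\beta^{d,1}_{\partial\Omega}(MB_Q)\lesssim_d\epsilon$ does not need (and should not invoke) upper Ahlfors regularity of $\partial\Omega$, which a merely Reifenberg flat boundary need not satisfy; it follows directly from the definition of the content beta number together with the elementary bound $\scH^d_\infty(\partial\Omega\cap B)\lesssim_d r_B^d$, which holds for every set.
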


Lemma \ref{l:Cube-local-CCBP} allows us to apply Theorem \ref{DT-thm} to get a Reifenberg parameterization $g_p:\R^{d+1}\rightarrow\R^{d+1}$ produced by the CCBP $\scZ_p$. We let $\scW'_p$ be the Whitney decomposition of $\bH^{d+1}$ such that $W_0=[-2,2]^d\times[4,8]\in\scW'_p$ so that $p = c(W_0) = (0,6)$ and give a localized Whitney lattice $\scW_0 = \{W\in\scW'_p : W\in D(W_0)$ as in \eqref{e:centered-Whitney}. We now give a one-step version of the stopping time construction in Definition \ref{def:stopping-time-regions} to produce a single domain $\sD_p$ and an extended version $\widehat{\sD}_p$ which contains additional ``buffer'' cubes which $g_p$ maps forward to approximating Lipschitz graph domains

\begin{definition}[Stopping time regions around flat $p$]\label{def:flat-stopping-time-region}
    Fix a constant $A_1 > 1$ and $p\in\sF$ and form the map $g_p$ and Whitney lattices $\scW'_p$ and $\scW_0$ as above. As in \eqref{e:G0}, we define
    \begin{align*}
        \scG_p &= \left\{W\in \scW_0 : \forall Q\in\scD \text{ such that }Q\simeq_{A_1} g_p(W) \text{ we have }Q\in S_p\right\}
    \end{align*}
    By definition, $p\in\sF_0$ implies $W_0\in \scG_p$. Define a single stopping time region $T_p\subseteq\scW_0$ by setting $T_p$ to be the maximal subtree of $D(W_0)\cap\scG_p$ such that for any $R\in T_p$, either all of its children are in $T_p$ or none are.
\end{definition}

\begin{definition}[Stopping time domains around flat $p$]
    For any $p\in\sF_0$, we define a \textit{stopping time domain}
    \begin{equation*}
        \mathcal{D}_p = \bigcup_{W\in T_p}W.
    \end{equation*}
    Additionally, we extend $\sD_p$ by a ``buffer'' region of $A_0$-close cubes on the boundary of $\sD_p$ by defining the \textit{extended stopping time region} and \textit{extended stopping time domain} by
    \begin{align*}
        \widehat{T}_p &= \{W\in \scW'_p : \exists R\in T_p,\ W\simeq_{A_0} R\},\\
        \widehat{\sD}_p &= \bigcup_{W\in\widehat{T}_p} W.
    \end{align*}
    We will carve up the image domains
    \begin{align*}
        \Omega_p &= g_p(\sD_p),\\
        \widehat{\Omega}_p &= g_p(\widehat{\sD}_p)
    \end{align*}
    to construct one family of our desired Lipschitz graph domains in the conclusion of Theorem \ref{t:gen-stop}.
\end{definition}

We will now prove an analog of Lemma \ref{l:g-prop}.
\begin{lemma}[properties of $g_p$]\label{l:gp-prop}
    There exists a choice of constant $A_1 \lesssim_d A_0$ such that for any $z = (x,y) \in \widehat{\sD}_p$, the following hold:
    \begin{enumerate}[label=(\roman*)]
    \item $f_{n(y)}(x)\in V_{n(y)}^8,$ \label{i:p-nearby-net}
    \item $(1-C\epsilon)|y| \leq \dist(g_p(z),\partial\Omega) \leq (1+C\epsilon)|y|,$ \label{i:gp-dist}
    \item For any $m\in\N$ with $m < n(y)$, there exists a collection of cubes $Q_{n(y)}\subseteq Q_{n(y)-1} \subseteq\cdots\subseteq Q_m$ such that for any $k$ with $m \leq k \leq n,\ Q_k\in S_p$ and $\dist(g(x,r_k), Q_{k}) \lesssim r_k$,
    \begin{equation*}
        \sum_{k=m}^n\epsilon'(f_k(x))^2 \lesssim_{M,\rho,d} \sum_{k=m}^n\beta^{d,1}_{\partial\Omega}(MB_{Q_k})^2 \lesssim \epsilon.
    \end{equation*}\label{i:p-epsilon-bound}
\end{enumerate}  
\end{lemma}
\begin{proof}
    The proof is similar to that of Lemma \ref{l:g-prop} with the only complication being that we need the map $g_p$ to also behave nicely on the buffer region of $A_0$ close cubes to those in $\scW_0$. We will prove this for fixed $z = (x,y)$ by first assuming that \ref{i:p-nearby-net} holds and showing that items \ref{i:gp-dist} and \ref{i:p-epsilon-bound} hold. We will then prove item \ref{i:p-nearby-net} by induction, considering the points $(x,r_k)\in\widehat{\sD}_p$ for $0\leq k < n(y)$ (assume without loss of generality that $h(W(T)) = 4$). 

    So, first assume that item \ref{i:p-nearby-net} holds. Given this, item \ref{i:gp-dist} follows exactly as in Lemma \ref{l:g-prop} item \ref{i:g-dist}. Similarly, item \ref{i:p-epsilon-bound} follows as in Lemma \ref{l:g-prop} item \ref{i:epsilon-bound} by replacing the infinite chain of cubes with a chain terminating in $Q_{n(y)}\in\scD_{s(n(y))}\cap S_p$.
    
    We now prove item \ref{i:p-nearby-net} by the induction discussed above. For the base case, recall that $f_0(x) = x$ so that $(x,y)\in\widehat{\sD}_p$ means $\dist(x,W(T_p)) \leq 2A_0\diam W(T)$. Since we've chosen $M$ large enough, $x\in MB_{Q_p}\cap P_{Q_p}$ so that $\dist(x,\partial\Omega) \lesssim_M\epsilon$. This means $p\in\sF$ implies that there exists $Q_0\in\scD_{s(0)}\cap S_p$ such that $|x-x_{Q_0}| \leq 2r_0$ from which the claim follows.
    We will finish the proof by proving the following claim:
    \vspace{1ex}
    \begin{claim}
        For any $k < n(y),\ f_k(x)\in V_k^8$ implies that $f_{k+1}(x)\in V_{k+1}^8$.
    \end{claim}
    \vspace{1ex}
    \begin{claimproof}
        The fact that $(x,r_k)\in\widehat{\sD}_p$ means that $(x,r_k)\in R_k\in\scW'_p$ and there exists $W\in T_p$ such that $R_k\simeq_{A_0} W$. This gives $\dist(R_k,W) \lesssim_{A_0}h(W)$ and $r_k \simeq h(R_k) \simeq_d \diam R_k \simeq_{A_0} h(W)$. If now $f_k(x)\in V_k^8$, then there exists $Q\in S_p\cap \scD_{s(k)}$ such that $|f_k(x) - x_Q| \leq 8r_k$, so that $b\beta_{\partial\Omega}(MB_Q)\leq\epsilon$ implies there is $Q_{k+1}\in\scD_{s(k+1)}$ with $|f_{k+1}(x) - x_{Q_{k+1}}| \leq 2r_{k+1}$. Applying item \ref{i:gp-dist} and \eqref{e:im-diam-height} gives
        \begin{align*}
            \dist(Q_{k+1}, g(W)) &\leq \dist(Q_{k+1}, g(R_k)) + \diam g(R_k) + \dist(g(R_k),g(W))\\
            &\leq 2\sqrt{d}h(R_k) + 2\sqrt{d}h(R_k) + A_0(\diam g(R_k) + \diam g(W))\\
            &\leq 5\sqrt{d}A_0 h(R_k) + 5\sqrt{d}A_0(h(R_k) + \diam g(W) \leq A_0^2\diam g(W)
        \end{align*}
        and
        $$\diam Q_{k+1} \leq r_{k+1} \leq h(R) \leq A_0h(W) \leq 2A_0 \diam g(W).$$
        $$\diam Q_{k+1} \geq c_0\ell(Q) \geq \frac{c_0\rho}{10}r_{k+1} \geq \frac{c_0\rho}{200}h(R_k) \geq \frac{c_0\rho}{200A_0}h(W) \geq \frac{c_0\rho}{1000\sqrt{d}A_0}\diam g(W).$$
        Therefore, $Q_{k+1} \simeq_{A_1} g(W)$. By the definition of $T_p$, we then have $Q_{k+1}\in S_p$ so that $x_{Q_{k+1}}\in Y_{k+1}^p$ and $f_{k+1}(x)\in V_{k+1}^8$ as in Lemma \ref{l:g-prop} \ref{i:nearby-net}. 
    \end{claimproof}\ 
\end{proof}
We will also need to construct Lipschitz graph domains around non-flat $q\in\sN_0$. Because $\partial\Omega$ admits a graph coronization, there are a controlled number of such $q$ so that we can cover the regions around them by ``trivial'' domains without adding too much total boundary.

\begin{definition}[Trivial domains around non-flat $q$]
    Fix once and for all an auxiliary Whitney decomposition $\widetilde{\scW}$ of $\Omega$. For any $q\in\sN_0$, there exists a Whitney cube $W_q\in\widetilde{\scW}$ such that $q\in W_q$ and
    \begin{equation*}
        \diam W_q \leq \dist(q,\partial\Omega) \leq 8\diam W_q.
    \end{equation*}
    We directly define
    \begin{align*}
        \sD_q &= \Omega_q = W_q,\\
        \widehat{\sD}_q &= \{W\in\widetilde{\scW} : W\simeq_{A_0} W_q\},\\
        \widehat{\Omega}_q &= \bigcup_{W\in\widehat{\sD}_q} W.
    \end{align*}    
\end{definition}

\subsection{The Lipschitz decomposition with bounded overlaps}
We will get our final collection of domains by choosing a well-spaced subset $\sF\subseteq\sF_0$ and $\sN\subseteq\sN_0$ and carving up the domains in $\{\widehat{\Omega}_p\}_{p\in\sF} \cup \{\widehat{\Omega}_q\}_{q\in\sN}$.

To choose our collections $\sF$ and $\sN$, we put an ordering on the points of $\sC_0$ by choosing some ordering on each finite set $C_n$ and then imposing $p_n <  p_m$ for any $p_n\in C_n,\ p_m\in C_m$ with $n < m$. The set $\sC_0$ has a least element that we call $c_0$, and we define an auxiliary collection $\sP_0 = \{p_0\}$ where $p_0 = c_0$. Given the definitions of $\sC_0$ and $\sP_0$, we define $\sC_{n+1}$ and $\sP_{n+1}$ inductively for any $n \geq 0$ by
\begin{align}\label{e:C-inductive}
    \sC_{n+1} &= \sC_n\setminus \left\{p\in\sC_n\ :\ \dist\left(p,\bigcup_{p'\in\sP_n}\Omega_{p'}\right) < \frac{A_0}{30}\dist(p,\partial\Omega).\right\},\\
    \sP_{n+1} &= \sP_n \cup \{p_{n+1}\}\label{e:P-inductive}
\end{align}
where $p_{n+1}$ is the least element of $\sC_{n+1}$ with respect to the ordering inherited from $\sC_0$. Finally, put
\begin{align*}
    \sC &= \bigcup_{n=0}^\infty \sP_n,\\
    \sF &= \sF_0 \cap \sC,\\
    \sN &= \sN_0 \cap \sC.
\end{align*}
We can now give the definition of our desired Lipschitz decomposition with bounded overlaps

\begin{definition}[Lipschitz decomposition with bounded overlap]\label{def:gen-L}
    For any $p\in\sF$, Proposition \ref{p:DT-lip-graph} implies there exists an Ahlfors regular $d$-rectifiable set $\Sigma_{T_p}$ such that
    \begin{equation*}
        \sD_p \setminus \Sigma_{T_p} = \bigcup_{j\in J_p} \sD_p^j
    \end{equation*}
    where $\sD_p^j$ is an $L_0(d)$-Lipschitz graph domain. We set $\Omega_p^j = g_p(\sD_p^j)$ and define our \textit{Lipschitz decomposition with bounded overlap}
    \begin{equation}\label{e:gen-L}
        \scL_b = \{g_p(W)\}_{p\in\sF,\ W\in\widehat{\sD}_p\setminus\sD_p}\cup\{\Omega_p^j\}_{p\in\sF,\ j\in J_p} \cup \{R\}_{q\in\sN,\ R\in\widehat{\sD}_q}.
    \end{equation}
\end{definition}

In analogy to Propositions \ref{p:RF-Lipschitz} and \ref{p:RF-surface-msr}, we will finish the proof of Theorem \ref{t:gen-stop} if we can prove the following two propositions.

\begin{proposition}\label{p:gen-Lipschitz}
    Let $\Omega$ be as in Theorem \ref{t:gen-stop} and $\mathscr{L}_b = \{\Omega_j\}_{j\in J_{\scL_b}}$ be as in \eqref{e:gen-L}. There exists $L_1(d) > 0$ such that for any $j\in J_{\scL_b},\ \Omega_j$ is an $L_1$-Lipschitz graph domain. In addition, we have
    \begin{enumerate}[label=(\roman*)]
        \item $\Omega_j\subseteq\Omega$, \label{i:prop-om-subset}
        \item $\Omega\subseteq \bigcup_{j\in J_{\scL_b}}\overline{\Omega_j}$, \label{i:prop-om-cover}
        \item $\exists C(d) > 0$ such that $\forall x\in\Omega$, $x\in\Omega_j$ for at most $C$ values of $j$. \label{i:prop-bounded-overlap}
    \end{enumerate}
\end{proposition}

\begin{proposition}\label{p:gen-surface-msr}
     Let $\Omega$ be as in Theorem \ref{t:gen-stop} and $\mathscr{L} = \{\Omega_j\}_{j\in J_{\scL_b}}$ be as in \eqref{e:gen-L}. For any $y\in\partial\Omega\cap B(0,1)$ and $0 < r < 1$, we have
    \begin{equation}
        \sum_{j\in J_{\scL_b}}\sH^d(\partial\Omega_j\cap B(y,r)) \lesssim_{\epsilon,d}\sH^d(\partial\Omega\cap B(y,A_1r)).
    \end{equation}
\end{proposition}

\subsection{Lipschitz bounds and covering / overlap properties for Theorems \ref{t:thmB} and \ref{t:thmC}}

In order to prove Propositions \ref{p:gen-Lipschitz} and \ref{p:gen-surface-msr}, we must show that the mapping $g_p$ behaves on $\widehat{\sD}_p$ as our single Reifenberg parameterization $g$ did on each $\sD_T$ in the setting of Theorem \ref{t:thmA}.

The following analogue of Lemma \ref{l:Dg-change} allows us to control the change in $Dg_p$ on any extended stopping time domain $\widehat{\sD}_p$.

\begin{lemma}[Variation of $Dg_p$]\label{l:Dgp-change}
    For any $p\in\sF$ and $z,w\in \widehat{\sD}_p$, we have
    \begin{equation}\label{e:Dgp-full-change}
        |Dg_p(z)\circ Dg_p(w)^{-1} - I| \leq C\delta.
    \end{equation}
    In particular,
    \begin{equation}\label{e:Dgp-change}
        |Dg_p(z) - I| \leq C\delta,
    \end{equation}
    and $g_p|_{\widehat{\sD}_p}$ is $(1+C\delta)$-bi-Lipschitz.
\end{lemma}
\begin{proof}
    Equation \eqref{e:Dgp-full-change} has a very similar proof to that of \eqref{e:Dg-change} in Lemma \ref{l:Dg-change}. The only significant difference is that we must use Lemma \ref{l:gp-prop} in place of Lemma \ref{l:g-prop}. Equation \eqref{e:Dgp-change} follows from the added observation that $p\in\sD_p$ and $\dist(p,\partial\Omega) \geq 2$ (after normalizing) implies $Dg_p(p) = I$ so that the claim follows from \eqref{e:Dgp-full-change} by taking $w = p$.
\end{proof}
We now have enough to show that each domain in $\scL$ as in \eqref{e:gen-L} is Lipschitz graphical.
\begin{lemma}\label{l:gen-Lipschitz}
    There exists a constant $L_1(d) > 0$ such that $\Omega_j$ is an $L_1$-Lipschitz graph domain for all $j\in J_\scL$.
\end{lemma}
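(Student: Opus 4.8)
The proof is a case analysis on the three types of domains appearing in the collection $\scL$ of \eqref{e:gen-L}, and in each case the strategy mirrors the proof of Proposition \ref{p:RF-Lipschitz}: start from a domain that is Lipschitz-graphical with a dimensional constant, then push it forward by a parameterization whose derivative varies only slightly, and invoke Proposition \ref{p:slow-vary-lip-graph} (the ``slowly varying derivative implies Lipschitz graph'' result) to conclude. First I would dispose of the easiest case: for $q\in\sN$ and $R\in\widehat{\sD}_q$, the domain $\Omega_j = R$ is literally a Whitney cube from the auxiliary decomposition $\widetilde{\scW}$, hence a $C(d)$-Lipschitz graph domain with no further argument needed.

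Next I would treat the ``buffer'' domains $g_p(W)$ for $p\in\sF$ and $W\in\widehat{\sD}_p\setminus\sD_p$. Here $W$ is a Whitney cube in $\scW'_p$, hence a $C(d)$-Lipschitz graph domain; by Lemma \ref{l:Dgp-change}, $g_p$ satisfies $|Dg_p(z)-I|\le C\delta$ on all of $\widehat{\sD}_p$, so in particular $Dg_p$ is within $C\delta$ of the identity on $W$. Then Proposition \ref{p:slow-vary-lip-graph} yields that $g_p(W)$ is a $C'(d)$-Lipschitz graph domain once $\delta$ (equivalently $\epsilon$, since $\epsilon\le\epsilon_0\ll\delta^4$) is fixed small in terms of $d$. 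The same logic handles the main family: for $p\in\sF$, Proposition \ref{p:DT-lip-graph} produces the decomposition $\sD_p\setminus\Sigma_{T_p}=\bigcup_{j\in J_p}\sD_p^j$ into $L_0(d)$-Lipschitz graph domains, and since $\sD_p^j\subseteq\sD_p\subseteq\widehat{\sD}_p$, Lemma \ref{l:Dgp-change} again gives $|Dg_p(z)-I|\le C\delta$ on $\sD_p^j$; Proposition \ref{p:slow-vary-lip-graph} then upgrades $g_p(\sD_p^j)=\Omega_p^j$ to an $L_1(d)$-Lipschitz graph domain. Taking $L_1(d)$ to be the maximum of the constants from the three cases finishes the lemma.

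The one point requiring a little care — and the main (mild) obstacle — is that Proposition \ref{p:DT-lip-graph} as used in Definition \ref{def:gen-L} applies to stopping time regions \emph{composed of cubes}, whereas in the general setting here the relevant Whitney pieces already are genuine cubes (there is no rescaling factor $2^p$ as in Theorem \ref{t:thmA}, because each $g_p$ is only $(1+C\delta)$-bi-Lipschitz rather than merely $L'$-bi-Lipschitz), so no analogue of Lemma \ref{l:linear-graph-domain} is needed and the constant $L_0$ depends only on $d$. One should also check the hypothesis of Proposition \ref{p:slow-vary-lip-graph} is met uniformly: the constants $\epsilon,\delta$ in the graph coronization are at our disposal by Propositions \ref{p:ur-equiv} and \ref{p:RF-graph-coronization}, so we simply choose them small enough (depending only on $d$) that $C\delta$ falls below the threshold in Proposition \ref{p:slow-vary-lip-graph} for domains of Lipschitz constant $\max\{L_0(d),C(d)\}$. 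With those choices fixed, every constant in the argument depends only on $d$, as claimed.
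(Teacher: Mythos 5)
Your proof is correct and follows essentially the same route as the paper: identify the domain $\Omega_j$ either as a literal Whitney cube (trivially $C(d)$-Lipschitz graphical) or as the $g_p$-image of an $L_0(d)$-Lipschitz graph domain, then invoke Lemma \ref{l:Dgp-change} together with Proposition \ref{p:slow-vary-lip-graph}. Your observation that no analogue of Lemma \ref{l:linear-graph-domain} or of the $2^p$-rescaling is needed here—because $g_p$ is $(1+C\delta)$-bi-Lipschitz rather than merely $L'$-bi-Lipschitz—accurately captures why all constants end up depending only on $d$.
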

\begin{proof}
    Each domain in the set $\{R\}_{q\in\sN,\ R\in\widehat{\sD}_q}$ is a cube, which is an $L_0(d)$-Lipschitz graph domain trivially. Each domain $\Omega_j$ in the set $\{g_p(W)\}_{p\in\sF,\ W\in\widehat{\sD}_p\setminus\sD_p}\cup\{\Omega_p^j\}_{p\in\sF,\ j\in J_p}$ is the image under $g_p$ of an $L_0$-Lipschitz graph domain. Therefore, by Lemma \ref{l:Dgp-change} and Proposition \ref{p:slow-vary-lip-graph} there exists $L_1(d) > L_0$ such that each such $\Omega_j$ is an $L_1$-Lipschitz graph domain.
\end{proof}

In order to prove the remaining statements of Proposition \ref{p:gen-Lipschitz}, we first show that the buffer region $\widehat{\Omega}_p\setminus\Omega_p$ contains a cone around $\Omega_T$ with respect to the distance to $\partial\Omega$ for any $p\in\sC$:
\begin{lemma}\label{l:omega-cone}
For any $p\in\sC$, $\widehat{\Omega}_p$ contains an $\frac{A_0}{10}$-cone around $\Omega_p$ with respect to distance from $\partial\Omega$. That is, 
\begin{equation}\label{e:omega-cone}
    F = \left\{w\in\Omega : \dist(w,\Omega_p) <  \frac{A_0}{10}\min\left\{\dist(w,\partial\Omega),\dist(g_p(W(T_p)),\partial\Omega)\right\} \right\} \subseteq \widehat{\Omega}_p
\end{equation}
\end{lemma}
\begin{proof}
First, suppose that $p\in\sF$ and let $z\in F$. Since $\widehat{\Omega}_p = g_p(\widehat{\sD}_p)$ where $g_p$ is $(1+C\delta)$-bi-Lipschitz by Lemma \ref{l:Dgp-change} and translates distance in the domain to $\R^d$ to distance to $\partial\Omega$ in the image by Lemma \ref{l:gp-prop} \ref{i:gp-dist}, it suffices to show 
\begin{equation}\label{e:Dc-cone}
    \left\{z\in \Omega : \dist(z,\sD_p) <  \frac{A_0}{4}\min\left\{\dist(z,\R^d),\dist(W(T_p),\R^d)\right\} \right\} \subseteq \widehat{\sD}_p
\end{equation}
because the desired containment then follows by mapping \eqref{e:Dc-cone} forward. Now, there exists $W\in T_p$ such that $\dist(z,W) = \dist(z,\sD_p)$ and there exists a cube $W_z\in\mathscr{W}'_p$ such that $z\in W_z$. By the definition of $\widehat{\sD}_p$, it suffices to show that $W\simeq_{A_0} W_z$. We estimate
\begin{align}\label{e:cone-cubes-close}
    \dist(W,W_z) &\leq \dist(z,\sD_p) < \frac{A_0}{4} \min\{\dist(z,\R^d),\dist(W(T_p),\R^d)\} \\\nonumber
    &\leq \frac{A_0}{2} \min\{h(W_z),h(W(T_p))\} = \frac{A_0}{2}\min\{\ell(W_z),\ell(W(T_p))\}.
\end{align}
Using this we get
\begin{align*}
    \ell(W) &= h(W) \leq \dist(W,W_z) + \diam W_z + h(W_z) \leq \left(\frac{A_0}{2} + \sqrt{d+1} + 1\right)\ell(W_z) \leq A_0\ell(W_z)
\end{align*}
given that $A_0 \geq 4\sqrt{d}$. A similar calculation shows that $\ell(W_z) \leq A_0 \ell(W)$ which completes the proof in the case when $p\in\sF$. 
If $q\in\sN$, then $\Omega_q = W_q$. Let $w\in F$ and let $W_w\in\widetilde{\scW}$ with $w\in W_w$. By a similar computation to the above, one can show that $W_q\simeq_{A_0} W_w$ from which the result follows.
\end{proof}

With the help of Lemma \ref{l:omega-cone}, we can prove the bounded overlap and covering properties of $\scL$.

\begin{lemma}\label{l:omegas-are-nice}\ 
Let $p,p'\in\sC,\ p\not=p'$. The following hold:
\begin{enumerate}[label=(\roman*)]
    \item $\Omega_p \cap \Omega_{p'} =\varnothing$, \label{i:omegas1}
    \item $\Omega\cap B(0,1)\subseteq \bigcup_{p\in\sC}\overline{\widehat{\Omega}}_p$, \label{i:omegas2}
    \item $\exists C(d) > 0$ such that $\forall x\in\Omega,\ x\in\widehat{\Omega}_p$ for at most $C$ values of $j$,\label{i:omegas3}
    \item $\widehat{\Omega}_p \subseteq\Omega.$\label{i:omegas4}
\end{enumerate}
\end{lemma}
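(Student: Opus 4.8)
The plan is to prove the four parts in the order \ref{i:omegas4}, \ref{i:omegas1}, \ref{i:omegas3}, \ref{i:omegas2}, using two size estimates throughout: for every $p\in\sC$ one has $p\in\Omega_p$ and $\diam\Omega_p<\tfrac{A_0}{60}\dist(p,\partial\Omega)$, and $\widehat{\Omega}_p\subseteq B\!\left(p,\,C(A_0,d)\dist(p,\partial\Omega)\right)$. For non-flat $q$ these are immediate from $\Omega_q=W_q$ and $\diam W_q\le\dist(q,\partial\Omega)$; for flat $p$ they follow, after normalizing $\dist(p,\partial\Omega)=6$, from $\sD_p\subseteq[-2,2]^d\times[0,8]$, $\widehat{\sD}_p\subseteq B(c_{W(T_p)},C(A_0,d))$, $g_p(c_{W(T_p)})=p$, the displacement bound $|g_p(z)-z|\lesssim_d\epsilon$ from Theorem \ref{DT-thm}, and Lemma \ref{l:Dgp-change}; the numerical constant beats $\tfrac{A_0}{60}$ because $A_0=\tfrac{1000\sqrt d}{c_0\rho}$ is huge.

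For \ref{i:omegas4}: if $q\in\sN$ then $\widehat{\Omega}_q$ is a union of cubes of a Whitney decomposition of $\Omega$, so $\widehat{\Omega}_q\subseteq\Omega$. If $p\in\sF$, it suffices to show $g_p(W)\subseteq\Omega$ for each $W\in\widehat{T}_p$; fix $W\simeq_{A_0}R$ with $R\in T_p$. Since $\sD_p$ is a connected union of Whitney cubes containing $W(T_p)\ni p$ and $g_p(\sD_p)$ avoids $\partial\Omega$ (Lemma \ref{l:gp-prop}\ref{i:gp-dist}), we get $g_p(R)\subseteq\Omega_p\subseteq\Omega$. As in the proof of Lemma \ref{l:Dg-change}, there is a cube $Q_R\in S_p$ with $Q_R\simeq_{A_1}g_p(R)$, so by \ref{i:gc-plane} of the graph coronization $\partial\Omega$ is $2\epsilon$-close to a $d$-plane $P$ inside the ball $B=B(g_p(c_R),\Lambda)$, $\Lambda\simeq_{A_0,d}h(R)$ --- this is the only place the graph-coronization hypothesis (rather than mere uniform rectifiability) is used. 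Now $g_p(c_R)$ and $g_p(c_W)$ are joined inside $B$ by the $g_p$-image of the segment $[c_R,c_W]$, which stays at height $\gtrsim_{A_0,d}\Lambda$ above $\R^d$, hence (Lemma \ref{l:gp-prop}\ref{i:gp-dist}) at distance $\gg\epsilon\Lambda$ from $\partial\Omega$; so it cannot cross $P$, and $g_p(c_W)$ lies on the same ($\Omega$-)side of $P$ as $g_p(c_R)$. Since $g_p(W)$ is connected, avoids $\partial\Omega$, and contains $g_p(c_W)\in\Omega$, we conclude $g_p(W)\subseteq\Omega$.

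For \ref{i:omegas1}: order $\sC=\{p_0<p_1<\cdots\}$ as in the construction. If $n<m$, then $p_m\in\sC_m$ was not removed at stage $m-1$ against $\sP_{m-1}\ni p_n$, so $\dist(p_m,\Omega_{p_n})\ge\tfrac{A_0}{30}\dist(p_m,\partial\Omega)$; a point $w\in\Omega_{p_m}\cap\Omega_{p_n}$ would give $\dist(p_m,\Omega_{p_n})\le|p_m-w|\le\diam\Omega_{p_m}<\tfrac{A_0}{60}\dist(p_m,\partial\Omega)$, a contradiction. For \ref{i:omegas3}: fix $x\in\Omega$, $t=\dist(x,\partial\Omega)$, and $p\in\sC$ with $x\in\widehat{\Omega}_p$. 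Unwinding $\widehat{\Omega}_p=g_p(\widehat{\sD}_p)$ and using Lemmas \ref{l:gp-prop}\ref{i:gp-dist} and \ref{l:Dgp-change} (resp. the Whitney property for $q\in\sN$) produces a cube $R\in T_p$ (resp. $R=W_q$) with $\diam R\simeq_{A_0,d}t$ and $\dist(x,g_p(R))\lesssim_{A_0,d}t$; thus $\Omega_p\supseteq g_p(R)$ contains a ball $B_p$ of radius $\gtrsim_{A_0,d}t$ with $B_p\subseteq B(x,C(A_0,d)t)$, and by \ref{i:omegas1} the $B_p$ are pairwise disjoint, so at most $C(A_0,d)$ indices $p$ occur.

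For \ref{i:omegas2} (the crux): fix $x\in\Omega\cap B(0,1)$, $t=\dist(x,\partial\Omega)\le|x|<1$, and pick $m$ with $s_m\in(t/2,t]$. Let $q\in\partial\Omega$ realize $\dist(x,\partial\Omega)$; on $[x,q]$ the distance to $\partial\Omega$ is exactly linear, so the point $z\in[x,q]$ with $\dist(z,\partial\Omega)=s_m$ has $|x-z|=t-s_m<s_m$, hence $z\in D_m$ --- this uses $q\in B(0,1)$, which holds when $|x|+t<1$, the remaining thin shell near $\partial B(0,1)$ being handled by a harmless enlargement of the ball in the definition of $D_m$. Since $C_m$ is an $s_m$-net of $D_m$, there is $p\in C_m$ with $|z-p|<s_m$, so $|x-p|<2s_m$. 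If $p\in\sC$, then $\dist(x,\Omega_p)\le|x-p|<2s_m<\tfrac{A_0}{10}\min\{\dist(x,\partial\Omega),\,\dist(g_p(W(T_p)),\partial\Omega)\}$ (using $t\ge s_m$ and $\dist(g_p(W(T_p)),\partial\Omega)\gtrsim s_m$), so $x$ lies in the cone $F$ of Lemma \ref{l:omega-cone}, i.e. $x\in\widehat{\Omega}_p$. If $p\notin\sC$, it was removed at some stage $j$ by $p'\in\sP_j\subseteq\sC$ with $\dist(p,\Omega_{p'})<\tfrac{A_0}{30}\dist(p,\partial\Omega)$; since points are processed in order of decreasing scale, $\dist(p',\partial\Omega)\ge\dist(p,\partial\Omega)=s_m$, whence $\dist(x,\Omega_{p'})\le|x-p|+\dist(p,\Omega_{p'})<\bigl(2+\tfrac{A_0}{30}\bigr)s_m<\tfrac{A_0}{10}\min\{\dist(x,\partial\Omega),\,\dist(g_{p'}(W(T_{p'})),\partial\Omega)\}$, so again $x\in\widehat{\Omega}_{p'}$ by Lemma \ref{l:omega-cone}. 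These inequalities close because the removal constant $\tfrac{A_0}{30}$, the net scale $s_m$, and the cone opening $\tfrac{A_0}{10}$ were matched at the outset and $A_0$ is a large multiple of $\sqrt d$; balancing them simultaneously, together with the mild issue of keeping the auxiliary point $z$ inside the net's ball, is the main point, while \ref{i:omegas4} is the only other nonroutine step.
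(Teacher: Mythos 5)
Your proof follows the paper's overall plan: item (i) via the removal condition \eqref{e:C-inductive} plus a diameter bound $\diam\Omega_p \lesssim \sqrt{d}\,\dist(p,\partial\Omega)$, item (ii) via locating a nearby net point and applying Lemma \ref{l:omega-cone}, item (iii) via packing disjoint comparable balls $B_p\subseteq\Omega_p$ inside $B(x,Ct)$, and item (iv) via a connectedness/distance-to-boundary argument. For (ii) you produce the net point $p$ by moving along $[x,q]$ toward the nearest boundary point, and you flag the mild issue of $z$ possibly leaving $B(0,1)$; the paper just asserts the existence of $p_k\in C_k$ with $|z-p_k|\le 3s_k$, so your version is more explicit but lands in the same place. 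The genuine divergence is in (iv). The paper disposes of it in one line, citing Lemma \ref{l:gp-prop}\ref{i:gp-dist}: the intended argument is simply that $\widehat{\sD}_p$ is connected, $g_p(\widehat{\sD}_p)$ avoids $\partial\Omega$, and $p=g_p(c_{W(T_p)})\in\Omega$ lies in the image, so the whole connected image sits in $\Omega$. You instead set up a local argument through the bilateral-flatness plane $P$ of the coronization, arguing that $g_p(c_W)$ and $g_p(c_R)$ lie on the same side of $P$. This adds machinery that is not needed, and it introduces a gap: your invocation of Lemma \ref{l:gp-prop}\ref{i:gp-dist} along the image of $[c_R,c_W]$ requires that segment to stay inside $\widehat{\sD}_p$, which you do not check (and which, owing to the asymmetric diameter ratio allowed by $A_0$-closeness, is not automatic for $W\simeq_{A_0}R$). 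Note also that by the time you need $\Omega_p\subseteq\Omega$ as an input, you have already used the very connectedness argument that, applied to $\widehat{\sD}_p$ rather than $\sD_p$, finishes (iv) directly; so the plane $P$ and hence condition \ref{i:gc-plane} of the graph coronization are not, as you remark, needed here.
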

\begin{proof}
We begin with proving \ref{i:omegas1}. Using the partial order on $\sC$, assume without loss of generality that $p' < p$. By the definition of $\sC$ (Recall \eqref{e:C-inductive}.), we have $\dist(p,\Omega_{p'}) \geq \frac{A_0}{30}\dist(p,\partial\Omega)$. We claim that
\begin{equation*}
    \Omega_{p}\subseteq B(p,3\sqrt{d}\dist(p,\partial\Omega)) \subseteq B\left(p,\frac{A_0}{30}\dist(p,\partial\Omega)\right)
\end{equation*}
where the final inclusion follows because $A_0 \geq 120\sqrt{d}$. Indeed, if $p\in\sN$, then $\Omega_{p} = W_{p}\ni p$ with $\diam W_{p} \leq \dist(p,\partial\Omega)$. If instead $p\in\sF$, then $\Omega_{p} = g_{p}(\sD_{p})$ where $\sD_{p}$ is composed of a union of cubes in the descendants $D(W(T_{p}))$ where $\dist(p,\partial\Omega) \geq \ell(W(T_{p}))$ so that the fact that $g_p$ is $(1+C\delta)$-bi-Lipschitz means $\sqrt{d+1}\dist(p,\partial\Omega) \geq \diam(g_p(W))$ and $\dist(p,\partial\Omega) \geq \dist(g_p(W),p)$ for any $W\in T_p$. The claim follows.

We now prove \ref{i:omegas2}. Let $z\in \Omega\cap B(0,1)$ and let $k\geq0$ be such that $s_{k+1}\leq \dist(z,\partial\Omega)\leq s_k$. By the definition of $C_k$, there exists $p_k\in C_k$ such that
\begin{equation*}
    |z-p_k| \leq 3s_k = 6s_{k+1} \leq 6\dist(z,\partial\Omega).
\end{equation*}
Now, if $p_k\in\sC$, then by Lemma \ref{l:omega-cone}, $z\in\widehat{\Omega}_{p_k}$. Otherwise, $p_k\not\in\sC$ so that by \eqref{e:C-inductive} there exists $p\in\sC$ such that $p < p_k$ and $\dist(p_k,\Omega_p) < \frac{A_0}{30}\dist(p_k,\partial\Omega) = \frac{A_0}{30}s_k = \frac{A_0}{15}s_{k+1}$. But then
\begin{equation*}
    \dist(z,\Omega_p) \leq |z-p_k| + \dist(p_k,\Omega_p) \leq 6s_{k+1} + \frac{A_0}{15}s_{k+1} \leq \frac{A_0}{10}\dist(z,\partial\Omega)
\end{equation*}
so that $z\in\widehat{\Omega}_p$ by Lemma \ref{l:omega-cone} as long as $\dist(z,\Omega_p) \leq \frac{A_0}{10}\dist(g(W_p),\partial\Omega)$, which follows from the fact that $p < p_k$ ($p$ is not a net point of smaller scale).

We now prove \ref{i:omegas3}. Let $z\in \Omega\cap B(0,1)$ and define $\sC_z = \{p\in\sC : z\in\widehat{\Omega}_p\}$. It suffices to prove 
$$\#(\sC_z) \lesssim_{d} 1.$$ 
First, suppose $p\in\sC_z\cap \sF$. There exists $W\in\widehat{T}_p$ such that $z\in g_p(W)$ and the definition of $\widehat{\Omega}_p$ then implies that there exists $R_p\in T_p$ such that $R_p\simeq_{A_0} W$. Let $r_z = \dist(z,\partial\Omega)$. Lemmas \ref{l:gp-prop} and \ref{l:Dgp-change} imply that $\diam g_p(R_p) \simeq_{d} \dist(g_p(R_p),\partial\Omega) \simeq_{A_0} r_z$ and there exists $C_0(d,A_0) > 0$ such that
\begin{equation}\label{e:omega-contain-balls}
    B(g_p(c_{R_p}), C_0^{-1}r_z) \subseteq g_p(R_p) \subseteq B(z,C_0r_z).
\end{equation}
Since $\Omega_p\cap\Omega_{p'} = \varnothing$ for $p\not=p'$, we have
\begin{equation}\label{e:omega-disjoint-balls}
    g_p(R_p) \cap g_{p'}(R_{p'}) = \varnothing.
\end{equation}
It follows from \eqref{e:omega-contain-balls} and \eqref{e:omega-disjoint-balls} that $\#(\sC_z\cap\sF) \lesssim_{A_0,d} 1$. A similar argument shows that $\#(\sC_z\cap\sN)\lesssim_{d,A_0} 1$ from which the claim follows.

Item \ref{i:omegas4} follows from Lemma \ref{l:gp-prop} \ref{i:gp-dist}.
\end{proof}

\begin{remark}[Whitney family]\label{rem:gen-whit-fam}
    In fact, \eqref{e:omega-contain-balls} and \eqref{e:omega-disjoint-balls} in combination with Lemma \ref{l:gp-prop} show that there exists a constant $\Lambda_1(d)$ such that the family
    \begin{equation}\label{e:gen-whit-fam}
        \mathcal{G}_1 = \bigcup_{\substack{p\in\sF \\ W\in T_p}} g_p(W) \cup \bigcup_{q\in\sN} W_q.
    \end{equation}
    is a $\Lambda_1$-Whitney family in the sense of Definition \ref{def:whit-family} (compare with Lemma \ref{l:boxes-whitney}).
\end{remark}

We can now finish the proof of Proposition \ref{p:gen-Lipschitz}.

\begin{proof}[Proof of Proposition \ref{p:gen-Lipschitz}]
    We showed the existence of $L_1$ such that $\Omega_j$ is $L_1$-Lipschitz graphical for any $j\in J_{\scL_b}$ in Lemma \ref{l:gen-Lipschitz}. The fact that $\Omega_j \subseteq \Omega$ follows from Lemma \ref{l:omegas-are-nice} \ref{i:omegas4} while $\Omega\subseteq \bigcup_{j\in J_{\scL_b}}\Omega_j$ follows from Lemma \ref{l:omegas-are-nice} \ref{i:omegas2}. Finally, item \ref{i:prop-bounded-overlap} of Proposition \ref{p:gen-Lipschitz} follows from Lemma \ref{l:omegas-are-nice} \ref{i:omegas3} because for each $p\in\sC$, there is by definition at most one index $j_p$ such that $x\in\Omega_{j_p}\subseteq\widehat{\Omega}_p$.
\end{proof}

\subsection{Surface area bounds for Theorems \ref{t:thmB} and \ref{t:thmC}}
In this section, we prove Proposition \ref{p:gen-surface-msr}. The proof is similar to that of Proposition \ref{p:RF-surface-msr} given Remark \ref{rem:gen-whit-fam}. Fix $y\in\partial\Omega\cap B(0,1)$ and $0 < r \leq 1$ and let $A_2 = 100\sqrt{d}A_0^2,\ A_3 = 50\sqrt{d}A_1A_2$. If $p\in\sF$ is such that $\widehat{\Omega}_p\cap B(y,r)\not=\varnothing$, then there exists a cube $R$ with $\ell(R) \leq 2r$ such that $g_p(R)\cap B(y,r)\not=\varnothing$ and $g_p(R)\simeq_{A_0} W$ with $W\in T_p$. Then
$$\dist(g_p(W),y) \leq \dist(g_p(W),g_p(R)) + \diam g_p(R) \leq  A_0(1+A_0)\diam g_p(R) \leq 3\sqrt{d}A_0^2\ell(R) < 10\sqrt{d} A_0^2 r.$$ 
Therefore, since $A_2 > 50\sqrt{d}A_0^2$, we get that $\Omega_p\cap B(y,A_2r)\not=\varnothing$. We set
\begin{equation*}
    \scT_{y,A_2r}' = \{T_p : p\in \sF,\ \Omega_p\cap B(y,A_2r)\not=\varnothing\}.
\end{equation*}
The above discussion gives that $\widehat{\Omega}_p \cap B(y,r)\not=\varnothing \implies \Omega_p\cap B(y,A_2r)\not=\varnothing$, so it suffices to consider stopping time domains in the family $\scT_{y,A_2r}'$. Break up $\scT_{y,A_2r}'$ into regions with large and small top cubes:
\begin{align*}
    \scT_{L,A_2r} &= \{T_p\in\scT_{y,A_2r}' : h(W(T_p)) > 10A_2r\},\\
    \scT_{y,A_2r} &= \scT_{y,A_2r}'\setminus \scT_{L,A_2r}.
\end{align*}
We also collect all of the boundaries of domains in our decomposition $\scL$ (See \ref{e:def-of-Lambda}.) associated with a given flat point $p\in\sF$ into the set
\begin{equation}\label{e:flat-boundaries}
    \mathcal{B}_p = \bigcup_{j\in J_p}\partial\Omega^j_p \cup \bigcup_{W\in\widehat{\sD}_p\setminus\sD_p}g(\partial W).
\end{equation}
We note that $\sB_p$ is Ahlfors $d$-regular with constant depending on $d$ and $A_0$ by Proposition \ref{p:DT-lip-graph} and the fact that each cube $W\subseteq\widehat{\sD}_p\setminus\sD_p$ is $A_0$-close to a cube $W'\in T_p$ with at least one face inside $\partial\sD_p$.

We can then use the arguments of the previous section to get the following analogues of Lemmas \ref{l:large-top-cubes} and \ref{l:small-top-cubes}.

\begin{lemma}\label{l:gen-large-top-cubes}
We have
\begin{equation*}
    \sum_{\substack{p\in\sF \\ T_p\in\scT_{L,A_2r}}} \sH^d(\mathcal{B}_p\cap B(y,r)) \lesssim_d r^d \leq \sH^d(\partial\Omega\cap B(y,r)).
\end{equation*}
\end{lemma}
\begin{proof}
It follows from the proof of Lemma \ref{l:large-top-cubes} and the fact that $\mathcal{G}_1$ is a Whitney family (see Remark \ref{rem:gen-whit-fam}) that $\#(\scT_{L,A_2r})\lesssim_{A_2,d} 1$. Since $\mathcal{B}_p$ is Ahlfors $d$-regular, we have
\begin{equation*}
     \sum_{\substack{p\in\sF \\ T_p\in\scT_{L,A_2r}}}\sH^d(\mathcal{B}_p\cap B(y,r)) \lesssim_{A_0,d} \#(\scT_{L,A_2r})r^d \lesssim_{A_2,d} r^d.
\qedhere\end{equation*}
\end{proof}

We now handle the regions with small top boxes:
\begin{lemma}\label{l:gen-small-top-cubes}
We have
\begin{equation}\label{e:gen-small-top-cubes}
    \sum_{\substack{p\in\sF \\ T_p\in\scT_{y,A_2r}}} \sH^d(\mathcal{B}_p\cap B(y,r)) \lesssim_{d,\epsilon} \sH^d(\partial\Omega\cap B(y,A_3r)).
\end{equation}
\end{lemma}
\begin{proof}
We modify the proof of Lemma \ref{l:small-top-cubes}. We first observe that since $\sH^d(\mathcal{B}_p) \lesssim_{A_0,d}\sH^d(\partial\Omega_p)$, we have
$$\sum_{\substack{p\in\sF \\ T_p\in\scT_{y,A_2r}}} \sH^d(\mathcal{B}_p\cap B(y,r)) \leq \sum_{\substack{p\in\sF \\ T_p\in\scT_{y,A_2r}}} \sH^d(\mathcal{B}_p) \lesssim_{A_0,d} \sum_{\substack{p\in\sF \\ T_p\in\scT_{y,A_2r}}} \sH^d(\partial\Omega_p).$$
Therefore, it suffices to prove \eqref{e:gen-small-top-cubes} with $\mathcal{B}_p\cap B(y,r)$ replaced by $\partial\Omega_p$. For any $p\in\sF$ and $T_p\in\scT_{y,A_2r}$, we get
\begin{equation}\label{e:gen-partial-omega-bound}
    \sH^d(\partial\Omega_p) \lesssim_d \sH^d(\partial\Omega_p\cap\partial\Omega) + \sum_{W\in m(T_p)} h(W)^d.
\end{equation}
Now, $W\in m(T_p)$ implies that there exists a child $W'\in\Stop(T_p)$ for which we have $Q\in \scB_e$ of \eqref{e:bad-scD} with $g(W')\simeq_{A_1} Q$ by Lemma \ref{l:scB-packing}. By replacing $A_0$ with $A_1$ and $r$ with $A_2r$ in \eqref{e:final-inflated-ball}, we get $Q\subseteq B(y,50\sqrt{d}A_1A_2r) \subseteq B(y,A_3r)$. Hence, applying Lemma \ref{l:whit-card} with $\scV = \{g(W): W\in m(T_p),\ T_p\in\scT_{y,A_2r}\}$ and $\scU = \{Q\in\scB_e: Q\subseteq B(y,A_3r)\}$, we get
\begin{align}
    \sum_{\substack{p\in\sF \\ T\in\scT_{y,A_2r}}}\sum_{W\in m(T_p)} h(W)^d \lesssim_{d} \sum_{\substack{Q\in\scB_e \\ Q\subseteq B(y,A_3r)}}\ell(Q)^d \lesssim_{d,\epsilon}\sH^d(\partial\Omega\cap B(y,A_3r))
\end{align}
where the last inequality follows from the Carleson packing condition for $\scB_e$. By observing that $\partial\Omega_p\cap\partial\Omega\subseteq B(y,50\sqrt{d}A_2r)$ and $\sH^d(\partial\Omega_p\cap\partial\Omega_{p'}\cap\partial\Omega) = 0$ for any $p\not=p'$, \eqref{e:gen-partial-omega-bound} implies 
\begin{equation*}
    \sum_{T_p\in\scT_{y,A_2r}}\sH^d(\partial\Omega_p) \lesssim_{d,\epsilon} \sH^d(\partial\Omega \cap B(y,A_3r)).
\qedhere\end{equation*}
\end{proof}

We also need to bound the surface measure associated to trivial domains around non-flat $q\in\sN$. For any $q\in\sN$, we define the set of boundaries
\begin{equation*}
    \sB_q = \partial W_q \cup \bigcup_{\substack{W\in\widetilde{\scW} \\ W\subseteq \widehat{\sD}_q\setminus\sD_q}}\partial W.
\end{equation*}
We note that $\sH^d(\sB_q) \lesssim_{d,A_0} \ell(W_q)^d$.
\begin{lemma}\label{l:gen-non-flat}
We have
    \begin{equation*}
        \sum_{\substack{q\in\sN}}\sH^d(\sB_q\cap B(y,r)) \lesssim_{d,\epsilon} \sH^d(\partial\Omega\cap B(y,A_3r)).
    \end{equation*}
\end{lemma}
\begin{proof}
    Observe that $\sB_q\cap B(y,r)\not=\varnothing$ implies there exists $Q\in\scB_e$ such that $W_q \simeq_{10A_1} Q$ and $Q\subseteq B(y,A_3r)$ so that we have
    \begin{equation*}
        \sum_{\substack{q\in\sN}}\sH^d(\sB_q\cap B(y,r)) \leq \sum_{\substack{q\in\sN \\ \sB_q\cap B(y,r)\not=\varnothing}} \ell(W_q)^d \lesssim_{A_1,d} \sum_{\substack{Q\in\scB_e \\ Q\subseteq B(y,A_3r)}}\ell(Q)^d \lesssim_{d,\epsilon} \sH^d(\partial\Omega \cap B(y,A_3r)).
    \qedhere\end{equation*}
\end{proof}

\begin{proof}[Proof of Proposition \ref{p:gen-surface-msr}]
    If $\Omega_j\in\scL_b$, then either there exists $p\in\sF,\ j_0\in J_p$ such that $\Omega_j = \Omega_{p}^{j_0}$ or there exists $q\in\sN$ such that $\Omega_j = R\in\widetilde{W}$ where $R\simeq_{A_0} W_q$. This means that
    \begin{align*}
        \sum_{j\in j_{\scL_b}}&\sH^d(\partial\Omega_j\cap B(y,r)) \\
        &\leq \sum_{p\in\sF}\left[\sum_{T_p\in\scT_{L,A_2r}}\sum_{j\in J_p}\sH^d(\partial\Omega^j_p\cap B(y,r)) + \sum_{T_p\in\scT_{y,A_2r}}\sum_{j\in J_p}\sH^d(\partial\Omega_p^j\cap B(y,r))\right]\\
        &\quad\quad+\sum_{q\in\sN}\sH^d(\sB_q\cap B(y,r))\\
        &\lesssim \sum_{p\in\sF}\left[\sum_{T_p\in\scT_{L,A_2r}} \sH^d(\sB_p\cap B(y,r)) + \sum_{T_p\in\scT_{y,A_2r}} \sH^d(\sB_p\cap B(y,r))\right] + \sum_{q\in\sN}\sH^d(\sB_q\cap B(y,r))\\
        &\lesssim_{d,\epsilon} \sH^d(\partial\Omega\cap B(y,A_3r))
    \end{align*}
    by Lemmas \ref{l:gen-large-top-cubes}, \ref{l:gen-small-top-cubes}, and \ref{l:gen-non-flat}.
\end{proof}
\section{Further questions}\label{sec:problems}
\begin{problem}
Can one find a disjoint decomposition rather than one of bounded overlap in Theorems \ref{t:thmB} and \ref{t:thmC}?
\end{problem}
In the construction given in this paper for Theorems \ref{t:thmB} and \ref{t:thmC}, the only overlap between domains occurs in intersections between $(1+C\delta)$-bi-Lipschitz images of Whitney cubes of comparable side length inside the ``buffer zones''. It seems plausible that one could devise a different scheme to divide the space between the disjoint ``core'' stopping time domains into Lipschitz graph domains.

Similarly, it seems plausible that one could obtain a similar result by modifying the methods of the proof to prove a version of Theorem \ref{t:thmA} with assumption \ref{item:beta-squared} removed (see Remark \ref{rem:better-refine}).

\begin{problem}
Can Theorem \ref{t:thmB} be extended to general lower-content $d$ regular sets?
\end{problem}
It seems possible that the necessary tools to handle this extension to non-Reifenberg flat sets are present in the $\beta$-number estimates in \cite{AS18} and \cite{Hyd22}. The technical disconnect between this paper and those ones is caused by the ``smoothing'' procedure needed there in defining the stopping time regions makes this generalization non-obvious to the author. It seems that a proof would require new ideas.

\appendix

\section{Graph coronizations for Reifenberg flat sets}\label{sec:RF-appendix}
The goal of this section is to provide a proof of Proposition \ref{p:RF-graph-coronization} which states that there exist (sufficiently small in terms of $d$) constants $\epsilon, \delta > 0$ such that Reifenberg flat sets admit $(M,\epsilon,\delta)$-graph coronizations. 

Reifenberg flat sets are a subset of a more general class of sets called \textit{lower content $d$-regular sets} studied by Azzam and Schul \cite{AS18} and later Hyde \cite{Hyd21} as a class of objects for $d$-dimensional traveling salesman results.
\begin{definition}[lower content $d$-regularity]
    A set $E\subseteq \R^{d+1}$ is said to be \textit{lower content $d$-regular} in a ball $B(x,r)$ if there exists a constant $c > 0$ and $r_B > 0$ such that
\begin{equation*}
    \sH^d_\infty(E\cap B(x,r)) \geq cr^d\ \text{for all $x\in E\cap B$ and $r\in(0,r_B)$}.
\end{equation*}
A set $E$ is \textit{lower content $d$-regular} if there exists a constant $c$ such that $E$ is lower content regular with constant $c$ in every ball centered on $E$.
\end{definition}
Since a Reifenberg flat set $\Sigma$ satisfies $b\beta_\Sigma(B) \leq \epsilon$ in every ball by definition, the only remaining requirements for the existence of a graph coronization are control over $\beta^{d,1}_\Sigma$-squared sums and control over the frequency of angle turning of well-approximating planes. The necessary control over $\beta$-sums is contained in the following traveling salesman theorems formulated for general lower content regular sets:
\begin{theorem}[\cite{Hyd21} Theorem 1.6]\label{t:hyde-lcr-beta-bound}
Let $H$ be a Hilbert space and $1 \leq d < \dim(H),\ 1 \leq p < p(d),\ C_0 > 1,\ $and $A > 10^5$. Let $E\subseteq H$ be a lower content $d$-regular set with regularity constant $c$ and Christ-David cubes $\mathscr{D}$. There exists $\epsilon > 0$ small enough so that the following holds. Let $Q_0\in\mathscr{D}$ and
\begin{equation*}
    \beta_{E,C_0,d,p}(Q_0) = \ell(Q_0)^d + \sum_{Q\subseteq Q_0}\beta_E^{d,p}(C_0B_Q)^2\ell(Q)^d.
\end{equation*}
Then
\begin{equation}\label{e:lcr-beta-bound}
    \beta_{E,C_0,d,p}(Q_0) \lesssim_{A,d,c,p,C_0,\epsilon} \sH^d(Q_0) + \BWGL(Q_0,A,\epsilon).
\end{equation}
\end{theorem}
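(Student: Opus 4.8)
The plan is to prove \eqref{e:lcr-beta-bound} by the corona method of Azzam--Schul \cite{AS18} and Hyde \cite{Hyd21}, isolating the scales at which $E$ fails to be flat (absorbed by the $\BWGL$ term) from those at which it is flat (absorbed by the measure of local Reifenberg graphs). We may assume $\sH^d(Q_0)<\infty$, since otherwise there is nothing to prove. Content beta numbers are bounded universally --- $\beta_E^{d,p}(C_0B_Q)\lesssim_{d,p,C_0}1$, because $\scH^d_\infty(E\cap B)\lesssim_d r_B^d$ for an arbitrary set --- so the cubes $Q\subseteq Q_0$ lying in $\BWGL(A,\epsilon)$ contribute at most
\[ \sum_{\substack{Q\subseteq Q_0\\ Q\in\BWGL(A,\epsilon)}}\beta_E^{d,p}(C_0B_Q)^2\ell(Q)^d\;\lesssim_{d,p,C_0}\;\sum_{\substack{Q\subseteq Q_0\\ Q\in\BWGL(A,\epsilon)}}\ell(Q)^d\;=\;\BWGL(Q_0,A,\epsilon), \]
and (taking the $\BWGL$ parameter as large as we like relative to $C_0$, which is permissible since the Carleson packings of $\BWGL$ at comparable parameters dominate one another up to constants and $\sH^d(Q_0)$) it remains to bound the sum over the \emph{flat} cubes $\scG=\{Q\subseteq Q_0: b\beta_E^d(AB_Q)\le\epsilon\}$ by $\sH^d(Q_0)+\BWGL(Q_0,A,\epsilon)$.

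I would organize $\scG$ into stopping-time regions $S$ as in the graph-coronizations of Section \ref{sec:preliminaries}: from a maximal flat cube $Q(S)$ with a near-optimal plane $P_{Q(S)}$, descend, keeping $Q$ in $S$ until the first generation at which $Q$ ceases to be flat, or its near-optimal plane tilts more than a small angle $\alpha$ from $P_{Q(S)}$, or a (cumulative) $\beta$-threshold is exceeded. On each $S$ one builds a CCBP over the cubes of $S$ --- legitimate because all of them are flat and their planes stay within angle $\alpha$ of one another, so the $\epsilon_k'$-numbers are $\lesssim\epsilon$ --- and the David--Toro map $g_S$ yields a surface $\Sigma_S\supseteq E\cap Q(S)$ which, the planes never rotating far from $P_{Q(S)}$, is a $C\alpha$-Lipschitz graph over $P_{Q(S)}$; hence $\sH^d(\Sigma_S\cap CQ(S))\lesssim_d\ell(Q(S))^d$. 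Since $E\cap Q(S)\subseteq\Sigma_S$, monotonicity of Hausdorff content gives $\beta_E^{d,p}(C_0B_Q)\le\beta_{\Sigma_S}^{d,p}(C_0B_Q)$; and as $\Sigma_S$ is a Lipschitz graph, hence $d$-Ahlfors regular, its content and $L^p$ beta numbers are comparable (Definition \ref{def:betas}), so a Dorronsoro--Jones estimate for Lipschitz graphs gives
\[ \sum_{Q\in S}\beta_E^{d,p}(C_0B_Q)^2\ell(Q)^d\;\le\;\sum_{Q\in S}\beta_{\Sigma_S}^{d,p}(C_0B_Q)^2\ell(Q)^d\;\lesssim_d\;\sum_{Q\in S}\beta_{\Sigma_S,p}^{d}(C_0B_Q)^2\ell(Q)^d\;\lesssim_d\;\sH^d(\Sigma_S\cap CQ(S))\;\lesssim_d\;\ell(Q(S))^d. \]
Thus \eqref{e:lcr-beta-bound} reduces to the packing bound $\sum_{Q(S)\subseteq Q_0}\ell(Q(S))^d\lesssim\sH^d(Q_0)+\BWGL(Q_0,A,\epsilon)$.

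This packing bound is the step I expect to be the main obstacle. Regions whose stop was caused by loss of flatness are charged directly to $\BWGL(Q_0,A,\epsilon)$. Regions whose stop was caused by angle tilt or by the $\beta$-threshold must instead be charged to $\sH^d(E)$: such a stop forces $E$ to carry a definite amount of content away from $P_{Q(S)}$ at a scale comparable to $\ell(Q(S))$ --- for the tilt stops this is where Lemma \ref{l:beta-angle-bound} enters --- and lower content $d$-regularity promotes this content bump into a genuine chunk of $\sH^d$-measure of mass $\gtrsim\ell(Q(S))^d$ localized near $Q(S)$; one then checks these chunks have bounded overlap, so $\sum\ell(Q(S))^d\lesssim\sH^d(Q_0)$. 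Calibrating the thresholds $\epsilon,\alpha$ and the $\beta$-stopping level (and the various inflation factors) in the correct order so that the within-region estimate really carries \emph{no} residual $\beta$-sum --- this is the purpose of the tilt and $\beta$ stops and the place where circularity must be avoided --- while simultaneously keeping the charged measure both bounded below and of bounded overlap, is the delicate part; the rest is a routine assembly of Carleson packing conditions.
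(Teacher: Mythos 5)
This theorem is quoted from [Hyd21] and not re-proved in the paper, so there is no in-paper argument to compare against; assessing your sketch directly, the broad architecture (corona decomposition into flat stopping regions, Reifenberg graph approximation, Dorronsoro--Jones, packing of top cubes against $\sH^d$ and $\BWGL$) is the right one, but two steps as stated would fail. First, the inequality $\beta_E^{d,p}(C_0B_Q)\le\beta_{\Sigma_S}^{d,p}(C_0B_Q)$ requires $E\cap C_0B_Q\subseteq\Sigma_S$, and this containment does \emph{not} follow from David--Toro for a stopping-time CCBP: their theorem only places the accumulation set $E_\infty$ of net points inside $\Sigma_S$, and for a region $S$ truncated at its minimal cubes the net terminates, so points of $E$ near those scales are merely $\lesssim\epsilon\,\ell(Q)$-close to $\Sigma_S$, not contained in it. Replacing containment by $\epsilon$-closeness produces an $O(\epsilon)$ error in every $\beta_E^{d,p}(C_0B_Q)$, and $\sum\epsilon^2\ell(Q)^d$ over all cubes of $S$ diverges. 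The actual [AS18]/[Hyd21] argument avoids this by working with the full nested hierarchy of approximating surfaces $\Sigma_N$ and the scale-dependent $\epsilon_k'$-numbers, whose squares (not a fixed $\epsilon$) control the deviation of $E$ from the approximant at each scale; that summability is the crux of the proof and is genuinely different from a one-shot monotonicity.

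Second, including a $\beta$-threshold stopping rule is circular: charging the resulting top cubes against $\sH^d(Q_0)$ requires precisely the strong geometric lemma you are trying to prove. (The paper's own appendix bounds the analogous family $S_\beta$ in the proof of Proposition~\ref{p:RF-graph-coronization} by invoking Theorem~\ref{t:hyde-lcr-beta-bound} --- legitimate there, since the theorem is taken as known, but not inside a proof of the theorem itself.) In the construction of [AS18]/[Hyd21] the only stopping rules are failure of flatness and angle tilt, together with the $\tau$-scale smoothing that produces $\Stop(N)$ from $\Layer(N)$ and makes the layered CCBP coherent; the $\beta$-Carleson bound is a \emph{conclusion}, not a stopping input, and the packing of $\Stop(N)$ top cubes comes from Lemma~\ref{l:reif-flat-angle-turn}-type lower bounds on content forced by angle turning, fed through lower content $d$-regularity. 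These are the ingredients your outline is missing.
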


\begin{theorem}[\cite{Hyd21} Theorem 1.7]
Let $H$ be a Hilbert space, $1 \leq d < \dim(H),\ 1 \leq p < \infty,\ A>1,\ \epsilon > 0,$ and $C_0 > 2\rho^{-1}$ where $\rho$ is as in the construction of the Christ-David lattice $\mathscr{D}$. Let $E\subseteq H$ be lower content $d$-regular with regularity constant $c$ and Christ-David cubes $\mathscr{D}$. For $Q_0\in\mathscr{D}$, we have
\begin{equation*}
    \sH^d(Q_0) + \BWGL(Q_0,A,\epsilon) \lesssim_{A,d,c,C_0,\epsilon}\beta_{E,C_0,d,p}(Q_0).
\end{equation*}
\end{theorem}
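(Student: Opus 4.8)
The plan is to treat the two pieces of the left-hand side separately: the measure bound $\sH^d(Q_0)\lesssim\beta_{E,C_0,d,p}(Q_0)$ and the packing bound $\BWGL(Q_0,A,\epsilon)\lesssim\beta_{E,C_0,d,p}(Q_0)$. One may assume $\beta_{E,C_0,d,p}(Q_0)<\infty$. For the measure bound I would invoke the ``sufficiency'' (surface construction) direction of the $d$-dimensional traveling salesman theorem for lower content $d$-regular sets of Azzam--Schul and Hyde: finiteness of $\beta_{E,C_0,d,p}(Q_0)$ produces a set $F\supseteq E\cap\tfrac12 B_{Q_0}$, a union of Lipschitz graphs of controlled constant (equivalently a Reifenberg-type surface), with $\sH^d(F\cap B_{Q_0})\lesssim\beta_{E,C_0,d,p}(Q_0)$. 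Since $Q_0\subseteq E\cap B_{Q_0}$, after the usual harmless adjustment of the window constant this yields $\sH^d(Q_0)\lesssim\beta_{E,C_0,d,p}(Q_0)$; in particular this is also what forces $\sH^d(Q_0)<\infty$, which is not automatic for merely lower content regular sets.

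For the packing bound I would first reduce parameters. For lower content $d$-regular sets the Carleson sums $\BWGL(Q_0,A,\epsilon)$ for different admissible pairs $(A,\epsilon)$ are mutually comparable up to constants depending on $A,\epsilon,d$ and the regularity constant $c$ (a standard consequence of the definitions, as in \cite{DS93} and \cite{AS18}), so it suffices to bound $\BWGL(Q_0,C_0,\epsilon_1)$ for $\epsilon_1$ as small as convenient and window radius $C_0\ell(R)$. Fix a bad cube $R\subseteq Q_0$, so $b\beta^d_E(C_0B_R)>\epsilon_1$, and split into two cases. \textit{Stick-out case:} for every $d$-plane $P$ one has $\sup_{y\in E\cap C_0B_R}\dist(y,P)\geq\epsilon'\ell(R)$, where $\epsilon'=c'\epsilon_1$. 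Then choose a far point $y_0\in E\cap C_0B_R$ and apply lower content $d$-regularity at $y_0$ at scale $\tfrac13\epsilon'\ell(R)$: this produces a piece of $E$ of $\scH^d_\infty$-content $\gtrsim_c(\epsilon'\ell(R))^d$ lying at distance $\gtrsim\epsilon'\ell(R)$ from $P$, and integrating in the definition of $\beta^{d,p}_E(C_0B_R,P)$ gives a lower bound $\gtrsim_{c,C_0,d}\epsilon'^{\,1+d/p}$. Since $P$ was arbitrary, $\beta^{d,p}_E(C_0B_R)\gtrsim_{\epsilon_1,c,C_0,d}1$, hence $\ell(R)^d\lesssim_{\epsilon_1}\beta^{d,p}_E(C_0B_R)^2\ell(R)^d$, and summing over all such $R$ is bounded by $\beta_{E,C_0,d,p}(Q_0)$.

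\textit{Hole case:} there is a $d$-plane $P_R$ with $\sup_{y\in E\cap C_0B_R}\dist(y,P_R)<\epsilon'\ell(R)$, yet $b\beta^d_E(C_0B_R,P_R)>\epsilon_1$ forces a point $w_R\in P_R\cap C_0B_R$ with $\dist(w_R,E)\geq c''\epsilon_1\ell(R)$, while $x_R\in E\cap C_0B_R$ gives $\dist(w_R,E)\lesssim_{C_0}\ell(R)$. Here the $\beta^{d,p}$-sum is useless, since one-sided content $\beta$-numbers do not see holes (a $d$-plane with a ball removed is lower content $d$-regular, is bilaterally non-flat at the hole's scale, yet has $\beta^{d,p}_E\equiv 0$), so hole cubes must be packed directly against $\ell(Q_0)^d$. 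Each hole cube omits a flat disk $D_R\subseteq P_R$ of radius $\sim\epsilon_1\ell(R)$, in the sense that no point of $E\cap C_0B_R$ that stays $\epsilon'\ell(R)$-close to $P_R$ projects into $D_R$. I would then run a covering/stopping-time argument on the $D_R$: lower content $d$-regularity forces $E$ to spread out in $C_0B_R$ at scale $\ell(R)$, so hole cubes of comparable scale whose (suitably enlarged) windows overlap have nearly parallel planes; projecting their disks onto a common background plane, the two-sided constraint $c''\epsilon_1\ell(R)\leq\dist(w_R,E)\lesssim_{C_0}\ell(R)$ shows that overlapping disks come from cubes of comparable side length, so each disk is charged boundedly often and one obtains $\sum_{R\ \mathrm{hole}}\ell(R)^d\lesssim_{\epsilon_1,c,C_0,d}\ell(Q_0)^d\leq\beta_{E,C_0,d,p}(Q_0)$. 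Combining the two cases gives $\BWGL(Q_0,C_0,\epsilon_1)\lesssim\beta_{E,C_0,d,p}(Q_0)$, and undoing the parameter reduction finishes the proof.

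The genuine obstacle is the hole case. It cannot be charged to the $\beta$-sum at all, so one is forced into a self-contained packing estimate whose uniform constants depend delicately on the interaction between the window size $C_0$, the flatness threshold $\epsilon_1$, and the lower content regularity constant $c$; the step that really needs care is the ``comparable scales and nearly parallel planes for overlapping holes'' claim, which is exactly where lower content $d$-regularity is indispensable. Everything else --- the measure bound via the known surface construction, the stick-out case, the parameter comparisons --- is essentially bookkeeping.
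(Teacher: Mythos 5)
The paper itself does not prove this statement; it is quoted verbatim from Hyde \cite{Hyd21} (Theorem 1.7) and invoked as a black box in the proof of Proposition \ref{p:RF-graph-coronization}, so there is no in-paper argument to compare against. On its own terms, your measure bound via the surface construction and your stick-out computation are essentially correct, and you have correctly isolated the hole case as the genuine obstacle: one-sided content $\beta$-numbers are blind to holes, so hole cubes must be packed directly against $\ell(Q_0)^d$.

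Two pieces of the proposal are still real gaps rather than bookkeeping. The parameter reduction --- that $\BWGL(Q_0,A,\epsilon)$ is uniformly comparable across admissible $(A,\epsilon)$ --- is not a formal consequence of the definitions. For Ahlfors regular sets this independence is a substantive theorem of David--Semmes (and for lower content regular sets it is due to Hyde), normally proved with exactly the machinery you are in the middle of building; citing it here is close to circular. It is cleaner to run the dichotomy at the given $(A,\epsilon)$ and, for each bad $R$, pass to a cube $R'\supseteq R$ a bounded number of generations up so that $AB_R\subseteq C_0B_{R'}$ and all the geometry takes place in the window that actually appears in $\beta_{E,C_0,d,p}(Q_0)$. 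More seriously, the hole-case packing is an outline, not a proof. You need: (a) the two-sided scale constraint, which gives $\ell(R)\sim_{\epsilon_1,C_0}\ell(R')$ for cubes with overlapping disks (so roughly $\log\epsilon_1^{-1}$ generations can interact; fine, since the constant may depend on $\epsilon_1$, but it should be said); (b) a per-scale counting bound from the net structure of the Christ--David centers so that only $\lesssim_{C_0,d}1$ cubes per generation satisfy $w\in C_0B_R$ for a fixed $w$; and (c) an angle-comparison argument in the spirit of Lemma \ref{l:beta-angle-bound} showing that $E$ fills out enough of $C_0B_R\cap C_0B_{R'}$ to pin down both approximating planes, so that projecting the disks $D_R$ to a common reference plane is approximately measure preserving. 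Step (c) is exactly where lower content $d$-regularity does the work, and it is not automatic: one must check that the overlap region is large relative to both cubes' scales and that the one-sided flatness hypothesis (you are in the non-stick-out case) suffices to apply the lemma. Until (a)--(c) are written out, the claim $\sum_{R\ \mathrm{hole}}\ell(R)^d\lesssim_{\epsilon_1,c,C_0,d}\ell(Q_0)^d$ --- which carries the whole weight of the theorem --- remains a plausible sketch rather than a proof.
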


If $E$ is $(\epsilon,d)$-Reifenberg flat, then the $\BWGL$ terms above vanish and \eqref{e:lcr-beta-bound} gives a Carleson packing condition for the content beta number sum reminiscent of the strong geometric lemma for uniformly rectifiable sets from which we will conclude the desired $\beta^2$ sum control.

We will require small technical tweaks of the stopping time machinery of Azzam and Schul on Reifenberg flat sets. We review the necessary definitions here, but refer to \cite{AS18} sections 5-8 for a full treatment of the construction. 
\begin{definition}[$d$-dimensional traveling salesman stopping time]
    We fix constants $0 < \epsilon \ll \alpha^4$ with $\alpha(d),\epsilon(d)$ to be chosen sufficiently small in terms of $\delta$ as required in \cite{AS18} . For any cube $Q\in\scD$, we define a stopping time region $S_Q^\alpha$ by adding cubes $R\subseteq Q$ to $S_Q$ if
\begin{enumerate}[label=(\roman*)]
    \item $R^{(1)}\in S_Q^\alpha$,
    \item $\ang(P_U,P_Q) < \alpha$ for any sibling $U$ of $R$ (including $R$ itself).
\end{enumerate}
For any collection of cubes $\scQ$, define a distance function
\begin{equation*}
    d_{\scQ}(x) = \inf\{\ell(Q) + \dist(x,Q) : Q\in\scQ\}.
\end{equation*}
For any $Q\in\scD$, define
\begin{equation*}
    d_\scQ(Q) = \inf_{x\in Q}d_\scQ(x) = \inf\{\ell(R) + \dist(Q,R) : R\in\scQ\}.
\end{equation*}
We let $m(S)$ be the set of minimal cubes of $S$, those which have no children contained in $S$ and define
\begin{equation*}
    z(S) = Q(S) \setminus \bigcup_{Q\in m(S)}Q.
\end{equation*}
Let
\begin{equation*}
    \Stop(-1) = \scD_0
\end{equation*}
and fix a small constant $\tau\in(0,1)$. Suppose we have defined $\Stop(N-1)$ for some integer $N\geq 0$ and define
\begin{align*}
    \Layer(N) &= \bigcup\{S_Q^\alpha : Q\in\Stop(N-1)\}.
\end{align*}
We then set $\Up(-1) = \varnothing$ and put
\begin{align*}
    \Stop(N) &= \{Q\in\scD : \text{$Q$ maximal such that $Q$ has a sibling $Q'$ with $\ell(Q') < \tau d_{\Layer(N)}(Q')$}\},\\
    \Up(N) &= \Up(N-1) \cup \{Q\in\scD : Q\supset R \text{ for some $R\in\Stop(N)\cup\Layer(N)$}\}
\end{align*}
\cite{AS18} Lemma 5.5 says that, in fact
\begin{equation*}
    \Up(N) = \{Q\in\scD : Q\not\subset R \text{ for any $R\in\Stop(N)$}\}.
\end{equation*}
\end{definition}
Essentially, $\Layer(N)$ is a layer of stopping time regions $S_Q^\alpha$ beginning at the stopped cubes of the previous generation and continuing until reaching a cube $R$ with a child $R'$ such that $\ang(P_Q,P_{R'}) > \alpha$. $\Stop(N)$ is formed by taking a ``smoothing'' of $\Layer(N)$ that ensures that nearby minimal cubes in $\Stop(N)$ are of similar size. One forms a CCBP from the centers and $b\beta$-minimizing planes of cubes in $\Up(N)$ which gives a surface $\Sigma_N$ for any $N\geq 0$ which converges to $\Sigma$ as $N\rightarrow\infty$. Azzam and Schul give tools for proving bounds on the degree of stopping in this construction in the following lemma
\begin{lemma}[\cite{Hyd21} Lemma 4.4 (5)]\label{l:reif-flat-angle-turn}
Let $\Sigma$ be $(\epsilon,d)$-Reifenberg flat and $\scD$ a Christ-David lattice for $\Sigma$. Let $N \geq 0$. For any $Q_0\in\scD$,
\begin{equation*}
    \sum_{N\geq 0}\sum_{\substack{Q\in\Stop(N) \\ Q\subseteq Q_0}}\ell(Q)^d \lesssim_{d,\alpha,\epsilon}\sH^d(Q_0).
\end{equation*}
\end{lemma}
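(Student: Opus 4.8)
The statement is the Reifenberg-flat specialization of \cite[Lemma 4.4(5)]{Hyd21}, and the plan is to deduce it by feeding the traveling-salesman estimates already recorded above into the stopping-time construction of \cite{AS18}, \cite{Hyd21}. The first observation is that since $\Sigma$ is $(\epsilon,d)$-Reifenberg flat with $\epsilon$ small, $\Sigma$ is $d$-Ahlfors regular, so by Theorem \ref{t:Christ}(iv) $\sH^d(Q)\simeq_d\ell(Q)^d$ for all $Q\in\scD$ and it suffices to prove $\sum_{N\geq 0}\sum_{Q\in\Stop(N),\,Q\subseteq Q_0}\ell(Q)^d\lesssim\ell(Q_0)^d$. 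The second observation is the source of all the Carleson mass: for Reifenberg flat $\Sigma$ one has $\BWGL(Q_0,A,\epsilon)=0$ and $\beta^{d,1}_\Sigma(CB_Q)\lesssim\epsilon$ in every cube, so Theorem \ref{t:hyde-lcr-beta-bound} (with $p=1$) gives
\begin{equation*}
    \sum_{Q\subseteq Q_0}\beta^{d,1}_\Sigma(CB_Q)^2\ell(Q)^d\;\lesssim_{d,\epsilon}\;\sH^d(Q_0)\;\simeq_d\;\ell(Q_0)^d ,
\end{equation*}
and, applying Lemma \ref{l:beta-angle-bound} at consecutive scales together with the comparability of $b\beta$-minimizing and $\beta^{d,1}$-near-minimizing planes, also a Carleson bound on the turning of the approximating planes $P_Q$.

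Next I would run the accounting that converts these Carleson sums into a bound on the stopping cubes. First I would reduce $\Stop(N)$ to the minimal cubes $m(\Layer(N))$ of the corresponding layer: by the definition of $\Stop(N)$ and \cite[\S 5]{AS18}, every $Q\in\Stop(N)$ is $A'$-close (in the sense of Definition \ref{def:Aclose}, with $A'=A'(\tau,d)$) to some $R_Q\in m(\Layer(N))$, and each such $R_Q$ is charged by at most $C(\tau,d)$ cubes of $\Stop(N)$; hence $\sum_{Q\in\Stop(N),\,Q\subseteq Q_0}\ell(Q)^d\lesssim_{\tau,d}\sum_{R\in m(\Layer(N)),\,R\subseteq C''Q_0}\ell(R)^d$, and covering $C''Q_0$ by boundedly many cubes of scale $\ell(Q_0)$ reduces everything to bounding $\sum_N\sum_{R\in m(\Layer(N)),\,R\subseteq Q_0}\ell(R)^d$. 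Then, working inside a single layer $S=S_{Q(S)}^\alpha$, each $R\in m(S)$ sits at the bottom of a chain along which the approximating planes remain within $\alpha$ of $P_{Q(S)}$ while a child of $R$ leaves that $\alpha$-neighborhood, so the telescoping/stopping-time argument of \cite[\S\S 5--8]{AS18} and \cite[\S 4]{Hyd21} charges $\sum_{R\in m(S)}\ell(R)^d$ to the portion of the Carleson sums above that lives on the cubes of $S$. Since the layers $\Layer(N)$, $N\geq 0$, are pairwise disjoint collections of cubes whose union exhausts the cubes contained in $Q_0$, summing over all layers incurs only bounded multiplicity, and the Carleson bounds from the previous paragraph give the claimed estimate.

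The step I expect to be the main obstacle is this last charging inside a layer: the quantity ``angle to the top plane of the current layer'' is not a martingale, so extracting $\sum_{R\in m(S)}\ell(R)^d\lesssim_\alpha(\text{local Carleson mass})$ requires the careful stopping-time bookkeeping of \cite{AS18}, \cite{Hyd21} rather than a one-line telescoping, and one must also track the interaction of the two sources of stopping — angle turning within a layer, and the ``$\tau d_{\Layer(N)}$'' smoothing that produces $\Stop(N)$ from $\Layer(N)$ — so that the final constant depends only on $d,\alpha,\epsilon$ (and the fixed parameter $\tau$). Once those inputs are quoted, the specialization to Reifenberg flat sets is exactly the replacement of the generic right-hand side of the cited lemma by $\sH^d(Q_0)$, using that $\BWGL\equiv 0$ there.
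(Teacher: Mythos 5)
The paper offers no proof of this lemma at all: it is stated purely as a citation to \cite{Hyd21} (Lemma 4.4(5)) and is then used as a black box in the proof of Proposition~\ref{p:RF-graph-coronization}. Your attempt is therefore not being compared against a written-out argument in the paper but against a deferral to the literature, and in that light what you have produced is a high-level reconstruction of the cited proof rather than a proof of its own. The ingredients you name are the correct ones: the source of the Carleson mass is Theorem~\ref{t:hyde-lcr-beta-bound} with the $\BWGL$ terms killed by Reifenberg flatness, the reduction from $\Stop(N)$ to $m(\Layer(N))$ via the smoothing comparability is real and appears in \cite{AS18}, and the final estimate is an aggregation over layers. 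But you explicitly leave open the decisive step, namely that the total mass of the angle-stopping cubes over \emph{all} layers is controlled by the beta-squared Carleson sum, and you acknowledge that this requires the full stopping-time bookkeeping of \cite{AS18}/\cite{Hyd21}. That step is precisely the content of the cited lemma; reducing to it is therefore circular, and the outline as written does not constitute an independent proof.

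Two smaller points worth flagging. First, the per-layer bound $\sum_{R\in m(S)}\ell(R)^d\lesssim\sH^d(Q(S))$ is trivial by disjointness and Ahlfors regularity; the genuine difficulty is the sum over $N$, not the single-layer charge, so your framing of ``charging inside a layer'' as the main obstacle slightly mislocates the real work (one must show that the collection of top cubes $\{Q(S)\}_{S}$ forms a Carleson family, which is the global statement). Second, your remark that the layers $\Layer(N)$ are pairwise disjoint families whose union exhausts the cubes below $Q_0$ is not quite accurate: the smoothing that produces $\Stop(N)$ from $\Layer(N)$ can skip past cubes that lie strictly between $m(\Layer(N))$ and $\Stop(N)$ and that belong to no layer. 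This does not derail the argument, since one charges $\Stop(N)$ to nearby layer cubes rather than partitioning $\scD$, but the stated claim should be corrected. In sum: your approach is consistent in spirit with what the paper does (defer to \cite{Hyd21}), but it should be presented as a pointer to the cited lemma, not as a proof of it.
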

\begin{proof}[Proof of Proposition \ref{p:RF-graph-coronization}]
    Fix $M \geq 1,$ and $ \epsilon,\alpha > 0$ sufficiently small in terms of $M,d,n$ determined by Lemma \ref{l:reif-flat-angle-turn} and Theorem \ref{t:hyde-lcr-beta-bound} and let $\delta = 100\alpha$. Let $\scD$ be a Christ-David lattice for $\Sigma$ and let $\{P_Q\}_{Q\in\scD}$ be a family of $d$-planes such that $x_Q\in P_Q$ and $\beta^{d,1}_\Sigma(MB_Q,P_Q) \leq 2\beta^{d,1}_\Sigma(MB_Q)$. Fix $Q_0\in \scD$ and form a collection of stopping time regions $\scF = \{S_Q\}$ contained within $Q_0$ satisfying the stopping conditions Items \ref{i:gc-beta} and \ref{i:gc-angle} of Definition \ref{def:graph-coronization}. We set $\scG = \scD$, $\scB = \varnothing$. To prove that $\scC = (\scG,\scB,\scF)$ is an $(M,\epsilon,\delta)$-graph coronization, we only need to show that $\scC$ is a coronization, i.e., there exists a constant $C(M,\epsilon, \delta, d)$ such that 
    \begin{equation*}
        \sum_{S\in\scF}\ell(Q(S))^d \leq C\sH^d(Q_0).
    \end{equation*}
    Define
    \begin{align*}
        S_\delta &= \{Q\in\scD : \exists S\in\scF,\ Q\in\Stop(S),\ \ang(P_Q,P_{Q(S)}) > \delta\},\\
        S_\beta &= \left\{Q\in\scD : \exists S\in\scF,\ Q\in\Stop(S),\ \sum_{Q\subseteq R\subseteq Q(S)}\beta^{d,1}_\Sigma(MB_R)^2 > \eta\right\}.
    \end{align*}
    It suffices to show that $\sum_{Q\in S_\delta\cup S_\beta}\ell(Q)^d \leq C\sH^d(Q_0)$. We define
\begin{equation*}
    \Stop(-1,\delta) = \{Q_0\},
\end{equation*}
and, given $\Stop(N-1,\delta)$ for some integer $N\geq 0$, we define
\begin{equation*}
    \Stop(N,\delta) = \{R\in S_\delta : R \text{ maximal such that } R\subseteq Q\in\Stop(N-1,\delta)\}.
\end{equation*}
With this, we have
\begin{equation*}
    S_\delta = \bigcup_{N\geq 0}\Stop(N,\delta).
\end{equation*}
We will use this to show that $\sum_{Q\in S_\delta}\ell(Q)^d \leq C\sH^d(Q_0).$

Fix $Q\in\Stop(N,\delta)$ and let $x\in Q\setminus z(S)$. Then there exists $R\in S_\delta,\ R\subset Q$ such that $x\in R$ and, since $\delta \geq 100\alpha$, there exists a cube $R'\in\Stop(K)$ for some $K \geq 0$ such that $R\subseteq R' \subseteq Q$. Set
\begin{equation*}
    \Stop(Q) = \left\{R\in\scD : \text{$R$ maximal such that $R\in\bigcup_{N\geq0}\Stop(N)$ and $R\subseteq Q$}\right\}.
\end{equation*}
The above argument has shown that $Q\setminus z(S_Q)\subseteq \cup_{R\in\Stop(Q)}R$. We see
\begin{equation*}
    \ell(Q)^d \lesssim_d \sum_{R\in\Stop(Q)}\ell(R)^d + \sH^d(z(S_Q)).
\end{equation*}
This means
\begin{align*}
    \sum_{Q\in S_\delta}\ell(Q)^d &= \sum_{N\geq 0}\sum_{Q\in\Stop(N,\delta)}\ell(Q)^d \\
    &\lesssim_d \sum_{N\geq 0}\sum_{Q\in\Stop(N,\delta)}\left( \sum_{R\in\Stop(Q)}\ell(R)^d + \sH^d(z(S_Q)) \right)\\
    &\lesssim \sH^d(Q_0) + \sum_{K\geq 0}\sum_{R\in\Stop(K)}\ell(R)^d\\
    &\lesssim_{d,\delta,\epsilon} \sH^d(Q_0)
\end{align*}
where the penultimate line follows from the fact that $\Stop(Q)\cap\Stop(Q')=\varnothing$ for $Q,Q'\in S_\delta,\ Q\not=Q'$, and the final line follows from Lemma \ref{l:reif-flat-angle-turn}.

We now show that $\sum_{Q\in S_\beta} \ell(Q)^d \leq C\sH^d(Q_0)$. We have
\begin{align*}
    \sum_{Q\in S_\beta}\ell(Q)^d &\leq \sum_{Q\in S_\beta}\ell(Q)^d\bigg[\epsilon^{-2}\sum_{\substack{Q\subseteq R \subseteq Q(S) \\ Q\in S\in\scF}}\beta^{d,1}_\Sigma(MB_R)^2\bigg] = \epsilon^{-2}\sum_{R\in\scD}\beta^{d,1}_\Sigma(MB_R)^2\sum_{\substack{Q\text{ maximal }\subseteq R \\ Q\in S_\beta}}\ell(Q)^d,\\
    &\lesssim \epsilon^{-2}\sum_{R\in\scD}\beta^{d,1}_\Sigma(MB_R)^2\ell(R)^d \lesssim_{d,\eta} \sH^d(Q_0)
\qedhere\end{align*}
using Theorem \ref{t:hyde-lcr-beta-bound} in the last line.
\end{proof}

\noindent\textbf{Acknowledgements}

\vspace{1ex}

\noindent The author thanks Raanan Schul for many helpful discussions and suggestions. This work would not have been possible without his original inquiry into the problem and his general guidance on attacking it.

The author thanks the referee for many useful suggestions and for catching numerous typos and small inaccuracies in the initial draft. Their work significantly improved the readability of this paper. The author also thanks a reviewer for mentioning additional background literature on harmonic measure and for explanations that improved the presentation of the results in \cite{MT24} and \cite{MPT22}.

\bibliographystyle{alpha}
\bibliography{bib-file-RST}

\end{document}